\definecolor{reference}{rgb}{0.20,0.36,0.74}
\definecolor{citation}{rgb}{0,.40,.80}
\newcommand{\fcat}{\mathscr}
\newcommand{\inj}{\hookrightarrow}
\newcommand{\surj}{\twoheadrightarrow}
\newcommand{\areq}{\mathbin{{\xrightarrow{\,\sim\,}}}}
\newcommand{\Type}{ {\fcat{S}} }
\DeclareMathOperator{\Sch}{Sch}
\renewcommand{\phi}{\varphi}
\renewcommand{\epsilon}{\varepsilon}
\DeclareMathOperator{\Vect}{Vect}
\DeclareMathOperator{\rank}{rank}
\DeclareMathOperator{\Sym}{Sym}
\newcommand{\Mod}{\mathrm{Mod}}
\DeclareMathOperator{\SL}{SL}
\DeclareMathOperator{\gr}{gr}
\DeclareMathOperator{\Rep}{Rep}
\renewcommand{\Im}{\mathrm{Im}}
\DeclareMathOperator{\Spec}{Spec}
\DeclareMathOperator{\Pic}{Pic}
\DeclareMathOperator{\Coh}{Coh}
\DeclareMathOperator{\QCoh}{QCoh}
\DeclareMathOperator{\Aff}{Aff}
\DeclareMathOperator{\Stk}{{\mathscr{S}tk}}
\DeclareMathOperator{\PStk}{{\mathscr{PS}tk}}
\DeclareMathOperator{\Hom}{Hom}
\DeclareMathOperator{\Ext}{Ext}
\DeclareMathOperator{\Tor}{Tor}
\newcommand{\HHom}{ {\fcat Hom} }
\DeclareMathOperator{\Map}{Map}
\DeclareMathOperator{\Fun}{Fun}
\DeclareMathOperator{\Ind}{Ind}
\DeclareMathOperator*{\colim}{colim}
\DeclareMathOperator*{\fib}{fib}
\newcommand{\SSet}{ {\mathrm{SSet}} }
\DeclareMathOperator{\CAlg}{CAlg}
\DeclareMathOperator{\Lan}{Lan}
\DeclareMathOperator{\Ran}{Ran}
\DeclareMathOperator{\Sh}{{\mathcal Shv}}
\DeclareMathOperator{\ev}{ev}
\newcommand{\prolim}[1][]{\lim\limits_{\stackrel{\longleftarrow}{#1}}}
\newcommand{\indlim}[1][]{\lim\limits_{\stackrel{\longrightarrow}{#1}}}
\DeclareMathOperator{\Tot}{Tot}
\DeclareMathOperator{\Fil}{Fil}
\DeclareMathOperator{\red}{red}
\mathchardef\mdef="2D
\DeclareFontFamily{OT1}{pzc}{}
\DeclareFontShape{OT1}{pzc}{m}{it}{<-> s * [1.200] pzcmi7t}{}
\DeclareMathAlphabet{\mathpzc}{OT1}{pzc}{m}{it}
\DeclareMathOperator{\Poly}{Poly}
\newtheorem{thm}{Theorem}[subsection]
\newtheorem*{thm*}{Theorem}
\newtheorem{lem}[thm]{Lemma}
\newtheorem{cor}[thm]{Corollary}
\newtheorem*{cor*}{Corollary}
\newtheorem{prop}[thm]{Proposition}
\newtheorem*{prop*}{Proposition}
\theoremstyle{definition}
\newtheorem{defn}[thm]{Definition}
\newtheorem{construction}[thm]{Construction}
\newtheorem{notation}[thm]{Notation}
\newtheorem{ex}[thm]{Example}
\newtheorem{rem}[thm]{Remark}
\newtheorem{question}[thm]{Question}
\newtheorem*{conj*}{Conjecture}
\definecolor{note_color}{rgb}{0.0,0.9,0.0}
\newcommand{\clause}[1]{\smallskip\noindent\underline{\textit{#1}}}
\newcommand{\Grad}{\mr{Grad}}
\newcommand{\Filt}{\mr{Filt}}
\newcommand{\ol}{\overline}
\newcommand{\Ker}{\mr{Ker}}
\newcommand{\mf}{\mathfrak}
\newcommand{\mc}{\mathcal}
\newcommand{\mbb}{\mathbb}
\newcommand{\mr}{\mathrm}
\newcommand{\mcr}{\mathscr}
\newcommand{\ul}{\underline}
\newcommand{\ra}{\rightarrow}
\newcommand{\xra}{\xrightarrow}
\newcommand{\RG}{{R\Gamma}}
\newcommand{\Zar}{\mathrm{Zar}}
\newcommand{\op}{{\mathrm{op}}}
\newcommand{\fin}{{\mathrm{fin}}}
\newcommand{\perf}{{\mathrm{perf}}}
\newcommand{\Hdg}{{\mathrm{H}}}
\newcommand{\dR}{{\mathrm{dR}}}
\newcommand{\crys}{{\mathrm{crys}}}
\newcommand{\fp}{{\mathrm{fp}}}
\newcommand{\pr}{{\mathrm{pr}}}
\newcommand{\Art}{{\mathrm{Art}}}
\newcommand{\sm}{{\mathrm{sm}}}
\newcommand{\cnj}{{\mathrm{conj}}}
\newcommand{\mstack}{\mathpzc}
\newcommand{\DMod}[1]{D(\Mod_{#1})}
\newcommand{\UMod}[1]{\Mod_{#1}}
\numberwithin{thm}{subsection}
\title{Hodge-to-de Rham degeneration for stacks}
\author{Dmitry Kubrak, Artem Prikhodko}
\date{}
\begin{document}
\maketitle

\begin{abstract}
We introduce a notion of a Hodge-proper stack and apply the strategy of Deligne-Illusie to prove the Hodge-to-de Rham degeneration in this setting. In order to reduce the statement in characteristic $0$ to characteristic $p$, we need to find a good integral model of a stack (namely, a Hodge-proper spreading), which, unlike in the case of proper schemes, need not to exist in general. To address this problem we investigate the property of spreadability in more detail by generalizing standard spreading out results for schemes to higher Artin stacks and showing that all proper and some global quotient stacks are Hodge-properly spreadable. As a corollary we deduce a (non-canonical) Hodge decomposition of the equivariant cohomology for certain classes of varieties with an algebraic group action.
\end{abstract}

\tableofcontents

\section{Introduction}
\subsection{Deligne-Illusie method for schemes}
Let $X$ be a smooth scheme over $\mathbb C$ and let $X(\mathbb C)$ be the topological space of its complex points. Grothendieck has shown that there is a formula for the singular cohomology of $X(\mathbb C)$ in purely algebraic terms, namely 

$$
H^n_{\mr{sing}}(X(\mathbb C),\mathbb C)\simeq H^n_{\mr{dR}}(X/\mathbb C),
$$
where the de Rham cohomology $H^n_{\mr{dR}}(X/\mathbb C)$ is defined as the $n$-th hypercohomology of the algebraic de Rham complex of $X$. If, moreover, $X$ is projective, using Hodge theory one obtains the Hodge decomposition
$$
H^n_{\mr{sing}}(X(\mathbb C),\mathbb C)\simeq \bigoplus_{p+q=n}H^q(X,\Omega^p_X).
$$
Unfortunately, it is only possible to get such a decomposition utilizing some transcendental methods (like Hodge theory). However, for $X$ proper, using just algebraic geometry we still obtain a functorial filtration $F^\bullet H^n_{\mr{dR}}(X/\mathbb C)$ whose associated graded is given by the sum above. Namely, the de Rham complex has a natural cellular (also called "stupid") filtration $\Omega_{X,\dR}^{\ge p}$ given by subcomplexes
$$
\Omega_{X,\dR}^{\ge p} \coloneqq \xymatrix{\ldots \ar[r] &0 \ar[r] & \Omega_{X}^p \ar[r]^-d & \Omega_{X}^{p+1} \ar[r]^-d & \ldots \ar[r]^-d & \Omega_X^{\dim X}}.
$$
This filtration induces a filtration on the complex of global sections $\RG_\dR(X/\mathbb C)\coloneqq\RG(X, \Omega_{X,\dR}^\bullet)$ whose associated graded pieces are $\RG(X, \Omega_X^p[-p])$. As a consequence one gets the so-called \emph{Hodge-to-de Rham spectral sequence}
$$E_1^{p,q} = H^q(X, \Omega_{X}^p) \ \Rightarrow\ H^{p+q}_\dR(X/\mathbb C).$$
As was shown by Deligne and Illusie \cite{DeligneIllusie} there is a purely algebraic proof of the degeneration of the spectral sequence above, thus the induced filtration $F^\bullet H^{n}_\dR(X/\mathbb C)$ on the de Rham cohomology has the associated graded
$$
\gr_F^\bullet H^{n}_\dR(X/\mathbb C) \simeq \bigoplus_{p+q=n} H^{p,q}(X),  \quad \text{where}\quad H^{p,q}(X):=H^q(X,\Omega^p_X).
$$

The strategy of Deligne-Illusie is to reduce the statement in characteristic $0$ to an analogous question in big enough positive characteristic. Let $k$ be a perfect field of characteristic $p$ and let $Y$ be a smooth scheme over $k$. Then we have:
\begin{thm}[Cartier]\label{Cartier_iso}
Let $Y^{(1)}$ denote the Frobenius twist of $Y$ and let $\phi_Y\colon Y \to Y^{(1)}$ be the relative Frobenius morphism. Then there exists a unique isomorphism of sheaves of $\mathcal O_{Y^{(1)}}$-algebras on $Y^{(1)}_{\Zar}$
$$C^{-1}_Y\colon \bigoplus_i \Omega^i_{Y^{(1)}} \to \bigoplus_i \mathcal H^i(\phi_{Y*}\Omega^\bullet_{Y, \dR}),$$
determined by the property that for any local section $f$ of $\mathcal O_Y$
$$C^{-1}_Y(df) = ``df^p/p" \coloneqq f^{p-1}df.$$
The map $C^{-1}_Y$ is called the \emph{inverse Cartier isomorphism}.
\end{thm}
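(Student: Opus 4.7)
The plan is to construct $C^{-1}_Y$ as the unique $\mathcal{O}_{Y^{(1)}}$-algebra map whose degree-$1$ component encodes the derivation $df \mapsto f^{p-1}df$, and then verify that this map is an isomorphism by reducing to a direct computation on $\mathbb{A}^1_k$. Uniqueness is immediate: since $C^{-1}_Y$ is required to be a morphism of graded $\mathcal{O}_{Y^{(1)}}$-algebras, it is determined by its values on elements of the form $df$, which locally generate $\Omega^1_{Y^{(1)}}$ over $\mathcal{O}_{Y^{(1)}}$, and the stated formula fixes these.

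For existence, I would first produce a map $\delta \colon \mathcal{O}_Y \to \mathcal{H}^1(\phi_{Y*}\Omega^\bullet_{Y,\dR})$, $f \mapsto [f^{p-1}df]$. The form $f^{p-1}df$ is $d$-closed, since $d(f^{p-1})\wedge df = (p-1)f^{p-2}\,df\wedge df = 0$. To see that $\delta$ is a derivation for the $\mathcal{O}_{Y^{(1)}}$-module structure on the target (where $\mathcal{O}_{Y^{(1)}}$ acts through Frobenius, i.e.\ by $p$-th powers), I need to verify, modulo exact forms, the identities
$$(f+g)^{p-1}d(f+g) \equiv f^{p-1}df + g^{p-1}dg \qquad\text{and}\qquad (fg)^{p-1}d(fg) = f^p g^{p-1}dg + g^p f^{p-1}df.$$
Both follow from formal manipulations in characteristic zero: the Leibniz identity comes from differentiating $f^p g^p = (fg)^p$ and dividing by $p$ (and is, after reduction mod $p$, an actual equality, not just a congruence); additivity comes from writing $(f+g)^p - f^p - g^p = p\cdot P(f,g)$ for an integer polynomial $P$, so that after differentiating and dividing by $p$, the class of $dP(f,g)$ provides the explicit exact form witnessing the congruence. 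Given $\delta$, the universal property of $\Omega^1_{Y^{(1)}}$ extends it to an $\mathcal{O}_{Y^{(1)}}$-linear map on $\Omega^1_{Y^{(1)}}$, and graded multiplicativity then yields $C^{-1}_Y$ in all degrees. This step is where the classical ``lift to characteristic zero, divide by $p$, reduce mod $p$'' trick --- which will later be exploited globally throughout the paper --- is introduced.

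To see that $C^{-1}_Y$ is an isomorphism, I would argue locally. Since source, target, and the map itself are all compatible with étale base change on $Y^{(1)}$, and $Y$ is smooth over $k$ so admits étale coordinates, it suffices to check the case $Y = \mathbb{A}^n_k$. By the Künneth formula applied to the de Rham complex $\Omega^\bullet_{k[t_1,\ldots,t_n]} \simeq \bigotimes_i \Omega^\bullet_{k[t_i]}$, this further reduces to $n=1$, where the verification is a direct calculation: $\ker\bigl(d\colon k[t] \to k[t]\,dt\bigr) = k[t^p]$, and every monomial form $t^m dt$ with $m\not\equiv p-1\pmod p$ is exact as $\tfrac{1}{m+1}d(t^{m+1})$. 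Thus $\mathcal{H}^1$ is the free $k[t^p]$-module generated by the class of $t^{p-1}dt$, which matches $\Omega^1_{(\mathbb{A}^1_k)^{(1)}} = \mathcal{O}_{Y^{(1)}}\cdot dt$ under $C^{-1}_Y$.

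The main obstacle is the well-definedness of $\delta$ --- in particular the additivity (mod exact) computation, which requires one to identify and track the integral correction polynomial $P(f,g)$. Once this is in place, promoting $\delta$ to a graded algebra map is formal, and the remaining verification that the map is an isomorphism is essentially a finite bookkeeping exercise via Künneth and the one-variable model.
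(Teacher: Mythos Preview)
Your argument is correct and is the standard classical proof of Cartier's theorem (essentially as in Katz or Illusie). Note, however, that the paper does not supply its own proof of this statement: it appears in the introduction as background attributed to Cartier, with no argument given, so there is nothing to compare against beyond observing that your approach is the textbook one.
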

This way we see that the Postnikov (also called "canonical") filtration on $\phi_{Y*}\Omega_{Y, \dR}^\bullet$ induces another filtration on  $\RG(Y, \Omega_{Y, \dR}^\bullet) \simeq \RG(Y^{(1)}, \phi_{Y*}\Omega_{Y, \dR}^\bullet)$ whose associated graded pieces are $\RG(Y^{(1)}, \Omega_{Y^{(1)}}^p[-p])$. Taking the spectral sequence induced by this filtration we obtain the so-called \emph{conjugate spectral sequence}
$$
E_2^{p,q} = H^p(Y^{(1)}, \Omega^q_{Y^{(1)}}) \Rightarrow H^{p+q}_\dR(Y/k).
$$
Note that for any spectral sequence the $E_\infty$-page is always a subfactor of the $E_r$-page ($r\ge 0$), hence $\dim_k E^{*,*}_\infty \le \dim_k E^{*,*}_r$. If all vector spaces $E_*^{*,*}$ are finite-dimensional, equality holds if and only if all differentials starting  from the $r$-th page vanish. It follows that for $Y$ proper, the conjugate spectral sequence degenerates if and only if $\dim_k H^n_\dR(Y) = \sum_{p+q=n} \dim_k H^{p,q}(Y^{(1)})$. Since $\dim_k H^{p,q}(Y^{(1)}) = \dim_k H^{p,q}(Y)$ this happens if and only if the Hodge-to-de Rham spectral sequence degenerates as well.

The differentials in the conjugate spectral sequence are induced by the connecting homomorphisms for the Postnikov filtration on $\phi_{Y_*} \Omega_{Y, \dR}^\bullet$. In particular, if $\phi_{Y_*}\Omega_{Y, \dR}^\bullet$ is formal (i.e. quasi-isomorphic to the sum of its cohomology), then the conjugate spectral sequence degenerates. While in general this is hard to guarantee, the formality of the truncation $\tau^{\le p-1}\phi_{Y_*} \Omega_{Y, \dR}^\bullet$ turns out to be equivalent to the existence of a lift to the second Witt vectors $W_2(k)$:
\begin{thm}[Deligne-Illusie]\label{Deligne_Illusie_schems}
A smooth scheme $Y$ over $k$ admits a lift to $W_2(k)$ if and only if there exists an equivalence in the derived category of $\mathcal O_{Y^{(1)}}$-modules
$$\bigoplus_{i=0}^{p-1} \Omega_{Y^{(1)}}^i[-i] \areq \tau^{\le p-1}\phi_{Y*}\Omega_{Y, \dR}^\bullet$$
inducing the inverse Cartier isomorphism $C^{-1}_Y$ on $\mathcal H^*$. In particular, if $Y$ admits a lift to $W_2(k)$ and $\dim Y < p$, then the complex $\phi_{Y*}\Omega_{Y, \dR}^\bullet$ is formal, and hence the Hodge-to-de Rham spectral sequence degenerates at the first page.
\end{thm}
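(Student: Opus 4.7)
The substance of the theorem is the forward direction: given a lift $\tilde Y$ of $Y$ to $W_2(k)$, I need to construct the equivalence in $D(\mathcal O_{Y^{(1)}})$; the converse follows from an obstruction-theoretic reinterpretation of the construction, and the final assertion is an immediate consequence. I will accordingly focus on the construction of the splitting. The central difficulty is that a lift of the relative Frobenius $\phi_Y$ to $\tilde Y \to \tilde Y^{(1)}$ need not exist globally; however, it exists locally on any affine, and the ambiguity in its choice is controlled by a Čech-type coboundary that can be packaged into coherent homotopies.

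Concretely, I pick an affine cover $\{U_\alpha\}$ of $Y$ together with smooth lifts $\tilde U_\alpha$ over $W_2(k)$, and on each $\tilde U_\alpha$ choose a lift $\tilde F_\alpha\colon \tilde U_\alpha \to \tilde U_\alpha^{(1)}$ of the relative Frobenius (possible since the obstruction lies in an $\Ext^1$ group, which vanishes on affines). Since $\tilde F_\alpha^*\tilde f - \tilde f^p \in p\mathcal O_{\tilde U_\alpha}$, the expression $d(\tilde F_\alpha^*\tilde f)/p$ makes sense modulo $p$, and yields an $\mathcal O_{U_\alpha^{(1)}}$-linear map
$$
\phi_\alpha^*\colon \Omega^1_{U_\alpha^{(1)}} \lra \phi_{U_\alpha *}\Omega^1_{U_\alpha}, \qquad d\tilde f \longmapsto f^{p-1}df + dg_{\tilde f,\alpha},
$$
where $g_{\tilde f,\alpha}\coloneqq (\tilde F_\alpha^*\tilde f - \tilde f^p)/p$. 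By construction $\phi_\alpha^*$ lands in cocycles of $\phi_{U_\alpha*}\Omega^\bullet_{U_\alpha,\dR}$ and induces $C^{-1}_Y$ on $\mathcal H^1$. Extending multiplicatively via wedge products gives, for each $0 \le i \le p-1$, a map of sheaves $\Omega^i_{U_\alpha^{(1)}} \to \phi_{U_\alpha*}\Omega^i_{U_\alpha}$ inducing $C^{-1}_Y$ on $\mathcal H^i$.

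Next I patch. Two Frobenius lifts $\tilde F_\alpha, \tilde F_\beta$ on $\tilde U_{\alpha\beta}\coloneqq \tilde U_\alpha \cap \tilde U_\beta$ satisfy $\tilde F_\alpha^* - \tilde F_\beta^* = p\cdot h_{\alpha\beta}$ for a $k$-derivation $h_{\alpha\beta}\colon \mathcal O_{U_{\alpha\beta}^{(1)}} \to \phi_*\mathcal O_{U_{\alpha\beta}}$, equivalently a map $\Omega^1_{U_{\alpha\beta}^{(1)}} \to \phi_*\mathcal O_{U_{\alpha\beta}}$. This $h_{\alpha\beta}$ is precisely the chain homotopy equating $\phi_\alpha^*$ and $\phi_\beta^*$ as maps of two-term complexes $\Omega^1_{U_{\alpha\beta}^{(1)}}[-1] \to \tau^{\le 1}\phi_*\Omega^\bullet_{U_{\alpha\beta},\dR}$; suitable wedges of $h_{\alpha\beta}$ with the $\phi^*_\bullet$ produce the analogous coherence data in degrees $\le p-1$. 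Feeding this system of local splittings and homotopies into the Čech nerve of the cover yields a morphism
$$
\bigoplus_{i=0}^{p-1}\Omega^i_{Y^{(1)}}[-i] \lra \tau^{\le p-1}\phi_{Y*}\Omega^\bullet_{Y,\dR}
$$
in $D(\mathcal O_{Y^{(1)}})$ whose induced map on each $\mathcal H^i$ is $C^{-1}_Y$, hence an isomorphism by Theorem \ref{Cartier_iso}, so the morphism itself is an equivalence.

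For the converse, the class of a lift of $Y$ in $\Ext^2_{\mathcal O_Y}(\Omega^1_Y,\mathcal O_Y)$ is identified, via $C^{-1}_Y$, with the obstruction to splitting $\tau^{\le 1}\phi_{Y*}\Omega^\bullet_{Y,\dR}$; the existence of the claimed equivalence trivializes this obstruction. For the final claim, when $\dim Y < p$ one has $\Omega^i_{Y^{(1)}} = 0$ for $i \ge p$, so the truncation equals the entire complex; the constructed equivalence exhibits $\phi_{Y*}\Omega^\bullet_{Y,\dR}$ as formal, the conjugate spectral sequence degenerates, and by the $E_\infty$-dimension comparison recalled above, so does the Hodge-to-de Rham spectral sequence. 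I expect the main obstacle to be the coherence of the gluing: verifying that the wedge powers of $\phi_\alpha^*$, together with their $h_{\alpha\beta}$-homotopies, satisfy the higher simplicial cocycle conditions on triple and higher overlaps. The restriction $i \le p-1$ is forced precisely here, since divisibility of $\binom{p}{i}$ by $p$ for $0<i<p$ is what makes the higher correction terms vanish mod $p$, and this divisibility fails at $i=p$.
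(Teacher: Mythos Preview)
The paper states this theorem as background in the introduction and does not give its own proof; it is the classical result of \cite{DeligneIllusie}. Your proposal is a correct outline of the original Deligne--Illusie argument: choose local Frobenius lifts, build local splittings $\phi_\alpha^*$, and glue via \v Cech homotopies $h_{\alpha\beta}$, then extend multiplicatively to degrees $\le p-1$.

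That said, the paper does prove a closely related result (\Cref{Functorial_DeligneIllusie_for_affines}) by a genuinely different route, following Fontaine--Messing and \cite{BMS2}. Rather than choosing local Frobenius lifts and patching, it passes to the quasisyntomic site, where on quasiregular semiperfect algebras the derived crystalline cohomology becomes the honest ring $\mathbb A_{\crys}(S)/p$ with explicit Hodge and conjugate filtrations. The splitting of $\Fil_1^\cnj$ is then produced uniformly by applying the divided Frobenius $\phi_1$ to the kernel of $\theta_{2,\widetilde S}\colon \mathbb A_{\crys}(S)/p^2 \to \widetilde S$ (\Cref{Hodge_splitting_on_QRSPerf}), extended by multiplicativity to $\Fil_{p-1}^\cnj$, and descended back to smooth affines by unfolding.

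The payoff of the paper's approach is twofold. First, it yields a splitting that is functorial at the level of the $\infty$-category $\DMod{k}$, not merely the homotopy category; this is what enables the right-Kan-extension argument that transports the splitting to stacks (\Cref{DeligneIllusieQuis}). Second, it sidesteps the \v Cech gluing entirely, which matters because, as noted in \Cref{original_DI_fails}, the classical approach hinges on an equivalence of gerbes on the \'etale site of $Y^{(1)}$ and does not extend to Artin stacks that only admit smooth (not \'etale) covers by affines. Conversely, your approach recovers the full ``if and only if'' statement via the identification of the two gerbes, whereas the paper's method only supplies the forward implication and explicitly disclaims the converse (see the remark after \Cref{DeligneIllusieQuis}). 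For the scheme case as stated, your argument is the standard reference proof and is fine.
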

The proof of the degeneration in characteristic $0$ is then accomplished by choosing a smooth proper model (the so-called \emph{spreading}) $X_R$ of $X$ over some finitely generated $\mathbb Z$-subalgebra $R$ of $F$. Enlarging $R$ if needed, one can assume that the $R$-modules $H^q(X_R,\Omega^p_{X_R})$ and $H^n_\dR(X_R/R)$ are free of finite rank and that $R$ is smooth over $\mathbb Z$. By smoothness of $R$ any homomorphism from $R$ to a perfect filed of positive characteristic lifts to the second Witt vectors (see \Cref{lemma below}). Picking a perfectization of closed point of $\Spec R$ of residue characteristic $p>\dim X$ one reduces Hodge-to-de Rham degeneration to \Cref{Deligne_Illusie_schems}.

\subsection{Generalization to stacks}
In this work we apply the strategy of Deligne-Illusie in the case of Artin stacks. For a smooth proper Deligne-Mumford stack one can proceed with the original arguments (see e.g. \cite[Corollary 1.7]{Satriano}), but they do not seem to work for a general smooth Artin stack (see \Cref{original_DI_fails}). Instead we use another approach relying on quasi-syntomic descent for the derived de Rham cohomology. 

As in the case of schemes, to establish Hodge-to-de Rham degeneration, we need to impose some properness assumptions. However, the standard notion of a proper stack is too restrictive for our purposes. For example, the quotient stack $[X/G]$ of a proper scheme $X$ by an action of a linear algebraic group $G$ is proper if and only if the stabilizers of all points of $X$ are finite group schemes. On the other hand, as we will see in \Cref{sec:reductive quotients}, the Hodge-to-de Rham spectral sequence for $[X/G]$ with reductive  $G$ always degenerates. 

This suggests that we should look for a more general notion of properness:
\begin{defn}
Let $R$ be a Noetherian ring. A smooth Artin stack $\mstack X$ over $R$ is called \emph{Hodge-proper} if $H^q(\mstack X, \wedge^p \mathbb L_{\mstack X/R})$ is a finitely generated $R$-module for all $p$ and $q$, where $\mathbb L_{\mstack X/R}$ is the cotangent complex of $\mstack X$ over $R$.
\end{defn}
The complex $\RG(\mstack X, \wedge^p \mathbb L_{\mstack X/R})$ is a natural analogue of $\RG(X, \Omega^p_X)$ and, similarly to the scheme case, the de Rham cohomology complex $\RG_{\mr{dR}}(\mstack X/R)$ has a natural (Hodge) filtration whose associated graded pieces are $\RG(\mstack X, \wedge^p \mathbb L_{\mstack X/R}[-p])$; see \Cref{sec:Hodge and de Rham cohomology} for more details. In this way one obtains a spectral sequence
$$
E_1^{p,q} = H^q(\mstack X, \wedge^p \mathbb L_{\mstack X/R}) \ \Rightarrow\ H^{p+q}_\dR(\mstack X/R).
$$    
In the case $R=F$ is a field this spectral sequence degenerates if and only if
\begin{align}\label{stacks_dim_eq_to_prove}
\dim_F H^{n}_\dR(\mstack X/F) =\sum_{p+q=n} \dim_F H^q(\mstack X, \wedge^p \mathbb L_{\mstack X/F}).
\end{align}
\begin{rem}
By smooth descent for the cotangent complex, $\RG(\mstack X, \wedge^p \mathbb L_{\mstack X/F})$ produces the same answer as the definition of the Hodge cohomology via the lisse-\'etale site of $\mstack X$ (see \Cref{Hodge_as_limit}).
\end{rem}
We will now explain the strategy of our proof of the equality \eqref{stacks_dim_eq_to_prove} above. The first step is to extend \Cref{Deligne_Illusie_schems} to the setting of stacks:
\begin{thm*}[\ref{DeligneIllusieQuis}]
Let $\mstack Y$ be a smooth quasi-compact quasi-separated Artin stack over a perfect field $k$ of characteristic $p$ admitting a smooth lift to the ring of the second Witt vectors $W_2(k)$. Then there is a canonical equivalence
$$\RG(\mstack Y, \tau^{\le p-1} \Omega_{\mstack Y,\dR}^\bullet) \simeq \RG\left(\mstack Y^{(1)}, \bigoplus_{i=0}^{p-1} \wedge^i \mathbb L_{\mstack Y^{(1)}/k}[-i]\right).$$
In particular for $n\le p-1$ we have $H^n_\dR(\mstack Y/k) \simeq H^n_\Hdg(\mstack Y^{(1)}/k)$.
\end{thm*}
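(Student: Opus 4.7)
The plan is to reduce to the affine scheme case via smooth descent applied to an atlas with a compatible $W_2(k)$-lift. Since $\mstack Y$ is quasi-compact, I would pick a smooth atlas $f\colon U \to \mstack Y$ with $U$ affine. Because $\widetilde{\mstack Y}\to \Spec W_2(k)$ is smooth, deformation theory lifts $f$ to a smooth $\widetilde f\colon \widetilde U \to \widetilde{\mstack Y}$; the Čech nerve $\widetilde U_\bullet \to \widetilde{\mstack Y}$ then reduces mod $p$ to $U_\bullet \to \mstack Y$, so each $U_n$ is a smooth affine $k$-scheme equipped with a canonical flat $W_2(k)$-lift $\widetilde U_n$.

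Next I would invoke smooth descent (a special case of the quasi-syntomic descent for derived de Rham cohomology highlighted in the introduction) on both sides of the asserted equivalence. Exterior powers of the cotangent complex are quasi-coherent and compatible with smooth pullback, so the right-hand side is the totalization
$$
\RG\bigl(\mstack Y^{(1)}, \bigoplus_{i=0}^{p-1} \wedge^i \mathbb L_{\mstack Y^{(1)}/k}[-i]\bigr) \simeq \lim_{[n]\in\Delta} \RG\bigl(U_n^{(1)}, \bigoplus_{i=0}^{p-1} \wedge^i \mathbb L_{U_n^{(1)}/k}[-i]\bigr).
$$
Analogously, smooth descent for the derived de Rham complex together with its conjugate (Postnikov) filtration gives an equivalence $\RG(\mstack Y, \tau^{\le p-1}\Omega_{\mstack Y, \dR}^\bullet) \simeq \lim_{[n]\in\Delta} \RG(U_n, \tau^{\le p-1}\Omega_{U_n, \dR}^\bullet)$.

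It then suffices to produce a coherent equivalence between these two cosimplicial objects. For each pair $(U_n,\widetilde U_n)$ the scheme-level statement \Cref{Deligne_Illusie_schems} supplies such an equivalence; the content of the argument is that these assemble into a natural transformation along the face and degeneracy maps of the Čech nerve. The principal obstacle is therefore the functoriality of the Deligne-Illusie splitting in the pair (smooth $k$-scheme, flat $W_2(k)$-lift): the classical construction depends on local choices of Frobenius lifts and only produces a quasi-isomorphism defined up to homotopy, which is insufficient for descent along a smooth cover of an Artin stack.

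I would address this by reformulating the construction in the $\infty$-categorical setting, presenting it as a natural transformation of functors on the $\infty$-category of smooth $k$-algebras equipped with a flat $W_2(k)$-lift, taking values in the derived category. Concretely, the lift $\widetilde U_n$ should canonically trivialize the first $p-1$ stages of the conjugate Postnikov tower of $\phi_*L\Omega_{U_n,\dR}^\bullet$ without having to choose a Frobenius lift, for instance via the divided-power interpretation of the derived de Rham complex. Once this functoriality is in place, totalizing the resulting natural equivalence on the Čech nerve yields the statement on $\mstack Y$, and the cohomological assertion $H^n_\dR(\mstack Y/k) \simeq H^n_\Hdg(\mstack Y^{(1)}/k)$ for $n \le p-1$ follows by taking $H^n$ of both sides, since in this range the conjugate truncation does not affect cohomology.
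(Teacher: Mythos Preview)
Your outline is essentially the paper's strategy: reduce to smooth affines via descent, then invoke a sufficiently functorial Deligne--Illusie splitting. The paper packages the descent step slightly differently---rather than fixing one atlas and lifting it, it works with the entire overcategory $\Aff^{\sm}_{/\widetilde{\mstack Y}}$ and shows that $\RG_\dR(-/k)\circ\pi$ agrees with its right Kan extension from $\Aff^{\sm}_{/W_2(k)}$; this is equivalent to your \v Cech argument and avoids the small wrinkle of lifting a prescribed atlas (it is simpler to take an atlas of $\widetilde{\mstack Y}$ and reduce it, rather than lift one of $\mstack Y$).

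The real gap in your proposal is exactly where you become vague. You correctly diagnose that the classical construction of \Cref{Deligne_Illusie_schems} is only homotopy-coherent up to uncontrolled choices of local Frobenius lifts, hence does not descend along a smooth hypercover of a higher Artin stack. But your proposed fix---``reformulate in the $\infty$-categorical setting via the divided-power interpretation''---is a description of what is needed, not a construction. This is precisely the substantive content of the paper: \Cref{Functorial_DeligneIllusie_for_affines} produces a genuine natural equivalence of functors $\Aff^{\sm,\op}_{/W_2(k)}\to\DMod{k}$, and its proof (\Cref{Hodge_splitting_on_QRSPerf}) is not a repackaging of Deligne--Illusie but rather the Fontaine--Messing approach. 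One passes by quasi-syntomic descent to quasi-regular semiperfectoid $W_2(k)$-algebras $\widetilde S$, where the derived de Rham complex becomes the discrete ring $\mathbb A_{\crys}(S)/p$ with explicit Hodge and conjugate filtrations; the lift $\widetilde S$ supplies the map $\theta_{2,\widetilde S}\colon\mathbb A_{\crys}(S)/p^2\to\widetilde S$, and applying the divided Frobenius $\phi_1$ to its kernel splits $\Fil_1^{\cnj}$ strictly functorially in $\widetilde S$, with no Frobenius-lift choices anywhere. Unfolding back to smooth affines via \Cref{prop:syntomic_vs_perfect} yields the $\infty$-functorial splitting you asserted must exist. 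Without this step (or something of comparable strength) your argument does not close.
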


\begin{rem}
	Note that \Cref{DeligneIllusieQuis} gives only a partial generalization of \Cref{Deligne_Illusie_schems}. Even though the statement indeed follows from the analogous splitting of sheaves (see \Cref{Functorial_DeligneIllusie_for_affines} below) there is no analogue of the "if and only if" statement of the Deligne-Illusie theorem. One reason for this is that the original approach of Deligne-Illusie is poorly suited for general Artin stacks (see \Cref{original_DI_fails}); so instead we use the (slightly enhanced) proof of the splitting due to Fontaine-Messing \cite[Section II]{FontaineMessing_padciPeriods}.
\end{rem}
Since the de Rham cohomology for Artin stacks are defined as the right Kan extension from smooth affine schemes (\Cref{def: de Rham}) one can more or less formally deduce the theorem above from the following very functorial form of Deligne-Illusie splitting for affine schemes:
\begin{thm*}[{\ref{Functorial_DeligneIllusie_for_affines}}]
Let $\Aff^\sm_{/W_2(k)}$ be the category of smooth affine schemes over $W_2(k)$. Then there is a natural $k$-linear equivalence of functors
$$
\bigoplus_{i=0}^{p-1} \Omega^i_{-}\colon B\mapsto \bigoplus_{i=0}^{p-1} \Omega^i_{(B^{(1)}/p)/k}[-i] \ \ \ \text{ and } \ \ \ \tau^{\le p-1} \Omega^\bullet_{-,\dR} \colon B\mapsto \tau^{\le p-1} \Omega^\bullet_{(B/p)/k, \dR}
$$
from $\Aff^{\sm,\op}_{/W_2(k)}$ to the $\infty$-category $\DMod{k}$ which induces the Cartier isomorphism on the level of the individual cohomology functors.
\end{thm*}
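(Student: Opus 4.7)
The strategy is to reduce the construction to polynomial $W_2(k)$-algebras, where a canonical lift of Frobenius is available, and then to leverage the fact that both sides of the proposed equivalence are the restriction of functors defined on a larger $\infty$-category by left Kan extension from this subcategory. Concretely, I would first extend both functors to the $\infty$-category of animated $W_2(k)$-algebras, replacing $\Omega^i_{-}$ by $\wedge^i \mathbb L_{-/k}$ and the classical de Rham complex by the derived one in the sense of Illusie; both extensions are tautologically left Kan extended from the full subcategory $\Poly_{W_2(k)} \subset \Aff^\sm_{/W_2(k)}$ of free polynomial $W_2(k)$-algebras on finitely many generators. By the universal property of left Kan extension, specifying a natural transformation between them on $\Aff^{\sm, \op}_{/W_2(k)}$ is then equivalent to specifying one on $\Poly^{\op}_{W_2(k)}$, and on smooth algebras the derived objects recover the classical ones, so the extended statement specializes to the required one.

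On a polynomial algebra $P = W_2(k)[x_1,\dots,x_n]$ there is a canonical lift $\tilde\phi_P \colon P \to P$ of the absolute Frobenius, defined by $x_i \mapsto x_i^p$ together with the Witt vector Frobenius on $W_2(k)$. Applying the Fontaine-Messing construction to this specific lift produces, for each $i < p$, an explicit map $\Omega^i_{(P^{(1)}/p)/k}[-i] \to \tau^{\le p-1}\Omega^\bullet_{(P/p)/k,\dR}$ which, in degree $i$, divides $\tilde\phi_P^*$ applied to a $W_2$-lift of the form by $p^i$ (well defined because $\tilde\phi_P^*$ is divisible by $p^i$ on $i$-forms modulo $p^{i+1}$). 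Summing over $i$, one gets a quasi-isomorphism inducing the inverse Cartier isomorphism $C^{-1}_{P/p}$ on cohomology, just as in the original Deligne-Illusie argument.

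The heart of the proof—and the main obstacle—is upgrading this pointwise splitting to a coherent natural transformation of $\infty$-functors on $\Poly^{\op}_{W_2(k)}$. A morphism $f \colon P \to Q$ of polynomial $W_2(k)$-algebras does not in general commute with the canonical Frobenius lifts, but the discrepancy $\tilde\phi_Q \circ f - f \circ \tilde\phi_P$ is visibly divisible by $p$, since both compositions reduce mod $p$ to $f$ followed by the absolute Frobenius. Dividing by $p$ gives a well-defined correction with values in the target complex, and this correction is exactly the $1$-simplex data of a homotopy between the two pointwise maps obtained by transporting the Fontaine-Messing construction along $f$. The analogous argument applied to length-$r$ chains of morphisms, in which the relevant discrepancies are again forced to be divisible by $p$, produces $r$-simplices of homotopies, and hence the full coherent data of a natural transformation. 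This can be packaged cleanly either via the simplicial Čech nerve of the multiplication map $P \otimes_{W_2(k)} P \to P$, or—following Fontaine-Messing's original telescoping argument more closely—via the bar resolution computing the derived de Rham complex.

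Finally, having natural transformations on $\Poly^{\op}_{W_2(k)}$, left Kan extension gives a natural transformation of the two functors on all of $\Aff^{\sm, \op}_{/W_2(k)}$. That it is an equivalence is checked pointwise on polynomial algebras, where it was designed to be so. The identification of the induced map on cohomology with the inverse Cartier isomorphism of \Cref{Cartier_iso} follows from its pointwise identification on $\Poly^{\op}_{W_2(k)}$ together with the naturality of $C^{-1}$ in the algebra, so the constructed natural transformation is the desired functorial Deligne-Illusie splitting.
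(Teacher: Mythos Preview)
Your approach is genuinely different from the paper's, and the difference is exactly at the point you flag as ``the heart of the proof''. The paper does \emph{not} attempt to build the higher coherences for the Deligne--Illusie splitting directly on $\Poly_{W_2(k)}$. Instead it passes to the quasisyntomic site of $W_2(k)$, uses that quasiregular semiperfectoid algebras form a basis, and observes that on such an algebra $\widetilde S$ the derived de Rham complex collapses to the ordinary ring $\mathbb A_{\crys}(\widetilde S/p)/p$ with its explicit Hodge and conjugate filtrations. The splitting is then produced \emph{as an isomorphism of sheaves of abelian groups} on $\mr{QRSPerf}_2$, namely $\bigoplus_{r=0}^{p-1}\Gamma^r_S(\mathbb I/\mathbb I^2)\simeq \Fil_{p-1}^{\cnj}$, built from the divided Frobenius $\phi_1$ on the Nygaard filtration together with the kernel of $\theta_{2,\widetilde S}\colon \mathbb A_{\crys}/p^2\to \widetilde S$. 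Because this is a map of sheaves, $\infty$-functoriality in $\widetilde S$ (and hence in $B$ after unfolding) is automatic; there is no coherence problem to solve by hand.

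Your proposal, by contrast, stays with the classical Deligne--Illusie/Fontaine--Messing cocycle on polynomial algebras and tries to promote it to an $\infty$-natural transformation by iteratively ``dividing discrepancies by $p$''. This is precisely the step the paper warns is delicate: the splitting is known to be functorial only at the level of the homotopy category. Concretely, over $W_2(k)$ the equation $p\cdot a=p\cdot b$ does not force $a=b$, so the correction $\eta_f$ with $\tilde\phi_Q\circ f-f\circ\tilde\phi_P=p\,\eta_f$ is defined only modulo the image of $p$-torsion; the $1$-homotopy you build depends on this choice, and when you pass to a composable pair $P\to Q\to R$ the cocycle identity $\eta_{gf}=\eta_g\circ f+g\circ\eta_f$ holds only up to an ambiguity you cannot divide away again inside $W_2$. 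Your sketch asserts that ``the relevant discrepancies are again forced to be divisible by $p$'' for $r$-simplices, but you never exhibit the $2$-cell, let alone show the resulting system is coherent. Without either lifting to a larger Witt ring (to gain more room to divide) or an independent contractibility argument for the space of choices, this step is a genuine gap. The quasisyntomic approach in the paper is designed exactly to sidestep it.
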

The splitting in \Cref{Deligne_Illusie_schems} is already functorial with respect to liftings to $W_2(k)$, but only on the level of the underlying homotopy category and not the $\infty$-category of complexes $\DMod{k}$ itself. To get this higher functoriality we follow \cite[Section II]{FontaineMessing_padciPeriods} using a more convenient language of \cite{BMS2}.

The idea is to extend the de Rham (and the crystalline) cohomology functor to a larger category of quasisyntomic algebras (\Cref{def:qsyn}). This category, endowed with the quasisyntomic topology, has a basis consisting of quasi-regular semiperfectoid $W_n(k)$-algebras (\Cref{def:qrsp}), on which the values of $\RG_{\dR}$ and $\RG_\crys$ (or rather their derived versions $\RG_{\mbb L\dR}$ and $\RG_{\mbb L\crys}$) become ordinary rings. Additionally, the Frobenius morphism, the Hodge filtration and the conjugate filtration can be described explicitly. This way, using quasi-syntomic descent, the question reduces to a certain computation in commutative algebra.

More concretely, for a quasi-regular semiperfect $k$-algebra $S$ one can prove that $\RG_{\mathbb L\crys}(S/W_n(k)) \simeq \mathbb A_{\crys}(S)/p^n$, where $\mathbb A_\crys(S)$ is the divided power envelope of the kernel of the natural surjection $W((S)^\flat) \surj S$ (see \Cref{constr:Acrys}). Under this identification the Hodge filtration on $\RG_{\crys}(-/k) \simeq \RG_{\dR}(-/k)$ corresponds to the filtration by the divided powers of the pd-ideal $I \triangleleft \mathbb A_{\crys}(S)/p$. The conjugate filtration $\Fil_*^\cnj$ admits an explicit description as well (see \Cref{constr:ConjFil}). Given a lifting $\widetilde S$ of $S$ to $W_2(k)$ there is a natural morphism $\theta\colon\mathbb A_{\crys}(S)/p^2\ra \widetilde S$. The image of $K\coloneqq\ker \theta$ under the first divided Frobenius map $\phi_1$ then provides a splitting of $\Fil_1^\cnj$ into $\Fil_0^\cnj\oplus \Fil_1^\cnj/\Fil_0^\cnj\simeq S^\flat/I\oplus I/I^2$ (\Cref{Hodge_splitting_on_QRSPerf}). By multiplicativity this extends to the splitting of $\Fil_{p-1}^\cnj$ whose descent to smooth schemes gives {\Cref{Functorial_DeligneIllusie_for_affines}}.
\begin{rem}\label{original_DI_fails}
The original approach of Deligne-Illusie (at least applied literally) does not seem to work for a general Artin stack; the key result of \cite{DeligneIllusie} is the equivalence of two gerbes on the \'etale site of $Y^{(1)}/k$ for a smooth $k$-scheme $Y$: the one of splittings of $\tau^{\le 1}\phi_{Y*}\Omega_{X, \dR}^\bullet$ in $\QCoh(Y^{(1)})$ and the one of liftings of $Y^{(1)}$ to $W_2(k)$. A general smooth Artin stack $\mstack Y$ can be covered by an affine scheme only \emph{smooth locally}, so one needs to replace the \'etale site of $\mstack Y$ by the smooth one. But both the space of splittings of $\tau^{\le 1}\phi_{Y*}\Omega_{Y, \dR}^\bullet$ and the space of liftings to $W_2(k)$ are not even presheaves there. Nevertheless, it would be still interesting to have an explicit description of the space of liftings to $W_2(k)$ for an arbitrary smooth $n$-Artin stack $\mstack Y$. We do not discuss this question here.
\end{rem}

\paragraph{Spreadings.}
Let now $\mstack X$ be a smooth Hodge-proper stack over a field $F$ of characteristic $0$. If there exists $\mathbb Z$-subalgebra $R \subset F$, which is finitely generated over $\mathbb Z$\footnote{More generally, in \Cref{def: spredable_stacks} we also allow subrings $R\subset F$ that are localizations of finitely generated $\mathbb Z$-algebras under the assumption that the image of $\Spec R$ in $\Spec \mbb Z$ is open, but it is fine to assume that $R$ is finitely generated throughout the introduction.}, and a Hodge-proper stack $\mstack X_R$ over $R$ such that $\mstack X_R \otimes_R F \simeq \mstack X$ (a \emph{Hodge-proper spreading of $\mstack X$}), then one can deduce the equality \eqref{stacks_dim_eq_to_prove} for the $n$-th cohomology from \Cref{DeligneIllusieQuis} by taking a suitable closed point $\Spec k \inj \Spec R$ of characteristic $p > n$ and considering the fiber $\mstack X_{k}$. This way we obtain
\begin{thm*}[\ref{Hodge_degeneration}]
Let $\mstack X$ be a smooth Hodge-properly spreadable Artin stack over a field $F$ of characteristic zero. Then the Hodge-to-de Rham spectral sequence for $\mstack X$ degenerates at the first page. In particular for each $n\ge 0$ there exists a (non canonical) isomorphism
$$H^n_\dR(\mstack X) \simeq \bigoplus_{p+q=n} H^{p,q}(\mstack X).$$
\end{thm*}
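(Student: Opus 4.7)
The plan is to reduce the dimension equality \eqref{stacks_dim_eq_to_prove} over $F$ to an analogous equality over a residue field of large positive characteristic of the spreading base, where \Cref{DeligneIllusieQuis} applies.

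Fix $n \geq 0$. By the spreadability hypothesis, choose a Hodge-proper spreading $\mstack X_R \to \Spec R$ with $R \subset F$ finitely generated over $\mathbb Z$ (or a localization thereof with open image in $\Spec \mathbb Z$) and $\mstack X_R \otimes_R F \simeq \mstack X$. Hodge-properness guarantees that each $H^q(\mstack X_R, \wedge^i \mathbb L_{\mstack X_R/R})$ is finitely generated over $R$, and then the Hodge-to-de Rham spectral sequence forces $H^m_\dR(\mstack X_R/R)$ to be finitely generated as well. By generic freeness, after localizing $R$ we may assume that, in the finitely many bidegrees $(i,q)$ with $i+q \le n$ and in total degrees $m \le n$, these $R$-modules are free of finite rank. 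Localizing further, we also arrange that their formation commutes with arbitrary base change $R \to R'$ (this is the cohomology-and-base-change step, which for higher Artin stacks requires a dedicated argument rather than a reference to the proper-scheme case). Finally, by generic smoothness over $\mathbb Z$, we may assume that $R$ is smooth over $\Spec \mathbb Z$.

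Since $R$ is of finite type and flat over $\mathbb Z$, the image of $\Spec R \to \Spec \mathbb Z$ contains closed points of arbitrarily large residue characteristic; pick a maximal $\mathfrak m \subset R$ with residue field $\kappa$ of characteristic $p > n$, and let $k \coloneqq \kappa^\perf$. Set $\mstack Y \coloneqq \mstack X_R \otimes_R k$, a smooth Artin stack over the perfect field $k$. Smoothness of $R$ over $\mathbb Z$ guarantees that the composite $R \to \kappa \to k$ lifts along the square-zero surjection $W_2(k) \twoheadrightarrow k$, and base-changing $\mstack X_R$ along such a lift produces a smooth $W_2(k)$-lift $\widetilde{\mstack Y}$ of $\mstack Y$. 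Invoking \Cref{DeligneIllusieQuis} in the range $n \leq p-1$ and using that Frobenius twist preserves $k$-dimensions (since $k$ is perfect), we obtain
$$\dim_k H^n_\dR(\mstack Y/k) \;=\; \dim_k H^n_\Hdg(\mstack Y^{(1)}/k) \;=\; \sum_{i+q=n} \dim_k H^q(\mstack Y, \wedge^i \mathbb L_{\mstack Y/k}).$$
Combining this with the base-change identities of the previous paragraph yields
$$\dim_F H^n_\dR(\mstack X/F) \;=\; \dim_k H^n_\dR(\mstack Y/k) \;=\; \sum_{i+q=n} \dim_k H^q(\mstack Y, \wedge^i \mathbb L_{\mstack Y/k}) \;=\; \sum_{i+q=n} \dim_F H^q(\mstack X, \wedge^i \mathbb L_{\mstack X/F}),$$
which is precisely \eqref{stacks_dim_eq_to_prove} in degree $n$. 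Since $n$ was arbitrary, the full Hodge-to-de Rham spectral sequence degenerates at $E_1$, and the (non-canonical) Hodge decomposition follows by choosing a splitting of the induced Hodge filtration on $H^n_\dR(\mstack X)$.

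The main obstacle is not the final assembly above, but the preparatory infrastructure: for a Hodge-proper spreading $\mstack X_R \to \Spec R$ one needs finite-generation of $\RG(\mstack X_R, \wedge^i \mathbb L_{\mstack X_R/R})$ and $\RG_\dR(\mstack X_R/R)$, generic freeness of their cohomology, and compatibility with arbitrary base change on $R$. For higher Artin stacks none of these are formal consequences of the scheme-theoretic versions; they must be established in the body of the paper. Once they are available, the reduction to positive characteristic together with \Cref{DeligneIllusieQuis} gives the result as described.
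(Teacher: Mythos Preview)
Your overall strategy is the paper's own strategy, and the final assembly is correct. Two points deserve correction or comparison.

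First, the range of cohomology groups you free is off. To deduce $H^n(\RG\otimes_R k)\simeq H^n(\RG)\otimes_R k$ from derived base change you need control of $\Tor_{j}^R(H^{n+j}(\RG),k)$ for $0\le j\le d$ where $d$ bounds the Tor-amplitude of $k$ over $R$; hence the paper localizes so that $H^m_\dR$ and $H^m_\Hdg$ are free for $m=n,n{+}1,\ldots,n{+}d$ with $d=\dim R$ (after making $R$ connected and regular), not for $m\le n$. Your choice of degrees $\le n$ would not suppress the higher Tor terms feeding into $H^n$.

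Second, you flag the base-change step as a serious obstacle requiring a separate ``cohomology-and-base-change for higher Artin stacks'' theorem. The paper avoids this entirely: \Cref{Hodge_and_deRham_basechange} gives derived base change $\RG_\dR(\mstack X_R/R)\otimes_R R'\simeq \RG_\dR(\mstack X_{R'}/R')$ and likewise for each $\RG(\mstack X_R,\wedge^p\mathbb L)$ whenever $R\to R'$ has finite Tor-amplitude; this follows formally from completeness of the Hodge filtration and base change for the cotangent complex, with no properness input. One then factors $R\to k$ as the flat map $R\to R_s$ followed by $R_s\to k$ of Tor-amplitude $\le d$ (by regularity), applies the derived statement, and uses the freeness in degrees $n,\ldots,n+d$ to extract the $H^n$-identity. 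So the ``preparatory infrastructure'' is lighter than you suggest, and your appeal to an unspecified cohomology-and-base-change result can be replaced by this explicit two-step argument.
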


We must warn the reader that smooth Hodge-properly spreadable stacks do not enjoy many of the nice properties that smooth proper schemes have, in particular the natural mixed Hodge structure on the singular $n$-th cohomology is not necessarily pure (see \Cref{rem:not pure}). The main motivation for the definition is that it is the most general class of stacks for which the Deligne-Illusie method can be applied. This, however, does not exceed all examples of the Hodge-to-de Rham degeneration (see \Cref{rem:degenerate but not spreadable}).

In order to address the question of the existence of a Hodge-proper spreading we first extend the standard spreading out results for finitely presentable schemes to the case of Artin stacks:
\begin{thm*}[\ref{fp_over_filtered_limit} and \ref{prop:  proper morphisms spread}]
Let $\{S_i\}$ be a filtered diagram of affine schemes with limit $S$. Fix a class of morphism $\mathcal P = $ proper, smooth, flat, surjective, or any other class of morphisms that satisfies the conditions of  \Cref{def:spreadable_class}. For an affine scheme $T$, let $\Stk_{/T}^{n\mdef\Art,\fp, \mathcal P}$ denote the category of finitely presentable $n$-Artin stacks over $T$ and morphisms in $\mathcal P$ between them. Then the natural functor 
$$\indlim[i] \Stk_{/S_i}^{n\mdef\Art,\fp, \mathcal P} \xymatrix{\ar[r] &} \Stk_{/S}^{n\mdef\Art,\fp, \mathcal P}$$
(induced by base-change) is an equivalence.
\end{thm*}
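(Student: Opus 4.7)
The plan is induction on $n$, the Artin level of the stacks. The base case $n=0$ (quasi-compact quasi-separated schemes) is the classical spreading out result of EGA IV, \S 8. The inductive step naturally splits into three pieces: essential surjectivity (every finitely presentable $n$-Artin stack over $S$ descends to some $S_i$), full faithfulness of the mapping categories (morphisms and equivalences between them descend to some finite stage), and preservation of the class $\mathcal P$. I would attack them in this order.

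For essential surjectivity, present a finitely presentable $n$-Artin stack $\mstack X$ over $S$ as the geometric realization of the \v{C}ech nerve $U_\bullet$ of a smooth atlas $p \colon U_0 \surj \mstack X$, where $U_0$ is a finitely presentable $(n-1)$-Artin stack over $S$ (in fact an affine scheme, after further atlasing) and each $U_k = U_0^{\times_{\mstack X}(k+1)}$ is finitely presentable at level $n-1$. By the inductive hypothesis, $U_0$ and $U_1$ spread out, together with the groupoid structure maps source, target, unit, composition, and inverse; these amount to a finite list of morphisms at level $n-1$ satisfying a finite list of identities, all of which descend to some $S_i$ by full faithfulness at level $n-1$. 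Smoothness of source and target is preserved by the inductive hypothesis applied to the smoothness class. Forming $\mstack X_i := |U_{\bullet, i}|$ and invoking smooth descent yields $\mstack X_i \otimes_{S_i} S \simeq \mstack X$, as well as finite presentability of $\mstack X_i$ at level $n$.

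For full faithfulness together with preservation of $\mathcal P$: given finitely presentable $\mstack X_i, \mstack Y_i$ over $S_i$ with base changes $\mstack X, \mstack Y$ and a morphism $f \colon \mstack X \to \mstack Y$ in $\mathcal P$, pull back a smooth atlas $V_0 \surj \mstack Y_i$ along $f$ (after base change to $S$) to obtain a smooth atlas $U_0$ of $\mstack X$, producing a morphism of smooth simplicial groupoids $f_\bullet \colon U_\bullet \to V_\bullet$ in $(n-1)$-Artin stacks. Each component and each coherence identity of $f_\bullet$ lives at level $n-1$ and so spreads out by the inductive hypothesis, at some potentially larger stage $j \ge i$; the finite list of coherence identities is enforced by further enlarging $j$. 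Since $\mathcal P$ is assumed smooth-local on source and target (part of the hypothesis in \Cref{def:spreadable_class}), membership in $\mathcal P$ can be tested on the atlas morphism $f_0 \colon U_0 \to V_0$ at level $n-1$, where spreading is again handled by the inductive hypothesis. Uniqueness of the spread morphism follows the same pattern, one level higher in the coherence data.

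The main obstacle, as I see it, is organizational rather than conceptual: keeping track of the coherence data at the correct categorical level. A morphism between $n$-Artin stacks is only specified up to a coherent simplicial homotopy, so spreading out a morphism really means spreading out an entire diagram of maps and coherences at level $n-1$, and one must check that a single sufficiently large $S_j$ captures all of them simultaneously. A clean formalism is to encode $\Stk_{/T}^{n\mdef\Art, \fp, \mathcal P}$ as the category of smooth simplicial groupoid objects in $\Stk_{/T}^{(n-1)\mdef\Art, \fp, \mathcal P}$ with $\mathcal P$-morphisms between them, and then apply the inductive hypothesis term by term; one should be careful that the various finite lists of coherence conditions really remain finite at level $n$, which ultimately reduces to the quasi-compactness and quasi-separatedness assumptions baked into the definition of finite presentability for Artin stacks.
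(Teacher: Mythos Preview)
Your inductive strategy matches the paper's, but two points deserve attention.

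\textbf{Coherence and the finiteness of the diagram.} You correctly identify the main obstacle, but your proposed resolution (``reduces to quasi-compactness and quasi-separatedness'') is not the mechanism that actually works. The paper's device is to use Pridham's representation of a finitely presentable $n$-Artin stack as the realization of an $(n-1)$-coskeletal hypercover $X_\bullet$ by finitely presentable \emph{affine} schemes (not $(n-1)$-Artin stacks). The key observation is then that $n$-Artin stacks are $(n+1)$-truncated, so $|X_\bullet| \simeq |X_\bullet|_{\le n+2}$ in $\Fun(\CAlg,\Type_{\le n+1})$; thus only the restriction $X_{\bullet}|_{\Delta_{\le n+2}}$ matters, and this is a genuinely finite diagram of affines which spreads out by the base case. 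Full faithfulness is handled in the same spirit: $\Hom(|X_\bullet|,\mstack Y_i)$ is computed as a $\Tot_{\le n+2}$, a finite limit that commutes with filtered colimits, after which one invokes the fact that a finitely presentable stack sends filtered limits of affines to filtered colimits. This avoids your step of lifting a morphism of stacks to a morphism of groupoid presentations, which is exactly where the infinite coherence tower would otherwise bite.

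\textbf{Proper morphisms require a separate argument.} Your reduction ``membership in $\mathcal P$ can be tested on the atlas morphism $f_0\colon U_0 \to V_0$'' uses smooth-locality on source and target, but properness is \emph{not} smooth-local on the source: a smooth cover of a proper stack is essentially never proper. So proper does not fit \Cref{def:spreadable_class}, and the theorem statement's inclusion of it is handled by a separate argument (\Cref{prop: proper morphisms spread}). There one inducts on the representability level of $f$: for $0$-representable $f$, pull back to an affine atlas of $\mstack Y$ and use the scheme case; for $n$-representable $f$, spread out both the diagonal (which is $(n-1)$-representable proper, handled by induction) and a surjection $P \surj \mstack X_U$ from a proper $U$-scheme witnessing properness over an atlas $U$ of $\mstack Y$.
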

\noindent As a corollary we deduce that any smooth $n$-Artin stack $\mstack X$ over $F$ admits a smooth spreading $\mstack X_R$ over some finitely generated $\mathbb Z$-algebra $R\subset F$ and that any two such spreadings become equivalent after enlarging $R$. Since all smooth proper stacks are Hodge-proper (see \Cref{proper_are_Hodge_proper}), we immediately deduce the Hodge-to-de Rham degeneration in this case. Note that this includes smooth proper Deligne-Mumford stacks as a special case.

However, Hodge-proper spreadings need not to exist in general: one can show that the classifying stack  $BG$ is Hodge-proper for any finite-type group scheme $G$ over $F$ (see \Cref{BG}) but it is not necessarily Hodge-properly spreadable. Indeed, the classifying stack $B\mathbb G_a$ of the additive group has nontrivial Hodge cohomology but is de Rham contractible (i.e. has the de Rham cohomology of a point), so the Hodge-to-de Rham spectral sequence is clearly nondegenerate. By \Cref{Hodge_degeneration} it follows that it is not Hodge-properly spreadable and this forces the Hodge cohomology of $B\mathbb G_{a, \mathbb Z}$ to have infinitely generated $p$-torsion for a dense set of primes $p$, which one can also see from the explicit description (see \Cref{ex: comment on BG_a} or \Cref{prop: description of the cohomology of BG_a}). This illustrates the general phenomenon: the non-degeneracy of the Hodge-to-de Rham spectral sequence in characteristic $0$ is always reflected arithmetically, namely the Hodge cohomology of any spreading \emph{has to be} infinitely generated over the base.

In the main case of our interest, namely the quotient stacks $\mstack X = [X/G]$,
we exhibit some sufficient conditions for Hodge-proper spreadability purely in terms of the geometry of $X,G$ and the action $G \curvearrowright X$. In this case the spreadability is not easy to show, especially if we can't spread $G$ to a linearly reductive group (which is only possible if $G$ is a torus or an extension of a finite group by one). Nevertheless, using certain cohomological finiteness results from \cite{FranjouKallen} we prove
\begin{thm*}[\ref{thm:proper coarse moduli}]
Let $F$ be an algebraically closed field of characteristic $0$. Let $X$ be a smooth scheme and let $Y$ be a finite-type quasi-separated scheme over $F$, both endowed with an action of a reductive group $G$. Assume that
\begin{enumerate}
\item There is a proper $G$-equivariant map $\pi\colon X\ra Y$.
\item The $G$-action on $Y$ is locally linear (Definition \ref{def:locally linear action}).
\item The categorical quotient $Y/\!/G$ is proper.
\end{enumerate}
Then the quotient stack $[X/G]$ is Hodge-properly spreadable\footnote{
In fact we prove a stronger statement, namely that $[X/G]$ is cohomologically properly spreadable, see Definitions \ref{def:cohomologically proper morphism} and \ref{def:coh_prop_spread}.
}.
\end{thm*}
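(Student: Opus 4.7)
The plan is to combine the spreading-out theorems for Artin stacks from \Cref{fp_over_filtered_limit} and \Cref{prop:  proper morphisms spread} with a reduction along $\pi$ to the categorical quotient $Y/\!/G$, and then to invoke the cohomological finiteness results of Franjou--van der Kallen to control $G$-cohomology integrally. First I would spread out all of the data to a finitely generated $\mathbb Z$-subalgebra $R\subset F$: this produces a reductive group scheme $G_R$, a smooth $R$-scheme $X_R$, a finite-type $R$-scheme $Y_R$ with a locally linear $G_R$-action, a proper $G_R$-equivariant map $\pi_R\colon X_R\to Y_R$, and a proper categorical quotient $Y_R/\!/G_R$; reductivity, properness, and the finite data witnessing local linearity are all constructible, so after enlarging $R$ they can be arranged to hold globally. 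This yields a candidate $\mstack X_R=[X_R/G_R]$ with $\mstack X_R\otimes_R F\simeq [X/G]$.

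The bulk of the work is to show that $\RG(\mstack X_R,\wedge^p\mathbb L_{\mstack X_R/R})$ is perfect over $R$ for each $p$. Using the transitivity triangle for $\mstack X_R\to[Y_R/G_R]\to\Spec R$ together with the Koszul filtration on $\wedge^p$, it suffices to control $\RG([Y_R/G_R],\mathcal F)$ for equivariant coherent sheaves $\mathcal F$ of the form $R[\pi_R]_*\bigl(\wedge^a\mathbb L_{\mstack X_R/[Y_R/G_R]}\otimes[\pi_R]^*\wedge^b\mathbb L_{[Y_R/G_R]/R}\bigr)$, where properness of $[\pi_R]$ ensures the pushforward stays coherent. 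The canonical map $q_R\colon[Y_R/G_R]\to Y_R/\!/G_R$ is affine by local linearity combined with reductivity of $G_R$, so cohomology on $[Y_R/G_R]$ coincides with cohomology on $Y_R/\!/G_R$ of $Rq_{R,*}\mathcal F$; properness of $Y_R/\!/G_R$ over $R$ together with Grothendieck's theorem then provides the desired $R$-perfection, provided $Rq_{R,*}\mathcal F$ is itself coherent.

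The hardest step is exactly the coherence of $Rq_{R,*}\mathcal F$, which, locally on $Y_R/\!/G_R$, reduces to showing that for $G_R$ acting rationally on a finitely generated $R$-algebra $A$ with invariants $A^{G_R}$, and for a finitely generated $G_R$-equivariant $A$-module $M$, each $H^i(G_R,M)$ is finitely generated over $A^{G_R}$. In characteristic zero this is trivial because reductive groups have vanishing higher cohomology, but over $R$ the positive-characteristic fibers fail to be linearly reductive, and this is precisely the regime addressed by the main theorem of \cite{FranjouKallen}. The real obstacle is therefore to verify that local linearity, reductivity and Franjou--van der Kallen finiteness all descend and apply uniformly over $\Spec R$ rather than merely fiber by fiber, so that the resulting sheaf $Rq_{R,*}\mathcal F$ genuinely satisfies Grothendieck's hypotheses. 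Once this is in place, the same argument applied to an arbitrary coherent sheaf on $\mstack X_R$, rather than to wedge powers of $\mathbb L_{\mstack X_R/R}$, yields the stronger statement of cohomologically proper spreadability promised in the footnote.
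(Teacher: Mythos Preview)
Your strategy is essentially the paper's: factor $[X/G]\to[Y/G]\to Y/\!/G\to\Spec R$, use properness of $\pi$ for the first arrow, Franjou--van der Kallen for the second, and properness of $Y/\!/G$ for the third. However there is one genuine error and some unnecessary detours.

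The error is the claim that $q_R\colon[Y_R/G_R]\to Y_R/\!/G_R$ is \emph{affine}. It is not: locally it is $[\Spec A/G_R]\to\Spec A^{G_R}$, and the source is not even a scheme. Over $F$ it is a good moduli space map (so $q_*$ is $t$-exact on $\QCoh^\heartsuit$), but over $R$ with positive-characteristic fibers even that fails, since $G_R$ is not linearly reductive there. What you actually need---and what Franjou--van der Kallen provides---is that $q_{R,*}$ sends $\Coh^\heartsuit$ to $\Coh^+$, i.e.\ that each $H^i(G_R,M)$ is finitely generated over $A^{G_R}$. The identity $\RG([Y_R/G_R],\mathcal F)\simeq\RG(Y_R/\!/G_R,Rq_{R,*}\mathcal F)$ then holds for free, by functoriality of pushforward, and needs no affineness.

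Two smaller points. First, the detour through the transitivity triangle for $\mathbb L$ and the Koszul filtration is superfluous and slightly treacherous (since $Y_R$ is not smooth, $\wedge^b\mathbb L_{[Y_R/G_R]/R}$ need not be bounded). The paper avoids this entirely: it proves directly that $[X_R/G_R]\to\Spec R$ is \emph{cohomologically proper} (\Cref{def:cohomologically proper morphism}), as a composite of three cohomologically proper maps, and Hodge-properness for smooth stacks over regular $R$ then follows by \Cref{coh_proper_stronger}. Second, the complexes $\RG(\mstack X_R,\wedge^p\mathbb L_{\mstack X_R/R})$ are not perfect over $R$ in general---already for $BG_R$ with $G$ non-toral reductive the cohomology is nonzero in infinitely many degrees---only bounded below coherent, which is all the definition of Hodge-properness requires.
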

\noindent \Cref{thm:proper coarse moduli} applies to some natural examples of smooth schemes $X$ with a $G$-action, in particular, equivariant "proper-over-affine" varieties (see \Cref{ex:projective-over-affine}) and the GIT quotients, whose coarse moduli space is proper (see \Cref{ex:GIT}). 

We also prove a variant of \Cref{thm:proper coarse moduli} where we drop the reductivity assumption on $G$ but impose an additional Bialynicki-Birula (BB)-completeness assumption on the action when restricted to a subgroup $h\colon \mathbb G_m\ra G$. Moreover, the extra structure given by the map $\pi$ is replaced by the internal condition on the properness of $h(\mathbb G_m)$-fixed points; see \Cref{thm: conical examples} for details. 

Using the results of Halpern-Leistner (specifically, \cite{HL-prep}) on $\Theta$-stratifications we also show that a smooth stack $\mstack X$, which is endowed with a $\Theta$-stratification such that all strata (including the semistable locus) are cohomologically properly spreadable, is also cohomologically properly spreadable (see \Cref{cor:degeneration for Theta-stratified stacks}). This gives rise to new examples of Hodge-properly spreadable stacks where old ones appear as individual $\Theta$-strata. In particular, this way, using \Cref{thm:proper coarse moduli} above, one can show that global quotients of KN-complete varieties are Hodge-properly spreadable; see \Cref{ex:Kempf-Ness}.

\smallskip As an application, for any Hodge-properly spreadable quotient stack $[X/G]$, we deduce an equivariant Hodge-to-de Rham degeneration:
\begin{cor*}[{\ref{cor:Hodge decomposition for singular cohomology}}]
Let $X$ be a smooth scheme over $\mathbb C$ endowed with an action of an algebraic group $G$ such that the quotient stack $[X/G]$ is Hodge-properly spreadable. Then there is a (non-canonical) decomposition
$$H^n_{G(\mathbb C)}(X(\mathbb C) , \mathbb C) \simeq \bigoplus_{p + q = n} H^q([X/G], \wedge^p \mathbb L_{[X/G]/\mathbb C}).$$
\end{cor*}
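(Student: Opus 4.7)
The plan is to combine the Hodge-to-de Rham degeneration for $[X/G]$, already established earlier in the paper, with Grothendieck's algebraic-analytic comparison applied to a smooth simplicial atlas. Since $[X/G]$ is a smooth Hodge-properly spreadable Artin stack over $\mathbb C$ by hypothesis, \Cref{Hodge_degeneration} immediately yields a (non-canonical) isomorphism
$$H^n_\dR([X/G]/\mathbb C) \;\simeq\; \bigoplus_{p+q=n} H^q\bigl([X/G],\, \wedge^p \mathbb L_{[X/G]/\mathbb C}\bigr).$$
It therefore suffices to identify the left-hand side with the equivariant singular cohomology $H^n_{G(\mathbb C)}(X(\mathbb C),\mathbb C)$.

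For this comparison, I would use the smooth atlas $X \to [X/G]$, whose \v{C}ech nerve is the action groupoid $X \times G^\bullet$, a simplicial smooth scheme of finite type over $\mathbb C$. By \Cref{def: de Rham} (right Kan extension from smooth affines, i.e.\ smooth descent for de Rham cohomology of stacks), one has
$$\RG_\dR([X/G]/\mathbb C) \;\simeq\; \Tot\bigl(\RG_\dR(X \times G^\bullet / \mathbb C)\bigr).$$
On the topological side, the analytification $X(\mathbb C) \times G(\mathbb C)^\bullet$ is a simplicial topological space whose geometric realization is homotopy equivalent to the Borel construction $EG(\mathbb C) \times_{G(\mathbb C)} X(\mathbb C)$, so by definition
$$H^n_{G(\mathbb C)}(X(\mathbb C),\mathbb C) \;\simeq\; H^n\bigl(|X(\mathbb C) \times G(\mathbb C)^\bullet|,\, \mathbb C\bigr).$$
Applying the classical Grothendieck comparison $H^*_\dR(Y/\mathbb C) \simeq H^*_{\mathrm{sing}}(Y(\mathbb C),\mathbb C)$ (valid for every smooth $\mathbb C$-variety $Y$, not merely proper ones) levelwise to each $X \times G^n$ produces a morphism between the two filtration spectral sequences computing the totalizations. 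Naturality of the Grothendieck isomorphism in smooth morphisms ensures compatibility with the simplicial face and degeneracy maps, so the map is an isomorphism on the $E_1$-page and hence on abutments.

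The main obstacle lies in the bookkeeping of this second step: matching the algebraic $\Tot$-spectral sequence with the topological realization spectral sequence under the Grothendieck comparison, and verifying that both converge in the same sense. Convergence in a fixed cohomological degree $n$ follows because only finitely many simplicial levels $p$ contribute (for instance, after a connectivity estimate using K\"unneth $H^*(X \times G^p) \simeq H^*(X) \otimes H^*(G)^{\otimes p}$ on each side), and the finite-dimensionality of the individual $E_1$-terms is controlled by Hodge-properness of $[X/G]$ together with finiteness of the cohomology of $X$ and $G$. The remaining identification of the two spectral sequences is a routine, but slightly delicate, naturality check.
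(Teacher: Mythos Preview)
Your approach is exactly the paper's: apply \Cref{Hodge_degeneration} and then identify $H^*_\dR([X/G]/\mathbb C)$ with equivariant singular cohomology via the action groupoid $X \times G^\bullet$ together with Grothendieck's comparison on each level. The paper packages the second step more cleanly: since $C^*(-,\mathbb C)$ sends colimits of spaces to limits of complexes and $\RG_\dR$ satisfies smooth descent, both sides are \emph{by definition} the totalization of naturally equivalent cosimplicial complexes (via Grothendieck's comparison on each $X\times G^n$), so no spectral-sequence bookkeeping is required at all.

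Your convergence discussion, however, is wrong as stated: it is not true that only finitely many simplicial levels $p$ contribute to a fixed degree $n$ (already $H^0(X\times G^p)\neq 0$ for every $p$, so $E_1^{p,0}\neq 0$ for all $p$), and Hodge-properness of $[X/G]$ plays no role in the finite-dimensionality of the individual $E_1$-terms --- that comes simply from $X$ and $G$ being of finite type. The honest reason nothing goes wrong is that a termwise equivalence of cosimplicial objects induces an equivalence of their $\Tot$'s; if you prefer spectral sequences, convergence is automatic because every $\RG_\dR(X\times G^n)$ (and every $C^*(X(\mathbb C)\times G(\mathbb C)^n,\mathbb C)$) lies in $\DMod{\mathbb C}^{\ge 0}$, so the $\Tot$-spectral sequence is conditionally convergent and the standard comparison theorem applies.
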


Finally, it turns out that \Cref{Hodge_degeneration} can be applied even in the case of some non-proper schemes, as we discuss in some detail in \Cref{sec:non-proper schemes}.

\begin{rem}
Even though all stacks in our main applications are classical (i.e. $1$-Artin), the machinery developed in this work to prove Hodge-to-de Rham degeneration applies automatically to higher Artin stacks, so we did not put any artificial restrictions on the level of representablity of stacks considered in the paper. An example of a genuinely higher stack to which our method applies can be found in \Cref{sec: higher examples}.
\end{rem}

\subsection{Relation to previous work and further directions}
Our definition of Hodge-proper stacks is partially motivated by the work \cite{RelaxedProperness} by Halpern-Leistner and Preygel, where several generalized notions of properness for stacks are studied. In Questions 1.3.2 and 1.3.3 of loc.cit. authors ask if any formally proper stack (Definition 1.1.3 of loc.cit.) admits a formally proper spreading and if the Hodge-to-de Rham spectral sequence degenerates for a formally proper stack over a field of characteristic $0$. It follows from our work that the first statement implies the second; however, the method of \Cref{sec:Spreadable stacks} does not help to show the existence of a formally proper spreading. In fact, for the degeneration, only the existence of a Hodge-proper spreading would suffice, but this still seems pretty hard to show (see Question \ref{que: is formally proper spreadable?} in the end of our paper).

The splitting of the $(p-1)$-st truncation of the de Rham complex for a smooth tame $1$-Artin stack over a perfect field $k$ of characteristic $p$ was established (among other things) in \cite{Satriano}. The key observation in \cite{Satriano} is that a smooth tame stack admits a smooth lift together with a lift of Frobenius  \'etale-locally on its coarse moduli space, which enables to follow the original argument of Deligne-Illusie. Our proof is different and works for an arbitrary smooth $n$-Artin stack. 

Even though the main examples of Hodge-spreadable stacks we construct in \Cref{sec:Examples of spreadable Hodge-proper stacks} are classical Artin stacks, \Cref{DeligneIllusieQuis} and \Cref{Hodge_degeneration} work equally well for higher ones. Thus we keep this level of generality throughout the paper. The spreading results of \Cref{sec:Spreadable stacks} in the case of classical Artin stacks are also covered in \cite[Appendix B]{Rydh_NoetherianApprox} and \cite[Chapter 4]{MoretLaumon}. The use of \cite[Section 4]{Pridham} gives a clear way to extend these results to the setting of higher stacks, which we record in \Cref{sec:Spreadable stacks}.

It is worth to mention that there is still no example of a smooth liftable scheme $X$ in characteristic $p$ whose Hodge-to-de Rham spectral sequence does not degenerate (recall that the Deligne-Illusie method gives such a degeneration only for $i+j<p$). Motivated by the recent examples of non-degeneration for the HKR-filtration constructed in \cite{Antieau-Bhatt-Mathew} one could first look for such a counterexample in the world of stacks. The de Rham cohomology of various classifying stacks were considered recently in great detail in \cite{Totaro}; however, in all examples the Hodge-to-de Rham spectral sequence did degenerate.

The equivariant Hodge-to-de Rham degeneration for a reductive group $G$ acting on a scheme $X$, under the Kempf-Ness-completeness assumption was treated (among other things) in \cite{Teleman} by completely different methods. We reprove his result in a (slightly) more general setting (\Cref{ex:Kempf-Ness}) using $\Theta$-stratifications and the associated semiorthogonal decompositions constructed in \cite{HL-prep}. The same strategy applies to any smooth $\Theta$-stratified stack with cohomologically properly spreadable centra of the strata and the semistable locus (\Cref{cor:degeneration for Theta-stratified stacks}). \cite[Section 4]{HL-instability} could provide more examples of stacks that are Hodge-properly spreadable.

Another approach to the equivariant Hodge theory was introduced in \cite{HLP_equiv_noncom}. There the authors deduce (among other things) the noncommutative Hodge-to-de Rham degeneration for the category of perfect complexes $\QCoh([X/G])^\perf$ for a KN-complete $X$ and for some purely non-commutative examples (like the categories of matrix factorizations), by exploiting methods of non-commutative geometry. Note that the result of Kaledin (see \cite{Kaledin} and \cite{Kaledin2}) does not apply in this situation, since the DG-category $\QCoh([X/G])^\perf$ is usually not smooth. It is natural to ask whether the commutative degeneration implies the noncommutative one in this case. This is not immediately clear, since the relation between the Hochschild/periodic cyclic homology of the category of perfect complexes and the Hodge/de Rham cohomology for Artin stacks is more subtle than in the case of schemes. 

\subsection{Plan of the paper}
\Cref{sec:Degeneration of the Hodge-to-de Rham spectral sequence} is devoted to a proof of the degeneration of the Hodge-to-de Rham spectral sequence for Hodge-properly spreadable stacks. In Subsections \ref{sec:Hodge and de Rham cohomology} and \ref{sec:Hodge-proper stacks} we review Hodge and de Rham cohomology of stacks, define Hodge-proper stacks and prove some technical lemmas about them. In \Cref{sec:Proof of the degeneration} we prove (a truncated version of) the Hodge-to-de Rham degeneration in positive characteristic for Hodge-proper stacks that admit a lift to $W_2(k)$. Then, in \Cref{sec:Degeneration in char 0} we prove the Hodge-to-de Rham degeneration in characteristic $0$ for stacks that are Hodge-properly spreadable. As a corollary, in \Cref{sec:Examples of equivariant Hodge degeneration}, in the case of a quotient stack, we also deduce a (non-canonical) Hodge decomposition for the corresponding equivariant singular cohomology.

In \Cref{sec:Spreadings of stacks} we study the spreadability of Hodge proper stacks. In Subsection \ref{sec:Spreadable stacks} we extend the standard spreading out results for finitely presented schemes and their morphisms to the case of Artin stacks (see \Cref{fp_over_filtered_limit}). In \ref{sec:Cohomologically proper stacks} we introduce a more convenient class of cohomologically proper stacks which includes all Hodge-proper ones. In \Cref{sec:Examples of spreadable Hodge-proper stacks} we give first examples of spreadable Hodge-proper stacks: in \Cref{sec:proper stacks} we cover the case of smooth proper stacks, in \Cref{sec:BG} we discuss for which algebraic groups $G$ the classifying stack $BG$ is Hodge-properly spreadable. Then, in \Cref{sec:non-proper schemes} we discuss the case schemes. 

In \Cref{sec:spreadability of quotient stacks} we concentrate on the spreadability of quotient stacks. In \Cref{sec:reductive quotients} we discuss the case of global quotients by reductive groups whose coarse moduli space is proper. In \Cref{sec:BB-complete} we prove Hodge-proper spreadability of $[X/\mathbb G_m]$ under the condition that the associated Bialynicki-Birula stratification is full and $X^{\mbb G_m}$ is proper; then, in \Cref{sec:more quotients} we use this to prove spreadability for a more general class of global quotients, including quotients by some non-reductive groups. Finally, in \Cref{sec:Teleman} we show that finite $\Theta$-stratifications spread out; then using the results of \cite{HL-prep} we show that if all $\Theta$-strata (or rather their centra) together with the semistable locus have cohomologically proper spreadings, then so does the original $\Theta$-stratified smooth stack $\mstack X$. In \Cref{ex:Kempf-Ness} we show how to establish the cohomologically proper spreadability of Kempf-Ness (KN-)complete quotient stacks using this method.

\paragraph{Acknowledgments.}
We are grateful to Bhargav Bhatt for his interest in our work and helpful advice concerning the Hodge-to-de Rham degeneration in characteristic $p$.\footnote{In fact his suggestion that one should be able to prove the Hodge-to-de Rham degeneration for some "cohomologically proper" stacks via the Deligne-Illusie method basically started this project.} We are also grateful to Daniel Halpern-Leistner for sharing the draft of \cite{HL-prep} and answering our questions about \cite{HL-GIT}. We would like to thank Davesh Maulik, Sasha Petrov, Vadim Vologodsky, and Roman Travkin for many helpful discussions. 
We also want to thank Borys Kadets and Chris Brav for carefully reading drafts of this text and providing many useful remarks and suggestions. Finally, we would like to thank anonymous referees for many helpful advises, various comments and questions concerning different sections of the paper.

The second author was partially supported by Laboratory of Mirror Symmetry NRU HSE, RF Government grant, ag. \textnumero 14.641.31.0001 and by the Simons Foundation.

\paragraph{Notations and conventions.}\label{sect:notations}
\begin{enumerate}[leftmargin=0pt,itemindent=*]
\item We will freely use the language of higher categories, modeled e.g. by quasi-categories of \cite{LurHTT}. If not explicitly stated otherwise all categories are assumed to be $(\infty,1)$ and all (co-)limits are homotopy ones. The $(\infty, 1)$-category of Kan complexes will be denoted by $\Type$ and we will call it \emph{the category of spaces}. By $\Lan_i F$ and $\Ran_i F$ we will denote left and right Kan extensions of a functor $F$ along $i$ (see e.g. \cite[Definition 4.3.2.2]{LurHTT} for more details).

\item For a commutative ring $R$ by $\DMod{R}$ we will denote the canonical $(\infty, 1)$-enhancement of the triangulated unbounded derived category of the abelian category of $R$-modules $\UMod{R}$. All tensor product, pullback and pushforward functors are implicitly derived.

\item In this work by Artin stacks we always mean (higher) Artin stacks in the sense of \cite[Section 1.3.3]{TV_HAGII} or \cite[Chapter 2.4]{GaitsRozI}: these are sheaves in \'etale topology admitting a smooth $(n-1)$-representable atlas for some $n\ge 0$ (an inductively defined notion, see \textit{loc.cit.} for more details). We stress that we work with non-derived Artin stacks, i.e. they are defined on the category of \emph{ordinary} commutative rings. When we need to emphasize a precise dependence on $n$ (usually in inductive arguments) we say that $\mstack X$ is an $n$-Artin stack. We denote the $\infty$-category of $n$-Artin stacks over a base scheme $S$ by $\Stk^{n\mdef\Art}_{/S}$. We also freely use the notion of quasi-compact quasi-separated morphism between Artin stacks from \cite[Chapter 2, Section 4.1.9]{GaitsRozI}.

\item For a stack $\mstack X$ we will denote by $\QCoh(\mstack X)$ the category of \emph{quasi-coherent sheaves on $\mstack X$} defined as the limit $\lim_{\Spec A \to \mstack X} \DMod{A}$ over all affine schemes $\Spec A$ mapping to $\mstack X$ (see \cite[Chapter 3.1]{GaitsRozI} for more details). Note that $\QCoh(\mstack X)$ admits a natural $t$-structure such that $\mathcal F \in \QCoh(\mstack X)^{\le 0}$ if and only if $x^*(\mstack F) \in \DMod{A}^{\le 0}$ for any $A$-point $x\in \mstack X(A)$. Moreover, by \cite[Chapter 3, Corollary 1.5.7]{GaitsRozI} if $\mstack X$ is Artin stack, then $\QCoh(\mstack X)$ is left- and right-complete (i.e. Postnikov's and Whitehead's towers converge) and the truncation functors commute with filtered colimits.

\item For an affine group scheme $G$ over a ring $R$, given a representation $M$ (i.e. a comodule over the corresponding Hopf algebra $R[G]$) we denote by $\RG(G,M)\in \DMod{A}$ the \textit{rational cohomology} complex of $G$, namely the derived functor of $G$-invariants $M\mapsto M^G$. By flat descent, for $G$ flat over $R$, the abelian category $\mr{Rep}(G)\coloneqq \Rep_G(\Vect_F)^\heartsuit$ is identified with $\QCoh(BG)^\heartsuit$ and $\RG(G,M)\simeq\RG(BG,M)$.
\end{enumerate}

\section{Degeneration of the Hodge-to-de Rham spectral sequence}\label{sec:Degeneration of the Hodge-to-de Rham spectral sequence}
\subsection{Hodge and de Rham cohomology}\label{sec:Hodge and de Rham cohomology}
In this section we set up the Hodge-to-de Rham spectral sequence for $n$-Artin stacks and prove some technical results needed in subsequent sections of the paper. For the rest of this section fix a base ring $R$. We refer the reader to \cite{TV_HAGII} for an introduction to the theory of Artin stacks and cotangent complexes.
\begin{defn}[Hodge cohomology]
Let $\mstack X$ be an Artin stack over $R$. Define \emph{Hodge cohomology $\RG_\Hdg(\mstack X/R)$ of $\mstack X$} to be
$$\RG_\Hdg(\mstack X/R) \coloneqq \bigoplus_{p \ge 0} \RG\left(\mstack X, \wedge^p \mathbb L_{\mstack X/R}[-p]\right),$$
where $\mathbb L_{\mstack X/R}$ is the cotangent complex of $\mstack X$ over $R$ and $\wedge^p \mathbb L_{\mstack X/R}$ is its $p$-th derived exterior power (see \cite[Chapitre I.4]{Illusie_Cotangent} or \cite[Section 3]{MathewBrantner_LLambda}). For a fixed $n\in \mathbb Z$ we will also denote
$$H^n_\Hdg(\mstack X/R) \coloneqq H^n\RG_\Hdg(\mstack X/R) \simeq \bigoplus_{p+q=n} H^{p,q}(\mstack X/R), \quad\text{where}\quad H^{p,q}(\mstack X/R) \coloneqq H^q\left(\mstack X, \wedge^p \mathbb L_{\mstack X/R}\right).$$
\end{defn}
\begin{notation}
Let $S\coloneqq \Spec A$ be an affine smooth $R$-scheme. The algebraic de Rham complex of $S$ over $R$
$$\xymatrix{A \ar[r]^-d & \Omega^1_{A/R} \ar[r]^-d & \Omega^2_{A/R} \ar[r]^-d & \ldots}$$
will be denoted by $\Omega_{S/R, \dR}^\bullet$. We define $\RG_\dR(S/R) \coloneqq \Omega_{S/R, \dR}^\bullet \in \DMod{R}$.
\end{notation}
\begin{defn}[de Rham cohomology]\label{def: de Rham}
Let $\mstack X$ be a smooth quasi-compact quasi-separated Artin stack over $R$. Define the \emph{(Hodge-completed) de Rham cohomology $\RG_\dR(\mstack X/R)$ of $\mstack X$} to be
$$\RG_\dR(\mstack X/R) \coloneqq \lim_{S\in \Aff^{\sm, \op}_{/\mstack X}} \RG_\dR(S/R),$$
where $\Aff^{\sm}_{/\mstack X}$ is the full subcategory of stacks over $\mstack X$ consisting of affine $R$-schemes that are smooth over $\mstack X$. We will also denote $H^n\RG_\dR(\mstack X/R)$ by $H^n_\dR(\mstack X/R)$.
\end{defn}
In fact the Hodge cohomology complex admits a description similar to our definition of the de Rham cohomology:
\begin{prop}\label{Hodge_as_limit}
For any $p\in \mathbb Z_{\ge 0}$ the natural map
\begin{align}\label{formula:cotangent_descent}
\RG(\mstack X, \wedge^p \mathbb L_{\mstack X/R}) \to \lim_{S\in \Aff^{\sm, \op}_{/\mstack X}} \RG(S, \wedge^p \mathbb L_{S/R})
\end{align}
is an equivalence.

\begin{proof}
By \Cref{flat_descent_for_cotangent_compl} below the left hand side satisfies smooth descent. It follows that both sides of \eqref{formula:cotangent_descent} satisfy smooth descent. Since $n$-Artin stacks are by definition iterated smooth quotients of schemes, by induction on $n$ we reduce to the case when $\mstack X$ is a smooth affine scheme, where the assertion of the proposition is true, since $\Aff^{\sm}_{/\mstack X}$ has a final object given by $\mstack X$.
\end{proof}
\end{prop}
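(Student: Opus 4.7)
The plan is to show that both sides of the comparison map define functors on Artin stacks over $R$ that satisfy smooth descent, and then reduce by induction on the Artin level to the base case of a smooth affine scheme, where the right-hand side trivializes because $\mstack X$ is itself a terminal object of $\Aff^{\sm}_{/\mstack X}$.

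For smooth descent of the LHS: the functor $\mstack X \mapsto \wedge^p \mathbb L_{\mstack X/R}$ takes values in $\QCoh(\mstack X)$, and, by the forthcoming \Cref{flat_descent_for_cotangent_compl}, it is compatible with flat (in particular smooth) descent in the sense that for any smooth surjection $Y \to \mstack X$ with \v{C}ech nerve $Y^\bullet$ one has $\wedge^p \mathbb L_{\mstack X/R} \simeq \lim_{[k]\in \Delta} (p^\bullet)_* \wedge^p \mathbb L_{Y^k/R}$. Applying $\RG(\mstack X, -)$ and commuting it with the limit yields smooth descent for the LHS. For the RHS, one uses that $\mstack Y \mapsto \lim_{S \in \Aff^{\sm,\op}_{/\mstack Y}} \RG(S, \wedge^p \mathbb L_{S/R})$ is, essentially by construction, the right Kan extension of the functor $S \mapsto \RG(S, \wedge^p \mathbb L_{S/R})$ from $\Aff^{\sm}_{/R}$ to all stacks over $R$; right Kan extensions are automatically sheaves for any topology in which the original functor is a sheaf, and smooth descent for the latter on smooth affine schemes is clear.

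The induction then proceeds as follows. For the base case, assume $\mstack X$ is a smooth affine $R$-scheme. Then $\mathrm{id}_{\mstack X}$ is a terminal object in $\Aff^{\sm}_{/\mstack X}$, so the limit defining the RHS collapses to $\RG(\mstack X, \wedge^p \mathbb L_{\mstack X/R})$ and the comparison map is the identity. For the inductive step, any $n$-Artin stack admits a smooth atlas $Y \to \mstack X$ by an affine scheme with $(n-1)$-representable diagonal, so the terms $Y^k$ of its \v{C}ech nerve are $(n-1)$-Artin stacks. By the inductive hypothesis, the comparison map is an equivalence on each $Y^k$; smooth descent on both sides then upgrades this to an equivalence on $\mstack X$.

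The main obstacle is the verification of smooth descent for the RHS, since $\Aff^{\sm}_{/\mstack X}$ is a large $\infty$-category and one must make sure the limit behaves well under restriction along $Y^k \to \mstack X$. The cleanest way to handle this is via the right Kan extension viewpoint above, where one reduces to smooth descent on the essentially small subcategory $\Aff^{\sm}_{/R}$. The only remaining input is the lemma on flat descent for the derived exterior powers of the cotangent complex, whose proof is the true content and is deferred.
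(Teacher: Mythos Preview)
Your argument is correct and follows the same strategy as the paper: establish smooth descent for both sides, then induct on the Artin level down to the affine base case where $\mstack X$ is terminal in $\Aff^{\sm}_{/\mstack X}$. Your right Kan extension framing for the RHS makes explicit what the paper leaves implicit in the phrase ``It follows that both sides satisfy smooth descent.'' One small quibble: the smooth descent of $S \mapsto \RG(S,\wedge^p\mathbb L_{S/R})$ on smooth affines is not quite ``clear'' on its own---it is again an instance of \Cref{flat_descent_for_cotangent_compl}, the same lemma you invoke for the LHS.
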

\begin{prop}[Flat descent for the cotangent complex] \label{flat_descent_for_cotangent_compl}
Let $p\colon \mstack U \to \mstack X$ be a surjective quasi-compact quasi-separated flat morphism between Artin stacks and denote by $p_\bullet \colon \mstack U_\bullet \to \mstack X$ the corresponding \v Cech nerve. Then the natural map
$$\wedge^d \mathbb L_{\mstack X/R} \xymatrix{\ar[r] &} \Tot p_{\bullet *}(\wedge^d\mathbb L_{\mstack U_\bullet/R})$$
is an equivalence for each $d\in \mathbb Z_{\ge 0}$.

\begin{proof}
The proof is essentially due to Bhatt (see \cite[Corollary 2.7, Remark 2.8]{Bhatt_derivedDeRham} or \cite[Section 3]{BMS2}). For every $n \in \mathbb Z_{\ge 0}$ we have a co-fiber sequence
$$\xymatrix{p_n^*\mathbb L_{\mstack X/R} \ar[r] & \mathbb L_{\mstack U_n/R}\ar[r] & \mathbb L_{\mstack U_n/\mstack X}}$$
in $\QCoh(\mstack U_n)$. It follows that $\wedge^d\mathbb L_{\mstack U_n/R}$ admits a $d$-step filtration with associated graded pieces $\wedge^ip_n^*\mathbb L_{\mstack X/R}\otimes \wedge^{d-i} \mathbb L_{\mstack U_n/\mstack X}$. Note that by flat descent for $\QCoh$ (see e.g. \cite[Corollary D.6.3.4]{Lur_SAG})
$$\Tot p_{\bullet*} \gr^0 \left(\wedge^d\mathbb L_{\mstack U_\bullet/R}\right) = \Tot p_{\bullet*} p_{\bullet}^* \wedge^d\mathbb L_{\mstack X/R} \simeq \wedge^d \mathbb L_{\mstack X/R}.$$
Hence it is enough to prove that
$$\Tot p_{\bullet*} \gr^i \left(\wedge^d\mathbb L_{\mstack U_\bullet/R}\right) = \Tot p_{\bullet*}( p_{\bullet}^*\wedge^i\mathbb L_{\mstack X/R} \otimes \wedge^{d-i}\mathbb L_{\mstack U_\bullet/\mstack X}) \simeq 0$$
for $i>0$. Moreover, since the map $p$ is faithfully flat, it is enough to show that the pullback
$$p^* \Tot p_{\bullet*} \gr^i \left(\wedge^d\mathbb L_{\mstack U_\bullet/R}\right) \simeq p^*\Tot(\wedge^i \mathbb L_{\mstack X/R} \otimes p_{\bullet*}\wedge^{d-i}\mathbb L_{\mstack U_\bullet/\mstack X})$$
of the totalization above is null-homotopic.

Note that by the qcqs assumption $p^*p_{n*} \simeq q_{n*} \mathrm{pr}_n^*$, where $q_\bullet \colon \mstack U_\bullet\times_{\mstack X} \mstack U \to \mstack U$ is the pullback of the \v Cech nerve $p_\bullet\colon \mstack U_\bullet \to \mstack X$ on $\mstack U$ along $p$ and $\mathrm{pr}_n\colon \mstack U_n \times_{\mstack X} \mstack U \to \mstack U_n$ are natural projections. It follows by base change for the relative cotangent complex that
$$p^* p_{\bullet *}\wedge^{d-i}\mathbb L_{\mstack U_\bullet/\mstack X} \simeq q_{\bullet *} \wedge^{d-i} \mathbb L_{\mstack U_\bullet\times_{\mstack X} \mstack U /\mstack U}.$$
But since $\mstack U_\bullet \times_{\mstack X} \mstack U \to \mstack U$ is a split simplicial object, the same holds for
$$p^*(\wedge^i \mathbb L_{\mstack X/R} \otimes p_{\bullet*}\wedge^{d-i}\mathbb L_{\mstack U_\bullet/\mstack X}) \simeq p^*(\wedge^i \mathbb L_{\mstack X/R}) \otimes q_{\bullet*}\wedge^{d-i} \mathbb L_{\mstack U_\bullet \times_{\mstack X} \mstack U/\mstack U},$$ 
since the class of split simplicial objects is stable under any functor. It follows $\Tot p^* p_{\bullet*} \gr^i \left(\wedge^d\mathbb L_{\mstack U_\bullet/R}\right) \simeq 0$. Finally, since by flat descent $\QCoh(\mstack X)$ is comonadic over $\QCoh(\mstack U)$, the pullback functor $p^*$ preserves totalizations of $p^*$-split cosimplicial objects, hence
$$p^* \Tot p_{\bullet*} \gr^i \left(\wedge^d\mathbb L_{\mstack U_\bullet/R}\right) \simeq \Tot p^* p_{\bullet*} \gr^i \left( \wedge^d\mathbb L_{\mstack U_\bullet/R}\right) \simeq 0.\qedhere$$
\end{proof}
\end{prop}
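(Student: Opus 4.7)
The plan is to decompose $\wedge^d \mathbb L_{\mstack U_n/R}$ via the transitivity triangle and reduce to flat descent for quasi-coherent sheaves. For each level $n$ of the Čech nerve, the cofiber sequence
\[
p_n^* \mathbb L_{\mstack X/R} \to \mathbb L_{\mstack U_n/R} \to \mathbb L_{\mstack U_n/\mstack X}
\]
in $\QCoh(\mstack U_n)$ induces, via the Koszul-style filtration on the derived wedge power of a cofiber, a finite filtration on $\wedge^d \mathbb L_{\mstack U_n/R}$ whose associated graded pieces are $\wedge^i p_n^*\mathbb L_{\mstack X/R} \otimes \wedge^{d-i}\mathbb L_{\mstack U_n/\mstack X}$, $0\le i\le d$, functorially in the simplicial parameter. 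Since totalization is exact, it suffices to analyze the totalization of the pushforward of each graded piece along $p_\bullet$.

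The top piece ($i = d$) equals $p_n^*\wedge^d \mathbb L_{\mstack X/R}$, so flat descent for $\QCoh$ applied to the faithfully flat qcqs cover $p$ immediately gives
\[
\Tot p_{\bullet*} p_\bullet^*\wedge^d \mathbb L_{\mstack X/R} \simeq \wedge^d \mathbb L_{\mstack X/R},
\]
which matches the target of the comparison map. It therefore remains to show that each lower piece ($0 \le i < d$) contributes zero after totalization.

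To establish this vanishing I would pull back along $p$, which is conservative by faithful flatness, and combine two base-change identifications. The qcqs base change $p^* p_{n*} \simeq q_{n*}\pr_n^*$ expresses the pullback of the $i$-th graded piece in terms of the pulled-back Čech nerve $q_\bullet\colon \mstack U_\bullet\times_{\mstack X}\mstack U\to \mstack U$, and base change for the relative cotangent complex identifies $\pr_n^*\wedge^{d-i}\mathbb L_{\mstack U_n/\mstack X} \simeq \wedge^{d-i}\mathbb L_{\mstack U_n\times_{\mstack X}\mstack U/\mstack U}$. The geometric key is that the augmented simplicial object $\mstack U_\bullet\times_{\mstack X}\mstack U\to \mstack U$ admits a section (the diagonal) and is therefore split; splitness is preserved by any functor, so the resulting cosimplicial diagram is split as well, and its totalization recovers the augmentation $\wedge^{d-i}\mathbb L_{\mstack U/\mstack U} = 0$ for $i < d$. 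Commuting $p^*$ past $\Tot$ is the final step and follows from comonadicity of $p^*$ (itself a consequence of flat descent), since the diagrams at hand are $p^*$-split.

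The main subtlety I anticipate is ensuring that the Koszul filtration on $\wedge^d$ of a cofiber sequence is sufficiently functorial in $n$ to yield a filtration of cosimplicial objects in $\QCoh(\mstack X)$; once that bookkeeping is settled, the rest of the argument is essentially formal, following from faithful flatness together with the split simplicial structure of the base-changed Čech nerve.
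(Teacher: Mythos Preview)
Your proposal is correct and follows essentially the same route as the paper: the transitivity cofiber sequence yields a finite filtration on $\wedge^d\mathbb L_{\mstack U_n/R}$, the piece with trivial relative part is handled by flat descent for $\QCoh$, and the remaining pieces vanish after pulling back along $p$ and invoking the split simplicial structure of the base-changed \v Cech nerve together with comonadicity. Your indexing (the surviving piece is $i=d$, the vanishing pieces are $i<d$) is in fact cleaner than the paper's, which has a small inconsistency between its $\gr^0$ identification and the general formula for $\gr^i$.
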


\begin{cor}
Let $\mstack X$ be a smooth quasi-compact quasi-separated Artin stack over $R$. Then
\begin{enumerate}
\item There exists a complete (decreasing) \emph{Hodge filtration $F^\bullet\RG_\dR(\mstack X/R)$} such that $\gr F^\bullet \RG_\dR(\mstack X/R) \simeq \RG_\Hdg(\mstack X/R)$.

\item There exists a strongly convergent spectral sequence $E_1^{p,q} = H^q(\mstack X, \wedge^p \mathbb L_{\mstack X/R}) \Rightarrow H^{p+q}_\dR(\mstack X/R).$
\end{enumerate}

\begin{proof}
Note that since $\mstack X$ is smooth, all schemes $S\in \Aff^\sm_{/\mstack X}$ are smooth. Since the Hodge filtration on $R\Gamma_\dR(S/R)$ is complete, the same holds for $\RG_\dR(\mstack X/R)$, since complete filtered complexes are closed under limits. Moreover, by construction we have
$$\gr F^\bullet \RG_\dR(\mstack X/R) \simeq \lim_{S\in \Aff^{\sm, \op}_{/\mstack X}} \gr F^\bullet \RG_\dR(S/R) \simeq \lim_{S\in \Aff^{\sm, \op}_{/\mstack X}} \RG_\Hdg(S/R) \simeq \RG_\Hdg(\mstack X/R),$$
where the last equivalence follows from the previous proposition. This filtration induces a spectral sequence with $E_1^{p,q}$ as stated. To prove it is strongly convergent, note that by smoothness of $\mstack X$, for each $n$ the induced filtration on $H^n_\dR(\mstack X/R)$ is finite.
\end{proof}
\end{cor}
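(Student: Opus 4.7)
My plan is to produce the Hodge filtration on $\RG_\dR(\mstack X/R)$ by passing to the limit along smooth affines, identify the associated graded via \Cref{Hodge_as_limit}, and then extract the spectral sequence by standard means from the resulting complete filtered complex.

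First, for each $p \ge 0$, I would define
$$F^p \RG_\dR(\mstack X/R) \;\coloneqq\; \lim_{S \in \Aff^{\sm, \op}_{/\mstack X}}\, \Omega^{\ge p}_{S/R, \dR},$$
obtained as the limit of the stupid/Hodge filtrations $\Omega^{\ge p}_{S/R, \dR} \subseteq \Omega^\bullet_{S/R, \dR}$, which are evidently functorial in $S \in \Aff^{\sm}_{/\mstack X}$. For each such $S$ the filtration is bounded (hence complete), since $S$ is smooth of locally bounded dimension. Since the property "$C \simeq \lim_p C/F^p$" of being complete is stable under limits in $C$, the resulting decreasing filtration on $\RG_\dR(\mstack X/R)$ is also complete, giving the first half of (1).

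Second, to compute the associated graded, I would use that $\gr^p$ is a finite colimit (the cofiber of $F^{p+1} \to F^p$) and therefore commutes with the limit defining $F^p \RG_\dR(\mstack X/R)$. Thus
$$\gr^p F^\bullet \RG_\dR(\mstack X/R)
\;\simeq\; \lim_{S} \bigl(\Omega^{\ge p}_{S/R, \dR}\big/ \Omega^{\ge p+1}_{S/R, \dR}\bigr)
\;\simeq\; \lim_{S} \Omega^p_{S/R}[-p]
\;\simeq\; \lim_{S} \RG(S, \wedge^p \mathbb L_{S/R})[-p].$$
By \Cref{Hodge_as_limit} the last term equals $\RG(\mstack X, \wedge^p \mathbb L_{\mstack X/R})[-p]$, and summing over $p$ yields $\RG_\Hdg(\mstack X/R)$, completing (1).

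The spectral sequence of (2) is then the standard spectral sequence attached to the complete filtered complex $F^\bullet \RG_\dR(\mstack X/R)$: it has $E_1^{p,q} = H^{p+q}(\gr^p F) = H^q(\mstack X, \wedge^p \mathbb L_{\mstack X/R})$ and abuts to $H^{p+q}_\dR(\mstack X/R)$. Conditional convergence is immediate from completeness and $F^0 = \RG_\dR(\mstack X/R)$. For strong convergence, one needs to show that for each fixed $n$ only finitely many $E_1^{p,n-p}$ are nonzero, forcing the induced filtration on $H^n_\dR(\mstack X/R)$ to be finite. This is the one step that actually uses smoothness: the cotangent complex $\mathbb L_{\mstack X/R}$ of a smooth qcqs Artin stack is perfect of bounded tor-amplitude, so the cohomological support of $\wedge^p \mathbb L_{\mstack X/R}$ grows linearly with $p$, and $H^{n-p}(\mstack X, \wedge^p \mathbb L_{\mstack X/R})$ vanishes for $p$ outside a range depending on $n$. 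The main obstacle is precisely this last bookkeeping point — ensuring finiteness of the Hodge filtration in the Artin stack setting (where, unlike for schemes, one cannot simply cite boundedness of the de Rham complex by the dimension); everything else reduces to formal manipulation of complete filtered limits together with the cotangent-complex descent of \Cref{Hodge_as_limit}.
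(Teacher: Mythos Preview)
Your proposal is correct and follows essentially the same route as the paper: define the Hodge filtration as the limit of the stupid filtrations on smooth affines, use closure of completeness under limits, identify the associated graded via \Cref{Hodge_as_limit}, and deduce strong convergence from the fact that $\wedge^p\mathbb L_{\mstack X/R}$ sits in nonnegative degrees so that $F^p$ does not contribute to $H^{<p}$. One small correction of wording: the reason $\gr^p$ commutes with the limit is not that ``finite colimits commute with limits'' (they do not in general), but that in a stable $\infty$-category a cofiber is a shifted fiber, and fibers commute with limits; the paper simply says ``by construction'' at this step.
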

The following simple observation will be quite useful in what follows:
\begin{rem}\label{connetedness_of_Hodge_to_deRham}
Let $\mstack X$ be a smooth Artin stack over $R$. Then the cotangent complex $\mathbb L_{\mstack X/R}$ (and its exterior powers) is concentrated in nonnegative cohomological degrees (with respect to the natural $t$-structure on $\QCoh(\mstack X)$). Since the global section functor $\RG$ is left $t$-exact, it follows that the natural map $\RG_\dR(\mstack X/R) \to \RG_\dR(\mstack X/R)/F^p\RG_\dR(\mstack X/R)$ induces an isomorphism on $H^{< p}$.
\end{rem}

Finally, we will need the following
\begin{prop}[Base-change]\label{Hodge_and_deRham_basechange}
Let $\mstack X$ be a smooth quasi-compact quasi-separated Artin stack over $R$ and let $R\to R^\prime$ be a ring homomorphism of finite Tor-amplitude. Then for $\mstack X^\prime \coloneqq \mstack X \otimes_R R^\prime$ the natural map $\RG_\dR(\mstack X / R) \otimes_R R^\prime \to \RG_\dR(\mstack X^\prime / R^\prime)$ is a filtered equivalence. In particular, for each $p\in \mathbb Z_{\ge 0}$ the natural map $\RG(\mstack X, \wedge^p \mathbb L_{\mstack X/R}) \otimes_R R^\prime \to \RG(\mstack X^\prime, \mathbb \wedge^p \mathbb L_{\mstack X^\prime/R^\prime})$ is an equivalence.

\begin{proof}
By the smoothness assumption on $\mstack X$ the fiber product $\mstack X\otimes_R R^\prime$ coincides with the derived fiber product. It follows by \cite[Lemma 1.4.1.16 (2)]{TV_HAGII} that $\mathbb L_{\mstack X/R}\otimes_R R^\prime \simeq \mathbb L_{\mstack X^\prime / R^\prime}$. By the base change for $\QCoh$ (see \cite[Chapter 3., Proposition 2.2.2 (b)]{GaitsRozI}) we deduce that the natural map $\RG_\Hdg(\mstack X / R) \otimes_R R^\prime \to \RG_\Hdg(\mstack X^\prime / R^\prime)$ is an equivalence.\

Next, note that the condition on the morphism $R\to R^\prime$ guarantees that the natural map $\RG_\dR(\mstack X/R)\otimes_R R^\prime \to \lim\limits_{\longleftarrow p} ((\RG_\dR(\mstack X/R)/F^p\RG_\dR(\mstack X/R))\otimes_R R^\prime)$ is an equivalence. Since both sides are complete with respect to the Hodge filtration, and, since by the above the induced map on the associated graded pieces
$$\RG_\Hdg(\mstack X/R) \otimes_R R^\prime \simeq \gr F^\bullet \RG_\dR(\mstack X / R) \otimes_R R^\prime \to \gr F^\bullet  \RG_\dR(\mstack X^\prime / R^\prime) \simeq \RG_\Hdg(\mstack X^\prime / R^\prime)$$
is an equivalence, we deduce that the base-change map for de Rham cohomology is an equivalence as well.
\end{proof}
\end{prop}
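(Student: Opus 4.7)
My plan is to first establish the Hodge (associated graded) version of the base change and then bootstrap to the filtered de Rham statement using completeness of the Hodge filtration.

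For the Hodge part, the first step is to identify $\mathbb L_{\mstack X/R} \otimes_R R' \simeq \mathbb L_{\mstack X'/R'}$ as objects of $\QCoh(\mstack X')$. Since $\mstack X$ is smooth over $R$, the ordinary fiber product $\mstack X \otimes_R R'$ agrees with the derived fiber product, so this is the standard base change for the cotangent complex (e.g.\ \cite[Lemma 1.4.1.16]{TV_HAGII}). Taking $p$-th derived exterior powers commutes with pullback, hence also $\wedge^p \mathbb L_{\mstack X/R}$ pulls back to $\wedge^p \mathbb L_{\mstack X'/R'}$. The second step is to apply base change for $\QCoh$ to the Cartesian square
\[
\xymatrix{\mstack X' \ar[r] \ar[d] & \mstack X \ar[d] \\ \Spec R' \ar[r] & \Spec R.}
\]
The qcqs assumption on $\mstack X$ together with finite Tor-amplitude of $R \to R'$ guarantee that the base change map is an equivalence (cf.\ \cite[Chapter 3, Proposition 2.2.2]{GaitsRozI}), yielding
\[
\RG(\mstack X, \wedge^p \mathbb L_{\mstack X/R}) \otimes_R R' \areq \RG(\mstack X', \wedge^p \mathbb L_{\mstack X'/R'}).
\]

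For the filtered statement, I would observe that by the preceding corollary, $\RG_\dR(\mstack X/R)$ is Hodge-complete with $\gr^p F^\bullet \simeq \RG(\mstack X, \wedge^p \mathbb L_{\mstack X/R}[-p])$, and similarly over $R'$. The finite Tor-amplitude of $R \to R'$ ensures that $(-)\otimes_R R'$ commutes with the limit $\lim_p \RG_\dR(\mstack X/R)/F^p$ defining the Hodge completion, so the left-hand side $\RG_\dR(\mstack X/R)\otimes_R R'$ is also Hodge-complete. It then suffices to verify that the map is an equivalence on each $\gr^p$, which is precisely the Hodge base change established above. A standard "two-out-of-three" argument for complete filtrations then promotes the graded equivalence to the filtered equivalence.

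The main place to be careful is the interaction between the infinite limit defining the Hodge completion and the (derived) tensor product with $R'$: without the finite Tor-amplitude hypothesis one only gets equivalence on the associated graded and on the truncations $\RG_\dR(\mstack X/R)/F^p$, not on the Hodge-completed object itself. This is the reason the hypothesis is imposed, and it is exactly what is needed to commute $(-)\otimes_R R'$ past the relevant inverse limit. Everything else is a formal consequence of smoothness of $\mstack X$ (to identify the derived and ordinary fiber products and to transport the cotangent complex) and of flat/qcqs base change for quasi-coherent sheaves.
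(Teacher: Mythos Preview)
Your proposal is correct and follows essentially the same approach as the paper: first establish Hodge base change via base change for the cotangent complex (using that smoothness makes the classical and derived fiber products agree) together with qcqs base change for $\QCoh$, then use the finite Tor-amplitude hypothesis to commute $(-)\otimes_R R'$ past the Hodge-completion limit and conclude by checking the map on associated graded pieces. Even the references you invoke match those in the paper.
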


\subsection{Hodge-proper stacks}\label{sec:Hodge-proper stacks}
For the rest of this subsection fix a Noetherian base ring $R$. In this section we will introduce a reasonable (at least from the point of view of Hodge-to-de Rham degeneration) generalization of the notion of properness for stacks.
\begin{defn}\label{def:ncoh}
A complex of $R$-modules $M$ is called \emph{bounded below coherent}\footnote{In the previous version of this text we called such complexes \emph{almost coherent}. We decided to change the notation to avoid possible clashes with almost mathematics.} if it is cohomologically bounded below and for any $i\in \mathbb Z$ the cohomology module $H^i(M)$ is finitely generated over $R$. We will denote the full subcategory of $\DMod{R}$ consisting of bounded below coherent $R$-modules by $\Coh^+(R)$.
\end{defn}
\begin{rem}
We use the term \textit{nearly} coherent for objects of $\Coh^+(R)$ to distinguish them from coherent complexes, which in our convention are necessarily bounded (both from above and below).
\end{rem}
We have the following basic properties of $\Coh^+(R)$:
\begin{prop}\label{acoh_basics}
Let $R$ be a Noetherian ring. Then:
\begin{enumerate}
\item The category $\Coh^+(R)$ is closed under finite (co-)limits and retracts. In particular $\Coh^+(R)$ is a stable subcategory of $\DMod{R}$.

\item For each $n\in \mathbb Z$ the category $\Coh^{\ge n}(R) \coloneqq \Coh^+(R) \cap \DMod{R}^{\ge n}$ is closed under totalizations.
\end{enumerate}

\begin{proof}
\begin{enumerate}[wide]
\item This follows from the fact that for a Noetherian $R$ the abelian category of finitely generated $R$-modules is closed under (co)kernels, extensions and direct summands.

\item Let $M^\bullet$ be a co-simplicial object of $\Coh^{\ge n}(R)$. By shifting if necessary, we can assume that $n=0$. Since coconnective modules are closed under limits, $\Tot(M^\bullet) \in \DMod{R}^{\ge 0}$; hence it is enough to prove that $H^i\Tot(M^\bullet)$ is finitely generated $R$-module for all $i\in \mathbb Z_{\ge 0}$. Since all $M^i$ are coconnective, the natural map $\Tot(M^\bullet) \to \Tot^{\le k}(M^\bullet)$ induces an isomorphism on $H^{\le k}$. But since $\Tot^{\le k}$ is a finite limit, each $H^i\Tot^{\le k}(M^\bullet)$ is a finitely generated $R$-module.\qedhere
\end{enumerate}
\end{proof}
\end{prop}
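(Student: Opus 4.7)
The plan for part (1) is to use the standard criterion that a full subcategory of a stable $\infty$-category is itself stable (and closed under finite (co)limits) as soon as it is closed under shifts and under fibers. Closure under shifts is immediate. For fibers: given $f\colon Y \to Z$ with $Y, Z \in \Coh^+(R)$, I would invoke the long exact sequence
$$\cdots \to H^{i-1}(Z) \to H^i(\fib f) \to H^i(Y) \to H^i(Z) \to \cdots.$$
Since both $Y$ and $Z$ are bounded below, so is $\fib f$; moreover $H^i(\fib f)$ sits in an extension of a submodule of $H^i(Y)$ by a quotient of $H^{i-1}(Z)$. The Noetherian hypothesis on $R$ guarantees that finitely generated modules form a subcategory of $\UMod{R}$ closed under kernels, cokernels, and extensions, so $H^i(\fib f)$ is finitely generated. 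Closure under retracts reduces to the parallel fact that a direct summand of a finitely generated module over a Noetherian ring is itself finitely generated, applied degreewise in cohomology.

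For part (2), I would first shift to reduce to the case $n = 0$. Objects of $\DMod{R}^{\ge 0}$ are closed under arbitrary limits, so $\Tot(M^\bullet)$ automatically lies in $\DMod{R}^{\ge 0}$; the remaining task is to check that each $H^i \Tot(M^\bullet)$ is a finitely generated $R$-module. The idea is to approximate $\Tot$ by its partial totalizations $\Tot^{\le k}(M^\bullet)$, which are \emph{finite} limits indexed by $\Delta^{\le k}$. By part (1), each $\Tot^{\le k}(M^\bullet)$ lies in $\Coh^{\ge 0}(R)$, so all of its cohomology modules are finitely generated over $R$.

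The key remaining step, which I expect to be the main technical point, is to show that for each fixed $i$ the natural truncation map $\Tot(M^\bullet) \to \Tot^{\le k}(M^\bullet)$ induces an isomorphism on $H^i$ once $k \ge i$. Here the hypothesis $M^j \in \DMod{R}^{\ge 0}$ is essential: the fiber of this truncation map admits a standard Bousfield--Kan spectral sequence whose $E_2$-page $H^s(\pi^{-t}M^\bullet)$ is concentrated in $t \le 0$ by the coconnectivity assumption, and this forces the fiber to be cohomologically concentrated in degrees $> k$. Combined with the previous paragraph, this yields finite generation of $H^i \Tot(M^\bullet)$ in every degree, hence $\Tot(M^\bullet) \in \Coh^{\ge 0}(R)$.
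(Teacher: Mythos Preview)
Your proposal is correct and follows essentially the same approach as the paper's proof: both parts reduce to the Noetherian closure properties of finitely generated modules, and part (2) in both cases proceeds by reducing to partial totalizations $\Tot^{\le k}$ via the coconnectivity hypothesis. The only difference is that you supply an explicit justification (via the Bousfield--Kan filtration) for why $\Tot(M^\bullet) \to \Tot^{\le k}(M^\bullet)$ is an isomorphism on $H^{\le k}$, whereas the paper simply asserts this.
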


\begin{rem}
Recall that the category of perfect $R$-modules $\DMod{R}^\perf$ is defined as the smallest full subcategory of $\DMod{R}$ containing $R$ and closed under finite (co-)limits and direct summands. Since $R\in \Coh^+(R)$ it follows from \Cref{acoh_basics}, that $\DMod{R}^\perf \subseteq \Coh^+(R)$.
\end{rem}

After this technical digression we are ready to introduce the notion of a Hodge-proper stack:
\begin{defn}[Hodge-proper stacks]
A smooth quasi-compact quasi-separated Artin stack $\mstack X$ over $R$ is called \emph{Hodge-proper} if for every $p\in \mathbb Z_{\ge 0}$ the complex $\RG(\mstack X, \wedge^p \mathbb L_{\mstack X/R})$ is bounded below coherent.
\end{defn}
\noindent For us the most important implication of Hodge-properness is that the de Rham cohomology is bounded below coherent:
\begin{prop}\label{de_rham_finiteness_of_Hodge_proper}
Let $\mstack X$ be a smooth Hodge-proper Artin stack over $R$. Then $\RG_\dR(\mstack X/R)$ is bounded below coherent complex of $R$-modules.

\begin{proof}
By smoothness $\RG_\dR(\mstack X/R)$ is bounded below by $0$, hence it is enough to prove that for each $n \in \mathbb Z_{\ge 0}$ the cohomology module $H^n_\dR(X/R)$ is finitely generated over $R$. By \Cref{connetedness_of_Hodge_to_deRham} the natural map $\RG_\dR(\mstack X/R) \to \RG_\dR(\mstack X/R)/F^{n+1}\RG_\dR(\mstack X/R)$ induces an isomorphism on $H^{\le n}$. We conclude, since $\RG_\dR(\mstack X/R)/F^{n+1}\RG_\dR(\mstack X/R)$, being a finite extension of bounded below coherent complexes $\RG(\mstack X, \wedge^i \mathbb L_{\mstack X/R}[-i])$, $0\le i\le n$, is bounded below coherent.
\end{proof}
\end{prop}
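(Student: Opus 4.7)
The plan is to combine three inputs: the connectivity of the cotangent complex for a smooth stack, the completeness of the Hodge filtration on $\RG_\dR(\mstack X/R)$, and the closure properties of $\Coh^+(R)$ established in Proposition \ref{acoh_basics}. The key point is that although the Hodge filtration is a priori an infinite filtration (and infinite limits do not preserve $\Coh^+(R)$), on any fixed cohomological degree only finitely many graded pieces contribute, by smoothness.

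First I would observe that $\RG_\dR(\mstack X/R)$ is cohomologically bounded below by $0$. Since $\mstack X/R$ is smooth, $\mathbb L_{\mstack X/R}$ (and hence each exterior power $\wedge^p\mathbb L_{\mstack X/R}$) is connective in the sense of lying in $\QCoh(\mstack X)^{\ge 0}$, so by left $t$-exactness of $\RG$ each Hodge piece $\RG(\mstack X, \wedge^p\mathbb L_{\mstack X/R}[-p])$ lives in degrees $\ge p \ge 0$. The completeness of the Hodge filtration then forces $\RG_\dR(\mstack X/R)\in\DMod{R}^{\ge 0}$.

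Next, to see that $H^n_\dR(\mstack X/R)$ is finitely generated for each fixed $n$, I would apply Remark \ref{connetedness_of_Hodge_to_deRham}: the projection $\RG_\dR(\mstack X/R) \to \RG_\dR(\mstack X/R)/F^{n+1}\RG_\dR(\mstack X/R)$ is an isomorphism on $H^{\le n}$. It therefore suffices to check that the truncated quotient is bounded below coherent. But this quotient is a finite iterated extension of the graded pieces $\gr^p F^\bullet\RG_\dR(\mstack X/R) \simeq \RG(\mstack X,\wedge^p\mathbb L_{\mstack X/R}[-p])$ for $0\le p\le n$, each of which is bounded below coherent by the Hodge-properness hypothesis. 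Applying Proposition \ref{acoh_basics}(1) finitely many times, the extension stays in $\Coh^+(R)$, and we conclude.

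No genuine obstacle is expected — the only subtle point is precisely the one that motivates the connectedness remark: one cannot directly argue with the associated graded, since an infinite direct sum of coherent pieces need not be coherent, but the smoothness hypothesis truncates the relevant part of the Hodge filtration in each fixed degree and reduces the question to the finite-extension case, which $\Coh^+(R)$ handles.
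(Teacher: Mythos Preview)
Your argument is correct and follows essentially the same route as the paper's proof: bound below by smoothness, use Remark~\ref{connetedness_of_Hodge_to_deRham} to replace $\RG_\dR$ by the finite quotient $\RG_\dR/F^{n+1}$, and conclude by closure of $\Coh^+(R)$ under finite extensions. Your additional commentary on why the infinite filtration does not cause trouble is a helpful elaboration but not a departure from the paper's approach.
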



\subsection{Hodge-to-de Rham degeneration in positive characteristic}\label{sec:Proof of the degeneration}
Let $\mstack Y$ be a Hodge-proper Artin stack over a perfect field $k$ of characteristic $p$ admitting a smooth lift to the ring of the second Witt vectors $W_2(k)$. In this section we will prove that the Hodge-to-de Rham spectral sequence $H^j(\mstack Y, \wedge^i \mathbb L_{\mstack Y/k}) \Rightarrow H_\dR^{i+j}(\mstack Y / k)$ degenerates at the first page for $i+j < p$. Our strategy is to interpret both Hodge and de Rham cohomology in terms of crystalline cohomology and then, following Fontaine-Messing \cite{FontaineMessing_padciPeriods} (and Bhatt-Morrow-Scholze \cite{BMS2}), use (quasi-)syntomic descent for the crystalline cohomology to get a very functorial form of the Deligne-Illusie splitting. 

We denote by $\sigma\colon k\xra{x\mapsto x^p} k$ the absolute Frobenius morphism of $k$. We denote by the same letter $\sigma$ the induced automorphisms $W(k)\ra W(k)$ and $W_n(k)\ra W_n(k)$ for any $n\in \mathbb N$. For a $W(k)$-algebra (e.g. a $W_s(k)$-algebra for some $s$) $A$ we denote by $A^{(1)}\coloneqq A\otimes_{W(k),\sigma} W(k)$ its Frobenius twist and by $A^{(-1)}\coloneqq A\otimes_{W(k),\sigma^{-1}} W(k)$ its Frobenius untwist. For each $n\in \mathbb Z$ we have the relative Frobenius map $\phi_A\colon A^{(n)}\ra A^{(n-1)}$.

\begin{defn}[{\cite[Definition 4.10]{BMS2}}]
	\label{def:qsyn}
	A morphism $A\ra B$ of $W_n(k)$-algebras is called \emph{quasisyntomic} if it is flat and $\mathbb L_{B/A}$ has cohomological Tor amplitude $\left[-1,0 \right] $. A morphism $A\ra B$ is \emph{a quasisyntomic cover} if it is quasisyntomic and faithfully flat. We will denote by $\mr{QSyn}_{n}$ the site consisting of quasisyntomic $W_n(k)$-algebras with the topology generated by quasisyntomic covers.
\end{defn}
\begin{rem}
	It probably worth clarifying how our definition of quasisyntomic site compares to the one in \cite[Section 4]{BMS2}. Namely, $\mr{QSyn}_{n}$ is just the \textit{small} quasisyntomic site of $W_n(k)$ in the terminology of \cite{BMS2}. Indeed since all algebras in $\mr{QSyn}_{n}$ are killed by $p^n$, the notions of $p$-complete (faithful) flatness and quasisyntomicity coincide with the classical ones. The rest of the properties can be easily seen to agree as well.
\end{rem}
The notion of a quasisyntomic morphism is a generalization of more classical notion of a \emph{syntomic} morphism: a flat map $A\ra B$ that is locally a complete intersection in a smooth one. Syntomic morphisms include smooth morphisms, and, in the case $A$ is a regular $k$-algebra, the relative Frobenius morphism $\phi\colon A^{(1)}\ra A$. The advantage of quasisyntomic morphisms is that they also include some natural non-finite-type maps, most importantly the direct limit perfection $A\ra A_{\perf}\coloneqq{\indlim}{}_{\phi,n\ge 0} A^{(-n)} $ and its tensor powers $A\ra A_{\perf}\otimes_A\ldots \otimes_A A_{\perf}$ for a smooth $k$-algebra $A$. Using standard properties of the cotangent complex it is not hard to show that quasisyntomic morphisms are stable under composition and pushouts along arbitrary morphisms of algebras (and same for quasisyntomic covers). We refer to Section 4 of \cite{BMS2} for more details.

\smallskip
Recall that an $\mathbb F_p$-algebra $S$ is called semiperfect if $\phi\colon S\ra S$ is surjective.
\begin{defn}\label{def:qrsp}
	A $k$-algebra $S$ is called \emph{quasiregular semiperfect} if $S$ is quasisyntomic and the relative Frobenius homomorphism $\phi\colon S^{(1)}\ra S$ is surjective. We call a $W_n(k)$-algebra $\widetilde S$ \emph{quasiregular semiperfectoid} if it is flat over $W_n(k)$ and $\widetilde S/p$ is quasiregular semiperfect. We will denote by $\mr{QRSPerf}_{n}$ the site consisting of quasiregular semiperfectoid $W_n(k)$-algebras with the topology generated by quasisyntomic covers.
\end{defn}
\begin{rem}
	Note that if $n>1$ our definition of a quasiregular semiperfectoid algebra over $W_n(k)$ does not agree with \cite[Definition 4.10]{BMS2} since $W_n(k)$ itself is not semiperfectoid. Nevertheless, since we assume that all our objects are flat over $W_n(k)$, all the necessary arguments go through essentially without any change by reducing modulo $p$.
\end{rem}
For any $k$-algebra $S$, $H^0(\mathbb L_{S/k})$ is identified with the K\"ahler differentials $\Omega^1_{S/k}$. Since $d(x^p)=0$, we get that $H^0(\mathbb L_{S/k})=0$ for $S$ semiperfect, and that $\mathbb L_{S/k}$ is concentrated in a single cohomological degree $-1$ for $S$ quasiregular semiperfect. The same is true for $\mathbb L_{\widetilde S/W_n(k)}$ for a quasiregular semiperfectoid $W_n(k)$-algebra $\widetilde S$. Moreover, any flat map $\widetilde S_1\ra \widetilde S_2$ between quasiregular semiperfectoids over $W_n(k)$ is quasisyntomic. This gives a map of sites $\mr{QRSPerf}_{n}\ra \mathrm{QSyn}_n$.

\smallskip

In fact quasiregular semiperfectoid algebras form a basis of topology in $\mathrm{QSyn}_n$. This leads to an equivalence between the corresponding categories of sheaves:
\begin{prop}\label{prop:syntomic_vs_perfect}
	The restriction along the natural embedding $u\colon\mr{QRSPerf}_{n}\ra \mr{QSyn}_{n}$ induces an equivalence
	$$
	\Sh(\mr{QSyn}_{n},\fcat C) \xymatrix{\ar[r]_\sim^-{u^{-1}} & }\Sh(\mr{QRSPerf}_{n}, \fcat C)
	$$
	of the categories of sheaves with values in any presentable $\infty$-category $\fcat C$.
	
	\begin{proof}
		Following the proof of \cite[Proposition 4.31]{BMS2} it is enough to show, that first, any quasisyntomic algebra $A$ has a quasisyntomic cover $A\ra S$ by a semiperfectoid, and second, that all terms $S^{\otimes_A i}$ in the corresponding \v Cech object are automatically semiperfectoid. The cover $S$ can be constructed as follows: we take the surjection $W_n(k)[x_a]_{a\in A}\surj A$ from the free polynomial algebra on $A$ and put $S\coloneqq A\otimes_{W_n(k)[x_a]}W_n(k)[x_a^{1/p^\infty}]$. The map $W_n(k)[x_a]\ra W_n(k)[x_a^{1/p^\infty}]$ is quasisyntomic and faithfully flat, thus so is $A\ra S$. Also $W_n(k)[x_a^{1/p^\infty}]\ra S$ is a surjection, $k[x_a^{1/p^\infty}]$ is perfect, thus $S/p$ is semiperfect. We get that $S\in \mr{QRSPerf}_{n}$. The statement about $S^{\otimes_A i}$ then follows from the analogous one modulo $p$ (see e.g. \cite[Lemma 4.30]{BMS2}).
	\end{proof}
\end{prop}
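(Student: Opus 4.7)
The plan is to invoke the standard $\infty$-categorical comparison lemma for sites: if $u\colon \mathcal{B}\inj \mathcal{C}$ is a fully faithful, continuous, and cocontinuous embedding such that every object of $\mathcal{C}$ admits a cover whose \v Cech nerve lies entirely in the essential image of $u$, then $u^{-1}\colon \Sh(\mathcal{C},\fcat{C})\ra \Sh(\mathcal{B},\fcat{C})$ is an equivalence for any presentable coefficient $\infty$-category $\fcat{C}$. Granting this, the entire proof reduces to producing, for each $A\in \mr{QSyn}_n$, a quasisyntomic cover $A\ra S$ such that $S$ and every iterated tensor power $S^{\otimes_A m}$ lie in $\mr{QRSPerf}_n$, together with routine continuity/cocontinuity checks.

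For the cover, I would use the standard trick of adjoining all $p$-power roots: take the tautological polynomial presentation $P\coloneqq W_n(k)[x_a]_{a\in A}\surj A$ on the underlying set of $A$, and form $S\coloneqq A\otimes_P W_n(k)[x_a^{1/p^\infty}]_{a\in A}$. The inclusion $P\ra P[x_a^{1/p^\infty}]$ is a filtered colimit of free (hence smooth) finitely presented extensions, so it is quasisyntomic and faithfully flat; both properties persist under arbitrary base change, so $A\ra S$ is a quasisyntomic cover. Since $A$ is $W_n(k)$-flat so is $S$, and $S/p$ receives a surjection from the perfect ring $k[x_a^{1/p^\infty}]$, hence Frobenius is surjective on $S/p$ and $S\in \mr{QRSPerf}_n$. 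For the \v Cech terms $S^{\otimes_A m}$, flatness over $W_n(k)$ is preserved under flat base change, and semiperfectness modulo $p$ reduces to the fact that a tensor product of semiperfect $\mathbb F_p$-algebras is semiperfect — essentially \cite[Lemma 4.30]{BMS2}, which can be seen directly since surjectivity of Frobenius on each tensor factor gives surjectivity of Frobenius on the tensor product.

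The main obstacle, such as it is, is purely formal: arranging the hypotheses so that the comparison lemma applies. Continuity of $u$ comes down to the fact that any quasisyntomic cover of a quasiregular semiperfectoid $\widetilde{S}$ can be refined by a cover in $\mr{QRSPerf}_n$ (apply the construction of the previous paragraph to the cover, and note that the result remains semiperfect modulo $p$ since it receives a surjection from a perfectoid algebra and surjects onto a cover of the semiperfect $\widetilde{S}/p$); cocontinuity is immediate once one unwinds the definitions and uses that $\mr{QRSPerf}_n$ is closed under quasisyntomic pushouts, which follows from the same tensor-product argument as above. With these in place, the equivalence of sheaf categories is formal, and all essential content is concentrated in the construction of $S$, exactly mirroring the strategy of \cite[Proposition 4.31]{BMS2}.
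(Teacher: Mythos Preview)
Your proposal is correct and follows essentially the same approach as the paper: both reduce (via \cite[Proposition~4.31]{BMS2}) to constructing, for each $A\in\mr{QSyn}_n$, the quasisyntomic cover $S=A\otimes_{W_n(k)[x_a]}W_n(k)[x_a^{1/p^\infty}]$ and checking that all \v Cech terms $S^{\otimes_A m}$ remain in $\mr{QRSPerf}_n$. The only difference is that you spell out the continuity/cocontinuity verifications explicitly, whereas the paper absorbs these into the reference to \cite{BMS2}.
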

\begin{rem}
	For a sheaf $\mathcal F$ on $\mr{QRSPerf}_{n}$ we will denote its image under the inverse equivalence in \Cref{prop:syntomic_vs_perfect} by $\mathcal F$ as well.
\end{rem}

\begin{ex}\label{ex:smooth k-algebra}
	Let $B$ be a smooth algebra over $W_n(k)$. By smoothness, Zariski-locally on $\Spec B$, there exists an \'etale map $P\ra B$ from the polynomial algebra $P=P_d\coloneqq W_n(k)[x_1,\ldots, x_d]$ for some $d$. Let $P_{\perf}=W_n(k)[x_1^{1/p^\infty},\ldots,x_d^{1/p^\infty}]$ and let $B_{\perf}\coloneqq B \otimes_P P_{\perf}$; it is a quasiregular semiperfectoid $W_n(k)$-algebra\footnote{In fact it is even \emph{quasismooth perfectoid}, since $\mathbb L_{B/W_n(k)} \simeq 0$ and the relative Frobenius for $B_{\mathrm{perf}}/p$ is an isomorphism.} and the natural map $B\ra B_{\perf}$ is a quasisyntomic cover. Moreover all terms ${(B_{\perf}\otimes_B\ldots \otimes_B B_{\perf})}_n$ in the corresponding Cech object are also quasiregular semiperfectoids. Given any sheaf $\mc F$ on $\mr{QRSPerf}_{n}$ its value on $B\in \mr{QSyn}_{n}$ (via \Cref{prop:syntomic_vs_perfect}) can be computed as "the unfolding": $$
	\RG_{\mr{QSyn}_n}(B,\mc F)\xra{\sim}\Tot\left( \xymatrix{\mc F(B_{\perf}) \ar@<-.45ex>[r] \ar@<.45ex>[r]& \mc F(B_{\perf}\otimes_B B_{\perf}) \ar@<+.9ex>[r] \ar[r] \ar@<-.9ex>[r] &  \mc F(B_{\perf}\otimes_B B_{\perf}\otimes_B B_{\perf}) \ar@<+1.35ex>[r] \ar@<+.45ex>[r] \ar@<-.45ex>[r] \ar@<-1.35ex>[r]& \cdots}\right).
	$$
\end{ex}

For a ring $R$ let $\Poly_{R}\subset \CAlg_{R/}$ denote the full subcategory of finitely generated \emph{polynomial $R$-algebras}. Recall that one of the ways to define the cotangent complex $\mathbb L_{A/R}$ for an $R$-algebra $A$ is to consider the left Kan extension of the functor $B\mapsto \Omega^1_{B/R}$ from the category of polynomial $R$-algebras, namely
$$
\mathbb L_{A/R}\simeq \colim_{\Poly_{R}/A}\Omega^1_{B/R}.
$$

One can extend the de Rham and crystalline cohomology functors in a similar way:
\begin{construction}\label{derived_crystalline}
	Let $k$ be a perfect field.
	\begin{itemize}
		\item The \emph{derived de Rham cohomology} functor
		$$\RG_{\mathbb L\dR}(-/W_n(k))\colon \CAlg_{W_n(k)/} \to \DMod{W_n(k)}  $$
		is defined as the left Kan extension of the functor $B\mapsto \Omega^\bullet_{B/W_n(k),\dR}$ on $\Poly_{W_n(k)}$.
		
		\item The \emph{derived crystalline cohomology} functor
		$$\RG_{\mathbb L\crys}(-/W(k))\colon \CAlg_{W_n(k)/} \to \DMod{W(k)}  $$
		is defined as the (derived) $p$-adic completion of the left Kan extension of the functor $B\mapsto \RG_{\crys}((B/p)/ W(k))$ on $\Poly_{W_n(k)}$.
	\end{itemize}
\end{construction}
\begin{rem}
	For a more thorough treatment of the derived de Rham and crystalline cohomology functors we refer the reader to \cite{Illusie_CotangentII} and \cite{Bhatt_derivedDeRham} where these notions were originally considered and applied.
\end{rem}
\begin{rem}\label{rem:de Rham vs crys}
For any $W_n(k)$-algebra $B$ the complexes $\RG_{\mathbb L\crys}(B/W(k))\otimes_{W(k)} W_n(k)$ and $\RG_{\mathbb L\dR}(B/W_n(k))$ are naturally equivalent. Indeed, by construction both functors commute with geometric realizations, hence it is enough to prove the statement for $B$ being a smooth $W_n(k)$-algebra. In this case this is a basic result in the crystalline cohomology theory, see e.g. \cite[Corollary 7.4]{BarthelotOgus}.
\end{rem}

Similarly, we can extend the functor $B\mapsto \tau^{\le m}\Omega^{\bullet}_{B/W_n(k),\dR}$ to get a filtered object $\left(\RG_{\mathbb L\dR}(-/W_n(k)\right),F_{\mr{H}}^*)$ (Hodge filtration). If $n=1$ the functor $B\mapsto \Omega^{\le m}_{B/W_n(k),\dR}$ extends to $(\RG_{\mathbb L\dR}(-/k),\Fil_*^\cnj)$ (conjugate filtration). The conjugate filtration on the derived de Rham cohomology is exhaustive since it is exhaustive on de Rham cohomology of polynomial algebras and since colimits commute. For $\RG_{\mathbb L\dR}(-/k)$ the Cartier isomorphism identifies the corresponding associated graded with $\bigoplus_{r\ge 0}\wedge^r\mathbb L_{B^{(1)}/k}[-r]$. Next lemma shows that the derived de Rham cohomology on the quasi-syntomic site satisfies flat descent:
\begin{lem}
Let $A \to B$ be a faithfully flat homomorphism of $k$-algebras and let $B^\bullet$ be the corresponding \v Cech co-simplicial object. Assume that $\mathbb L_{B/k}$ and $\mathbb L_{B/A}$ have cohomological $\Tor$-amplitude $[-1;0]$. Then the natural map
$$R\Gamma_{\mathbb L\dR}(A/k) \to \Tot R\Gamma_{\mathbb L\dR}(B^\bullet/k)$$
is an equivalence.

\begin{proof}
Note that since $A\to B$ is faithfully flat the base change for cotangent complex and transitivity triangles imply that $\mathbb L_{B^i/k}$ has Tor-amplitude in degrees $[-1;0]$ for all $i\ge 0$, where $B^i$ is the $i$-th term in the corresponding cosimplicial \v Cech object. Consequently, $\wedge^n\mathbb L_{B^{i(1)}/k}[-n]$ is 0-coconnective for any $n$ and $i$. It then follows by flat descent for cotangent complex (see \cite[Theorem 3.1]{BMS2}) that the natural map
$$F^n_{\mathrm{conj}} R\Gamma_{\mathbb L\dR}(A/k) \to \Tot R\Gamma_{\mathbb L\dR}(B^\bullet/k)$$
is $n$-coconnective and hence induces an equivalence after passing to the colimit by $n$ on the left hand side.
\end{proof}
\end{lem}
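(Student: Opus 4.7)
The plan is to reduce the claim to flat descent for the cotangent complex via the conjugate filtration on derived de Rham cohomology.

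First I would verify that each \v{C}ech term $B^i$ has $\mathbb L_{B^i/k}$ of cohomological Tor-amplitude $[-1,0]$. Applying the long exact sequence of cohomology to the transitivity triangle $\mathbb L_{A/k}\otimes_A B\to \mathbb L_{B/k}\to \mathbb L_{B/A}$ one concludes that $\mathbb L_{A/k}\otimes_A B$ has Tor-amplitude $[-1,0]$, and faithful flatness of $A\to B$ then transfers this bound to $\mathbb L_{A/k}$ itself. The base-change identification
$$\mathbb L_{B^i/A}\simeq \bigoplus_{j}\pi_j^*\mathbb L_{B/A}$$
(one summand per projection $\pi_j$) shows that $\mathbb L_{B^i/A}$ keeps Tor-amplitude $[-1,0]$, and an inductive use of the transitivity triangle for $A\to B^i$ yields the desired bound for $\mathbb L_{B^i/k}$.

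Next, the conjugate filtration $\Fil_\bullet^\cnj$ on $\RG_{\mathbb L\dR}(-/k)$ is exhaustive and, by the Cartier isomorphism, its $n$-th graded piece is $\wedge^n\mathbb L_{(-)^{(1)}/k}[-n]$. The Tor-amplitude bound implies that $\wedge^n\mathbb L_{B^{i(1)}/k}$ has Tor-amplitude $[-n,0]$, so $\gr_n^\cnj\RG_{\mathbb L\dR}(B^i/k)$ lies in cohomological degrees $[0,n]$; in particular each $\Fil_n^\cnj\RG_{\mathbb L\dR}(B^i/k)$ is $0$-coconnective, uniformly in $i$. Theorem 3.1 of \cite{BMS2} (flat descent for the cotangent complex, extended to its derived exterior powers) gives
$$\wedge^n\mathbb L_{A^{(1)}/k}[-n]\xrightarrow{\sim}\Tot\wedge^n\mathbb L_{B^{\bullet(1)}/k}[-n],$$
so each graded piece satisfies descent. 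Since totalizations commute with fiber/cofiber sequences in a stable $\infty$-category, an induction on $n$ using $\Fil_{n-1}^\cnj\to \Fil_n^\cnj\to \gr_n^\cnj$ then establishes the equivalence $\Fil_n^\cnj\RG_{\mathbb L\dR}(A/k)\xrightarrow{\sim}\Tot\Fil_n^\cnj\RG_{\mathbb L\dR}(B^\bullet/k)$ for every $n$.

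The main technical obstacle is the passage to $\colim_n$, as cosimplicial totalization need not commute with filtered colimits in general. The resolution is precisely the uniform $0$-coconnectivity established above: for a cosimplicial object in $\DMod{k}^{\ge 0}$ the cohomology $H^j(\Tot X^\bullet)$ is already computed by the partial totalization $\Tot^{\le j+1}X^\bullet$, and this finite limit commutes with filtered colimits. Consequently $\colim_n\Tot\Fil_n^\cnj\RG_{\mathbb L\dR}(B^\bullet/k)\simeq \Tot\RG_{\mathbb L\dR}(B^\bullet/k)$, and combining this with exhaustiveness of the conjugate filtration on the left-hand side completes the proof.
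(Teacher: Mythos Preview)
Your proposal is correct and follows essentially the same route as the paper: reduce to the graded pieces of the conjugate filtration, invoke flat descent for (exterior powers of) the cotangent complex from \cite{BMS2}, and use uniform $0$-coconnectivity to interchange $\colim_n$ with $\Tot$. Your treatment is in fact more explicit than the paper's terse phrasing---in particular, you spell out why the colimit-over-$n$ commutes with the cosimplicial totalization via partial totalizations $\Tot^{\le j+1}$, which is the substantive point behind the paper's compressed claim that the map from $\Fil_n^\cnj$ is ``$n$-coconnective''.
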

\begin{cor}
For every $n\ge 1$ the presheaves on $\mr{QSyn}_{n}$
$$A \mapsto R\Gamma_{\mathbb L\dR}(A/W_n(k)) \quad\text{and}\quad A \mapsto R\Gamma_{\mathbb L \crys}((A/p)/W(k))$$
are sheaves.

\begin{proof}
By \Cref{rem:de Rham vs crys} since $W_n(k)$ is of finite $\Tor$-amplitude over $W(k)$ it is enough to prove the assertion for $R\Gamma_{\mathbb L\crys}((-/p)/W(k))$. But since $R\Gamma_{\mathbb L\crys}((-/p)/W(k))$ is derived $p$-complete by construction and since $k$ is a perfect $W(k)$-module (and thus $-\otimes_{W(k)}k$ commutes with limits) it is enough to prove that $R\Gamma_{\mathbb L\crys}((-/p)/W(k))\otimes_{W(k)} k \simeq R\Gamma_{\mathbb L\dR}(-/k)$ is a sheaf. This is a content of the previous lemma.
\end{proof}
\end{cor}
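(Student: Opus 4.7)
The plan is to reduce both statements to \v Cech descent for the de Rham functor $A\mapsto \RG_{\mathbb L\dR}(A/k)$ on $\mr{QSyn}_1$, which is precisely the content of the preceding lemma.

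First, I would observe that given a quasisyntomic cover $A\to B$ in $\mr{QSyn}_n$, reduction modulo $p$ yields a quasisyntomic cover $A/p\to B/p$ in $\mr{QSyn}_1$ (quasisyntomicity and faithful flatness are preserved by base change along $W_n(k)\to k$). Consequently both $\mathbb L_{(B/p)/k}$ and $\mathbb L_{(B/p)/(A/p)}$ have Tor-amplitude $[-1,0]$ by the definition of $\mr{QSyn}_1$, so the preceding lemma directly yields \v Cech descent for $\RG_{\mathbb L\dR}(-/k)$ along such reduction-mod-$p$ covers.

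Next I would handle the crystalline functor $A\mapsto \RG_{\mathbb L\crys}((A/p)/W(k))$. Since it takes values in derived $p$-complete complexes by construction, and since $k\simeq W(k)/p$ is a perfect complex over $W(k)$, the base-change functor $-\otimes^{\mathbb L}_{W(k)}k$ commutes with the totalization defining \v Cech descent. Hence the sheaf property can be checked after tensoring with $k$; by Remark \ref{rem:de Rham vs crys} applied with $n=1$, this reduced functor is precisely $A\mapsto \RG_{\mathbb L\dR}((A/p)/k)$, which is handled by the first step.

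Finally, for the de Rham functor $A\mapsto \RG_{\mathbb L\dR}(A/W_n(k))$, Remark \ref{rem:de Rham vs crys} identifies it with $\RG_{\mathbb L\crys}((A/p)/W(k))\otimes^{\mathbb L}_{W(k)}W_n(k)$. Since $W_n(k)=W(k)/p^n$ is itself a perfect $W(k)$-module, this base change commutes with totalizations, and the sheaf property transfers directly from the crystalline case just established. The only real content beyond bookkeeping is the preceding lemma; the remaining obstacle, namely controlling the various derived base changes with respect to \v Cech totalizations, is absorbed by the perfectness of $k$ and $W_n(k)$ over $W(k)$.
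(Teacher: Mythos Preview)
Your proposal is correct and follows essentially the same route as the paper: reduce the crystalline case to the mod-$p$ de Rham case using derived $p$-completeness together with perfectness of $k$ over $W(k)$, then deduce the $W_n(k)$-de Rham case from the crystalline one via \Cref{rem:de Rham vs crys} and the fact that $W_n(k)$ is perfect (the paper says ``finite Tor-amplitude'', which suffices) over $W(k)$. Your explicit remark that reducing a quasisyntomic cover in $\mr{QSyn}_n$ modulo $p$ yields one in $\mr{QSyn}_1$ makes the application of the preceding lemma a bit more transparent than in the paper, but the argument is otherwise identical.
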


\begin{rem}\label{derived de rham smooth}
	Note that if $R$ were a $\mathbb Q$-algebra, the derived de Rham cohomology would be just equal to $R$ \cite[Remark 2.6]{Bhatt_pAdicDerivedDeRham}. Indeed, by the $\mathbb A^1$-invariance of the de Rham cohomology in characteristic $0$, the de Rham cohomology functor restricted to $\Poly_R$ is constant with value $R$, hence so is its left Kan extension. 
	In particular, the derived de Rham cohomology of a smooth $R$-algebra is usually not equivalent to the classical de Rham cohomology. To improve the situation one usually works with the Hodge-completed version of the derived de Rham cohomology instead. 
	
	However, in positive characteristic the non-completed derived de Rham cohomology is much better behaved. In particular it coincides with the classical de Rham cohomology for smooth $W_n(k)$-algebras. Here, the key observation, which is due to Bhatt (see \cite{Bhatt_pAdicDerivedDeRham}), is to use the conjugate filtration. Namely, to show that the natural morphism $R\Gamma_{\mathbb L\dR}(B) \to \RG_\dR(B)$  is an equivalence for a smooth $W_n(k)$-algebra $B$, it is enough to show this modulo $p$, and then (since both sides are complete), that the induced map on the associated graded of the conjugate filtration is an equivalence. For $B$ smooth, this reduces to natural isomorphisms $\Lambda^i \mathbb L_{B^{(1)}_k/k} \areq \Omega^i_{B^{(1)}_k/k}$.
\end{rem}
\begin{rem}\label{rem:F_p vs k}
	Since the absolute Frobenius $\sigma\colon k\ra k$ is an automorphism, the cotangent complex $\mathbb L_{k/\mathbb F_p}$ (and all its wedge powers) vanishes. It follows that $\RG_{\dR}(k/\mathbb F_p) \simeq k$. Given any $k$-algebra $B$ we have a natural morphism of $E_\infty$-algebras $\RG_{\dR}(k/\mathbb F_p)\ra \RG_{\dR}(B/\mathbb F_p)$. This endows $\RG_{\dR}(B/\mathbb F_p)$ with a natural $k$-linear structure. Similarly, for any $k$-algebra $A$ the complex $\RG_{\mathbb L\crys}(A/\mathbb Z_p)$ has a natural $W(k)$-linear structure. Moreover, the natural morphism 
	\begin{align}\label{eq:crys_over_Zpn_vs_Wn}
	\RG_{\mathbb L\crys}(B/\mathbb Z_p) \to \RG_{\mathbb L\crys}(B/W(k))
	\end{align}
	is $W(k)$-linear. We claim that \eqref{eq:crys_over_Zpn_vs_Wn} is an equivalence. Since both sides are $p$-adically complete it is enough to show that it is an equivalence mod $p$, where we get an analogous map, but for the derived de Rham cohomology of the reduction $B/p$. On the associated graded of the conjugate filtration $\Fil^\cnj_*$ the induced map is an equivalence, since in the transitivity triangle
	$$\mathbb  L_{k/\mathbb F_p}\otimes_{k} B \to \mathbb L_{B/\mathbb F_p} \to \mathbb L_{B/k}$$
	the term $\mathbb  L_{k/\mathbb F_p}$ is equivalent to $0$. Thus \eqref{eq:crys_over_Zpn_vs_Wn} is an equivalence.
\end{rem}

Recall that the cotangent complex $\mathbb L_{\widetilde S/W_n(k)}$ of $\widetilde S\in \mr{QRSPerf}_{n}$ has cohomological $\Tor$-amplitude consentrated in $-1$, thus $\bigoplus_{r}\wedge^r\mathbb L_{\widetilde S/W_n(k)}[-r]$ is supported in cohomological degree 0. The same holds for $\RG_{\mathbb L\dR}(\widetilde S/W_n(k))$; in other words, it is a classical commutative ring. It has a description in terms of one of the Fontaine's period rings $A_{\crys}$:
\begin{construction}\label{constr:Acrys}
	Let $S$ be a quasiregular semiperfect $k$-algebra and let $S^\flat$ be the inverse limit perfection $S^\flat\coloneqq{\prolim}{}_{\phi,n\ge 0}\  S^{(n)}$. We have a natural map $S^\flat \to S$ which is surjective. The ring $\mathbb A_\crys(S)$ is defined as the $p$-adic completion of the divided power envelope of the kernel of the natural composite surjection $\theta_{1,S}\colon W(S^\flat) \surj S^\flat \surj S$ (where the divided power structure agrees with the one on the ideal $(p) \subset W(k)$). Note that $\mathbb A_\crys(S)/p$ is identified with the $PD$-completion $D^{PD}_I(S^\flat)$ along the ideal $I\subset S^\flat$ defined as the kernel of the natural map $S^\flat \surj S$.
	
	Theorem 8.14(3) of \cite{BMS2} (together with \Cref{rem:F_p vs k}) identifies $\RG_{\mathbb L\crys}(S/ W(k))$ with $\mathbb A_{\crys}(S)$. The ring $\mathbb A_\crys(S)$ comes with a natural ring morphism $\phi\colon \mathbb A_\crys(S)^{(1)} \to \mathbb A_\crys(S)$ induced by the relative Frobenius $\phi\colon S^{(1)}\ra S$. It is identified with the natural Frobenius $\phi\colon\RG_{\mathbb L\crys}(S/W(k))^{(1)}\ra \RG_{\mathbb L\crys}(S/W(k))$ on the crystalline cohomology. 
	For each $n$ we define a presheaf of rings $\mathbb A_{\crys}$ on $\mr{QRSPerf}_{n}$ by sending $\widetilde S\in \mr{QRSPerf}_{n}$ to $\mathbb A_{\crys}(\widetilde S/p)$. By the above identification it is in fact a sheaf. Note that by the universal property of the PD-envelope there is a natural map\footnote{Here we endow $(p)\subset \widetilde S$ with the standard PD-structure, given by  $p^{[k]}:=p^k/k!$.} $\theta_{n,\widetilde S}:\mathbb A_{\crys}(\widetilde S/p)\ra \widetilde S$ which factors through $\mathbb A_{\crys}(\widetilde S/p)/p^n$.
\end{construction}

The following two filtrations on $\mathbb A_{\crys}/p$ correspond to the Hodge and the conjugate filtrations:  
\begin{defn}\label{constr:HodgeFil}
	Let $S$ be a quasiregular semiperfect $k$-algebra and let $I$ be the ideal of the natural projection $S^\flat \surj S$. The descending \emph{Hodge filtration} on $\mathbb A_{\crys}(S)/p\simeq D^{PD}_I(S^\flat)$ is defined as the filtration by the divided powers of $I$: $\mathbb A_{\crys}(S)/p\simeq I^0 \supset I^{[1]} = I \supset I^{[2]} \supset I^{[3]}  \supset \cdots$. This filtration is functorial in $S$ and thus defines a filtration by presheaves $\mathbb I^0 \supset \mathbb I^{[1]} = \mathbb I \supset \mathbb I^{[2]} \supset \mathbb I^{[3]}  \supset \cdots$ on the sheaf $\mathbb A_{\crys}/(p)$ on $\mr{QRSPerf}_{n}$ for any $n$. Via Proposition 8.12 of \cite{BMS2} it is identified with the Hodge filtration on $\RG_{\mathbb L\dR}(S/k)\simeq \mathbb A_{\crys}(S)/p$ and thus is in fact a filtration by sheaves.
\end{defn}
\begin{defn}\label{constr:ConjFil}
	The ascending \emph{conjugate filtration} $\mr{Fil}_*^{\cnj}$ on $\mathbb A_{\crys}(S)/p\simeq D^{PD}_{S^\flat}(I)$ is defined by taking $F_r^{\cnj}$ to be the $S^\flat $-submodule generated by the elements of the form $s_1^{[l_1]} s_2^{[l_2]} \ldots s_m^{[l_m]}$ with $s_i\in I$ and $\sum_{i=1}^m l_i < (r+1)p$. This construction is functorial in $S$ and determines an (ascending) filtration  $\mr{Fil}_{*}^\cnj$ on the sheaf $\mathbb A_{\crys}/p$ on $\mr{QRSPerf}_{n}$ for any $n$. By Proposition 8.12 of \cite{BMS2} it is identified with the conjugate filtration on $\RG_{\mathbb L\dR}(S/k)$ and thus is also a filtration by sheaves. Note that both filtrations are multiplicative and the conjugate filtration is exhaustive.
\end{defn}

The following is an analogue of the inverse Cartier isomorphism (see \Cref{Cartier_iso}) between $(\mathbb A_{\crys}/p, \mathbb I^{[*]})$ and $(\mathbb A_{\crys}/p, \Fil_{*}^\cnj)$:
\begin{prop}[\cite{BMS2}, Propositions 8.11 and 8.12] \label{prop:Cartier on A_crys}
	Let $S$ be a semiperfect $k$-algebra.
	There is a well-defined surjective homomorphism of $W(k)$-algebras $\kappa_*\colon\Gamma^*_{S}(I/I^2)^{(1)}\ra \gr_*^\cnj(\mathbb A_{\crys}(S)/p) $\footnote{Where $\Gamma^*$ denotes the free commutative divided power algebra.}. If $S$ is quasiregular, $\kappa_*$ is an isomorphism.
	
	\begin{proof}
		The map is defined as follows: for $s_i\in I$ 
		$$
		\kappa_{k_1+\cdots k_m}:s_1^{[k_1]}\cdots s_m^{[k_m]} \mapsto \prod_{i=1}^m \left(\frac{(pk_i)!}{p^{k_i}k_i!}\right)s_1^{[pk_1]}\cdots s_m^{[pk_m]}. 
		$$
		We have $(s_1s_2)^{[pk]}=p!(s_1^k)^{[p]}s_2^{[pk]}=0$ and $(s_1s_2)^{[l]}\in \Fil_0^\cnj$ for any $l<p$. This shows that for $s\in I^2$, $s^{[l]}\in\Fil_0^\cnj$ for all $l$ and so the map is well-defined.
		Elements $\{s_1^{[pk_1]}\cdots s_m^{[pk_m]}\}_{k_1+\cdots k_m<r+1}$ in fact generate $\Fil_r^\cnj$ over $S^\flat$. Since the integer $\prod_{i=1}^m \left(\frac{(pk_i)!}{p^{k_i}k_i!}\right)$ is a $p$-adic unit the map $\kappa_*$ is surjective. The fact that $\kappa_*$ is an isomorphism for $S$ quasiregular semiperfect is a part of Proposition 8.12 of \cite{BMS2}.
	\end{proof}
\end{prop}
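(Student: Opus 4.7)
The plan is to construct $\kappa_*$ by an explicit formula on divided-power monomials, verify well-definedness, prove surjectivity, and in the quasiregular case establish injectivity. Throughout I write $R \coloneqq S^\flat$ and let $I \subset R$ denote the kernel of $R \twoheadrightarrow S$, so that $\mathbb A_{\crys}(S)/p = D^{PD}_I(R)$, and the conjugate filtration is the one defined in \Cref{constr:ConjFil}.

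First I would define a candidate map $\widetilde\kappa$ on divided-power monomials by sending $\overline{s_1}^{[k_1]} \cdots \overline{s_m}^{[k_m]} \in \Gamma^{\sum k_i}_S(I/I^2)^{(1)}$ to $\prod_i c_{k_i} \cdot s_1^{[pk_1]} \cdots s_m^{[pk_m]}$, where $c_k \coloneqq (pk)!/(p^k k!)$. A short computation with Legendre's formula yields $v_p(c_k) = 0$, so each $c_k$ is a $p$-adic unit, and the image clearly lies in $\Fil_{\sum k_i}^\cnj$. For well-definedness as a map out of $\Gamma^*_S(I/I^2)^{(1)}$ I need to check two things: (a) the image lies in $\Fil^\cnj_{<r}$ (and so vanishes in $\gr_r^\cnj$) whenever some $s_i$ lies in $I^2$; and (b) the divided-power axioms $(s+s')^{[k]} = \sum_{i+j=k} s^{[i]} (s')^{[j]}$ and $(\lambda s)^{[k]} = \lambda^k s^{[k]}$ on the source are respected modulo $\Fil^\cnj_{<r}$. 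Point (a) follows from the key identity $(s_1 s_2)^{[pk]} = p! \cdot (s_1^k)^{[p]} \cdot s_2^{[pk]}$ (obtained by rewriting $(s_1 s_2)^{pk}/(pk)!$ and extracting $(s_1^k)^p = p! (s_1^k)^{[p]}$), which vanishes in characteristic $p$ for $k \ge 1$; so the image is literally zero when any $s_i \in I^2$. Point (b) reduces to Lucas-type multiplicativity of the $c_k$'s combined with the fact that $(\lambda s)^{[pk]} = \lambda^{pk} s^{[pk]}$ matches the $\lambda^k$ on the source after Frobenius twist.

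For surjectivity, I would exploit the description of $\Fil_r^\cnj$ as the $R$-span of $s_1^{[l_1]}\cdots s_m^{[l_m]}$ with $\sum l_i < (r+1)p$. Writing each $l_i = p k_i + j_i$ with $0 \le j_i < p$ and using that $j_i!$ is a unit modulo $p$, I have $s_i^{[l_i]} = (j_i!)^{-1} s_i^{j_i} \cdot s_i^{[pk_i]}$; absorbing the leftover ordinary-power factors $\prod_i s_i^{j_i}$ into an element of $R$ then expresses the original monomial as an $R$-multiple of $s_1^{[pk_1]} \cdots s_m^{[pk_m]}$ with $\sum k_i \le r$. Since the $c_{k_i}$ appearing in the formula for $\kappa_*$ are all units, surjectivity follows immediately.

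The main obstacle is injectivity in the quasiregular semiperfect case, which I expect to require a universal-model argument. Quasiregularity says precisely that $\mathbb L_{S/k}[-1] \simeq I/I^2$ is a flat $S$-module, and both the source and the target of $\kappa_*$ are functorially built from the pair $(S, I/I^2)$ in a manner compatible with filtered colimits. The strategy is therefore to reduce to the universal test case $R = k[x_\alpha^{1/p^\infty}]$ with $I = (x_\alpha)$, for which the source is the free divided-power algebra on the free $S$-module with basis $\{\overline{x_\alpha}\}$, while the target $D^{PD}_{(x_\alpha)}(R)/p$ admits an explicit $R$-basis consisting of monomials $\prod_\alpha x_\alpha^{[l_\alpha]}$ by the classical computation of the PD-envelope of a (pro-)regular ideal. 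Matching the respective bases of $\gr_r^\cnj$ and $\Gamma^r_S(I/I^2)^{(1)}$ monomial by monomial, up to the unit factors $c_{k_\alpha}$, then produces the isomorphism; if this reduction proves too delicate, one can cite \cite[Proposition 8.12]{BMS2} directly at this final step.
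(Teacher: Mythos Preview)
Your proposal is correct and follows essentially the same approach as the paper: the same explicit formula for $\kappa_*$, the same key identity $(s_1s_2)^{[pk]}=p!(s_1^k)^{[p]}s_2^{[pk]}=0$ for well-definedness, the same reduction of arbitrary divided-power monomials to those of the form $s_1^{[pk_1]}\cdots s_m^{[pk_m]}$ for surjectivity (you spell out the $l_i=pk_i+j_i$ decomposition more explicitly), and the same appeal to \cite[Proposition~8.12]{BMS2} for the isomorphism in the quasiregular case. Your additional remarks about Legendre's formula and compatibility with the divided-power axioms are useful elaborations the paper leaves implicit.
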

\begin{rem}
	 In particular we get an isomorphism $\kappa_*\colon \Gamma^*_{S}(\mathbb I/\mathbb I^2)^{(1)}\xra{\sim} \gr_*^\cnj (\mathbb A_{\crys}/p) $ of sheaves of algebras on $\mathrm{QRSPerf_n}$.
\end{rem}

Now we descend everything back to the quasisyntomic site $\mr{QSyn}_n$. We record what the sheaves defined above give when computed on a smooth $W_n(k)$-algebra $B$.

\begin{prop}\label{prop:A_crys/p_on_smooth schemes}
	Let $B$ be a smooth $W_n(k)$-algebra considered as an object of $\mr{QSyn}_{n}$.
	Then:
	\begin{enumerate}
		\item For any $0\le s\le n$ there is a natural equivalence of $E_\infty$-algebras $\RG_{\mathrm{QSyn}_n}(B, \mathbb A_{\crys}/p^s)\simeq \Omega_{(B/p^s)/ W_s(k),\dR}^\bullet$.
		
		\smallskip 
		
		\item For any $r\in \mathbb Z_{\ge 0}$ there is a natural equivalence $\RG_{\mathrm{QSyn}_n}(B, \mathbb I^{[r]})\simeq \Omega_{(B/p)/k,\dR}^{\ge r}$,
		where $\Omega_{B/p,\dR/k}^{\ge r}$ is the $r$-th term of the Hodge filtration. 
		\smallskip 
		
		\noindent In particular, $\RG_{\mathrm{QSyn}_n}(B, \mathbb I^{[r]}/\mathbb I^{[r+1]})\simeq \Omega^r_{(B/p)/k}[-r]$.
		
		\item For any $r\in \mathbb Z_{\ge 0}$ there is a natural equivalence $\RG_{\mathrm{QSyn}_n}(B, \Fil_{r}^\cnj) \simeq \tau^{\le r} \Omega^\bullet_{(B/p)/k, \dR}$.
		
		\item The natural map $\Gamma^r_{S}(\mathbb I/\mathbb I^2)\ra \mathbb I^{[r]}/\mathbb I^{[r+1]}$ given by multiplication induces an equivalence
		$$
		\RG_{\mr{QSyn}_n}(B,\Gamma^r_{S}(\mathbb I/\mathbb I^2))\simeq\RG_{\mr{QSyn}_n}(B,\mathbb I^{[r]}/\mathbb I^{[r+1]})
		$$ 
		for any $r\ge 0$.
		
		\item The isomorphism $\kappa_*\colon \Gamma^*_{S}(\mathbb I/\mathbb I^2)^{(1)}\xra{\sim} \gr_*^\cnj (\mathbb A_{\crys}/p) $ from \Cref{prop:Cartier on A_crys} induces the inverse Cartier isomorphism 
		$$
		\bigoplus_{r=0}^\infty \Omega^r_{(B^{(1)}/p)/k} \xymatrix{\ar[r]^-{C^{-1}}_\sim & }   \bigoplus_{r=0}^\infty H^{r}\left(\Omega_{(B/p)/k,\dR}^\bullet\right)
		$$ via the above equivalences.
	\end{enumerate}
	
	\begin{proof}
		
		Parts $1, 2, 3$  follow from Proposition 8.12 and Theorem 8.14(3) of \cite{BMS2} and flat descent for the derived crystalline cohomology (using Remarks \ref{derived de rham smooth}, \ref{rem:de Rham vs crys} and \ref{rem:F_p vs k}).
		
		\smallskip 
		
		4. We use the notations of \Cref{ex:smooth k-algebra}. We have
		$$
		\RG_{\mr{QSyn}_n}(B,\mc F)\xra{\sim}\Tot\left( \xymatrix{\mc F(B_{\perf}) \ar@<-.45ex>[r] \ar@<.45ex>[r]& \mc F(B_{\perf}\otimes_B B_{\perf}) \ar@<+.9ex>[r] \ar[r] \ar@<-.9ex>[r] &  \mc F(B_{\perf}\otimes_B B_{\perf}\otimes_B B_{\perf}) \ar@<+1.35ex>[r] \ar@<+.45ex>[r] \ar@<-.45ex>[r] \ar@<-1.35ex>[r]& \cdots}\right)
		$$
		for any quasisyntomic sheaf $\mc F$. Moreover all terms $(B_{\perf}\otimes_B\ldots \otimes_B B_{\perf})_n$ are in fact regular semiperfect, meaning $I\subset S^\flat$ is generated by a regular sequence. Thus for them $\Gamma^r_{S}(I/I^2)\xra{\sim} I^{[r]}/I^{[r+1]}$ and so $\RG_{\mr{QSyn}_n}(B,\Gamma^r_{S}(\mathbb I/\mathbb I^2))\simeq \RG_{\mr{QSyn}_n}(B,\mathbb I^{[r]}/\mathbb I^{[r+1]})$.
		
		\smallskip 
		
		5. Note that the map depends only on the reduction of $B$ modulo $p$, thus it is enough to consider the case $B\in \mr{QSyn}_1$ is of characteristic $p$. The inverse Cartier isomorphism $C^{-1}$ is uniquely defined by the property that it is multiplicative, $C^{-1}(f)=f^p$ and $C^{-1}(df)=f^{p-1}df$ for any $f\in B$. The map $\kappa_*$ is multiplicative, $\kappa_0$ is by definition given by Frobenius, so it remains to check the third assertion. By functoriality (considering the map $k[x]\xra{x\mapsto f} B$) it is enough to check this in the case $B=k[x]$ and $f=x$. While originally we had the proof using the relation between the Cartier isomorphism and the Bockstein operator, we will present a different proof that was kindly suggested to us by one of the referees.
		
		One can use the explicit formula for the crystalline cohomology via the \v Cech-Alexander complex. Namely, for any smooth $k$-algebra $B$ the homotopy groups of the cosimplicial algebra $C_\crys^\bullet(B)\coloneqq D_{\Ker(B^{\otimes_k^\bullet}\xra{m} B)} B^{\otimes_k^\bullet}$ compute the de Rham cohomology of $B$; here $B^{\otimes_k^\bullet}\xra{m} B$ is the multiplication map and $D_{\Ker(B^{\otimes_k^\bullet}\xra{m} B)} B^{\otimes_k^\bullet}$ is the PD-envelope corresponding to its kernel. 
		By \cite[The proof of Theorem 2.12]{BhattdeJong} the totalization of the bicomplex
		$$
		\xymatrix{B\ar[r]\ar[d]& \Omega^1_B\ar[r]\ar[d]^{d_{\text{\v Cech}}}& \Omega^2_B\ar[r]\ar[d]& \ldots\\
			D(2)\ar[r]^(.39){d_\dR}\ar[d]& \Omega^1_{D(2)} \ar[r]\ar[d]& \Omega^2_{D(2)}\ar[r]\ar[d]& \ldots\\ D(3)\ar[r]\ar[d]& \Omega^1_{D(3)} \ar[r]\ar[d]& \Omega^2_{D(3)}\ar[r]\ar[d]& \ldots\\
			\ldots& \ldots&\ldots&
		}
		$$
		with $D(i)\coloneqq D_{\Ker(B^{\otimes_k^i}\xra{m} B)} B^{\otimes_k^i}$ is quasiisomorphic to both the first row and column (via the embeddings of the latter) this way establishing the comparison quasiisomorphism of $C_\crys^\bullet(B)$ and $\Omega_{B,\dR}^\bullet$. For $B=k[x]$ we have $D(2)=D_{(x_1-x_2)}k[x_1,x_2]$, $d_{\text{\v Cech}}(x^{p-1}dx)=x_1^{p-1}dx_1 -x_2^{p-1}dx_2$ and we leave it to the reader to check that this also equals to $d_{\dR}(a)$ with $a\coloneqq (p-1)!((x_1 -x_2)^{[p]}+\sum_{i=1}^{p-1}(-1)^i x_1^{[p-1-i]}x_2^{[i]})\in D(2)$. Thus under this comparison the class $[x^{p-1}dx]\in H^1_\dR(k[x])$ goes to $[a]$ in $H^1(C_\crys^\bullet(B))$. Note also that by an analogous but simpler computation $[dx]\in H^1_\dR(k[x])$ goes to $[x_1-x_2]\in H^1(C_\crys^\bullet(B))$.

		As we saw in part 4, the cosimplicial algebra $(\mathbb A_{\crys}/p)(B_{\perf}^{\otimes_B \bullet})\simeq D_{\Ker(B_{\perf}^{\otimes_k^\bullet}\xra{} B_\perf^{\otimes_B^\bullet})}B_{\perf}^{\otimes_k^\bullet}$ appearing as the \v Cech object associated to the quasisyntomic cover $B\ra B_\perf$ also computes the de Rham cohomology of $B$. We have a natural map of cosimplicial algebras $C_\crys^\bullet(B)\ra \mathbb (\mathbb A_{\crys}/p)(B_{\perf}^{\otimes_B \bullet})$ induced termwise by $B^{\otimes_k^\bullet}\ra B_{\perf}^{\otimes_k^\bullet}$. The definitions (\ref{constr:HodgeFil} and \ref{constr:ConjFil}) of the conjugate and Hodge filtrations make sense for any PD-envelope and so extend to the cosimplicial algebra $C_\crys^\bullet(B)$ as well. The map $\kappa$ (see the proof of \ref{prop:Cartier on A_crys}) extends naturally as well, the map $C_\crys^\bullet(B)\ra \mathbb (\mathbb A_{\crys}/p)(B_{\perf}^{\otimes_B \bullet})$ preserves the filtrations, commutes with $\kappa$ and in fact is a filtered quasi-isomorphism\footnote{Indeed, both complexes can be considered as \v Cech-Alexander complexes for a slightly unusual "quasisyntomic" version of the char $p$ crystalline site:  namely, we consider triples $(U,T,\delta)$ with $U\ra \Spec B$ a quasisyntomic morphism and $T$ being a char $p$ PD-thickening of $U$. The complexes $C_\crys^\bullet(B)$ and $\mathbb (\mathbb A_{\crys}/p)(B_{\perf}^{\otimes_B \bullet})$ can be interpreted as the \v Cech-Alexander complexes corresponding to two different covers, namely $\Spec B\ra *$ and $\Spec B_\perf\ra *$. The map above is then induced by the map of coverings $\Spec B_\perf\ra \Spec B$ and being a map \v Cech-Alexander complexes (for the structure sheaf) is automatically a quasiisomorphism. From this interpretation it is also clear that it respects the Hodge and conjugate filtrations, as well as the map $\kappa$.}.  Returning to the case $B=k[x]$ we see that, first, $dx\in \Omega^1_{\mathbb A^1_k}\simeq  H^1_{\mr{QSyn}}(B,\mathbb I/\mathbb I^2)$ corresponds to $x_1-x_2\in D(2)\subset D_{\Ker(B_\perf^{\otimes_k^{2}} \ra B_\perf^{\otimes_B^{ 2}})}B_\perf^{\otimes_k^{2}}$ under the comparison, and, second, that the class $[\kappa_1(dx)]$ in $ H^1_{\mr{QSyn}}(B,\gr_1^\cnj)$ is given by the class of the element $(p-1)!(x_1-x_2)^{[p]}\in D(2)\subset D_{\Ker(B_\perf^{\otimes_k^{2}} \ra B_\perf^{\otimes_B^{ 2}})}B_\perf^{\otimes_k^{2}}$ modulo $\Fil_{0}^\cnj$. It remains to note that this element differs from $a$ by $\sum_{i=1}^{p-1}(-1)^i x_1^{[p-1-i]}x_2^{[i]}$ which lies in $\Fil_0^\cnj$. 	\qedhere

	\end{proof}
	
	
	
\end{prop}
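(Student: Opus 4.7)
The plan is to establish all five statements by reducing to computations on quasi-regular semiperfectoid algebras, where the sheaves in question become ordinary commutative rings with explicit descriptions, and then to descend back to smooth $W_n(k)$-algebras via the quasisyntomic cover $B \to B_{\perf}$ constructed in \Cref{ex:smooth k-algebra}. The general template: by \Cref{prop:syntomic_vs_perfect} and the unfolding formula of \Cref{ex:smooth k-algebra}, it suffices to establish each comparison on every term of the \v Cech object $B_{\perf}^{\otimes_B n}$ in a way compatible with coface maps.

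For parts (1)--(3) I would invoke the comparison results already in the toolbox. Part (1) follows from \cite[Theorem 8.14(3)]{BMS2} identifying $\RG_{\mathbb L\crys}(-/W(k))$ with $\mathbb A_{\crys}$ on quasi-regular semiperfectoids, combined with Remarks \ref{derived de rham smooth}, \ref{rem:de Rham vs crys}, and \ref{rem:F_p vs k} to pass between derived and classical (de Rham/crystalline) cohomology and between the bases $\mathbb Z_p$ and $W_s(k)$. Parts (2) and (3) are then built into the definitions of $\mathbb I^{[*]}$ and $\Fil_*^\cnj$ (see \Cref{constr:HodgeFil}, \Cref{constr:ConjFil}): by \cite[Proposition 8.12]{BMS2} they correspond, under the identification $\RG_{\mathbb L\dR}(-/k) \simeq \mathbb A_{\crys}(-/p)$, to the Hodge and conjugate filtrations on derived de Rham cohomology, and on smooth inputs these specialize by \Cref{derived de rham smooth} to the classical Hodge filtration $\Omega_{(B/p)/k,\dR}^{\ge r}$ and the Postnikov truncation $\tau^{\le r}\Omega^\bullet_{(B/p)/k,\dR}$ respectively.

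Part (4) rests on the observation that each term $B_{\perf}^{\otimes_B n}$ in the \v Cech cover is not merely quasiregular but in fact \emph{regular} semiperfect: the ideal $I \subset S^\flat$ defining the surjection onto $B_{\perf}^{\otimes_B n}/p$ is generated by a regular sequence (arising from the $p$-power roots of polynomial generators). For such $S$, the standard fact about PD-envelopes of regular ideals yields an isomorphism $\Gamma^r_S(I/I^2) \xrightarrow{\sim} I^{[r]}/I^{[r+1]}$, and termwise application followed by totalization gives the claim.

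The main obstacle is part (5), where one must match two a priori different identifications between $\bigoplus_r \Omega^r_{B^{(1)}/k}$ and $\bigoplus_r \mathcal{H}^r(\Omega^\bullet_{(B/p)/k,\dR})$. By multiplicativity of both $\kappa_*$ and $C^{-1}$ the verification reduces to checking $\kappa_0 = \phi$ (immediate from the definition of $\kappa$, cf.\ \Cref{prop:Cartier on A_crys}) together with the identity $\kappa_1(df) = [f^{p-1}\,df]$ on exact one-forms. The latter further reduces by functoriality applied to the morphism $k[x] \xrightarrow{x \mapsto f} B$ to the universal case $B = k[x]$, $f = x$. Here I would compute both classes through an explicit \v Cech-Alexander model: $dx$ lifts to $x_1 - x_2$ in the PD-envelope of the kernel of the multiplication $k[x_1,x_2] \to k[x]$, so $\kappa_1(dx)$ is represented by $(p-1)!(x_1 - x_2)^{[p]}$ modulo $\Fil_0^\cnj$, while a direct de Rham computation (essentially as in \cite{BhattdeJong}) identifies $[x^{p-1}dx]$ with $(p-1)!\bigl((x_1-x_2)^{[p]} + \sum_{i=1}^{p-1}(-1)^i x_1^{[p-1-i]} x_2^{[i]}\bigr)$. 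The two representatives differ by an element of $\Fil_0^\cnj$ and therefore agree in $\gr_1^\cnj$. The technical delicacy is to ensure that the \v Cech-Alexander model $C_\crys^\bullet(B) := D_{\ker(B^{\otimes_k \bullet} \to B)} B^{\otimes_k \bullet}$ and the quasisyntomic model $(\mathbb A_{\crys}/p)(B_{\perf}^{\otimes_B \bullet})$ are connected by a filtered quasi-isomorphism intertwining both $\kappa$ maps, which one can verify by viewing both as \v Cech-Alexander complexes associated to different quasisyntomic covers of a common char-$p$ crystalline setup.
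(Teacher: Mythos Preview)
Your proposal is correct and follows essentially the same route as the paper's own proof: parts (1)--(3) are deduced from \cite[Proposition~8.12, Theorem~8.14(3)]{BMS2} together with the remarks linking derived and classical de Rham/crystalline cohomology; part (4) is handled by the regular-semiperfect observation for the \v Cech terms $B_{\perf}^{\otimes_B n}$; and part (5) is reduced via multiplicativity and functoriality to $B=k[x]$, $f=x$, where the explicit \v Cech--Alexander comparison (using \cite{BhattdeJong}) identifies $\kappa_1(dx)$ with $[x^{p-1}dx]$ up to an element of $\Fil_0^{\cnj}$, exactly as in the paper. The only point the paper makes slightly more explicit is the footnoted justification that the map $C_\crys^\bullet(B)\to (\mathbb A_{\crys}/p)(B_{\perf}^{\otimes_B\bullet})$ is a filtered quasi-isomorphism by interpreting both sides as \v Cech--Alexander complexes for two different covers in a quasisyntomic crystalline site, which you correctly flag as the technical delicacy.
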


\smallskip

Next we prove the following enhancement of the classical Deligne-Illusie splitting:
\begin{thm}\label{Functorial_DeligneIllusie_for_affines}
	Let $\Aff^\sm_{/W_2(k)}$ be the category of smooth affine schemes over $W_2(k)$. Then there is a natural $k$-linear equivalence of functors
	$$
	\bigoplus_{i=0}^{p-1} \Omega^i_{-^{(1)}}\colon B\mapsto \bigoplus_{i=0}^{p-1} \Omega^i_{(B^{(1)}/p)/k}[-i] \ \ \ \text{ and } \ \ \ \tau^{\le p-1} \Omega^\bullet_{-,\dR} \colon B\mapsto \tau^{\le p-1} \Omega^\bullet_{(B/p)/k, \dR}
	$$
	from $\Aff^{\sm,\op}_{/W_2(k)}$ to $\DMod{k}$ which induces the Cartier isomorphism on the level of the individual cohomology functors.
\end{thm}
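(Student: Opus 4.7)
The plan is to work entirely on the quasi-syntomic site and exploit the fact, already set up in \Cref{prop:syntomic_vs_perfect}, that $\mr{QRSPerf}_2$ is a basis. Concretely, I will produce a natural splitting
$$\bigoplus_{r=0}^{p-1} \gr_r^\cnj\bigl(\mathbb A_{\crys}/p\bigr)[-r] \xrightarrow{\ \sim\ } \Fil_{p-1}^\cnj\bigl(\mathbb A_{\crys}/p\bigr)$$
of sheaves (of complexes) on $\mr{QRSPerf}_2$, functorial in the semiperfectoid $W_2(k)$-algebra $\widetilde S$. Granting such a splitting, the theorem follows by evaluating $\RG_{\mr{QSyn}_2}(B,-)$ at a smooth $W_2(k)$-algebra $B$: part (3) of \Cref{prop:A_crys/p_on_smooth schemes} identifies the right-hand side with $\tau^{\le p-1}\Omega_{(B/p)/k,\dR}^\bullet$, while parts (4) and (5), together with the Cartier isomorphism $\kappa_*$ of \Cref{prop:Cartier on A_crys}, identify the left-hand side with $\bigoplus_{i=0}^{p-1}\Omega^i_{(B^{(1)}/p)/k}[-i]$. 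Functoriality in $B\in \Aff^{\sm,\op}_{/W_2(k)}$ is built in, and the induced map on cohomology is the Cartier isomorphism by construction.

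The heart of the proof is the construction of the splitting on $\mr{QRSPerf}_2$, which I will carry out following Fontaine--Messing. Given $\widetilde S\in \mr{QRSPerf}_2$ with reduction $S = \widetilde S/p$, consider the canonical surjection $\theta = \theta_{2,\widetilde S}\colon \mathbb A_{\crys}(S)/p^2 \surj \widetilde S$ of \Cref{constr:Acrys} and its kernel $K$. Since $\theta$ is a ring map into a $p$-torsion free $W_2(k)$-algebra, for $x\in K$ we have $\phi(x)\equiv x^p \pmod p$ with $\theta(x^p)=\theta(x)^p = 0$ in $\widetilde S$, so in fact $\phi(K)\subseteq p\cdot \mathbb A_{\crys}(S)/p^2$. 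Division by $p$ then defines the \emph{first divided Frobenius}
$$\phi_1 \colon K \lra \mathbb A_{\crys}(S)/p,$$
and one checks that $\phi_1$ annihilates $p\cdot\mathbb A_{\crys}(S)/p^2$ together with $K\cdot I^{[1]} + I^{[2]}$, hence descends to a $W_2(k)$-linear map $(I/I^2)^{(1)} \to \Fil_1^\cnj\subset \mathbb A_{\crys}(S)/p$ which, by inspection, splits the projection $\Fil_1^\cnj \twoheadrightarrow \gr_1^\cnj \simeq (I/I^2)^{(1)}$ coming from $\kappa_1$. All of this construction is manifestly functorial in $\widetilde S$, so $\phi_1$ provides the desired splitting at the level of sheaves. (This step is the precise content foreshadowed as \Cref{Hodge_splitting_on_QRSPerf} in the introduction.)

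Because $\mathbb A_{\crys}/p$ is a sheaf of commutative $W_2(k)$-algebras with divided powers, and since the conjugate filtration is multiplicative, the level-one splitting extends by taking (divided) products to multiplicative splittings of all $\Fil_r^\cnj \twoheadrightarrow \gr_r^\cnj$ for $r\le p-1$; under $\kappa_*$ these are the splittings coming from $\Gamma^r\bigl((I/I^2)^{(1)}\bigr)$. The bound $r\le p-1$ is forced at this stage: one loses control of the splitting at $r=p$ because $x^p = p!\cdot x^{[p]}$ for $x\in I$ introduces a denominator that trivializes the associated graded contribution only below $p$. Assembling these splittings gives the required equivalence at the level of sheaves on $\mr{QRSPerf}_2$; descending to $\mr{QSyn}_2$ via \Cref{prop:syntomic_vs_perfect} and evaluating on $B\in \Aff^\sm_{/W_2(k)}$ via \Cref{ex:smooth k-algebra} and \Cref{prop:A_crys/p_on_smooth schemes} yields the theorem.

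The main technical obstacle is the verification that $\phi_1$ really does factor through $(I/I^2)^{(1)}$ and that the resulting multiplicative extension is compatible with $\kappa_*$ up to degree $p-1$. This is essentially a PD-algebra calculation inside $\mathbb A_{\crys}/p$; once done for one $\widetilde S$ it is automatic sheaf-wise, which is precisely why the Fontaine--Messing approach — as opposed to the original gerbe argument of Deligne--Illusie — adapts to the $\infty$-categorical setting needed here.
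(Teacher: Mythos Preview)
Your proposal is correct and follows essentially the same route as the paper: reduce via \Cref{prop:syntomic_vs_perfect} and \Cref{prop:A_crys/p_on_smooth schemes} to constructing a functorial splitting of $\Fil_{p-1}^\cnj$ on $\mr{QRSPerf}_2$, then build the level-one splitting from the divided Frobenius $\phi_1$ applied to $K=\ker\theta_{2,\widetilde S}$ and extend multiplicatively using $\Gamma^i\simeq\Sym^i$ for $i\le p-1$. The paper phrases the construction of $\phi_1$ through the Nygaard filtration (appealing to \cite[Theorem~8.14]{BMS2}) rather than your more direct description, but the content is the same.
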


By \Cref{prop:syntomic_vs_perfect} and \Cref{prop:A_crys/p_on_smooth schemes} to deduce the statement of the theorem it is enough to prove the following: 
\begin{prop}\label{Hodge_splitting_on_QRSPerf}
	There is a natural isomorphism $f\colon \bigoplus_{r=0}^{p-1} \Gamma^r_{S}(\mathbb I/\mathbb I^2) \simeq \Fil_{p-1}^\cnj$ of sheaves of abelian groups on $\mr{QRSPerf}_{2}$ such that it agrees with $\kappa_{\le p-1}\colon \Gamma^{\le p-1}_{S}(\mathbb I/\mathbb I^2)\xra{\sim} \gr_{\le p-1}^\cnj (\mathbb A_{\crys}/p) $ after passing to the associated graded.
	
	\begin{proof}
		Given $\widetilde S\in \mr{QRSPerf}_{2}$ we denote by $S$ the reduction of $\widetilde S$ modulo $p$. As before we denote the kernel of the natural map $S^\flat\ra S$ by $I$. Note that $\Gamma^i_{S}(I/I^2)\simeq \Sym^i_{S}(I/I^2)$ for $i\le p-1$ and so, extending the map by multiplicativity, it is enough to construct a splitting $f\colon S^\flat/I\oplus I/I^2\xra{\sim} \Fil_1^\cnj$. Recall that we have a natural endomorphism $\phi\colon \mathbb A_{\crys}(S)\ra \mathbb A_{\crys}(S)$. We consider the Nygaard filtration (see Definition 8.9 of \cite{BMS2})
		$$
		\mc N^{\ge i} \mathbb A_{\crys}(S)\coloneqq\{x\in \mathbb A_{\crys}(S)\ |\ \phi(x)\in p^i\mathbb A_{\crys}(S)\}.
		$$
		In fact we will be interested only in the first two of its associated graded terms. We will construct $f$ by using the divided Frobenii, defined as follows. By Theorem 8.15(1) of \cite{BMS2} $\mathbb A_{\crys}(S)$ is $p$-torsion free and so for each $i\ge 0$ one has a well defined map
		$$
		\phi_i\coloneqq{\phi}/{p^i}\colon \mc N^{i}\mathbb A_{\crys}(S) \ra \mathbb A_{\crys}(S)/p
		$$
		from the $i$-th graded piece $\mc N^{i}\mathbb A_{\crys}(S)\coloneqq \mc N^{\ge i}\mathbb A_{\crys}(S)/\mc N^{\ge i+1}\mathbb A_{\crys}(S)$ of the Nygaard filtration.
		
		It is clear that $p\cdot \mathbb A_{\crys}(S)\subset\mc N^{\ge 1}\mathbb A_{\crys}(S)$; moreover, by Theorem 8.14(4) of \cite{BMS2}, $\mc N^{\ge 1}\mathbb A_{\crys}(S)\! \mod p\cdot \mathbb A_{\crys}$ is given by $I\subset \mathbb A_{\crys}(S)/p$. Thus $\mc N^0\coloneqq \mc N^{\ge 0}/\mc N^{\ge 1}\simeq S^\flat/I$ and $\phi_0$ induces an isomorphism $S^\flat/I\xra{\sim} \Fil_0^\cnj$ (since $\kappa_0=\phi$, this follows from \Cref{prop:Cartier on A_crys}).  We then also have a map $\phi_1\colon \mc N^1\mathbb A_{\crys}(S) \ra \mathbb A_{\crys}(S)/p$, which, by Theorem 8.14(2) of \cite{BMS2}, is an isomorphism onto $\Fil_1^\cnj$. Multiplication by $p$ induces a natural map $\mc N^0\mathbb A_{\crys}(S)\ra \mc N^1\mathbb A_{\crys}(S)$ which after composing with $\phi_1$ is identified with the embedding $\Fil_0^\cnj\subset \Fil_1^\cnj$. In fact, by flatness of $\mathbb A_{\crys}(S)$, we have $\mc N^{\ge 1}\mathbb A_{\crys}(S)\cap p\cdot \mathbb A_{\crys}(S)\simeq \mc N^{\ge 0}\mathbb A_{\crys}(S)$ and so $\Fil_0^\cnj$ (under the isomorphism given by $\phi_1$) is identified exactly with the subspace of those elements in $\mc N^{1}\mathbb A_{\crys}(S)\simeq \Fil_1^\cnj$ that lift to elements of $\mc N^{\ge 1}\mathbb A_{\crys}(S)$ divisible by $p$.
		
		We now use the lifting $\widetilde{S}$ of $S$ to construct the splitting of $\Fil_1^\cnj$. Recall that we have a map $\theta_{2,\widetilde S}\colon \mathbb A_{\crys}(S)/p^2\surj \widetilde S$ and let $K\coloneqq\ker \theta_{2,\widetilde S}$. Since both $\widetilde S$  and $\mathbb A_{\crys}(S)/p^2$ are flat over $W_2(k)$, we get that $K$ is also flat over $W_2(k)$ and that $K/pK \simeq I$:
		$$
		\xymatrix{0\ar[r]& K \ar[d]\ar[r] &\mathbb A_{\crys}(S)/p^2\ar[d] \ar[r] &\widetilde S\ar[r] \ar[d] &0\\
			0\ar[r]& I \ar[r] &\mathbb A_{\crys}(S)/p \ar[r] &S\ar[r]& 0.
		}
		$$
		The splitting is then given by applying $\phi_1$ to $K$. Namely, since $\phi(I)=0\in \mathbb A_{\crys}(S)/p$ it follows that $\phi(K)\subset p\cdot\mathbb A_{\crys}(S)/p^2$ and $K\subset \mc N^{\ge 1}\mathbb A_{\crys}(S) \mod p^2 \mathbb A_{\crys}(S)$.  The natural projection from $K$ to $\mc N^{1}\mathbb A_{\crys}(S)$ contains $p\cdot K+ \mc N^{\ge 2}\mathbb A_{\crys}(S)\! \mod p^2\mathbb A_{\crys}(S)$ in its kernel. Since $K/pK=I$ and the image of $\mc N^{\ge 2}\mathbb A_{\crys}$ modulo $p$ is given by $I^2$ (e.g. by Theorem 8.14(4) of \cite{BMS2}), we get that $\phi_1$ (applied to $K$) gives a well-defined map $f\colon I/I^2\ra \Fil_1^\cnj$. Moreover $K\cap (p\cdot \mathbb A_{\crys}(S)/p^2)\subset p\cdot K$, since $K$ is flat over $W_2(k)$, and so the image of $f$ does not intersect with $\Fil_0^\cnj$. 
		
		It remains to check that the constructed $f\colon I/I^2\ra \Fil_1^\cnj$ coincides with $\kappa_1$ after the projection to $\Fil_1^\cnj\!/\Fil_0^\cnj$. Given $s\in I$ let $\widetilde s=[s]+p\cdot s'\in K\subset \mathbb A_{\crys}/p^2$ be a lifting of $s$ to an element of $K$. Then 
		$$
		\phi(\widetilde s)=\phi([s])+p\cdot \phi(s')=(p-1)!\cdot p\cdot[s]^{[p]}+p \cdot \phi(s') \ \Rightarrow \ f(s)= (p-1)!\cdot s^{[p]}+ \phi(s').
		$$ 
		By the discussion above (see also Theorem 8.14(2) in \cite{BMS2}) $\phi(s')\in \Fil_0^\cnj$ and $f(s)= (p-1)!\cdot s^{[p]}$ modulo $\Fil_0^\cnj$.
		
		Since the above splitting is clearly functorial in $\widetilde S$ we get the statement of the proposition.
	\end{proof}
\end{prop}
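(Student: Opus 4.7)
The plan is to build $f$ in two stages: first produce a functorial splitting in filtration degree $1$, that is, a section $I/\mathbb I^2 \hookrightarrow \Fil_1^\cnj$ that lifts the surjection $\Fil_1^\cnj \twoheadrightarrow I/\mathbb I^2$ and recovers $\kappa_1$ on the associated graded; then extend multiplicatively through the isomorphism $\Gamma^r_S(\mathbb I/\mathbb I^2) \cong \Sym^r_S(\mathbb I/\mathbb I^2)$ valid for $r \le p-1$, using that both the target filtration $\Fil_\bullet^\cnj$ and the map $\kappa_\bullet$ are multiplicative. Since $\kappa_0 = \phi_0$ already identifies $S^\flat/I$ with $\Fil_0^\cnj$, the whole content is concentrated in the degree $1$ splitting.

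The principal tool will be the Nygaard filtration $\mathcal N^{\ge i} \mathbb A_\crys(S) = \{x : \phi(x) \in p^i \mathbb A_\crys(S)\}$ together with its divided Frobenii $\phi_i = \phi/p^i$. By the relevant results of \cite{BMS2}, $\phi_1$ induces an isomorphism of $\mathcal N^1 \mathbb A_\crys(S)$ with $\Fil_1^\cnj$, and multiplication by $p$ on $\mathcal N^0 \simeq S^\flat/I$ corresponds under $\phi_1$ to the inclusion $\Fil_0^\cnj \hookrightarrow \Fil_1^\cnj$. To produce the section, I would use the lift $\widetilde S$ in an essential way: set $K \coloneqq \ker\bigl(\theta_{2,\widetilde S}\colon \mathbb A_\crys(S)/p^2 \twoheadrightarrow \widetilde S\bigr)$. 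Flatness of both $\widetilde S$ and $\mathbb A_\crys(S)/p^2$ over $W_2(k)$ gives $K/pK \simeq I$, so that $K$ functorially replaces $I$ with a module that remembers the lift.

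The key computation is then the following: since $\phi$ kills $I$ modulo $p$, one has $\phi(K) \subset p\cdot \mathbb A_\crys(S)/p^2$, which means $K$ sits inside $\mathcal N^{\ge 1} \mathbb A_\crys(S) \bmod p^2$. Composing with $\phi_1$ produces a map $K \to \Fil_1^\cnj$, which I would show vanishes on $pK + \mathcal N^{\ge 2}\bmod p^2$; since $\mathcal N^{\ge 2}$ reduces to $\mathbb I^{[2]} \subseteq \mathbb I^2$ modulo $p$, this descends to the desired $f\colon I/\mathbb I^2 \to \Fil_1^\cnj$. The verification that the image of $f$ meets $\Fil_0^\cnj$ only in $0$ comes from $K \cap p\cdot \mathbb A_\crys(S)/p^2 \subset p\cdot K$, itself a consequence of $W_2(k)$-flatness of $K$. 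Finally, matching with $\kappa_1$ reduces to computing $\phi$ on the Teichm\"uller lift $[s]$ of $s \in I$: the PD-identity $\phi([s]) = (p-1)!\,p\,[s]^{[p]}$ forces $f(s) \equiv (p-1)!\,s^{[p]}$ modulo $\Fil_0^\cnj$, which is exactly $\kappa_1(s)$.

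The step I expect to require the most care is verifying that the construction is genuinely functorial in $\widetilde S \in \mathrm{QRSPerf}_{2}$ and independent of the auxiliary lifts used inside $K$; equivalently, the various flatness-based identifications ($K/pK \simeq I$, $\mathcal N^{\ge 1} \cap p\cdot \mathbb A_\crys \simeq p \cdot \mathcal N^{\ge 0}$, and the identification of $\mathcal N^{\ge 2}$ modulo $p$ with $\mathbb I^{[2]}$) all need to be pinned down precisely. Once the degree $1$ splitting is sheafified in this way, multiplicativity promotes it freely to an isomorphism $\bigoplus_{r=0}^{p-1}\Gamma^r_S(\mathbb I/\mathbb I^2) \simeq \Fil_{p-1}^\cnj$, and compatibility with $\kappa_{\le p-1}$ on associated gradeds is inherited from the degree $1$ case by the multiplicativity of $\kappa_\bullet$.
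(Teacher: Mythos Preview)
Your proposal is correct and follows essentially the same argument as the paper's own proof: the reduction to degree $1$ via $\Gamma^r \simeq \Sym^r$ for $r \le p-1$, the use of the Nygaard filtration and divided Frobenii $\phi_i$, the construction of the section from $K = \ker(\theta_{2,\widetilde S})$ together with the $W_2(k)$-flatness arguments, and the verification against $\kappa_1$ via $\phi([s]) = (p-1)!\,p\,[s]^{[p]}$ all match. The only cosmetic difference is that you identify the reduction of $\mathcal N^{\ge 2}$ modulo $p$ with $\mathbb I^{[2]}$ rather than $I^2$; either suffices for the descent to $I/I^2$.
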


As a corollary we deduce
\begin{thm}\label{DeligneIllusieQuis}
	Let $\mstack Y$ be a smooth Artin stack over a perfect field $k$ of characteristic $p$ admitting a smooth lift to the ring of the second Witt vectors $W_2(k)$. Then there is a canonical equivalence
	$$\RG(\mstack Y, \tau^{\le p-1} \Omega_{\mstack Y,\dR}^\bullet) \simeq \RG\left(\mstack Y^{(1)}, \bigoplus_{i=0}^{p-1} \wedge^i \mathbb L_{\mstack Y^{(1)}/k}[-i]\right).$$
	In particular for $n\le p-1$ we have $H^n_\dR(\mstack Y/k) \simeq H^n_\Hdg(\mstack Y^{(1)}/k)$.
	
	\begin{proof}
		Let $\pi \colon \Stk^{n\mdef\Art, \sm}_{/W_2(k)} \to \Stk^{n\mdef\Art, \sm}_{k}$ be the reduction functor, $\widetilde{\mstack Y} \mapsto \widetilde{\mstack Y}\otimes_{W_2(k)} k$. By \Cref{Functorial_DeligneIllusie_for_affines} it is enough to prove that the natural map (existing by the universal property of the right Kan extensions)
		\begin{align}\label{eq:proof_of_DI_sytcks}
		\RG_{\dR}(-/k) \circ \pi \to \Ran_{i_2}(\RG_\dR(-/k) \circ \pi_{|\Aff_{/W_2(k)}^\sm})
		\end{align}
		(where $i_2$ denotes the inclusion functor $\Aff_{/W_2(k)}^\sm \inj \Stk^{n\mdef\Art, \sm}_{/W_2(k)}$) is an equivalence. Since both sides of \eqref{eq:proof_of_DI_sytcks} satisfy smooth descent, by induction on $n$ we reduce the statement to the case of smooth affine schemes over $W_2(k)$, where \eqref{eq:proof_of_DI_sytcks} is evidently an equivalence.
	\end{proof}
\end{thm}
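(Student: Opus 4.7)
The plan is to reduce the statement for stacks to the affine case, where it follows from the very functorial form of the Deligne--Illusie splitting in \Cref{Functorial_DeligneIllusie_for_affines}. Since $\mstack Y$ admits a smooth lift $\widetilde{\mstack Y}$ over $W_2(k)$, I would first observe that both sides of the desired equivalence make sense as functors of $\widetilde{\mstack Y}\in \Stk^{n\mdef\Art,\sm}_{/W_2(k)}$ via the reduction functor $\pi\colon \widetilde{\mstack Y}\mapsto \widetilde{\mstack Y}\otimes_{W_2(k)} k$, and that on the subcategory $\Aff^{\sm}_{/W_2(k)}\subset \Stk^{n\mdef\Art,\sm}_{/W_2(k)}$ the theorem already provides a canonical $k$-linear equivalence
$$\tau^{\le p-1}\Omega^\bullet_{(B/p)/k,\dR}\;\simeq\;\bigoplus_{i=0}^{p-1}\Omega^i_{(B^{(1)}/p)/k}[-i].$$

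The next step is to promote this to the Artin setting by right Kan extension. By \Cref{def: de Rham}, $\RG_\dR(\mstack X/k)$ is already defined as the right Kan extension from smooth affines, and one checks easily that the subcomplex $\tau^{\le p-1}\Omega^\bullet_{-,\dR}$ descends through the extension in the same way (using that $\tau^{\le p-1}$ commutes with the relevant totalizations). On the Hodge side, by \Cref{Hodge_as_limit} and \Cref{flat_descent_for_cotangent_compl}, the functor $\RG(-,\wedge^i\mathbb L_{-/k}[-i])$ satisfies smooth descent, and hence also agrees with its right Kan extension from the smooth affine site. Thus the proof reduces to showing that for both functors
$$F_1\colon \widetilde B\mapsto \RG(\mstack Y,\tau^{\le p-1}\Omega^\bullet_{\mstack Y,\dR})\qquad \text{and}\qquad F_2\colon \widetilde B\mapsto \RG\!\left(\mstack Y^{(1)},\bigoplus_{i=0}^{p-1}\wedge^i \mathbb L_{\mstack Y^{(1)}/k}[-i]\right),$$
the natural comparison with the right Kan extension of their restriction to $\Aff^{\sm}_{/W_2(k)}$ is an equivalence.

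For this, I would run the induction on the Artin level $n$ using smooth descent. Concretely, for an $n$-Artin stack $\mstack Y$ with smooth atlas $\mstack U\to \mstack Y$ whose iterated fiber products $\mstack U_\bullet$ are $(n-1)$-Artin, both $F_1$ and $F_2$ satisfy
$$F_j(\mstack Y)\simeq \Tot F_j(\mstack U_\bullet),$$
by smooth descent for the de Rham complex (via \Cref{def: de Rham}) and by \Cref{flat_descent_for_cotangent_compl} on the Hodge side. Lifting the atlas to $W_2(k)$ (which is possible since smoothness is preserved, and smoothly locally lifts exist), the induction hypothesis gives the equivalence on the cover and on all simplicial degrees; totalizing yields the equivalence for $\mstack Y$. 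The base case $n=0$ of smooth affine schemes is precisely \Cref{Functorial_DeligneIllusie_for_affines}.

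The main subtlety I anticipate is ensuring that the functorial equivalence of \Cref{Functorial_DeligneIllusie_for_affines} is sufficiently natural to survive the Kan extension procedure; this is exactly the reason the previous theorem was stated as an equivalence of functors into $\DMod{k}$ rather than only on homotopy categories, and is the reason the Fontaine--Messing/BMS approach via quasi-syntomic descent was chosen over the classical Deligne--Illusie argument. The last assertion about $H^n_\dR$ versus $H^n_\Hdg$ for $n\le p-1$ then follows by \Cref{connetedness_of_Hodge_to_deRham} applied to both sides.
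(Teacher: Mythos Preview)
Your proposal is correct and follows essentially the same approach as the paper: reduce to the affine case via \Cref{Functorial_DeligneIllusie_for_affines}, observe that both sides are right Kan extended from $\Aff^{\sm}_{/W_2(k)}$, and verify this by smooth descent and induction on the Artin level. One small imprecision: you speak of ``lifting the atlas to $W_2(k)$,'' but since you are already working with $\widetilde{\mstack Y}\in \Stk^{n\mdef\Art,\sm}_{/W_2(k)}$, any smooth atlas of $\widetilde{\mstack Y}$ is automatically over $W_2(k)$ and there is nothing to lift---this is exactly why the paper phrases everything through the reduction functor $\pi$ applied to stacks over $W_2(k)$.
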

\begin{cor}
	Let $\mstack Y$ be a smooth Hodge-proper stack over a perfect field $k$ of characteristic $p$ admitting a smooth lift to $W_2(k)$. Then the Hodge-to-de Rham spectral sequence $H^j(\mstack Y, \wedge^i \mathbb L_{\mstack Y/k}) \Rightarrow H_\dR^{i+j}(\mstack Y / k)$ degenerates at the first page for $i+j<p$.

\begin{proof}
	This follows from  \Cref{DeligneIllusieQuis} and the equality of dimensions $\dim_k H^n_{\mr H}(\mstack Y)=\dim_k H^n_{\mr H}(\mstack Y^{(1)})$.
\end{proof}
\end{cor}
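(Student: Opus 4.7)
The plan is to reduce the claim to the dimensional equality $\dim_k H^n_\dR(\mstack Y/k) = \dim_k H^n_\Hdg(\mstack Y/k)$ for $n \le p-1$ and then invoke the standard spectral-sequence-degeneration-via-dimensions argument. First, since $\mstack Y$ is Hodge-proper, all $H^q(\mstack Y, \wedge^p \mathbb L_{\mstack Y/k})$ are finite-dimensional over $k$, and by \Cref{de_rham_finiteness_of_Hodge_proper} so are all $H^n_\dR(\mstack Y/k)$; hence these dimensions make sense.

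Next, I would observe that the Frobenius twist does not affect dimensions over $k$: since $\sigma\colon k\to k$ is an isomorphism (perfectness of $k$), the base-change formula of \Cref{Hodge_and_deRham_basechange} applied to $\sigma$ gives $H^{p,q}(\mstack Y^{(1)}/k) \simeq H^{p,q}(\mstack Y/k)\otimes_{k,\sigma} k$, so in particular $\dim_k H^n_\Hdg(\mstack Y^{(1)}/k) = \dim_k H^n_\Hdg(\mstack Y/k)$ for every $n$. Combining this with the "in particular" part of \Cref{DeligneIllusieQuis}, we get
\[
\dim_k H^n_\dR(\mstack Y/k) \;=\; \dim_k H^n_\Hdg(\mstack Y^{(1)}/k) \;=\; \dim_k H^n_\Hdg(\mstack Y/k)
\]
for every $n\le p-1$.

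Finally, I would finish by the standard spectral sequence argument. Writing $d_r(n) \coloneqq \sum_{i+j=n}\dim_k E_r^{i,j}$, the identity $E_{r+1}^{i,j} = \ker(d_r^{i,j})/\mathrm{im}(d_r^{i-r,j+r-1})$ yields
\[
d_r(n) - d_{r+1}(n) \;=\; A_r(n) + A_r(n-1), \qquad A_r(n) \coloneqq \sum_{i+j=n}\dim_k\mathrm{im}(d_r^{i,j}),
\]
with $A_r(n) \ge 0$. The dimensional equality above says $d_1(n) = d_\infty(n)$ for $n\le p-1$, which forces $A_r(n) = 0$ for all $r\ge 1$ and all $n\le p-1$ by induction on $n$ starting from $A_r(-1)=0$. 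Hence every differential issuing from (and, consequently, every differential landing in) a bidegree $(i,j)$ with $i+j<p$ vanishes, establishing degeneration of the Hodge-to-de Rham spectral sequence at the first page in the stated range. No step here is really an obstacle — the entire content of the corollary is packaged in \Cref{DeligneIllusieQuis}; the rest is the well-known bookkeeping that turns a dimension count into a degeneration.
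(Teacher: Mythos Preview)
Your proposal is correct and follows exactly the approach indicated in the paper's one-line proof: you invoke \Cref{DeligneIllusieQuis} together with the Frobenius-invariance of Hodge dimensions, and then spell out the standard dimension-count argument that the paper leaves implicit. There is no substantive difference in strategy.
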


\subsection{Degeneration in characteristic zero}\label{sec:Degeneration in char 0}
To reduce the statement in characteristic $0$ to results of the previous section we introduce the following notion:
\begin{defn}\label{def: spredable_stacks}
A smooth Hodge-proper Artin stack $\mstack X$ over a field $F$ of characteristic $0$ is called \emph{Hodge-properly spreadable} if there exists a $\mathbb Z$-subalgebra $R\subset F$ and an Artin stack $\mstack X_R$ over $\Spec R$ such that 
\begin{itemize}
\item $R$ is a localization of a smooth $\mathbb Z$-algebra such that the image of $\Spec R$ in $\Spec \mathbb Z$ is open.

\item $\mstack X_R$ is smooth over $R$ and $\mstack X \otimes_R F \coloneqq \mstack X_R \times_{\Spec R} \Spec F\simeq \mstack X$.

\item $\mstack X_R$ is Hodge-proper over $R$, namely $\RG(\mstack X_R,\wedge^p \mathbb L_{X_R/R})$ is bounded below coherent over $R$ for any $p\ge 0$.
\end{itemize}
\end{defn}
\begin{rem}\label{rem: cofilteredness of all such R in F}
	We note that any field $F$ of characteristic 0 is a union of all such subrings $R\subset F$ (in fact even a union of those that are smooth over $\mbb Z$). As we will see in \Cref{sec:non-proper schemes} allowing some infinite localizations of smooth algebras makes some difference when constructing examples. The condition on openness of the image is added to guarantee that the diagram of all such $R\subset F$ is filtered and that for any such $R$ the image of $\Spec R$ in $\Spec \mbb Z$ is infinite. To see the first point: indeed, having $Q_1=R_1[S_1^{-1}]$, $Q_2=R_2[S_2^{-1}]$, $Q_1,Q_2\subset F$ being localizations of smooth $\mbb Z$-algebras $R_1,R_2$ for some subsets $S_i\subset R_i$ as in \Cref{def: spredable_stacks}, for any finite localization $ (Q_1\cdot Q_2)[1/f]\subset F$ the image of $\Spec (R_1\cdot R_2)[1/f]$ in $\Spec \mbb Z$ is still open. Then, picking $f$ such that $(R_1\cdot R_2)[1/f]$ is again smooth over $\mbb Z$ we get a subring $(Q_1\cdot Q_2)[1/f]\subset F$ that contains both $Q_1, Q_2$ and fits in \Cref{def: spredable_stacks}. 
\end{rem}
We defer a thorough discussion of spreadability of stacks till the next section. We only stress here again, that (unlike in the case of proper schemes) Hodge-proper spreadings do not exist in general (see \Cref{sec:BG}).

Now we will deduce the promised Hodge-to-de Rham degeneration in characteristic $0$:
\begin{thm}\label{Hodge_degeneration}
Let $\mstack X$ be a smooth Hodge-properly spreadable Artin stack over a field $F$ of characteristic zero. Then the Hodge-to-de Rham spectral sequence for $\mstack X$ degenerates at the first page. In particular for each $n\ge 0$ there exists a (non-canonical) isomorphism
$$H^n_\dR(\mstack X/F) \simeq \bigoplus_{p+q=n} H^{p,q}(\mstack X/F).$$

\begin{proof}
For the rest of the proof fix $n\in \mathbb Z_{\ge 0}$. By Hodge-properness of $\mstack X$ it is enough to prove
$$\dim_F H^n_\dR(\mstack X/F) = \dim_F H^n_\Hdg(\mstack X/F).$$

Let $R$ and $\mstack X_R$ be as in \Cref{def: spredable_stacks}. Note that by the assumption on $\mstack X_R$ and \Cref{de_rham_finiteness_of_Hodge_proper} both $H^n_\dR(\mstack X_R/R)$ and $H^n_\Hdg(\mstack X_R/R)$ are finitely generated $R$-modules. Localizing $R$ if necessary, we can assume that $R$ is connected of some Krull dimension $d$, and that the $i$-th cohomology groups $H^i_\dR(\mstack X_R/R)$ and $H^i_\Hdg(\mstack X_R/R)$ are free $R$-modules of finite rank for $i = n, n+1, \ldots, n+d$.\footnote{Note that there does not necessarily exist a localization $R[s^{-1}]$ such that \emph{for all $i$} the $R[s^{-1}]$-modules $H^i_\dR(\mstack X_R/R)[s^{-1}]$ (or $H^i_\Hdg(\mstack X_R/R)[s^{-1}]$) are free, since there are infinitely many of them.} Note that for any point $s\colon \Spec k \to \Spec R$ the map $R \to k$ can be factored as a composition of a flat map $R \to R_{s}$ (where $R_s$ is a local ring of $s$) and a map $R_s \to k$ of finite Tor-amplitude (by regularity assumption $k$ is perfect as an $R_s$-module). Hence by \Cref{Hodge_and_deRham_basechange} we have
$$R\Gamma_\dR(\mstack X_k/k)\simeq \RG_\dR(\mstack X_R/R)\otimes_R k \qquad\text{and}\qquad R\Gamma_\Hdg(\mstack X_k/k)\simeq R\Gamma_\Hdg(\mstack X_R/R)\otimes_R k,$$
where $\mstack X_k \coloneqq \mstack X_R \otimes_R k$. Since the $n, (n+1), \ldots, (n+d)$-th cohomology groups are free as $R$-modules and since the Tor-amplitude of $k$ over $R$ is bounded by $d$, we get $H^n_\dR(\mstack X_k/k)\simeq H^n_\dR(\mstack X_R/R)\otimes_R k$, so
$$
\dim_F H^n_\dR(\mstack X/F) = \rank_R H^n_\dR(\mstack X_R/R) = \dim_k H^n_\dR(\mstack X_k/k)
$$
and analogously for the Hodge cohomology. In particular, to prove that $\dim_F H^n_\dR(\mstack X/F) = \dim_F H^n_\Hdg(\mstack X/F)$ it is enough to show that $\dim_k H^n_\dR(\mstack X_k/k) = \dim_k H^n_\Hdg(\mstack X_k/k)$ for some point $s\colon\Spec k\to \Spec R$.

To do so, note that by the infiniteness of the image of $\Spec R \to \Spec \mathbb Z$ and \Cref{lemma below} below, there exists a closed point $s\colon \Spec k \inj \Spec R$ of characteristic greater than $n$, such that the map $R \to k \to k^\perf$ factors through the ring of the second Witt vectors $W_2(k^\perf)$. Since the base change $\mstack X_{W_2(k^\perf)}\coloneqq \mstack X_R\times_R W_2(k^\perf)$ is smooth and Hodge-proper over $W_2(k^\perf)$, by \Cref{DeligneIllusieQuis} we have
$$\dim_{k^\perf} H^n_\dR(\mstack X_{k^\perf}/k^\perf) = \dim_{k^\perf} H^n_\Hdg(\mstack X_{k^\perf}/k^\perf).$$
Finally, by base change (applied to $k\ra k^{\perf}$) we get 
$$
\dim_{k} H^n_\dR(\mstack X_{k}/k) = \dim_{k} H^n_\Hdg(\mstack X_{k}/k)
$$
as desired.
\end{proof}
\end{thm}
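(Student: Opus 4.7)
The plan is to reduce the equality of dimensions $\dim_F H^n_\dR(\mstack X/F) = \dim_F H^n_\Hdg(\mstack X/F)$ for each fixed $n$ (which by \Cref{de_rham_finiteness_of_Hodge_proper} involves only finite-dimensional $F$-vector spaces) to the analogous equality for the special fiber of the spreading $\mstack X_R$ at a suitably chosen closed point of residue characteristic $p>n$, where \Cref{DeligneIllusieQuis} supplies the required equality.

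Fix $n$ and a Hodge-proper spreading $\mstack X_R$ over some admissible $R\subset F$ as in \Cref{def: spredable_stacks}. By \Cref{de_rham_finiteness_of_Hodge_proper}, both $\RG_\dR(\mstack X_R/R)$ and $\RG_\Hdg(\mstack X_R/R)$ are bounded below coherent. After further Zariski-localizing $R$ (remaining inside the filtered diagram of \Cref{rem: cofilteredness of all such R in F}), I can arrange that $R$ is connected smooth over $\mathbb Z$ of some Krull dimension $d$ and that $H^i_\dR(\mstack X_R/R)$ and $H^i_\Hdg(\mstack X_R/R)$ are free of finite rank for the \emph{finitely many} indices $i\in\{n,n+1,\ldots,n+d\}$ that will be needed. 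I do not try to make all cohomology groups simultaneously free — this is exactly the bookkeeping subtlety, since there are infinitely many of them, and only boundedly many can be controlled after any given localization.

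Two base-change steps then connect the generic fiber to a special fiber. On the generic side, freeness of the cohomology modules in the relevant range gives $\dim_F H^n_*(\mstack X/F)=\mathrm{rk}_R H^n_*(\mstack X_R/R)$ for $*\in\{\dR,\Hdg\}$. On the special side, I pick a closed point $s\colon\Spec k\hookrightarrow\Spec R$ of residue characteristic $p>n$; such a point exists because the image of $\Spec R\to\Spec\mathbb Z$ is open and hence contains all but finitely many primes. The map $R\to k$ factors as the flat localization $R\to R_s$ followed by $R_s\to k$ of Tor-amplitude at most $d$ (by regularity of $R_s$), so \Cref{Hodge_and_deRham_basechange} together with the freeness of $H^i_*(\mstack X_R/R)$ for $i\in\{n,\ldots,n+d\}$ yields $H^n_*(\mstack X_k/k)\simeq H^n_*(\mstack X_R/R)\otimes_R k$ and in particular matching dimensions.

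Finally, using smoothness of $R$ over $\mathbb Z$ and the lemma referenced in the introduction, the composite $R\to k\to k^\perf$ lifts to a ring map $R\to W_2(k^\perf)$; the base change $\mstack X_{W_2(k^\perf)}\coloneqq\mstack X_R\times_R W_2(k^\perf)$ is then a smooth lift of $\mstack X_{k^\perf}$ to $W_2(k^\perf)$. Since $n<p$, \Cref{DeligneIllusieQuis} gives $H^n_\dR(\mstack X_{k^\perf}/k^\perf)\simeq H^n_\Hdg(\mstack X_{k^\perf}^{(1)}/k^\perf)$, and since Frobenius twist is a base change along the automorphism $\sigma$ of $k^\perf$, the right-hand side has the same dimension as $H^n_\Hdg(\mstack X_{k^\perf}/k^\perf)$. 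Composing this with flat base change along $k\to k^\perf$ and the generic/special comparisons above yields $\dim_F H^n_\dR(\mstack X/F)=\dim_F H^n_\Hdg(\mstack X/F)$, completing the proof. The main technical obstacle is, as noted, arranging the freeness in the window $[n,n+d]$ so that base change to both $F$ and to the residue field $k$ can be read off directly from rank; everything else is straightforward application of the prior theorems.
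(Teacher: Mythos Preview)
Your proposal is correct and follows essentially the same approach as the paper's proof: localize $R$ so that the Hodge and de Rham cohomology are free in the window $[n,n+d]$, use \Cref{Hodge_and_deRham_basechange} along the factorization $R\to R_s\to k$ to match dimensions at the generic and special fibers, then pick a closed point of residue characteristic $p>n$, lift $R\to k^\perf$ through $W_2(k^\perf)$ via smoothness of $R/\mathbb Z$, and apply \Cref{DeligneIllusieQuis}. Your explicit remark that the Frobenius twist changes dimensions only by base change along the automorphism $\sigma$ is a detail the paper leaves implicit, but otherwise the two arguments are the same.
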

\begin{lem}\label{lemma below}
Let $R$ be a localization of a smooth $\mathbb Z$-algebra. Then for any field $k$ of positive characteristic and a map $R \to k$ the composite map $R \to k\inj k^\perf$ factors through the ring $W_2(k^\perf)$ of the second Witt vectors.

\begin{proof}
By assumption on $R$ the cotangent complex $\mathbb L_{R/\mathbb Z} \simeq \Omega_{R/\mathbb Z}[0]$ is concentrated in degree zero and is a locally free (in particular flat) $R$-module. By the basic deformation theory the obstruction to lift a map $R \to k^\perf$ to $R \to W_2(k^\perf) \to k^\perf$ lies in $\Ext^1_{k^\perf}(\mathbb L_{R/\mathbb Z} \otimes_{R} k^\perf, k^\perf)$. But the latter group vanishes, since by flatness of $\mathbb L_{R/\mathbb Z}$, the restriction $\mathbb L_{R/\mathbb Z} \otimes_{R} k^\perf$ is a complex of $k^\perf$-vector spaces concentrated in degree $0$.
\end{proof} 
\end{lem}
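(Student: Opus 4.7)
The plan is to apply the standard obstruction theory for square-zero extensions of commutative rings. Concretely, the surjection $W_2(k^\perf) \twoheadrightarrow k^\perf$ is a square-zero extension whose kernel is a free $k^\perf$-module of rank one (generated by $p$), so lifting the composite $R \to k \inj k^\perf$ to a morphism $R \to W_2(k^\perf)$ amounts to verifying that a certain obstruction class in $\Ext^1_R(\mathbb L_{R/\mathbb Z}, k^\perf)$ vanishes. Here the $R$-module structure on $k^\perf$ is taken via the given map.

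First I would analyze the cotangent complex $\mathbb L_{R/\mathbb Z}$. Writing $R = R_0[S^{-1}]$ where $R_0$ is a smooth $\mathbb Z$-algebra, localization is formally \'etale so $\mathbb L_{R/R_0} \simeq 0$; then by the transitivity triangle
$$\mathbb L_{R/\mathbb Z} \simeq \mathbb L_{R_0/\mathbb Z} \otimes_{R_0} R \simeq \Omega^1_{R_0/\mathbb Z} \otimes_{R_0} R,$$
which is a finitely generated projective (in particular flat, and concentrated in cohomological degree $0$) $R$-module.

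Next, I would invoke the standard obstruction formalism: for a square-zero extension $0 \to I \to A \to \bar A \to 0$ of commutative rings and a homomorphism $g \colon R \to \bar A$, the existence of a lift $R \to A$ is controlled by a class in $\Ext^1_R(\mathbb L_{R/\mathbb Z}, I)$, where $I$ is viewed as an $R$-module via $g$. Applying this with $A = W_2(k^\perf)$, $\bar A = k^\perf$, $I \cong k^\perf$, and base-changing along $R \to k^\perf$ yields
$$\Ext^1_R(\mathbb L_{R/\mathbb Z}, k^\perf) \simeq \Ext^1_{k^\perf}(\mathbb L_{R/\mathbb Z} \otimes_R k^\perf, k^\perf),$$
and by the flatness established in the previous paragraph the complex $\mathbb L_{R/\mathbb Z} \otimes_R k^\perf$ is a finite-rank free $k^\perf$-module sitting in degree $0$. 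Hence the Ext group vanishes, the obstruction class is zero, and the desired lift exists.

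There is no genuine obstacle here; the result is really just a manifestation of the formal smoothness of $\Spec R$ over $\Spec \mathbb Z$. The only point requiring some care is ensuring that localization does not spoil the vanishing of higher cotangent cohomology, which is immediate from the transitivity triangle together with $\mathbb L_{R/R_0} \simeq 0$.
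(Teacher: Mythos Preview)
Your proposal is correct and follows essentially the same approach as the paper's proof: both identify the cotangent complex $\mathbb L_{R/\mathbb Z}$ as a projective $R$-module concentrated in degree $0$ and then invoke the standard deformation-theoretic obstruction in $\Ext^1_{k^\perf}(\mathbb L_{R/\mathbb Z}\otimes_R k^\perf, k^\perf)$, which vanishes. You supply a bit more detail (the transitivity triangle for the localization and the explicit description of the square-zero kernel), but the argument is the same.
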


\subsection{Equivariant Hodge decomposition}\label{sec:Examples of equivariant Hodge degeneration}
In this section we apply \Cref{Hodge_degeneration} to obtain a (non-canonical) Hodge decomposition for the equivariant singular cohomology of an algebraic variety $X$ with a $G$ action, under the assumption that the corresponding quotient stack $[X/G]$ is Hodge-properly spreadable.

Let $K$ be a homotopy type with an action of a topological group $H$ (i.e. an $(\infty,1)$-functor $K_\bullet\colon BH \to \Type$, where $\Type$ denotes the $(\infty,1)$-category of spaces, see \Cref{sect:notations}). Recall that the $H$-equivariant cohomology $C^*_H(K, \Lambda)$ of $K$ with coefficients in a ring $\Lambda$ are defined as
$$C^*_H(K, \Lambda) \coloneqq C^*(K_{hH}, \Lambda),$$
where $K_{hH}$ is the homotopy quotient of $K$ by $H$ (i.e. a colimit of the corresponding functor $K_\bullet$, or, more classically, $(K\times EH)/H$). 

If $X$ is a smooth algebraic variety over a field $F\subseteq \mathbb C$ equipped with an action of an algebraic group $G$, then the de Rham cohomology of $[X/G]$ gives a model for the $G(\mathbb C)$-equivariant singular cohomology of $X(\mathbb C)$:
\begin{prop}
Let $X$ and $G$ be as above. Then there is a canonical equivalence
$$C^*_{G(\mathbb C)}(X(\mathbb C), \mathbb C) \simeq \RG_\dR([X/G] / F)\otimes_F \mathbb C.$$
\begin{proof}
By definition we have
\begin{align*}
\left|\xymatrix{\ldots \ar@<+1.35ex>[r] \ar@<+.45ex>[r] \ar@<-.45ex>[r] \ar@<-1.35ex>[r] & G \times G \times X \ar@<+.9ex>[r] \ar[r] \ar@<-.9ex>[r] & G \times X \ar@<-.45ex>[r] \ar@<.45ex>[r] & X}\right| & \simeq [X/G],\\
\left|\xymatrix{\ldots \ar@<+1.35ex>[r] \ar@<+.45ex>[r] \ar@<-.45ex>[r] \ar@<-1.35ex>[r] & G(\mathbb C) \times G(\mathbb C) \times X(\mathbb C) \ar@<+.9ex>[r] \ar[r] \ar@<-.9ex>[r] & G(\mathbb C) \times X(\mathbb C) \ar@<-.45ex>[r] \ar@<.45ex>[r] & X(\mathbb C)}\right| & \simeq X(\mathbb C)_{hG(\mathbb C)}.
\end{align*}
Since the functor of cochains $C^*(-, \mathbb C)$ sends colimits of homotopy types to limits of complexes and by smooth descent for $\RG_\dR(- /F )\otimes_F \mathbb C$, the result follows from the analogous comparison between algebraic de Rham and Betti cohomology for ordinary smooth schemes $X\times G^n$.
\end{proof}
\end{prop}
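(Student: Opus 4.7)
The plan is to exhibit both sides as totalizations of cosimplicial complexes indexed by the bar construction of the $G$-action, and then to reduce the identification to the classical Grothendieck comparison applied levelwise to the smooth schemes $G^n\times X$.

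First, on the topological side, the homotopy quotient admits the simplicial presentation
$$X(\mathbb C)_{hG(\mathbb C)}\simeq\bigl|G(\mathbb C)^\bullet\times X(\mathbb C)\bigr|$$
already displayed just above. Since $C^*(-,\mathbb C)$ sends colimits of spaces to limits of complexes, this yields
$$C^*_{G(\mathbb C)}(X(\mathbb C),\mathbb C)\simeq \Tot C^*(G(\mathbb C)^\bullet\times X(\mathbb C),\mathbb C).$$
On the algebraic side, $[X/G]$ is the geometric realization of the \v{C}ech nerve $G^\bullet\times X$ of the smooth atlas $X\to[X/G]$. Smooth descent for the functor $\RG_\dR(-/F)\otimes_F\mathbb C$, which is already built into the definition of de Rham cohomology for stacks as a right Kan extension from smooth affine schemes (Definition \ref{def: de Rham}), then identifies $\RG_\dR([X/G]/F)\otimes_F\mathbb C$ with the totalization $\Tot\bigl(\RG_\dR(G^\bullet\times X/F)\otimes_F\mathbb C\bigr)$.

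Second, for each $n\ge 0$ the variety $G^n\times X$ is smooth over $F$, so Grothendieck's classical comparison theorem gives a natural equivalence
$$C^*(G(\mathbb C)^n\times X(\mathbb C),\mathbb C)\simeq\RG_\dR(G^n\times X/F)\otimes_F\mathbb C.$$
What remains is to promote this levelwise equivalence to an equivalence of cosimplicial complexes compatible with the face and degeneracy maps of the bar construction, and then to pass to $\Tot$ to conclude.

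The main subtlety I expect is arranging the Grothendieck comparison to be functorial, that is, upgrading it to a morphism of $\infty$-functors from smooth $F$-schemes to $\DMod{\mathbb C}$, rather than a mere collection of isomorphisms at individual objects. The cleanest route is to factor the comparison through the analytic de Rham complex: the holomorphic Poincar\'e lemma provides a natural equivalence $C^*(Y(\mathbb C),\mathbb C)\simeq\RG(Y(\mathbb C),\Omega^\bullet_{Y(\mathbb C),\mathrm{an}})$, and the algebraic-to-analytic comparison of sheaves of de Rham complexes is manifestly functorial in $Y$. Applied to the simplicial scheme $G^\bullet\times X$ this chain of natural equivalences assembles into the required equivalence of cosimplicial complexes, and totalizing yields the proposition.
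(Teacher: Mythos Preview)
Your proof is correct and follows essentially the same approach as the paper: both sides are expressed as totalizations over the bar simplicial object $G^\bullet\times X$, and the identification is reduced levelwise to Grothendieck's comparison for the smooth schemes $G^n\times X$. Your additional paragraph on functoriality (factoring through the analytic de Rham complex) addresses a subtlety the paper leaves implicit, but the overall strategy is the same.
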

\begin{cor}[Equivariant Hodge decomposition]\label{cor:Hodge decomposition for singular cohomology}
Let $X$ be a smooth scheme over $\mathbb C$ with an action of an algebraic group $G$. Assume that $[X/G]$ is Hodge-properly spreadable (e.g. $X$ and $G$ satisfy the conditions of Theorem \ref{thm:proper coarse moduli} or \ref{thm: conical examples}). Then for all $n\in \mathbb Z_{\ge 0}$ there exists an isomorphism
$$H^n_{G(\mathbb C)}(X(\mathbb C), \mathbb C) \simeq \bigoplus_{p+q = n} H^q([X/G], \wedge^p \mathbb L_{[X/G]/\mathbb C}).$$
\end{cor}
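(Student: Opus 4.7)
The plan is to chain together the two results immediately preceding the corollary. First, taking $F = \mathbb C$ in the proposition above, we obtain a canonical identification
$$H^n_{G(\mathbb C)}(X(\mathbb C), \mathbb C) \simeq H^n_\dR([X/G]/\mathbb C),$$
which reduces the problem to producing a (non-canonical) Hodge decomposition for the de Rham cohomology of the quotient stack $\mstack X = [X/G]$ over $\mathbb C$.

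Next, I would invoke Theorem \ref{Hodge_degeneration} (which requires precisely the assumption that $\mstack X$ is a smooth Hodge-properly spreadable Artin stack over a field of characteristic zero). Since $X$ is smooth and $G$ acts on $X$, the quotient stack $[X/G]$ is smooth, and by hypothesis it is Hodge-properly spreadable. Applying the theorem yields, for each $n\ge 0$, a (non-canonical) isomorphism
$$H^n_\dR([X/G]/\mathbb C) \simeq \bigoplus_{p+q=n} H^q\bigl([X/G], \wedge^p \mathbb L_{[X/G]/\mathbb C}\bigr).$$
Composing with the identification from the previous paragraph gives the desired decomposition.

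There is essentially no obstacle here: the corollary is a direct combination of the comparison between Betti equivariant cohomology and stacky de Rham cohomology (which in turn reduces, via smooth descent along the simplicial Borel construction $G^\bullet \times X \to [X/G]$, to the classical Grothendieck comparison for the smooth schemes $X \times G^n$) with the main Hodge-to-de Rham degeneration theorem of the paper. The only thing one might wish to double-check is that the hypothesis on $[X/G]$ being Hodge-properly spreadable really is the hypothesis of Theorem \ref{Hodge_degeneration}, i.e.\ that $[X/G]$ is automatically smooth and quasi-compact quasi-separated, which follows from $X$ being smooth and from $G$ being an algebraic group acting on $X$. The parenthetical remark in the corollary about Theorems \ref{thm:proper coarse moduli} and \ref{thm: conical examples} is then just a pointer to concrete situations in which the hypothesis of Hodge-proper spreadability is verified in the paper.
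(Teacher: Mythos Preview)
Your proposal is correct and matches the paper's approach exactly: the corollary is stated without proof because it is an immediate concatenation of the preceding comparison proposition (equivariant Betti $\simeq$ stacky de Rham) with Theorem~\ref{Hodge_degeneration}. Your additional remarks on smoothness and qcqs of $[X/G]$ are sound sanity checks but not elaborated in the paper.
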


\begin{ex} Let $X=\Spec \mathbb C$. Then $\wedge^n \mathbb L_{BG} \simeq \Sym^n(\mf g^\vee)[-n]$ where $\mathfrak g$ is the Lie algebra of $G$ endowed with the adjoint action of $G$. This way we get a standard isomorphism 
$$
H^n_{G(\mathbb C)}(\mr{pt},\mathbb C)\simeq \left\{ \begin{array}{ll}
\Sym^k(\mf g^\vee)^G & \text{ if $n=2k$},\\ 
0 & \text{ if $n=2k+1$}.
\end{array}\right.
$$
In particular,
$$
H^\bullet_{G(\mathbb C)}(\mr{pt},\mathbb C)\simeq \Sym(\mf g^\vee)^G,\quad\text{where}\quad \deg(\mathfrak g^\vee) = 2.
$$
\end{ex}

\begin{ex}
As another example one can take a conical resolution $\pi\colon X\ra \Spec A$ (see the second example of \ref{ex:mpre examples}). Following \Cref{ex:mpre examples}, the quotient stack $[X/\mathbb G_m]$ is Hodge-properly spreadable and we get a decomposition for $H^\bullet_{\mathbb C^\times}\!(X(\mathbb C),\mathbb C)$ as in \Cref{cor:Hodge decomposition for singular cohomology}. Note also that in this case $H^1_{\mbb C^\times}(X(\mbb C),\mbb C)\simeq H^1(X(\mbb C),\mbb C)$; indeed one can replace $\mbb C^\times$ with $S^1$ and consider the Serre-Lerray spectral sequence $$
E_2^{p,q}=H^{p}(BS^1, H^q(X(\mbb C),\mbb C)) \Rightarrow H^{p+q}_{S^1}(X(\mbb C),\mbb C).
$$ We have $BS^1\simeq \mbb CP^\infty$, thus $H^{1}(BS^1, H^0(X(\mbb C),\mbb C))=0$ and it's enough to show that $d_2^{0,1}=0$. We leave it as an exercise to the reader to check that this is trues as soon as $X(\mbb C)$ is connected and $X(\mbb C)^{S^1}\neq 0$. 

From all this we get a decomposition
\begin{equation}\label{formula}
H^1(X(\mathbb C),\mathbb C)\simeq H^0([X/\mathbb G_m], \mathbb L_{[X/\mathbb G_m]})\oplus H^1([X/\mathbb G_m], \mc O_{[X/\mathbb G_m]}).
\end{equation}
We have $\mathbb L_{[X/\mathbb G_m]}\simeq \Omega^1_X\xra{a^*} \mc O_X$ as a complex of $\mathbb G_m$-equivariant sheaves on $X$, where $a^*$ is the map dual to the derivative of the action $\mr{Lie}(\mathbb G_m)\otimes_{\mathbb C}\mc O_X\ra \mathbb T_X$ (where $\mathbb T_X$ denotes the tangent bundle). Then $\mathbb H^0(X, \Omega^1_X\xra{a^*} \mc O_X)\simeq \ker\left(H^0(X,\Omega^1_X)\xra{a^*} H^0(X,\mc O_X)\right)$, which is identified with the invariants of the Lie algebra action, which also identifies with the group invariants $H^0(X,\Omega^1_X)^{\mathbb G_m}$. Finally we get
$$
H^0([X/\mathbb G_m], \mathbb L_{[X/\mathbb G_m]})\simeq H^0(X, \Omega^1_X\xra{a^*} \mc O_X)^{\mathbb G_m} \simeq H^0(X,\Omega^1_X)^{\mathbb G_m}
$$
as well. The second summand in (\ref{formula}) is just $H^1(X,\mc O_X)^{\mathbb G_m}$. Thus for any conical resolution we get a formula 
$$
H^1(X(\mathbb C),\mathbb C)\simeq H^0(X,\Omega^1_X)^{\mathbb G_m}\oplus H^1(X,\mc O_X)^{\mathbb G_m}.
$$
This is a partial generalization of results of Section 6 in \cite{KubrakTravkin} to the case when $R^1\pi_*\mc O_X$ is not necessarily $0$.
\end{ex}

\section{Spreadings}\label{sec:Spreadings of stacks}
To apply \Cref{Hodge_degeneration} we need to find a good model of our stack over a finitely generated $\mathbb Z$-algebra, namely a Hodge-proper spreading. However, as we will see, such a spreading does not necessarily exist in general. 

In \Cref{sec:Spreadable stacks} we first prove a general result about the existence of spreadings for some more natural classes of morphisms between Artin stacks (like smooth, flat, etc). Then some examples of Hodge-properly spreadable and nonspreadable stacks are given in \Cref{sec:Examples of spreadable Hodge-proper stacks}.

\subsection{Spreadable classes}\label{sec:Spreadable stacks}
\begin{defn}
Let $\mathcal P$ be a class of morphisms of schemes (e.g. $\mathcal P = $ smooth, flat or proper morphisms) containing all isomorphisms and closed under compositions. For a scheme $S$, define $\Sch_{/S}^{\fp, \mathcal P}$ to be the (non-full) subcategory of schemes over $S$ consisting of finitely-presentable $S$-schemes and morphisms between them that belong to $\mathcal P$.
\end{defn}
\begin{thm}[{\cite[Theorems 8.10.5, 11.2.6]{EGA_IV3} and \cite[Proposition 17.7.8]{EGA_IV4}}] \label{thm:spreading of schemes}
Let $\{S_i\}$ be a filtered diagram of affine schemes with limit $S$ and let $\mathcal P$ be one of the following classes of morphisms: isomorphisms, surjections, closed embeddings, flat, smooth or proper morphisms\footnote{The list is not even nearly complete. See \cite[Appendix C.1]{Poonen} for a much more exhaustive list of classes of morphisms and their properties with precise references.}. Then the natural functor
$$\indlim[i] \Sch^{\fp,\mathcal P}_{/S_i} \to \Sch^{\fp,\mathcal P}_{/S}$$
(induced by the base change $\Sch^{\fp, \mathcal P}_{/S_i} \ni X \mapsto X\times_{S_i} S$) is an equivalence.
\end{thm}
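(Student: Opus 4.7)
The plan is to decompose the statement into three independent sub-claims: (i) \emph{essential surjectivity}, that every finitely presented $S$-scheme $X$ arises as the base-change of some finitely presented $S_i$-scheme; (ii) \emph{full faithfulness}, that $\mathcal P$-morphisms between base-changes lift uniquely up to enlarging $i$; and (iii) \emph{stability of $\mathcal P$}, that a morphism of spreadings lies in $\mathcal P$ over $S$ if and only if it does so over some $S_j$.

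For (i), I would reduce to the affine case by choosing a finite Zariski atlas $\{U_\alpha = \Spec A_\alpha\}$ of $X$ such that each intersection $U_\alpha \cap U_\beta$ is a finite union of principal opens. Because $X$ is finitely presented, each $A_\alpha$ admits a presentation $\mathcal{O}(S)[x_1, \ldots, x_n]/(f_1, \ldots, f_m)$ involving only finitely many coefficients; since $\mathcal{O}(S) = \colim_i \mathcal{O}(S_i)$, these coefficients come from some $\mathcal{O}(S_{i_\alpha})$, giving an affine spreading $A_{\alpha, i_\alpha}$. Enlarging the index, I would then spread out the transition data on pairwise intersections, and arrange for the cocycle condition on triple intersections to hold by passing still further in the diagram.

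For (ii), given spreadings $X_i, Y_i$ of $X, Y$ and a morphism $f\colon X \to Y$ over $S$, cover $Y_i$ by finitely many affine opens $V_\alpha = \Spec B_\alpha$. The restriction of $f$ to each preimage $f^{-1}(V_\alpha\times_{S_i}S)$ corresponds to a morphism of $\mathcal{O}(S)$-algebras from $B_\alpha \otimes_{\mathcal{O}(S_i)} \mathcal{O}(S)$ to the sections of the structure sheaf on the relevant open of $X$. Finite presentation of $B_\alpha$ means that such a morphism is specified by images of generators together with finitely many polynomial relations; both the images and the witnesses for the relations come from some $\mathcal{O}(S_j)$. Uniqueness of the lift, and compatibility of the local lifts on pairwise intersections, is then forced by enlarging $j$ once more and running the same argument on equalities.

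For (iii), each property in the list is local on the base and stable under base change, so one can argue affine-locally. Closed embeddings and surjections reduce to finite presentation of ideals. Openness of the flat and smooth loci over a Noetherian base (plus finite presentation of $\Omega^1$ for smoothness) reduces these cases to checking vanishing of finitely presented (co)homology modules, which commutes with filtered colimits. The hard part will be \emph{properness}: it is a global property that cannot be detected on affines. My approach would be to invoke Chow's lemma to dominate $X_i \to S_i$ by a projective morphism after enlarging $i$, and then use that projectivity amounts to the existence of a very ample line bundle on $X_i$, which is a finitely presented piece of data and hence spreads out. Separatedness and universal closedness would then follow from the projective case together with the valuative criterion applied over residue fields of points of the $S_j$.
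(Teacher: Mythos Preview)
The paper does not prove this statement at all: it is quoted verbatim from EGA~IV with precise references (8.10.5, 11.2.6, 17.7.8) and used as a black box input to the stacky generalization in \Cref{fp_over_filtered_limit}. There is therefore nothing in the paper to compare your proposal against; your outline is essentially a sketch of the EGA arguments themselves, and the decomposition into essential surjectivity, full faithfulness, and stability of $\mathcal P$ is the standard one.

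One genuine wrinkle in your properness argument: as written, ``invoke Chow's lemma to dominate $X_i \to S_i$'' is circular, since Chow's lemma presupposes properness and, in its classical form, a Noetherian base. The correct order is to first use finite presentation to descend the whole situation to a Noetherian subring of $\mathcal O(S_{i_0})$ (this is the absolute Noetherian approximation step, itself an instance of parts (i) and (ii) already established), \emph{then} apply Chow's lemma to the proper morphism $X\to S$ over that Noetherian base to produce a projective surjection $X'\twoheadrightarrow X$, spread out the closed immersion $X'\hookrightarrow \mathbb P^n$, the surjection $X'\to X$, and the separatedness of $X\to S$, and finally conclude that $X_i\to S_i$ is proper as the separated image of a projective morphism. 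This is exactly how the Stacks Project (Tag 081F) and, in effect, EGA organize the argument; your sketch has the right ingredients but the Noetherian reduction must come before Chow, not after.
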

We will say that a scheme $X$ is a \emph{$\mathcal P$-scheme over $S$ } ($\mathcal P$-scheme$/S$) if $X$ is an $S$-scheme and the structure morphism $X\to S$ is in $\mathcal P$. From the theorem above one can formally deduces the following corollary (see \Cref{spreading_with_properties_for_stacks} for a proof in a bit more general stacky setting):
\begin{cor}\label{spreading_with_properties_for_schemes}
Let $\{S_i\}_{i\in I}, S$ and $\mathcal P$ be as above. Then if $X$ is a finitely presentable $\mathcal P$-scheme$/S$, then there exists $i\in I$ and a finitely presentable $\mathcal P$-scheme $X_i$ over $S_i$, such that $X \simeq X_i\times_{S_i} S$.
\end{cor}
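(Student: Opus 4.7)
The plan is to deduce the corollary essentially formally from the essential surjectivity of the equivalence in Theorem \ref{thm:spreading of schemes}. Unwinding the definition of a filtered colimit of ordinary categories, an object of $\indlim_i \Sch^{\fp,\mathcal{P}}_{/S_i}$ is represented by a pair $(i, X_i)$ with $X_i \in \Sch^{\fp,\mathcal{P}}_{/S_i}$, and its image in $\Sch^{\fp,\mathcal{P}}_{/S}$ under the functor of the theorem is precisely the base change $X_i \times_{S_i} S$, equipped with its induced structure morphism to $S$ (which automatically lies in $\mathcal{P}$, since every class on the list is stable under base change). Applying essential surjectivity to the given $X$ then produces an index $i$, an object $X_i \in \Sch^{\fp,\mathcal{P}}_{/S_i}$, and an isomorphism $X_i \times_{S_i} S \simeq X$ in the category $\Sch^{\fp,\mathcal{P}}_{/S}$; since morphisms in this category are by definition $S$-morphisms belonging to $\mathcal{P}$, this is in particular an isomorphism of $S$-schemes, which is exactly the statement of the corollary.

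In other words, no argument is required beyond Theorem \ref{thm:spreading of schemes} itself; the entire content is packaged into essential surjectivity of the functor appearing there. The only bookkeeping issue is the standard description of objects in a $1$-categorical filtered colimit, together with the tacit observation that isomorphisms lie in $\mathcal{P}$ (built into the standing assumption that $\mathcal{P}$ contains all isomorphisms and is closed under composition), so that the categorical isomorphism produced above is genuinely an $S$-scheme isomorphism. There is thus no real obstacle here; the more substantive work is deferred to the stacky analogue \Cref{spreading_with_properties_for_stacks}, where the same formal deduction has to be carried out in the higher-categorical setting with suitably more care.
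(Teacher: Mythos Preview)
There is a genuine gap. You are reading the category $\Sch^{\fp,\mathcal P}_{/S}$ as if its \emph{objects} were $\mathcal P$-schemes over $S$, but that is not how the paper defines it: the objects are \emph{all} finitely presentable $S$-schemes, and only the \emph{morphisms} are required to lie in $\mathcal P$. Consequently, essential surjectivity of the functor in Theorem~\ref{thm:spreading of schemes} tells you no more than that $X$ descends to some finitely presentable $X_i$ over $S_i$; it says nothing about the structure morphism $X_i\to S_i$ belonging to $\mathcal P$. Your parenthetical ``which automatically lies in $\mathcal P$, since every class on the list is stable under base change'' is exactly the step that fails: there is nothing to base-change, because $X_i\to S_i$ has not been shown to be in $\mathcal P$.

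The fix is the one the paper carries out in \Cref{spreading_with_properties_for_stacks}: after spreading $X$ to some $X_j$, apply the \emph{morphism} part of the equivalence to the structure map. Both $X$ and $S$ are objects of $\Sch^{\fp,\mathcal P}_{/S}$ (the latter via the identity), and $\Hom_{\Sch^{\fp,\mathcal P}_{/S}}(X,S)$ is nonempty precisely because $X$ is a $\mathcal P$-scheme over $S$. Since morphisms in a filtered colimit of categories are filtered colimits of morphism sets, this forces $\Hom_{\Sch^{\fp,\mathcal P}_{/S_i}}(X_j\times_{S_j}S_i,\,S_i)$ to be nonempty for some $i\ge j$, i.e.\ the structure morphism $X_i\to S_i$ lies in $\mathcal P$. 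So both halves of the equivalence are needed, not just essential surjectivity.
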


Our goal in this section is to extend \Cref{thm:spreading of schemes} to the setting of Artin stacks. First we recall how "finitely presentable" is defined in Artin setting:
\begin{defn}[Finitely presentable Artin stacks]
A $(-1)$-Artin stack $\mstack X$ over a base ring $R$ is called \emph{finitely presentable}, if $\mstack X\simeq \Spec A$ and $A$ is a finitely presentable $R$-algebra. Then, an $n$-Artin stack $\mstack X$ over $R$ is called finitely presentable if there exists a smooth atlas $U \surj \mstack X$ such that $U$ is a finitely presentable affine scheme and $U \times_{\mstack X} U$ is a finitely presentable $(n-1)$-Artin $R$-stack. We will denote the category of finitely presentable $n$-Artin stacks by $\Stk^{n\mdef\Art,\fp}$.
\end{defn}
Our general strategy for proving results about spreadability is to inductively reduce to the case of finitely presentable schemes. For this end it will be technically convenient to use instead of iterative description of Artin stacks a representation as a geometric realization of a coskeletal hypercover by schemes:
\begin{construction}
Let $X_\bullet\colon \Delta^\op \to \fcat C$ be a simplicial object in a category $\fcat C$ admitting finite limits. Define $X(-)\colon \SSet^{\fin, \op} \to \fcat C$ to be the right Kan extension of $X_\bullet$ along the inclusion of $\Delta^\op$ into the opposite  $\SSet^{\fin,\op}$ of the category of finite simplicial sets (meaning simplicial sets with only finitely many non-degenerate simplices). More concretely, for a finite simplicial set $K$
$$X(K) \simeq \lim\limits_{\Delta^n \in \Delta^\op_{/K}} X(\Delta^n).$$
In particular, we denote $M_n(X_\bullet) \coloneqq X(\partial \Delta^n)$ and call it the \emph{$n$-th matching object of $X_\bullet$}.
\end{construction}
\begin{defn}
Let $\mathbb H$ be an $\infty$-topos. An augmented simplicial object $X_\bullet \to X_{-1}$ is called a \emph{hypercover of $X_{-1}$} if for any $n\in \mathbb Z_{\ge 0}$ the natural map $X_n \to M_n(X_\bullet)$ is an effective epimorphism ($M_n$ is computed in the category $\mathbb H_{/X_{-1}}$). A hypercover $X_\bullet$ is called \emph{$n$-coskeletal} if additionally for each $m>n$ the natural map $X_m \to M_m(X_\bullet)$ is an equivalence (equivalently $X_\bullet$ coincides with the right Kan extension of its restriction to $\Delta_{\le n}^\op$). We refer interested reader to \cite[Appendix]{MathewBrantner_LLambda} for a quick recap on hypercovers and to \cite[Section 2]{Pridham} for a discussion of hypergroupoids, which is most relevant for this section.
\end{defn}

With this notation, $n$-Artin stacks can be thought of as some special $(n-1)$-coskeletal hypercovers:
\begin{thm}[{\cite[Proposition 4.1 and Theorem 4.7]{Pridham}}]
Let $\mstack X$ be an $n$-Artin stack over $S$. Then there exists an $(n-1)$-coskeletal hypercover $X_\bullet$ of $\mstack X$ such that all $X_k$ are equivalent to coproducts of affine schemes and for all $m,k,$ with $0\le m \le k$, the maps $X_k \to X(\Lambda^k_m)$ are smooth surjections. Conversely, given $X_\bullet$ as above, its geometric realization $|X_\bullet|$ (in the category of stacks, i.e. sheaves of spaces in \'etale topology) is an $n$-Artin stack.
\end{thm}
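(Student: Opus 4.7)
The plan is to prove both directions by induction on $n$, reducing the Artin-stack structure to combinatorial data on a simplicial diagram of affine schemes.

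For the forward direction, I would start with a smooth surjective atlas $X_0 \twoheadrightarrow \mstack X$ by a coproduct of affine schemes, which exists by the definition of $n$-Artin stack. I would then build $X_\bullet$ by induction on the simplicial degree $k$. Assuming $X_{\leq k-1}$ has been constructed compatibly with $\mstack X$ and satisfies the horn-smoothness conditions at levels $\leq k-1$, the partial data determines a matching object $M_k := X(\partial\Delta^k)$. The candidate for "$X_k$" above $\mstack X$ is the universal filler, namely the limit over $\partial\Delta^k$ of $X_{\leq k-1}$ computed in the slice $\Stk_{/\mstack X}$; using the inductive hypothesis on $n$ (each iterated fiber product of $X_0$ over $\mstack X$ is an $(n-1)$-Artin stack, and one uses that fiber products of atlases drop the Artin level), this universal filler is itself an Artin stack of one lower level. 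Choose $X_k$ to be a smooth surjective atlas of this filler by a coproduct of affines; the smoothness of horn maps $X_k \to X(\Lambda^k_m)$ follows because horn inclusions factor through the partial matching data, on which the previously constructed projections were already smooth surjections. For $k \geq n$, the universal filler is already a coproduct of affine schemes (every inductive Artin level has been exhausted), so we set $X_k = M_k$, enforcing $(n-1)$-coskeletality.

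For the converse direction, given such $X_\bullet$ I would show by induction on $n$ that $|X_\bullet|$ is $n$-Artin. The map $X_0 \to |X_\bullet|$ is the candidate atlas; to verify smooth surjectivity it suffices to check that $X_1 \to X_0 \times_{|X_\bullet|} X_0$ is an epimorphism of sheaves of spaces (which follows from the horn condition for $k=1$, $m \in \{0,1\}$, giving smooth surjectivity of both projections $X_1 \to X_0$) and to identify the fiber product $X_0 \times_{|X_\bullet|} X_0$ with the geometric realization of a shifted simplicial object built from $X_\bullet$ that satisfies the analogous conditions one level down; by induction this realization is $(n-1)$-Artin.

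The main obstacle is the coherent inductive construction in the forward direction: ensuring that the smooth atlas choices at successive simplicial degrees are compatible with all face and degeneracy maps and with the simplicial identities, and that the $(n-1)$-coskeletality can be forced to kick in at level $n$. The standard way around this is to avoid the direct simplex-by-simplex approach and instead use a bisimplicial construction: take the Čech nerve $\mstack X_\bullet^{\mathrm{Cech}}$ of $X_0 \twoheadrightarrow \mstack X$, whose terms are $(n-1)$-Artin by definition; apply the inductive hypothesis to resolve each $\mstack X_k^{\mathrm{Cech}}$ by an $(n-2)$-coskeletal hypercover of coproducts of affines; assemble the resolutions functorially into a bisimplicial object; and take the diagonal. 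The verification that the diagonal is $(n-1)$-coskeletal, satisfies the horn smoothness conditions, and has geometric realization equivalent to $\mstack X$ is the delicate technical step, relying on the fact that the diagonal of a bisimplicial hypercover in an $\infty$-topos computes the same geometric realization and on the Reedy-type lemma that smoothness of horn maps is preserved under the diagonal of two such bisimplicial structures.
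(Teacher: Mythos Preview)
The paper does not give its own proof of this theorem---it is a citation to Pridham. What the paper does contain is the proof of the immediately following corollary (the finitely-presented variant), which reveals the intended approach: the forward direction invokes Pridham's inductive construction ``using only finite limits and atlases'' (his Proposition 4.5), and the converse direction uses the d\'ecalage identity
\[
X_0 \times_{|X_\bullet|} X_0 \simeq |X_0 \times_{X_\bullet} \mathrm{Dec}_+(X_\bullet)|
\]
from Pridham's Remark 2.25.

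Your converse sketch is correct and matches this: the ``shifted simplicial object built from $X_\bullet$'' that you describe is exactly the d\'ecalage, and the observation that it is an $(n-2)$-coskeletal hypergroupoid of one level lower is precisely how the induction runs.

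For the forward direction, your first approach---build $X_k$ as a smooth affine atlas over the matching object computed in stacks over $\mstack X$---is also Pridham's. The coherence obstacle you flag is less severe than you suggest: once $X_k$ is chosen together with its map to $M_k(X_\bullet)$, all face maps are determined by the universal property of $M_k$, and the simplicial identities are inherited. Degeneracies are handled by standard Reedy-style bookkeeping (one builds $X_k$ so that the latching map $L_k \to X_k$ exists, e.g.\ by taking an atlas of $M_k$ that contains the image of $L_k$). Your bisimplicial-diagonal workaround is correct in principle, but it is heavier machinery than the direct construction and the verification that the diagonal is $(n-1)$-coskeletal with smooth horn maps is at least as delicate as what you were trying to avoid.
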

\begin{cor}\label{represent_fp}
Let $\mstack X$ be a finitely presented $n$-Artin stack over $S$. Then there exists an $(n-1)$-coskeletal hypercover $X_\bullet$ of $\mstack X$ such that all $X_k$ are finitely presentable affine schemes and for all $m,k,$ with $ \ 0\le m \le k$, the maps $X_k \to X(\Lambda^k_m)$ are smooth surjections. Conversely, given $X_\bullet$ as above, its geometric realization $|X_\bullet|$ (in the category of stacks) is a finitely presentable $n$-Artin stack.

\begin{proof}
Let $\mstack X$ be a finitely presentable $n$-Artin stack. The simplicial scheme $X_\bullet$ from the theorem above is constructed inductively in \cite[Proposition 4.5]{Pridham} using only finite limits and atlases, hence all $X_i$ can be chosen to be finitely presentable.

Conversely, if $X_\bullet$ is a simplicial affine scheme as in the statement of corollary, then by the theorem above $|X_\bullet|$ is an $n$-Artin stack. Moreover, the natural map $X_0 \to |X_\bullet|$ is a smooth finitely presentable atlas. To prove that $X_0 \times_{|X_\bullet|} X_0$ is finitely presented, recall that by \cite[Remark 2.25]{Pridham} there is a natural equivalence
$$X_0 \times_{|X_\bullet|} X_0 \simeq |X_0 \times_{X_\bullet} \mathrm{Dec}_+(X_\bullet)|,$$
where $\mathrm{Dec}_+$ is the d\'ecalage functor, $\mathrm{Dec}_+(X_\bullet)_i \simeq X_{i+1}$. Since finitely presentable affine schemes are closed under fibered products, it follows that $X_0 \times_{X_\bullet} \mathrm{Dec}_+(X_\bullet)$ also satisfies conditions of the corollary. Since moreover, $X_0 \times_{X_\bullet} \mathrm{Dec}_+(X_\bullet)$ is $(n-2)$-coskeletal, its geometric realization $X_0 \times_{|X_\bullet|} X_0$ is finitely presentable by induction.
\end{proof}
\end{cor}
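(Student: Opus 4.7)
The corollary has two directions and I would handle them in opposite ways: the first is a refinement of Pridham's existence theorem that requires tracking finite presentability through his inductive construction, while the second is a genuine induction on the level $n$ of Artin-ness, with the d\'ecalage calculation doing most of the work.

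For the forward direction, my plan is to revisit the construction of the simplicial scheme in \cite[Proposition 4.5]{Pridham} referenced in the preceding theorem. That construction is inductive in the skeletal level: starting from a smooth atlas $U_0\to \mstack X$, one produces $X_0=U_0$ and then, at each step, realizes $X_{k+1}$ as a smooth atlas for a particular finite limit of previously constructed schemes (built out of matching objects and the nerve of $\mstack X$). The only operations used are finite limits in the category of affine schemes and the choice of smooth atlases. Since $\mstack X$ is finitely presentable by hypothesis, the first atlas $U_0$ can be chosen finitely presentable (this is part of the definition). Inductively, all the finite limits in question stay within finitely presentable affine schemes (since these are closed under finite limits over a finitely presentable base), and the atlases needed at higher levels are atlases of finitely presentable $(n{-}1)$-Artin stacks, hence can be chosen finitely presentable as well. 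This way the resulting hypercover $X_\bullet$ consists of finitely presentable affine schemes.

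For the converse, I would induct on $n$. The base case $n=0$ is trivial: a $(-1)$-coskeletal hypercover is equivalent data to the single scheme $X_0$, and $|X_\bullet|\simeq X_0$ is finitely presentable by hypothesis. For the inductive step, the cited Pridham theorem already tells us that $|X_\bullet|$ is an $n$-Artin stack and that $X_0\to |X_\bullet|$ is a smooth surjection (indeed this uses only the matching-map conditions). It remains to verify that $X_0$ is a finitely presentable affine atlas (immediate) and that the diagonal $X_0\times_{|X_\bullet|} X_0$ is a finitely presentable $(n-1)$-Artin stack. Here I would invoke the d\'ecalage formula
\[
X_0 \times_{|X_\bullet|} X_0 \;\simeq\; \bigl|X_0 \times_{X_\bullet} \mathrm{Dec}_+(X_\bullet)\bigr|
\]
from \cite[Remark 2.25]{Pridham}. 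The right-hand side is the geometric realization of a new simplicial diagram whose $i$-th level is $X_0\times_{X_i} X_{i+1}$. This diagram is $(n-2)$-coskeletal (since $\mathrm{Dec}_+$ shifts the coskeletal level down by one and the fiber product with the constant object $X_0$ preserves this), its terms are finitely presentable affine schemes (finite limits of such), and the matching/horn-filling smoothness conditions transfer from $X_\bullet$. By the inductive hypothesis the realization of this $(n-2)$-coskeletal hypercover is a finitely presentable $(n-1)$-Artin stack, so $X_0\times_{|X_\bullet|} X_0$ is as required.

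The main obstacle I anticipate is purely bookkeeping: verifying that the d\'ecalage simplicial object $X_0\times_{X_\bullet}\mathrm{Dec}_+(X_\bullet)$ really still satisfies all the hypotheses of the corollary (smoothness and surjectivity of the maps to the horn fillers $X(\Lambda^k_m)$, and coskeletality dropping by exactly one). This is essentially a diagram-chase with matching objects and the shifting identity $M_k(\mathrm{Dec}_+ X_\bullet)\simeq X(\Delta^0\star\partial\Delta^k)$, so I would not expect surprises, but it is the step that makes the induction work.
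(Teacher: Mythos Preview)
Your proposal is correct and follows essentially the same approach as the paper's own proof: the forward direction is handled by noting that Pridham's inductive construction uses only finite limits and choices of atlases, and the converse runs by induction on $n$ using the d\'ecalage identity from \cite[Remark 2.25]{Pridham} to identify $X_0\times_{|X_\bullet|}X_0$ with the realization of an $(n-2)$-coskeletal hypercover satisfying the same hypotheses. The only difference is that you spell out the base case and the bookkeeping obstacle explicitly, which the paper leaves implicit.
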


For convenience we introduce the following notation:
\begin{defn}[Spreadable class]\label{def:spreadable_class}
A class of morphism $\mathcal P$ between Artin stacks is called \emph{spreadable} if
\begin{itemize}
\item $\mathcal P$ is closed under arbitrary base changes, compositions and contains all equivalences.

\item (Locality on source and target) Let $f\colon \mstack X \to \mstack Y$ be a morphism of finitely presentable Artin stacks. Then $f$ lies in $\mathcal P$ if and only if there exist smooth finitely presentable affine atlases $U\surj \mstack Y$ and $V\surj U\times_{\mstack Y} \mstack X$ such that the composite map $V \to U \times_{\mstack Y} \mstack X \to U$ is in $\mathcal P$.

\item (Affine spreadability) Let $\{S_i\}$ be a filtered diagram of affine schemes with the limit $S$. Let $f\colon X \to Y$ be a morphism in $\mathcal P$ between affine finitely presentable $S$-schemes. Then for some $i$ there exists a map $f_i\colon X_i\to Y_i$ in $\mathcal P$ of affine finitely presentable $S_i$-schemes, such that $f \simeq f_i \times_{S_i} S$.
\end{itemize}
\end{defn}
\begin{ex}
If $\mathcal P$ and $\mathcal Q$ is a pair of spreadable classes, then $\mathcal P\cap \mathcal Q$ and $\mathcal P \cup \mathcal Q$ are also spreadable. There exists the smallest spreadable class (consisting only of equivalences) and the largest one (consisting of all finitely presentable morphisms).
\end{ex}
\begin{ex}
Since surjective, smooth and flat morphisms of Artin stacks are by definition local on the source and the target for the flat topology, by \Cref{thm:spreading of schemes} we get that these classes are spreadable.
\end{ex}
\begin{defn}
Let $\mathcal P$ be a spreadable class and let $S$ be a scheme. Let us denote by $\Stk_{/S}^{n\mdef\Art,\fp, \mathcal P}$ the subcategory of the category of finitely presentable $n$-Artin stacks over $S$ and morphisms from $\mathcal P$ between them.
\end{defn}

We are now ready to prove the main technical result of this section (see \cite{Rydh_NoetherianApprox}, \cite[Chapter 4]{MoretLaumon} for similar results in the context of $1$-Artin stacks and \cite[Theorem 4.4.2.2]{Lur_SAG} for the spectral version):
\begin{thm}\label{fp_over_filtered_limit}
Let $\{S_i\}$ be a filtered diagram of affine schemes with limit $S$. Let $\mathcal P$ be a spreadable class. Then the natural functor
$$\indlim[i] \Stk_{/S_i}^{n\mdef\Art,\fp, \mathcal P} \xymatrix{\ar[r] &} \Stk_{/S}^{n\mdef\Art,\fp, \mathcal P}$$
(induced by the base-change $\Stk_{/S_i}^{n\mdef\Art,\fp, \mathcal P} \ni \mstack X_i \mapsto \mstack X_i \times_{S_i} S$) is an equivalence.

\begin{proof}
We will prove the statement by induction on $n$. The base of the induction $n=-1$, i.e. the case of affine schemes, holds by the definition of a spreadable class. To make the induction step, we first prove the statement for $\mathcal P = $ "all (finitely presented) morphisms" (using the induction assumption for smooth surjective morphisms) and then deduce the statement for a general spreadable class $\mathcal P$.

\smallskip\clause{Essential surjectivity for $\mathcal P =\text{"all"}$}. Since all $n$-Artin stacks are $(n+1)$-truncated, the Yoneda embedding $\Stk^{n\mdef\Art} \inj \Fun(\CAlg, \Type)$ factors through a full subcategory $\Fun(\CAlg, \Type_{\le n+1})=: \PStk_{\le n+1}$. Let now $\mstack X$ be a finitely presented $n$-Artin $S$-stack and let $X_\bullet$ be a simplicial diagram of finitely presented affine $S$-schemes, so that $|X_\bullet| \simeq \mstack X$ (as in \Cref{represent_fp}). Since for any simplicial diagram $A_\bullet$ in any $(n+1, 1)$-category the natural map $|A_\bullet|_{\le n+2} \to |A_\bullet|$ is an equivalence, we see that $\mstack X \simeq |X_\bullet|_{\le n+2}$ in $\PStk_{\le n+1}$. But $X_{\bullet|\Delta_{\le n+2}^\op}$ is a finite diagram of finitely presented affine schemes, hence there exists $S_i$ and a diagram $X_{\bullet|\Delta_{\le n+2}^\op, S_i}$ such that $X_{\bullet|\Delta_{\le n+2}^\op} \simeq X_{\bullet|\Delta_{\le n+2}^\op, S_i} \times_{S_i} S$. We set $\mstack X_{S_i} \coloneqq |X_{\le n+2, S_i}|$. By applying the inductive assumption with $\mathcal P = $ "smooth surjective", we can assume that all maps $X_{k, S_j} \to X_{\bullet, S_j}(\Lambda^k_m)$ are smooth and surjective for some $S_j$; hence by \Cref{represent_fp} $\mstack X_{S_j}$ is a finitely presented $n$-Artin spreading of $\mstack X$.

\smallskip\clause{Fully-faithfulness for $\mathcal P =\text{"all"}$}. Let $\mstack X_i, \mstack Y_i$ be a pair of $n$-Artin stacks of finite presentation over $S_i$. We then have
\begin{align}\label{eq:fp_fully_faithful}
\indlim[j] \Hom_{\PStk_{/S_j}}(\mstack X_i \times_{S_i} S_j, \mstack Y_i \times_{S_i} S_j) \simeq \indlim[j] \Hom_{\PStk_{/S_i}}(\mstack X_i \times_{S_i} S_j, \mstack Y_i) \simeq \indlim[j] \Hom_{\PStk_{\le n+1 /S_i}}(\mstack X_i \times_{S_i} S_j, \mstack Y_i),
\end{align}
where the second equivalence follows from the fact that filtered co-limits commute with $\pi_*$, hence preserve $(n+1)$-truncated spaces. Let now $X_\bullet \to \mstack X$ be as in \Cref{represent_fp}. Then
\begin{align*}
\eqref{eq:fp_fully_faithful} & \ldots \simeq \indlim[j] \Hom_{\PStk_{\le n+1 /S_i}}(|X_\bullet \times_{S_i} S_j |_{\le n+2}, \mstack Y_i) \simeq \Tot_{\le n+2} \indlim[j] \Hom_{\PStk_{\le n+1 /S_i}}(X_\bullet \times_{S_i} S_j, \mstack Y_i) \simeq \\ & \simeq \Tot_{\le n+2} \indlim[j] \Hom_{\Stk_{/S_i}}(X_\bullet \times_{S_i} S_j, \mstack Y_i),
\end{align*}
where the second equivalence follows from the fact that, since $\Delta_{\le n + 2}$ is a finite diagram, limits along $\Delta_{\le n+2}$ commute with filtered co-limits. Similarly, one shows that
$$\Hom_{\Stk_{/S}}(\mstack X_i\times_{S_i} S, \mstack Y_i\times_{S_i} S) \simeq \Tot_{\le n+2} \Hom_{\Stk_{/S_i}}(X_\bullet \times_{S_i} S, \mstack Y_i).$$
Finally, since $\mathcal Y_i$ is finitely presentable, by \cite[Chapter 2, Proposition 4.5.2]{GaitsRozI}
$$\indlim[j] \Hom_{\Stk_{/S_i}}(X_\bullet \times_{S_i} S_j, \mstack Y_i) \simeq \Hom_{\Stk_{/S_i}}(X_\bullet \times_{S_i} S, \mstack Y_i).$$

\smallskip\clause{General $\mathcal P$}. Let $f\colon \mstack X \to \mstack Y$ be a morphism in a spreadable class $\mathcal P$ over $S$. It is enough to prove that there exists $i$ and a map between finitely presentable $n$-Artin $S_i$-stacks $f_i\colon \mstack X_i \to \mstack Y_i$ such that $f_i\times_{S_i} S\simeq f$ and $f_i \in \mathcal P$. Choose affine finitely presentable atlases $U\surj \mstack Y$ and $V \surj U \times_{\mstack Y} \mstack X$. The induced map $g\colon V\to U$ belongs to $\mathcal P$, so by the previous part and definition of spredable classes, the diagram
$$\xymatrix{
V \ar@{->>}[r] \ar[d]^g & \mstack X \ar[d]^f \\
U \ar@{->>}[r] & \mstack Y
}$$
can be spread out to some $S_i$, such that $g_{S_i}$ belongs to $\mathcal P$. It follows by the definition of spreadable class, that $f_{S_i}$ is also in $\mathcal P$.
\end{proof}
\end{thm}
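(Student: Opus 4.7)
The plan is to proceed by induction on the Artin level $n$, with the base case $n = -1$ (finitely presentable affine schemes) being exactly the affine spreadability axiom of a spreadable class, so that the statement specializes to the known result \Cref{thm:spreading of schemes}. The inductive step has two subparts: first establish the result when $\mathcal P$ is the largest possible class (all finitely presentable morphisms), and then leverage this together with the locality-on-source-and-target axiom to handle a general spreadable $\mathcal P$.

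For essential surjectivity in the ``all morphisms'' case, I would represent a given finitely presentable $n$-Artin stack $\mstack X$ over $S$ via \Cref{represent_fp} as the geometric realization $|X_\bullet|$ of an $(n-1)$-coskeletal hypercover of finitely presentable affine schemes with smooth surjective matching maps $X_k \to X(\Lambda^k_m)$. The key observation is that since $n$-Artin stacks are $(n+1)$-truncated in the $\infty$-topos of stacks, the realization $|X_\bullet|$ depends only on the restriction $X_{\bullet|\Delta_{\le n+2}^{\op}}$, which is a \emph{finite} diagram of finitely presentable affine schemes. Such a finite diagram can be spread to some $S_i$ by the base case, and by applying the inductive hypothesis for the (lower-level) spreadable class ``smooth surjective'' to the finitely many matching maps, one can pass to a further $S_j$ over which the face maps remain smooth surjective. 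Then \Cref{represent_fp} in the other direction gives a finitely presentable $n$-Artin $S_j$-stack realizing $\mstack X$ after base change.

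For fully-faithfulness in the ``all morphisms'' case, given $\mstack X_i, \mstack Y_i$ over $S_i$, I would express $\Hom_{\Stk_{/S_j}}(\mstack X_i\times_{S_i} S_j, \mstack Y_i \times_{S_i} S_j)$ via the hypercover presentation of $\mstack X_i$ as a limit over $\Delta^{\op}$. Using the $(n+1)$-truncatedness of the target one can cut this off to a \emph{finite} totalization over $\Delta_{\le n+2}^{\op}$, and then commute the filtered colimit in $j$ past this finite limit and apply the finitely presentable affine case levelwise (together with the fact that filtered colimits preserve $(n+1)$-truncation). Once the ``all morphisms'' case is established, the general case follows formally: for $f\colon \mstack X \to \mstack Y$ in $\mathcal P$ over $S$, locality provides finitely presentable affine atlases $U \surj \mstack Y$ and $V \surj U\times_{\mstack Y} \mstack X$ with $V \to U$ in $\mathcal P$, and one spreads the entire cube by the already-proved essential surjectivity (plus affine spreadability applied to $V\to U$) and invokes locality again over $S_i$.

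The main technical obstacle I anticipate is the essential surjectivity step: one must carefully justify that truncating a simplicial object, spreading over a finite index category, and then re-realizing is compatible with the constraint of landing in $n$-Artin stacks; this boils down to ensuring smoothness and surjectivity of the matching maps can be preserved after passing to a large enough $i$, and that the re-realization of the spread truncated diagram indeed reproduces $\mstack X$ after base change, which requires the interchange of $(n+1)$-truncation with the filtered colimit in the index $i$.
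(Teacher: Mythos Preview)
Your proposal is correct and follows essentially the same route as the paper: induction on $n$ with base case $n=-1$, first handling $\mathcal P=\text{``all''}$ via the $(n+1)$-truncatedness trick to reduce to a finite piece $X_{\bullet|\Delta_{\le n+2}^{\op}}$ of the hypercover from \Cref{represent_fp}, spreading that finite diagram and using the inductive hypothesis for the smooth-surjective class on the horn-filling maps, and then reducing fully-faithfulness to a finite $\Tot_{\le n+2}$ commuting with the filtered colimit; the general $\mathcal P$ then follows by spreading the atlas square and invoking locality on source and target. The only cosmetic slip is that the maps $X_k \to X(\Lambda^k_m)$ are horn-filling maps rather than matching maps (the latter would be $X_k \to X(\partial\Delta^k)$), but you have the correct objects in mind.
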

A stack $\mstack X$ is called an \emph{$n$-Artin $\mc P$-stack over $S$} if the structure morphism $\pi\colon \mstack X \to S$ exhibits $\mstack X$ as an $n$-Artin stack and $\pi$ is in $\mathcal P$.
\begin{cor}[Existence of spreading in a predefined class]\label{spreading_with_properties_for_stacks}
Let $\{S_i\}_{i\in I}$ be a filtered diagram of affine schemes, $S \coloneqq \prolim S_i$ and $\mathcal P$ be a spreadable class. Then if $\mstack X$ is a finitely presentable  $n$-Artin $\mathcal P$-stack over $S$, then there exists $i\in I$ and a finitely presentable $n$-Artin  $\mathcal P$-stack $\mstack X_i$ over $S_i$, such that $\mstack X \simeq \mstack X_i\times_{S_i} S$.

\begin{proof}
Let $\pi\colon \mstack X \to S$ be the structure morphism. By the previous theorem and the description of objects in filtered colimits of categories (see e.g. \cite{Nick_FilteredCats}) there exists a finitely presented stack $\pi_j \colon \mstack X_j \to S_j$ such that $\pi_j\times_{S_j} S = \pi$. A morphism in a filtered colimit of categories is a filtered co-limit of morphisms, hence
$$\Hom_{\Stk_{/S}^{n\mdef\Art, \fp, \mathcal P}}(\mstack X, S) \simeq \indlim[k]\Hom_{\Stk_{/S_i}^{n\mdef\Art, \fp, \mathcal P}}(\mstack X_i\times_{S_j} S_i, S_i).$$
Since the left hand side is non-empty by assumption, the right hand side also must be nonempty for some $i$, i.e. there exists $i\in I$ such that $\pi_i\colon \mstack X_i \to S_i$ is in $\mathcal P$.
\end{proof}
\end{cor}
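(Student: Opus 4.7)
The plan is to reduce this corollary to Theorem \ref{fp_over_filtered_limit} in two steps: first spread out the underlying stack and then spread out the structure morphism as an arrow in $\mathcal P$.

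For the first step, I would apply the essential surjectivity part of Theorem \ref{fp_over_filtered_limit}, which gives an equivalence of the categories $\indlim_i\Stk_{/S_i}^{n\mdef\Art,\fp,\mathcal P}\simeq \Stk_{/S}^{n\mdef\Art,\fp,\mathcal P}$. Since objects of these categories are unrestricted finitely presentable $n$-Artin stacks (the class $\mathcal P$ only constrains morphisms), the standard description of objects in a filtered colimit of $(\infty,1)$-categories (see e.g.\ \cite{Nick_FilteredCats}) yields an index $j\in I$ and a finitely presentable $n$-Artin stack $\mstack X_j$ over $S_j$ together with an equivalence $\mstack X_j\times_{S_j}S\simeq \mstack X$. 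Setting $\mstack X_i\coloneqq \mstack X_j\times_{S_j}S_i$ for each $i\ge j$ gives a cofinal family of candidate spreadings; the task is to select one for which the structure map to $S_i$ is in $\mathcal P$.

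For the second step, I would invoke the full faithfulness part of Theorem \ref{fp_over_filtered_limit}. Viewing $S$ as the terminal object of $\Stk_{/S}^{n\mdef\Art,\fp,\mathcal P}$ (and each $S_i$ similarly over itself), the hypothesis that $\pi\colon \mstack X\to S$ lies in $\mathcal P$ precisely means that $\pi$ is a morphism of the restricted category $\Stk_{/S}^{n\mdef\Art,\fp,\mathcal P}$. Full faithfulness then produces an equivalence
$$\Hom_{\Stk_{/S}^{n\mdef\Art,\fp,\mathcal P}}(\mstack X,S)\;\simeq\;\indlim_{i\ge j}\Hom_{\Stk_{/S_i}^{n\mdef\Art,\fp,\mathcal P}}(\mstack X_i,S_i),$$
and since the left-hand side contains $\pi$ and is therefore nonempty, so is some term on the right, yielding an $i\ge j$ and a morphism $\pi_i\colon \mstack X_i\to S_i$ in $\mathcal P$ whose base change along $S_i\to S$ recovers $\pi$. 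This furnishes the required $\mathcal P$-spreading.

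The main (and only) subtlety I anticipate is a bookkeeping one: making sure that the structure morphism $\pi$ genuinely qualifies as a morphism of $\Stk_{/S}^{n\mdef\Art,\fp,\mathcal P}$ between the objects $\mstack X$ and $S$, i.e.\ that both the source, the target, and the arrow $\pi$ belong to the restricted category — but this is immediate from the hypothesis that $\pi$ is in $\mathcal P$ and from the fact that $\mathcal P$ contains identities. Once this is granted, the corollary is a purely formal consequence of the essential surjectivity and full faithfulness statements already established in Theorem \ref{fp_over_filtered_limit}.
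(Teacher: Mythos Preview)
Your proposal is correct and follows essentially the same route as the paper: spread out the underlying stack via essential surjectivity of Theorem~\ref{fp_over_filtered_limit}, then use full faithfulness to identify the $\Hom$-space $\Hom_{\Stk_{/S}^{n\mdef\Art,\fp,\mathcal P}}(\mstack X,S)$ with the filtered colimit of the corresponding spaces over $S_i$, and conclude from nonemptiness. Your write-up is in fact slightly more explicit about the cofinal reindexing $\mstack X_i\coloneqq\mstack X_j\times_{S_j}S_i$ and about why $S$ is a legitimate target in the restricted category, but the argument is the same.
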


\subsection{Cohomologically proper stacks}
\label{sec:Cohomologically proper stacks}
In most examples for which we are able to construct a Hodge-proper spreading, the spreading in fact satisfies a stronger property, namely it is \textit{cohomologically proper}. This property enjoys many natural properties that Hodge-properness does not: e.g. it translates along proper maps and a cohomologically proper scheme is necessarily proper. To introduce it we first need to extend \Cref{def:ncoh} to all locally Noetherian Artin stacks:
\begin{defn}\label{def:Noetherian_Artin}
An Artin stack is called \emph{locally Noetherian} if it admits an atlas $\coprod_i U_i$, where all $U_i$ are Noetherian affine schemes. An Artin stack is called \emph{Noetherian} if it is locally Noetherian and quasi-compact quasi-separated.

For a locally Noetherian Artin stack $\mstack X$ we will denote by $\Coh(\mstack X)$ (resp. $\! \Coh^+(X)$) the full subcategory of $\QCoh(\mstack X)$ consisting of sheaves $\mathcal F$ such that the restriction of $\mathcal F$ to some (equivalently to any) locally Noetherian atlas has bounded (resp. bounded below) coherent cohomology sheaves.
\end{defn}
\begin{defn}\label{def:cohomologically proper morphism}
A quasi-compact quasi-separated morphism $f\colon \mstack X \to \mstack Y$ of locally Noetherian Artin stacks is called \emph{cohomologically proper} if the induced functor $f_*\colon \QCoh(\mstack X) \to \QCoh(\mstack Y)$ preserves the full subcategory of bounded below coherent sheaves. A locally Noetherian Artin stack $\mstack X$ over a Noetherian ring $R$ is called \emph{cohomologically proper} if the structure morphism $\mstack X \to \Spec R$ is cohomologically proper.
\end{defn}
\begin{rem}\label{coh_proper_trivial_rem}
By the left exactness of $f_*$ it is enough to prove that $f_*(\Coh(\mstack X)^\heartsuit) \subset \Coh^+(\mstack Y)$.
\end{rem}
We have the following basic properties of cohomologically proper morphisms:
\begin{prop} \label{properties_of_coh_prop_morps}
In the notations above we have:
\begin{enumerate}
\item The class of cohomologically proper morphism is closed under compositions.

\item Let $f\colon \mstack X \to \mstack Y$ be a cohomologically proper morphism and assume that $\mstack X$ is Noetherian. Then for any open quasi-compact embedding $\mstack U \inj \mstack Y$ the pullback $\mstack U \times_{\mstack Y} \mstack X$ is cohomologically proper over $\mstack U$.

\item Let $f\colon \mstack X \to \mstack Y$ be a quasi-compact quasi-separated morphism such that for some smooth cover $\pi\colon \mstack U\ra \mstack Y$ the pull-back $f_{\mstack U}\colon \mstack X\times_{\mstack Y} \mstack U \to \mstack U$ is cohomologically proper. Then $f$ is cohomologically proper.
\end{enumerate}

\begin{proof}
The first point is obvious. To prove the second one note that by base change it is enough to show that any coherent sheaf on $\mstack U\times_{\mstack Y} \mstack X$ is a retract of a restriction of a coherent sheaf on $\mstack X$. This is proved in \Cref{cor:coherent_exted_from_open} below. The third point follows by base change as well since it is enough to check that a sheaf belongs to $\Coh^+$ on a smooth cover.
\end{proof}
\end{prop}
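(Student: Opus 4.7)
The plan is to dispatch each part via a short argument, leveraging flat base change and the fact that the property of being in $\Coh^+$ is local in the smooth topology. Part (1) is essentially tautological: if $f\colon \mstack X\to \mstack Y$ and $g\colon \mstack Y\to \mstack Z$ are both cohomologically proper, the identification $(g\circ f)_*\simeq g_*\circ f_*$ shows the composite pushforward preserves $\Coh^+$, since each factor does.

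For part (2), write $j\colon \mstack U\inj \mstack Y$ for the open embedding and form the pullback square with $j'\colon \mstack U\times_{\mstack Y}\mstack X\to \mstack X$ and $f_{\mstack U}\colon \mstack U\times_{\mstack Y}\mstack X\to \mstack U$. By \Cref{coh_proper_trivial_rem} it suffices to check that $(f_{\mstack U})_*\mathcal F$ is bounded below coherent for each $\mathcal F\in \Coh(\mstack U\times_{\mstack Y}\mstack X)^\heartsuit$. Appealing to \Cref{cor:coherent_exted_from_open} (proved below, and the only nontrivial input), one produces a coherent sheaf $\mathcal G$ on $\mstack X$ such that $\mathcal F$ is a retract of $j'^*\mathcal G$. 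Flat base change along $j$ identifies $(f_{\mstack U})_*j'^*\mathcal G\simeq j^*f_*\mathcal G$; since $f_*\mathcal G\in \Coh^+(\mstack Y)$ by cohomological properness of $f$, its restriction $j^*f_*\mathcal G$ lies in $\Coh^+(\mstack U)$. Closure of $\Coh^+$ under retracts finishes the argument, after observing that $(f_{\mstack U})_*$ carries the retract $\mathcal F \hookrightarrow j'^*\mathcal G \twoheadrightarrow \mathcal F$ to a retract of $j^*f_*\mathcal G$.

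For part (3), denote by $\pi'\colon \mstack X\times_{\mstack Y}\mstack U\to \mstack X$ the pullback of the smooth cover $\pi\colon \mstack U\to \mstack Y$. Given $\mathcal F\in \Coh^+(\mstack X)$, the sheaf $\pi'^*\mathcal F$ lies in $\Coh^+$ because smooth pullbacks, being flat and locally of finite type, preserve this property; hence $(f_{\mstack U})_*\pi'^*\mathcal F\in \Coh^+(\mstack U)$ by hypothesis. Flat base change for the quasi-compact quasi-separated morphism $f$ along the flat map $\pi$ gives the equivalence $(f_{\mstack U})_*\pi'^*\mathcal F\simeq \pi^*f_*\mathcal F$, and by \Cref{def:Noetherian_Artin} membership in $\Coh^+$ is tested smooth-locally on $\mstack Y$, completing the proof.

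The substantive work is thus isolated in \Cref{cor:coherent_exted_from_open}; modulo that statement, the three assertions reduce to standard manipulations of flat base change together with the evident stability of $\Coh^+$ under smooth pullback and retracts. The main obstacle in the proposition itself is therefore only bookkeeping; the genuine content lies in the separate extension-by-retract result needed for part (2).
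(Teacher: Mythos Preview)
Your proof is correct and follows essentially the same approach as the paper's own proof: part (1) is immediate from functoriality of pushforward, part (2) reduces via flat base change to the retract statement of \Cref{cor:coherent_exted_from_open}, and part (3) is flat base change together with the smooth-local nature of membership in $\Coh^+$. You have simply spelled out the details that the paper leaves implicit.
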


\begin{prop}\label{compact_generation_of_bounded_qcoh}
Let $\mstack X$ be a Noetherian Artin stack. Then for all $n\in \mathbb Z$ the category $\QCoh(\mstack X)^{\ge n}$ is compactly generated by $\Coh(\mstack X)^{\ge n}$.

\begin{proof}
The shift functor $\mathcal F \mapsto \mathcal F[n]$ induces an equivalence $\QCoh(\mstack X)^{\ge n} \simeq \QCoh(\mstack X)^{\ge 0}$, hence without loss of generality we can assume that $n=0$. During the proof we will freely use the fact that the truncation functors for the natural $t$-structure on $\QCoh(\mstack X)$ preserve filtered colimits (see e.g. \cite[Chapter 3.3,
Corollary 1.5.7]{GaitsRozI}).

We first prove that $\Coh(\mstack X)^{[0;m]}$ is compact in $\QCoh(\mstack X)^{[0;m]}$ for all $m\ge 0$. Let $U_\bullet$ be an affine Noetherian smooth hypercover of $\mstack X$. Since $\QCoh(\mstack X)^{[0; m]}$ is an $(m+1)$-category we then have
$$\QCoh(\mstack X)^{[0; m]} \simeq \Tot^{\le m+2} \QCoh(U_\bullet)^{[0; m]}.$$
Since $\Delta_{\le m+2}$ is a finite diagram it follows that a sheaf in $\Coh(\mstack X)^{[0;m]}$ is compact in $\QCoh(\mstack X)^{[0;m]}$, since all of its images are compact in $\QCoh(U_i)^{[0;m]}$. Note also that since for any $\mathcal F \in \QCoh(\mstack X)^{\le m}$ and $\mathcal G\in \QCoh(\mstack X)$ we have
$$\Hom_{\QCoh(\mstack X)^{\ge 0}}(\mathcal F, \mathcal G) \simeq \Hom_{\QCoh(\mstack X)}(\mathcal F, \tau^{\le m}\mathcal G)$$
and since truncation functor $\tau^{\le m}$ preserves filtered colimits, it follows that $\mathcal F \in \Coh(\mstack X)^{[0; m]}$ is compact in $\QCoh(\mstack X)^{\ge 0}$ as well.

Next we show that $\QCoh(\mstack X)^\heartsuit \simeq \Ind(\Coh(\mstack X)^\heartsuit)$. The argument is a slight variation of \cite[Tag 07TU]{StacksProj}. By assumption on $\mstack X$ there exists an affine Noetherian atlas $p\colon U\surj \mstack X$. Let $\mathcal F \in \QCoh(\mstack X)^\heartsuit$ and write $p^*\mathcal F \simeq \indlim \mathcal G_\alpha$, where the diagram on the right runs over all finitely generated submodules of $p^*\mathcal F$. For each $\alpha$ define $\mathcal F_\alpha \in \QCoh(\mstack X)$ as a pullback
$$\xymatrix{
\mathcal F_\alpha \ar@{^(->}[r]\ar[d] & \mathcal F \ar[d] \\
\mathcal H^0 p_*\mathcal G_\alpha \ar@{^(->}[r] & \mathcal H^0 p_*p^*\mathcal F.
}$$
Using triangular identities one easily checks that the inclusion $p^*\mathcal F_\alpha \inj p^*\mathcal F$ factors through an inclusion $\mathcal G_\alpha \inj p^*\mathcal F$. In particular, $p^*\mathcal F_\alpha$, being a submodule of a finitely generated module $\mathcal G_\alpha$ over Noetherian ring $\Gamma(U, \mathcal O_U)$, is finitely generated itself. By definition it means that $\mathcal F_\alpha$ is coherent. Finally, since $p$ is quasi-compact quasi-separated, the pushforward functor $\mathcal H^0p_*$ preserves filtered colimits, hence the natural map $\indlim \mathcal F_\alpha \to \mathcal F$ is an isomorphism.

Let now $i\colon \Ind(\Coh(\mstack X)^{\ge 0}) \to \QCoh(\mstack X)^{\ge 0}$ be a natural functor. Note that since by the previous $\Coh(\mstack X)^{\ge 0}$ is compact in $\QCoh(\mstack X)^{\ge 0}$ this functor is fully faithful. Moreover, since $i$ preserves colimits it admits a right adjoint $R$. Then to prove that $i$ is essentially surjective it is enough to show that the fiber $\mathcal G$ of the co-unit $iR \mathcal F \to \mathcal F$ vanishes. But $R$ being right adjoint preserves fibered products, hence $R \mathcal G \simeq \fib(RiR\mathcal F \to R\mathcal F) \simeq 0$. By Yoneda's lemma and adjunction $i \dashv R$ we conclude that $\Hom_{\QCoh(\mstack X)^{\ge 0}}(\mathcal H, \mathcal G) \simeq *$ for all $\mathcal H \in \Coh(\mstack X)^{\ge 0}$. We claim that $\mathcal G \simeq 0$. To see this assume that $\mathcal G \not\simeq 0$ and let $i$ be the smallest integer such that $\mathcal H^i(\mathcal G) \not \simeq 0$. By the previous part there exists a coherent subsheaf $\mathcal H \subseteq \mathcal H^i(\mathcal G)$. It follows the composition $\mathcal H[-i] \to \mathcal H^i(\mathcal G)[-i] \to \mathcal G$ is non-zero, a contradiction.
\end{proof}
\end{prop}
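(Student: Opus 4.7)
The plan is to reduce by shifting to the case $n=0$ and then establish two claims separately: that every $\mathcal F \in \Coh(\mstack X)^{\ge 0}$ is compact in $\QCoh(\mstack X)^{\ge 0}$, and that every object of $\QCoh(\mstack X)^{\ge 0}$ can be written as a filtered colimit of such coherent objects. Both parts lean on a Noetherian affine atlas of $\mstack X$ together with the fact that truncation functors on $\QCoh(\mstack X)$ commute with filtered colimits.

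For compactness, given $\mathcal F\in \Coh(\mstack X)^{[0,m]}$ I would use the identification
$$\Hom_{\QCoh(\mstack X)^{\ge 0}}(\mathcal F, \mathcal G) \simeq \Hom_{\QCoh(\mstack X)}(\mathcal F, \tau^{\le m}\mathcal G),$$
combined with the commutation of $\tau^{\le m}$ with filtered colimits, to reduce to compactness in the strictly bounded category $\QCoh(\mstack X)^{[0,m]}$. I would then pick a smooth affine Noetherian hypercover $U_\bullet \to \mstack X$ and use that $\QCoh(\mstack X)^{[0,m]} \simeq \Tot^{\le m+2} \QCoh(U_\bullet)^{[0,m]}$ (since both sides are $(m+1)$-categories). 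Because $\Tot^{\le m+2}$ is a finite limit, it commutes with filtered colimits, which reduces compactness to the affine Noetherian case — where it is classical that finitely presented modules over Noetherian rings are compact in the appropriate truncated subcategory.

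For generation, I would argue in two stages. First, at the heart level, I would follow the Stacks Project argument (Tag~07TU) to prove $\QCoh(\mstack X)^\heartsuit \simeq \Ind(\Coh(\mstack X)^\heartsuit)$. Given $\mathcal F \in \QCoh(\mstack X)^\heartsuit$ and a Noetherian affine atlas $p\colon U \surj \mstack X$, I would write $p^*\mathcal F = \colim_\alpha \mathcal G_\alpha$ as the filtered union of its finitely generated submodules and define $\mathcal F_\alpha := \mathcal F \times_{\mathcal H^0 p_* p^* \mathcal F} \mathcal H^0 p_* \mathcal G_\alpha$. Triangular identities then show $p^*\mathcal F_\alpha \inj \mathcal G_\alpha$, so $\mathcal F_\alpha$ is coherent, and $\mathcal F \simeq \colim_\alpha \mathcal F_\alpha$ follows from $\mathcal H^0 p_*$ commuting with filtered colimits (for $p$ qcqs). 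Second, I would bootstrap from the heart to the whole coconnective category: the compactness just established yields a fully faithful embedding $i\colon \Ind(\Coh(\mstack X)^{\ge 0}) \inj \QCoh(\mstack X)^{\ge 0}$ with right adjoint $R$. Writing $\mathcal C$ for the fiber of the counit $iR\mathcal F \to \mathcal F$, the identity $R(\mathcal C) \simeq 0$ translates into $\Hom(\mathcal E, \mathcal C) = 0$ for every coherent $\mathcal E \in \Coh(\mstack X)^{\ge 0}$. Taking the lowest nonvanishing cohomology of $\mathcal C$ and combining with the heart-level result produces a nonzero coherent subobject $\mathcal E_0 \inj \mathcal H^i(\mathcal C)$, and hence a nonzero map $\mathcal E_0[-i] \to \mathcal C$ — contradicting $R(\mathcal C) \simeq 0$. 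Therefore $\mathcal C \simeq 0$ and $i$ is essentially surjective.

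The main obstacle is the bootstrap step: the heart-level generation is a concrete construction via the atlas, but passing to the full coconnective derived category forces an indirect argument through the right adjoint $R$. In particular one has to check that $R$ preserves fiber sequences, that testing against shifted coherent objects detects vanishing, and that the full coconnectivity is compatible with the Whitehead-style induction on the first nonvanishing cohomology. All of these ultimately reduce to the heart-level Ind-completion statement, so once that is in hand the rest is formal.
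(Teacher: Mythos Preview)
Your proposal is correct and follows essentially the same approach as the paper's own proof: reduce to $n=0$, establish compactness of bounded coherent objects via a finite partial totalization over an affine Noetherian hypercover together with the commutation of $\tau^{\le m}$ with filtered colimits, prove the heart-level statement $\QCoh(\mstack X)^\heartsuit \simeq \Ind(\Coh(\mstack X)^\heartsuit)$ by the Stacks Project atlas argument, and then bootstrap to all of $\QCoh(\mstack X)^{\ge 0}$ by analyzing the counit of the adjunction $i \dashv R$ and deriving a contradiction from the lowest nonvanishing cohomology. The only difference is a minor reordering of the compactness steps, which is immaterial.
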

\begin{cor}\label{cor:coherent_exted_from_open}
Let $\mstack X$ be a Noetherian Artin stack and let $j\colon \mstack U \to \mstack X$ be an open embedding. Then every coherent sheaf on $\mstack U$ is a retract of a restriction of a coherent sheaf from $\mstack X$.

\begin{proof}
Note that since $\mstack X$ is Noetherian, the stack $\mstack U$ is also Noetherian. In particular the embedding $j$ is quasi-compact and quasi-separated. Next, by pulling back to an atlas and using base change (which holds by qcqs assertion about $j$), one finds that the co-unit of adjunction $j^*j_*\mathcal F \to \mathcal F$ is an equivalence for any quasi-coherent sheaf on $\mstack U$. Let now $\mathcal F \in \Coh(\mstack U)$. By the previous proposition $j_*\mathcal F \simeq \indlim \mathcal G_\alpha$ for some filtered diagram of coherent sheaves $\mathcal G_\alpha$. It follows that $\mathcal F \simeq \indlim j^*\mathcal G_\alpha$. By compactness of $\mathcal F$ we conclude that it is a retract of some $j^*\mathcal G_\alpha$.
\end{proof}
\end{cor}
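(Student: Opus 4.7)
The plan is to deduce the corollary from Proposition \ref{compact_generation_of_bounded_qcoh} by writing $j_*\mathcal{F}$ as a filtered colimit of coherent sheaves on $\mstack{X}$ and then using compactness of $\mathcal{F}$ to split off one of them. Since $\mstack{X}$ is Noetherian, the open embedding $j\colon \mstack{U}\inj \mstack{X}$ is automatically quasi-compact and quasi-separated, and $\mstack{U}$ is itself Noetherian. A preliminary step is to verify that the counit $j^*j_*\to \mathrm{id}$ is an equivalence on $\QCoh(\mstack{U})$. This can be checked after pulling back along a smooth atlas $p\colon U\surj \mstack{X}$; by flat base change for the qcqs morphism $j$ it reduces to the same statement for the open embedding $p^{-1}(\mstack{U})\inj U$ of ordinary (Noetherian) schemes, which is standard.

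With this in hand, let $\mathcal{F}\in \Coh(\mstack{U})^\heartsuit$ (the general case follows by shifting). Since $j_*$ is left $t$-exact we have $j_*\mathcal{F}\in \QCoh(\mstack{X})^{\ge 0}$. Applying Proposition \ref{compact_generation_of_bounded_qcoh} with $n=0$, we may write
\[
j_*\mathcal{F}\simeq \colim_\alpha \mathcal{G}_\alpha
\]
as a filtered colimit of objects $\mathcal{G}_\alpha\in \Coh(\mstack{X})^{\ge 0}$. Pulling back and using $j^*j_*\simeq \mathrm{id}$ together with the fact that $j^*$ preserves colimits, we obtain $\mathcal{F}\simeq \colim_\alpha j^*\mathcal{G}_\alpha$ in $\QCoh(\mstack{U})^{\ge 0}$.

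The final step is to exhibit $\mathcal{F}$ as a retract. By the same argument used in the proof of Proposition \ref{compact_generation_of_bounded_qcoh} (representing $\mstack{U}$ by a finite truncated affine Noetherian hypercover and combining this with the colimit-preservation of truncation functors on Noetherian stacks), every coherent sheaf on $\mstack{U}$ is compact in $\QCoh(\mstack{U})^{\ge 0}$. Hence the identity $\mathcal{F}\to \mathcal{F}\simeq \colim_\alpha j^*\mathcal{G}_\alpha$ factors through some $j^*\mathcal{G}_\alpha$, which yields a splitting $\mathcal{F}\to j^*\mathcal{G}_\alpha\to \mathcal{F}$ and proves the claim.

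The main technical obstacle here is really just ensuring that the compact generation result of Proposition \ref{compact_generation_of_bounded_qcoh} actually applies, namely the verification that $j_*\mathcal{F}$ is connective (immediate from $t$-exactness) and the identification $j^*j_*\simeq \mathrm{id}$; once these are in place, the corollary is a formal consequence of the compactness of $\mathcal{F}$ in $\QCoh(\mstack{U})^{\ge 0}$.
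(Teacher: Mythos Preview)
Your proof is correct and follows essentially the same route as the paper: establish $j^*j_*\simeq \mathrm{id}$ via base change along an atlas, write $j_*\mathcal F$ as a filtered colimit of coherent sheaves using Proposition~\ref{compact_generation_of_bounded_qcoh}, pull back, and use compactness of $\mathcal F$ in $\QCoh(\mstack U)^{\ge 0}$. One small imprecision: the reduction ``let $\mathcal F\in\Coh(\mstack U)^\heartsuit$ (the general case follows by shifting)'' is not literally correct, since shifting does not turn an arbitrary bounded coherent complex into a heart object; however, your argument only uses that $\mathcal F\in\Coh(\mstack U)^{\ge 0}$, and \emph{that} reduction by shifting is valid, so the proof goes through unchanged.
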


If $R$ is regular, the cohomological properness is stronger than the Hodge-properness:
\begin{prop}\label{coh_proper_stronger}
Let $\mstack X$ be a smooth cohomologically proper Artin stack over a regular Noetherian ring $R$. Then $\mstack X$ is Hodge-proper over $R$.

\begin{proof}
Since $\mstack X$ is smooth over a regular Noetherian ring $R$, the category of coherent sheaves on $\mstack X$ coincides with the category of perfect complexes. So by assumption, it is enough to prove that $\wedge^i \mathbb L_{\mstack X/R}$ is perfect for all $i\ge 0$. By smoothness, the cotangent complex $\mathbb L_{\mstack X/R}$ is perfect and concentrated in non-negative cohomological degrees. It follows that $\mathbb L_{\mstack X/R}$ admits a finite filtration with the associated graded pieces being negative shifts of vector bundles. Hence by induction it is enough to prove that if $E$ is a quasi-coherent sheaf on $\mstack X$ such that $\wedge^{j} E$ is perfect for $j \le i$, then $\wedge^i(E[-1])$ is also perfect. But by construction (see \cite[Theorem 3.35]{MathewBrantner_LLambda}) the functor $\wedge^i$ is $i$-excisive, so $\wedge^i(E[-1])$ is a finite limit of sheaves of the form $\wedge^i(E^{\oplus n}), n\le i$, hence is perfect.
\end{proof}
\end{prop}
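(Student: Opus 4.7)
My plan is to reduce Hodge-properness to the combination of two independent inputs: first, that each wedge power $\wedge^p \mathbb L_{\mstack X/R}$ lies in $\Coh^+(\mstack X)$, and second, that cohomologically proper morphisms transport $\Coh^+$ along $f_*$ by definition. The composition immediately yields $\RG(\mstack X, \wedge^p \mathbb L_{\mstack X/R}) \in \Coh^+(R)$, and this is exactly Hodge-properness. Because $\mstack X$ is smooth over a regular Noetherian base, $\mstack X$ is itself regular, and on such a stack the perfect complexes coincide with the bounded coherent complexes. Perfect is in particular bounded below coherent, so it suffices to sharpen the goal to: show that $\wedge^p \mathbb L_{\mstack X/R}$ is a perfect complex for every $p \ge 0$.

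To this end I would first use that $\mathbb L_{\mstack X/R}$ is itself perfect (this is essentially the definition of smoothness for an Artin stack) and concentrated in a bounded range of cohomological degrees. Choosing its Postnikov filtration yields a finite sequence of cofiber sequences whose graded pieces are of the form $E_k[-k]$, where the $E_k$ are finite locally free sheaves; these are the cohomology sheaves of $\mathbb L_{\mstack X/R}$, and they are locally free because $\mathbb L_{\mstack X/R}$ is perfect and $\mstack X$ is regular. Perfectness is a two-out-of-three property for cofiber sequences, so once we know that $\wedge^p$ applied to each graded piece is perfect, we can propagate perfectness to $\wedge^p \mathbb L_{\mstack X/R}$ through the filtration, provided we also understand how $\wedge^p$ interacts with a cofiber sequence.

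The central inductive step is therefore the following: if $\wedge^q E$ is perfect for all $q \le p$ and $E$ is a vector bundle, then $\wedge^p(E[-k])$ is perfect for every $k \ge 0$. The key input is that the derived exterior power $\wedge^p$ is a polynomial functor of degree $p$, hence $p$-excisive (see e.g.\ Illusie or Mathew--Brantner). This means two things simultaneously: first, applied to an extension $A \to B \to C$ it produces a finite filtration with graded pieces $\wedge^{p-j} A \otimes \wedge^j C$, so perfectness is inherited along cofiber sequences; second, $\wedge^p(E[-1])$ can be written as a finite (co)limit of cross-effects involving only terms $\wedge^q(E^{\oplus n})$ with $q \le p$, which are perfect by hypothesis. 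Iterating the shift by one argument handles arbitrary $E[-k]$, and combining with the Postnikov induction finishes the proof.

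The main obstacle is not conceptual but technical: everything hinges on having a clean statement of the polynomial/excisivity behavior of the derived $\wedge^p$ together with the filtration it induces on $\wedge^p$ of a fiber sequence. Without this, a naive attempt to mimic the classical short exact sequence for exterior powers of an extension of vector bundles breaks down as soon as one works derived. Once that structural result is in place, the argument is essentially a double induction (on $p$ and on the length of the Postnikov filtration of $\mathbb L_{\mstack X/R}$), with every step using only closure of perfect complexes under finite colimits, retracts, and tensor products.
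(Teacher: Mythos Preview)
Your proposal is correct and follows essentially the same strategy as the paper: reduce to showing $\wedge^p\mathbb L_{\mstack X/R}$ is perfect, filter $\mathbb L_{\mstack X/R}$ by shifts of vector bundles, and then use $p$-excisivity of $\wedge^p$ (with the same Brantner--Mathew reference) to handle the shifts. Your write-up is a bit more explicit about the double induction and about the filtration on $\wedge^p$ of a cofiber sequence, which the paper leaves implicit, but the argument is the same.
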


Moreover, all proper morphisms are cohomologically proper. To show this, let's first recall the notion of a proper morphisms between higher stacks (following \cite[Section 4]{PortaYu_StacksGAGA}):
\begin{defn}\label{defn:proper morphism}
A $0$-representable morphism $\mstack X \to \mstack Y$ is called \emph{proper} if for any affine scheme $S$ mapping to $\mstack Y$, the pullback $\mstack X\times_{\mstack Y} S$ is a proper $S$-scheme. Next, assuming that the notion of a proper $(n-1)$-representable morphism is already defined, an $n$-representable morphism $f\colon \mstack X \to \mstack Y$ is called \emph{proper} if
\begin{itemize}
\item $f$ is \emph{separated}, i.e. the diagonal map $\mstack X \to \mstack X \times_{\mstack Y} \mstack X$ (which is $(n-1)$-representable) is proper.

\item For any affine scheme $S$ mapping to $\mstack Y$ the pullback $\mstack X_S := \mstack X \times_{\mstack Y} S$ admits a surjective $S$-morphism $P \surj \mstack X_S$ such that $P$ is a proper $S$-scheme.
\end{itemize}

\end{defn}
\begin{rem}
Since the property of a morphism of schemes to be proper is flat local on the target, it is enough in the definition above to check the second condition only for some atlas of $\mstack Y$.
\end{rem}
\begin{rem}
A potentially more familiar definition of a (classical) proper algebraic stack $p:\mstack X\ra S$ is that $p$ should be separated, finite type and universally closed. We note that such stacks over $S$ are proper 1-Artin stacks in the definition above. Indeed, by \cite[Theorem 1.1]{Olsson} in this case there exists a proper surjective map $U\ra \mstack X$ from a proper scheme $U$.
\end{rem}
From the standard results about proper morphisms of schemes and representable morphisms of stacks one formally deduces:
\begin{prop}
With the notations above:
\begin{enumerate}

\item Proper morphism are closed under base change.

\item The property of being a proper morphism is flat local on the target.

\item Proper morphisms are also closed under compositions.
\end{enumerate}
\end{prop}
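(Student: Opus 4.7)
The plan is to establish (1), (2), (3) simultaneously by induction on the level $n$ of representability. The base case $n=0$ follows from classical facts about proper morphisms of schemes---closure under arbitrary base change, fpqc-locality on the target, and closure under composition---together with the corresponding facts for $0$-representable morphisms (which by definition reduce to the scheme case after pullback to any affine over $\mstack Y$). For the inductive step one must verify both clauses of \Cref{defn:proper morphism}: properness of the diagonal, which is an $(n-1)$-representable morphism and thus falls under the inductive hypothesis, and the existence of a proper scheme cover $P \twoheadrightarrow \mstack X_S$ for every affine $S \to \mstack Y$.

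For (1), given proper $n$-representable $f\colon \mstack X \to \mstack Y$ and $\mstack Z \to \mstack Y$, the diagonal of the base change $f'\colon \mstack X \times_{\mstack Y}\mstack Z \to \mstack Z$ is the pullback of $\Delta_f$, hence $(n-1)$-representable and proper by the inductive (1). Any affine $S \to \mstack Z$ composes to an affine over $\mstack Y$, and $(\mstack X \times_{\mstack Y} \mstack Z) \times_{\mstack Z} S = \mstack X \times_{\mstack Y} S$, so the proper scheme cover is inherited from $f$. For (2), given a flat surjection $\pi\colon \mstack U \to \mstack Y$ with $f_{\mstack U}$ proper, separatedness descends by applying the inductive (2) to $\Delta_f$, whose pullback along $\pi$ is $\Delta_{f_{\mstack U}}$. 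For the scheme-cover condition on an affine $S \to \mstack Y$, one chooses an affine flat cover $T \to \mstack U \times_{\mstack Y} S$, uses properness of $f_{\mstack U}$ to get a proper $T$-scheme $P_T \twoheadrightarrow \mstack X_T$, and descends $P_T$ along $T \to S$ by fpqc descent for proper schemes with descent data.

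For (3), with $f\colon \mstack X \to \mstack Y$ and $g\colon \mstack Y \to \mstack Z$ both proper, the diagonal of $g\circ f$ factors as
$$\mstack X \xrightarrow{\Delta_f} \mstack X\times_{\mstack Y}\mstack X \lra \mstack X \times_{\mstack Z} \mstack X,$$
where the second arrow is the base change of $\Delta_g$ along $\mstack X \times_{\mstack Z} \mstack X \to \mstack Y \times_{\mstack Z} \mstack Y$. Both factors are $(n-1)$-representable proper (using inductive (1) for the second), and their composition is proper by the inductive (3). For the scheme-cover condition, an affine $S \to \mstack Z$ and properness of $g$ yield a proper $S$-scheme $Q \twoheadrightarrow \mstack Y_S$; pulling back $f$ to $Q$ and covering $Q$ by finitely many affines $U_\alpha$, properness of $f$ produces proper $U_\alpha$-schemes $P_\alpha \twoheadrightarrow \mstack X \times_{\mstack Y} U_\alpha$.

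The main obstacle is then to assemble the $P_\alpha$ into a \emph{single} proper $S$-scheme surjecting onto $\mstack X_S$, since $\coprod P_\alpha$ is only proper over $\coprod U_\alpha$, which need not be proper over $S$. This requires a form of Chow's lemma for proper stacks: one shows that $\mstack X \times_{\mstack Y} Q$, which is proper over $Q$ by (1), admits a proper surjection from a projective $Q$-scheme $P$. Then $P \to Q \to S$ is a composition of proper morphisms of schemes, so $P$ is a proper $S$-scheme surjecting onto $\mstack X \times_{\mstack Y} S = \mstack X_S$ as required. This is the only non-formal step in the induction and is precisely where one invokes \cite[Section 4]{PortaYu_StacksGAGA}.
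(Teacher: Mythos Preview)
The paper does not prove this proposition; it states that the three properties ``formally'' follow from standard facts about proper morphisms of schemes and representable morphisms of stacks, with the detailed theory deferred to \cite[Section~4]{PortaYu_StacksGAGA}. Your inductive outline for (1) is correct, and for (3) you rightly isolate the single non-formal step---assembling the local proper scheme covers $P_\alpha$ over the affine opens $U_\alpha \subset Q$ into one proper $S$-scheme surjecting onto $\mstack X_S$---and point to the appropriate tool (a Chow-type lemma for proper higher stacks).

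Your argument for (2), however, has a genuine gap. After producing a proper $T$-scheme $P_T \twoheadrightarrow \mstack X_T$ over an affine flat cover $T \to S$, you assert that one ``descends $P_T$ along $T \to S$ by fpqc descent for proper schemes with descent data.'' But $P_T$ carries no descent data: it is merely \emph{some} proper scheme cover of $\mstack X_T$, not a canonical construction, and the two pullbacks of $P_T$ to $T \times_S T$ have no reason to be isomorphic (already for $n=1$, two different proper scheme covers of a Deligne--Mumford stack need not agree after base change). So fpqc descent simply does not apply here. In fact (2) presents exactly the same obstacle you correctly identified in (3)---gluing local proper scheme covers into a global one---and is resolved by the same Chow-type input, or equivalently by passing through the characterization ``separated, of finite type, universally closed,'' for which flat-locality on the target is immediate. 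The descent shortcut you wrote does not work.
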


The fact that proper morphisms are cohomologically proper was proved in \cite[Theorem 5.13]{PortaYu_StacksGAGA}, but in a slightly different context. Their proof essentially follows the argument of \cite[Theorem 15.6]{MoretLaumon} in the case of classical proper stacks. For the reader's convenience we sketch the argument here:
\begin{prop}\label{proper_are_Hodge_proper}
Let $f\colon \mstack X \to \mstack Y$ be a proper morphism between locally Noetherian Artin stacks. Then $f$ is cohomologically proper.

\begin{proof}[Sketch of the proof]
The question is local on the target, hence we can assume that $\mstack Y = Y$ is an affine Noetherian scheme. Moreover, by localizing further if necessary, we can assume that there exists a surjective map $\pi\colon P\surj \mstack X$ such that $P$ is a proper scheme over $Y$. Let us also assume that $\mstack X$ is $n$-Artin for some $n\ge 0$ and let us prove the statement by induction on $n$. The statement for the $n=0$ is the fundamental result about the direct image of a coherent sheaf under a proper morphism of schemes \cite[Chapter III, Theorem 3.2.1]{EGA_III1}.

By \Cref{coh_proper_trivial_rem} it is enough to prove that $f_*(\Coh^\heartsuit(\mstack X)) \subset \Coh^+(Y)$. Let $\mathcal F \in \Coh^\heartsuit(\mstack X)$. Since $\mstack X$ is proper over a Noetherian base, it is Noetherian itself. It follows that there exists a finite filtration (by power of nil-radical of $\mathcal O_{\mstack X}$) of $\mathcal F$ with  the associated graded pieces coming from $\mstack X^{\red}$. Since $\Coh^+(Y)$ is closed under finite extensions, it follows that we can assume that both $\mstack X$ and $Y$ are reduced.

Let us denote $\Tot \pi_{\bullet, *}(\mathcal H^0(\pi_\bullet^*\mathcal F))$ by $\mathcal F^\prime$, where $\pi_\bullet \colon P_{\bullet} \to \mstack X$ is the \v Cech nerve of the map $P \to \mstack X$. By the higher "generic flatness" \cite[Theorem 8.3]{PortaYu_StacksGAGA} there exists an open dense substack $\mstack U$ of $\mstack X$ such that the induced map $P_{\mstack U} := P\times_{\mstack X} \mstack U \to \mstack U$ is flat. In particular, $\pi_{\mathcal U, n}^*(\mathcal F) \simeq \mathcal H^0(\pi_{\mathcal U, n}^*(\mathcal F))$ for all $n\in \mathbb Z_{\ge 0}$. It follows by flat descent that the natural map $\mathcal F \to \mathcal F^\prime$ becomes an equivalence when restricted to $\mstack U$. By Noetherian induction we can assume that $f_*(\fib(\mathcal F \to \mathcal F^\prime))$ lies in $\Coh^+(Y)$. So to prove that $\pi_*(\mathcal F)$ lies in $\Coh^+(Y)$ it is enough to show that $\pi_*(\mathcal F^\prime) \in \Coh^+(Y)$. On the other hand, all elements $P_n$ of the \v Cech nerve are $(n-1)$-Artin proper stacks over $Y$ and all sheaves $\mathcal H^0(p_n^*(\mathcal F))$ are coherent. Since the global section functors $f_{n,*}\colon \QCoh(P_n) \to \QCoh(Y)$ are right $t$-exact, it follows by induction and \Cref{acoh_basics} that the totalization
$$f_*(\mathcal F^\prime) \simeq \Tot f_{\bullet,*}(\mathcal H^0(p_n^*\mathcal F))$$
lies in $\Coh^+(Y)$.
\end{proof}
\end{prop}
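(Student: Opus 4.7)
My plan follows the standard strategy for reducing statements about Artin stacks to the scheme case, via a double induction: the outer induction is on the Artin level $n$ of $\mstack X$, and the inner one is Noetherian induction on the support of the coherent sheaf in question. The question is smooth local on the target (by part 3 of \Cref{properties_of_coh_prop_morps}, or rather its analogue for cohomological properness, which follows from base change for qcqs pushforward), so I can assume $\mstack Y = Y = \Spec A$ is an affine Noetherian scheme. By the definition of a proper morphism I may also fix a surjection $\pi \colon P \surj \mstack X$ from a proper $Y$-scheme $P$. By \Cref{coh_proper_trivial_rem} it suffices to show $f_*\mathcal F \in \Coh^+(Y)$ for any coherent sheaf $\mathcal F \in \Coh(\mstack X)^\heartsuit$. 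Since an $\mathcal O_{\mstack X}$-module admits a finite filtration with subquotients annihilated by the nilradical and $\Coh^+(Y)$ is closed under finite extensions (\Cref{acoh_basics}), I may further assume $\mstack X$ and $Y$ are reduced.

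The base case $n = 0$ (i.e.\ $\mstack X \to Y$ is representable hence a proper morphism of schemes) is the classical coherence theorem \cite[III, Thm.\ 3.2.1]{EGA_III1}. For the inductive step, let $\pi_\bullet \colon P_\bullet \to \mstack X$ denote the Čech nerve of $\pi$; each $P_n$ is a proper $(n{-}1)$-Artin stack over $Y$ by separatedness of $f$ and induction, so by the outer inductive hypothesis $f_{n,*}$ carries $\Coh^+(P_n)$ into $\Coh^+(Y)$. Consider the sheaf $\mathcal F' \coloneqq \Tot \pi_{\bullet,*}\bigl(\mathcal H^0(\pi_\bullet^*\mathcal F)\bigr)$, equipped with its natural comparison map $\mathcal F \to \mathcal F'$. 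Because each $\mathcal H^0(\pi_n^*\mathcal F)$ is coherent on $P_n$, and since $f_{n,*}$ is right $t$-exact (in the sense that it carries $\Coh(P_n)^{\ge 0}$ into $\Coh^{\ge 0}(Y)$), the argument of \Cref{acoh_basics}(2) applied termwise shows $f_*\mathcal F' = \Tot f_{\bullet,*}\mathcal H^0(\pi_\bullet^*\mathcal F) \in \Coh^+(Y)$.

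It remains to handle the fiber $\mathcal K \coloneqq \fib(\mathcal F \to \mathcal F')$. Here I apply the higher generic flatness theorem \cite[Thm.\ 8.3]{PortaYu_StacksGAGA}: there exists an open dense substack $j \colon \mstack U \hookrightarrow \mstack X$ such that the base change $P_{\mstack U} \to \mstack U$ is flat. Over $\mstack U$ flatness gives $\pi_{\mstack U,n}^*\mathcal F \simeq \mathcal H^0(\pi_{\mstack U,n}^*\mathcal F)$, and flat descent identifies $\mathcal F_{|\mstack U}$ with $\Tot \pi_{\mstack U,\bullet,*}\pi_{\mstack U,\bullet}^*\mathcal F_{|\mstack U}$, so $\mathcal K_{|\mstack U} \simeq 0$. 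Thus $\mathcal K$ is set-theoretically supported on the proper closed substack $\mstack Z \coloneqq \mstack X \setminus \mstack U$. Applying Noetherian induction on $\mstack X$ to the closed substack $\mstack Z$ (with the induced map to $Y$, which is still proper of the same Artin level), I conclude $f_*\mathcal K \in \Coh^+(Y)$, and combining with the previous paragraph gives $f_*\mathcal F \in \Coh^+(Y)$.

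The main technical obstacle is making the Noetherian-induction step rigorous in the Artin setting: $\mathcal K$ is only set-theoretically supported on $\mstack Z$, not scheme-theoretically, so one needs to filter $\mathcal K$ by powers of the ideal sheaf of $\mstack Z$ and exploit that $\mstack X$ is Noetherian (so the filtration terminates) together with closure of $\Coh^+$ under finite extensions to reduce to the case where $\mathcal K$ is annihilated by this ideal and hence pushed forward from $\mstack Z$. The other subtle point is establishing that cohomological properness is smooth local on the target, which is needed already to reduce to affine $Y$; this follows from qcqs base change applied to a smooth cover, since the condition $f_*\mathcal F \in \Coh^+$ can be checked after smooth pullback using \Cref{compact_generation_of_bounded_qcoh}.
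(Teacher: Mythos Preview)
Your proof is correct and follows essentially the same approach as the paper's: the same outer induction on the Artin level $n$, the same Noetherian induction on the support of $\mathcal F$ via generic flatness for the map $P \to \mstack X$, and the same use of the totalization $\mathcal F' = \Tot \pi_{\bullet,*}\mathcal H^0(\pi_\bullet^*\mathcal F)$ together with \Cref{acoh_basics}(2). Your additional discussion of the technical obstacles (filtering $\mathcal K$ by powers of the ideal of $\mstack Z$, and the smooth-locality of cohomological properness on the target) makes explicit points the paper leaves implicit in its sketch.
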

\begin{cor}
Let $\mstack X$ be a smooth proper Artin stack over a regular Noetherian ring $R$. Then $\mstack X$ is Hodge-proper.

\begin{proof}
Follows immediately from the previous proposition and \Cref{coh_proper_stronger}.
\end{proof}
\end{cor}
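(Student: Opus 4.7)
The plan is to combine directly the two immediately preceding results. First I would verify that the structure morphism $\pi\colon \mstack X \to \Spec R$ meets the hypotheses of \Cref{proper_are_Hodge_proper}: the target $\Spec R$ is locally Noetherian by assumption, and the source $\mstack X$ is locally Noetherian as well, which one sees by taking a surjection from a proper $R$-scheme $P \surj \mstack X$ provided by \Cref{defn:proper morphism} (so $\mstack X$ admits a smooth atlas by a Noetherian affine scheme). The morphism $\pi$ is proper by hypothesis. Hence \Cref{proper_are_Hodge_proper} applies and tells me that $\pi_*$ sends $\Coh^+(\mstack X)$ into $\Coh^+(R)$, i.e., that $\mstack X$ is cohomologically proper over $R$.

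Second, I would plug this into \Cref{coh_proper_stronger}. Its hypotheses are now all satisfied: $\mstack X$ is smooth over the regular Noetherian ring $R$ and cohomologically proper over it. The conclusion of that proposition is exactly that $\mstack X$ is Hodge-proper. Unwinding the proof of \Cref{coh_proper_stronger} for reassurance: smoothness of $\mstack X/R$ makes $\mathbb L_{\mstack X/R}$ perfect and connective, which together with the excisiveness of $\wedge^p$ propagates perfectness to every $\wedge^p \mathbb L_{\mstack X/R}$; regularity of $R$ identifies perfect complexes with coherent complexes on the smooth stack $\mstack X$; and cohomological properness then forces $\RG(\mstack X, \wedge^p \mathbb L_{\mstack X/R}) \in \Coh^+(R)$ for every $p\ge 0$, which is the definition of Hodge-properness.

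There is essentially no obstacle: the corollary is a two-line concatenation of previous results, and the only thing that genuinely requires care is the verification that ``smooth proper Artin stack over $R$'' is being read consistently with the paper's conventions (namely, $\pi\colon \mstack X \to \Spec R$ is both smooth in the sense used throughout and proper in the sense of \Cref{defn:proper morphism}), so that both \Cref{proper_are_Hodge_proper} and \Cref{coh_proper_stronger} are literally applicable.
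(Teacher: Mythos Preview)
Your proposal is correct and matches the paper's proof exactly: apply \Cref{proper_are_Hodge_proper} to get cohomological properness, then feed that into \Cref{coh_proper_stronger} to conclude Hodge-properness. One small quibble: your justification that $\mstack X$ is locally Noetherian via the proper surjection $P\surj \mstack X$ is a non sequitur (that map is not smooth in general); the cleaner argument is that any smooth atlas of $\mstack X$ is smooth, hence of finite type, over the Noetherian ring $R$ and therefore Noetherian---but this is a side verification the paper does not even spell out.
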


Finally, we record the following observation, which allows to construct new examples of cohomologically proper stacks in inductive way.
\begin{prop}\label{coh_prop_hypercover}
Let $\pi_\bullet\colon \mstack U_\bullet \to \mstack X$ be a flat hypercover such that all $\mstack U_n$ are cohomologically proper over a Noetherian base ring $R$. Then $\mstack X$ is cohomologically proper over $R$.

\begin{proof}
Let $\mathcal F$ be a coherent sheaf on $\mstack X$. By shifting if necessary we can assume that $\mathcal H^{<0}(\mathcal F) \simeq 0$. By the flat descent
$$R\Gamma(\mstack X, \mathcal F) \simeq \Tot R\Gamma(\mstack U_\bullet, \pi^*_\bullet \mathcal F).$$
Since the global section functors $R\Gamma(\mstack U_n, -)$ are right $t$-exact and by assumptions on $\mstack U_n$ the diagram $R\Gamma(\mstack U_\bullet, \pi^*_\bullet \mathcal F)$ consists of coconective bounded below coherent complexes. By \Cref{acoh_basics} the complex $R\Gamma(\mstack X, \mathcal F)$ is also bounded below coherent.
\end{proof}
\end{prop}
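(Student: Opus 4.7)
The plan is to reduce the statement to a totalization computation in $\Coh^+(R)$ by invoking flat descent for quasi-coherent sheaves, and then apply Proposition \ref{acoh_basics}(2), which says that the subcategory $\Coh^{\ge 0}(R) \subset \DMod{R}$ is closed under totalizations.

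More precisely, let $\mathcal F \in \Coh^+(\mstack X)$. Since $\Coh^+(R)$ is stable under shifts, I would first shift $\mathcal F$ so that it lies in $\QCoh(\mstack X)^{\ge 0}$. Because each map $\pi_n \colon \mstack U_n \to \mstack X$ is flat, the pullback $\pi_n^* \mathcal F$ remains both coherent and coconnective, and hence lies in $\Coh^+(\mstack U_n) \cap \QCoh(\mstack U_n)^{\ge 0}$. By flat descent one has a natural equivalence
\[
R\Gamma(\mstack X, \mathcal F) \;\simeq\; \Tot R\Gamma(\mstack U_\bullet, \pi^*_\bullet \mathcal F),
\]
so it suffices to show that the right hand side lies in $\Coh^{\ge 0}(R)$.

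By the assumed cohomological properness of each $\mstack U_n$ over $R$, every term $R\Gamma(\mstack U_n, \pi_n^* \mathcal F)$ belongs to $\Coh^+(R)$. Furthermore, since $R\Gamma(\mstack U_n, -)$ is left $t$-exact and $\pi_n^* \mathcal F \in \QCoh(\mstack U_n)^{\ge 0}$, each term in fact lies in $\Coh^{\ge 0}(R)$. Applying Proposition \ref{acoh_basics}(2), the totalization of such a cosimplicial diagram again lies in $\Coh^{\ge 0}(R)$, which proves that $R\Gamma(\mstack X, \mathcal F) \in \Coh^+(R)$, as needed.

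There is no substantial obstacle in this argument: the entire content is the interplay between flat descent and the closure of coconnective bounded below coherent complexes under totalizations. The only point worth being careful about is to first shift $\mathcal F$ into coconnective degrees before applying the descent spectral sequence, so that the hypotheses of \ref{acoh_basics}(2) (which crucially requires the coconnective bound) are satisfied; otherwise the totalization of an unbounded cosimplicial object in $\Coh^+(R)$ need not lie in $\Coh^+(R)$.
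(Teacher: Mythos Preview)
Your proof is correct and follows essentially the same route as the paper's: shift $\mathcal F$ into coconnective degrees, apply flat descent to express $R\Gamma(\mstack X,\mathcal F)$ as a totalization, observe that each term lies in $\Coh^{\ge 0}(R)$ by cohomological properness and $t$-exactness of pushforward, and conclude via \Cref{acoh_basics}(2). The only difference is cosmetic (you say ``left $t$-exact'' where the paper writes ``right $t$-exact'' for the same property of $R\Gamma$ preserving coconnectivity).
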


\subsection{Examples of Hodge-properly spreadable stacks}\label{sec:Examples of spreadable Hodge-proper stacks}
In this subsection we begin to study which Hodge-proper stacks in characteristic 0 admit a Hodge-proper spreading over some finitely generated $\mathbb Z$-algebra. We will make extensive use of \Cref{fp_over_filtered_limit} in the following situation: let $F$ be an algebraically closed field of characteristic $0$, then $\Spec F\simeq \prolim R$ where $R\subset F$ runs through subrings of $F$ that are smooth over $\mathbb Z$. This diagram is filtered since for any two such subalgebras $R_1,R_2\subset F$ some finite localization $(R_1\cdot R_2)[1/f]$ of their composite in $F$ is again smooth over $\mbb Z$. In particular we have an equivalence 

$$\indlim[R\subset F] \Stk_{/R}^{n\mdef\Art,\fp, \mathcal P} \xymatrix{\ar[r]^\sim &} \Stk_{/F}^{n\mdef\Art,\fp, \mathcal P}$$ 
for any spreadable class $\mc P$. Another option is to also allow those localizations of smooth $\mathbb Z$-algebras for which the image of $\Spec R$ in $\Spec \mathbb Z$ is open (as in \Cref{def: spredable_stacks}). By \Cref{rem: cofilteredness of all such R in F} the diagram of all such $R$ is again filtered and so \Cref{fp_over_filtered_limit} can be applied. In \Cref{sec:non-proper schemes} we will see that allowing these localizations actually makes a difference.
In what follows $F$ will always denote an algebraically closed field of characteristic 0 and we will pick $R\subset F$ to be a smooth $\mathbb Z$-subalgebra (except \Cref{sec:non-proper schemes}, where it will be an infinite localization of one) of $F$. We also freely use the standard spreading out results for schemes (\Cref{thm:spreading of schemes}) and their easy consequences (like spreading out group schemes, group actions, group homomorphisms, closed subgroups, etc.) without any additional reference.

We start with the Hodge-proper spreadability for proper Artin stacks, which is  deduced from the spreadability of proper morphisms (\Cref{fp_over_filtered_limit}). This is done in \Cref{sec:proper stacks}. Then we discuss in great detail the question of Hodge-proper spreadability of $BG$ in \Cref{sec:BG}; the case of more general quotient stacks is postponed till \Cref{sec:spreadability of quotient stacks}. Finally, in \Cref{sec:non-proper schemes} we try to grasp the scope of potential applications of \Cref{Hodge_degeneration} concentrating on the case of schemes: in fact a particular set of examples given by semiabelian surfaces. 

As was mentioned, often, along with Hodge-proper spreadability, we are able to prove a somewhat stronger statement, saying that the stacks we consider admit a cohomologically proper spreading. For this it is  convenient to introduce the following variant of \Cref{def:cohomologically proper morphism}:
\begin{defn}\label{def:coh_prop_spread}
A morphism  $f\colon \mstack X\ra \mstack Y$ of Artin stacks over a field $F$ of characteristic $0$ is called \emph{cohomologically properly spreadable} if there exists a finitely generated $\mathbb Z$-algebra $R\subset F$\footnote{Or a suitable localization of one, as in \Cref{def: spredable_stacks}.} and a morphism $f_R\colon \mstack X_R\ra \mstack Y_R$ over $\Spec R$, such that 
\begin{itemize}
\item $f_R\otimes_R F:=f_R\times_R F\simeq f$.

\item $f_R\colon \mstack X_R\ra Y_R$ is cohomologically proper (see \Cref{def:cohomologically proper morphism}).
\end{itemize}
\end{defn}
In the case $\mstack Y=\Spec F$ we will call $\mstack X$ \textit{cohomologically properly spreadable}. By \Cref{coh_proper_stronger} any such $\mstack X$ is also Hodge-properly spreadable.

\subsubsection{Proper stacks}\label{sec:proper stacks}
In this subsection we show that all proper stacks are cohomologically (and in particular Hodge-)properly spreadable. By \Cref{proper_are_Hodge_proper} it is just enough to show that proper morphisms spread out. 

Following the convention of \Cref{sec:Spreadable stacks}, for an affine scheme $S$ we denote by $\Stk_{/S}^{n\mdef\Art, \fp, \pr}\subset \Stk_{/S}^{n\mdef\Art, \fp}$ the subcategory consisting of finitely presented $n$-Artin $S$-stacks and with morphisms given by proper maps (see \Cref{defn:proper morphism}).

The results of \Cref{sec:Spreadable stacks} allow to deduce the spreadability of proper morphisms from the analogous statement for classical schemes:
\begin{prop}\label{prop: proper morphisms spread}
Let $\{S_i\}$ be a filtered diagram of affine schemes with a limit $S$. Then the natural functor
$$\indlim[i] \Stk_{/S_i}^{n\mdef\Art, \fp, \pr} \xymatrix{\ar[r] &} \Stk_{/S}^{n\mdef\Art, \fp, \pr}$$
is an equivalence.

\begin{proof}
By \Cref{fp_over_filtered_limit} it is enough to prove that for a proper morphism $f\colon \mstack X \to \mstack Y$ there exists a proper morphism $f_i \colon \mstack X_i \to \mstack Y_i$ such that $f_i\times_{S_i} S \simeq f$. Assume that $f$ is $n$-representable. We will prove the statement by induction on $n$.

For $n=0$ let $U\surj \mstack Y$ be an affine finitely presentable atlas. Then by assumption $\mstack X_U := \mstack X \times_{\mstack Y} U$ is a scheme proper over $U$. By \Cref{fp_over_filtered_limit} we can spread the commutative square
$$\xymatrix{
\mstack X_U \ar[r] \ar[d] & \mstack X \ar[d]^f \\
U \ar@{->>}[r] & \mstack Y
}$$
to some $S_i$. By spreadability of equivalences, smooth surjective morphisms of stacks and proper morphisms of schemes we can assume, taking base change to some $S_j$ if necessary, that the natural map $\mstack X_{U, j} \to U_j \times_{\mstack Y_j} \mstack X_j$ is an equivalence. We can also assume that $U_j \to \mstack Y_j$ is a smooth atlas and that $\mstack X_{U,j}$ is proper scheme over $U_j$. Since the property of a map of schemes to be proper is flat local on target, it follows that for any $T$ mapping to $\mstack Y_j$ the pullback $T\times_{\mstack Y_j} \mstack X_j$ is a proper $T$-scheme. So the map $f_j\colon \mstack X_j \to \mstack Y_j$ is proper.

Finally, assume that the statement for $(n-1)$-representable morphism is already proved. Let $U\surj \mstack Y$ be a smooth finitely presentable atlas and let $P \surj \mstack X_U$ be a surjection from a proper $U$-scheme $P$. Then by the induction assumption we can find a spreading $f_i\colon \mstack X_i \to \mstack Y_i$ such that $f_i$ is separated. By taking base change to some $S_j$ we can assume that $P_j$ is proper over $U_j$ and that the map $P_j \to \mstack X_{U_j}$ is surjective.
\end{proof}
\end{prop}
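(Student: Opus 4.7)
The plan is to combine Theorem \ref{fp_over_filtered_limit} (applied with the spreadable class of \emph{all} finitely presented morphisms) with the classical spreading-out result for proper morphisms of schemes (Theorem \ref{thm:spreading of schemes}), and then upgrade a spreading of a proper map to a proper spreading by induction on the level of representability $n$ of the morphism $f\colon \mstack X \to \mstack Y$.

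First I would invoke \Cref{fp_over_filtered_limit} to obtain \emph{some} spreading $f_i\colon \mstack X_i \to \mstack Y_i$ of $f$ as a finitely presentable morphism of $n$-Artin stacks over some $S_i$. The remaining work is to show that, after enlarging $i$, we may arrange $f_i$ to be proper. For the base case $n=0$, the morphism $f$ is representable, so picking a smooth affine finitely presented atlas $U \surj \mstack Y$ we get that $\mstack X_U := \mstack X \times_{\mstack Y} U$ is a proper $U$-scheme. The square $\mstack X_U \to \mstack X, U \to \mstack Y$ can be spread out using \Cref{fp_over_filtered_limit}; since equivalences and smooth surjections form spreadable classes, after passing to some $S_j$ we may assume $\mstack X_{U,j} \simeq U_j \times_{\mstack Y_j} \mstack X_j$ and that $U_j \surj \mstack Y_j$ is still a smooth atlas. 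By \Cref{thm:spreading of schemes} applied to proper morphisms of schemes, after a further enlargement we may assume $\mstack X_{U,j} \to U_j$ is proper, and since properness is flat local on the target for schemes, this implies $f_j$ is proper.

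For the inductive step, assume the result is known for $(n{-}1)$-representable proper morphisms and write $f$ as an $n$-representable proper map. By the inductive definition of properness, the diagonal $\Delta_f\colon \mstack X \to \mstack X \times_{\mstack Y} \mstack X$ is a proper $(n{-}1)$-representable morphism, and for some smooth atlas $U \surj \mstack Y$ there exists a surjection $P \surj \mstack X_U$ from a proper $U$-scheme $P$. Applying the inductive hypothesis to $\Delta_f$, I can spread $f$ so that the diagonal of $f_i$ is proper, i.e.\ $f_i$ is separated. In parallel, \Cref{fp_over_filtered_limit} applied to the surjection $P \surj \mstack X_U$ lets me spread this data to some $S_j$; by spreadability of surjections and by the classical $n=0$ case applied to $P_j \to U_j$, I may assume both that $P_j$ is a proper scheme over $U_j$ and that $P_j \surj \mstack X_{U_j}$ remains surjective. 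Since properness of stack morphisms is flat local on the target (so it suffices to verify the proper-cover condition on the fixed atlas $U_j$), this verifies the two conditions in \Cref{defn:proper morphism} for $f_j$.

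The main obstacle is the bookkeeping: at each stage of the induction we must spread out several pieces of data at once (the morphism, an atlas of its target, the diagonal's properness, a proper scheme covering the source, and the surjectivity of that cover) and ensure all the required properties hold simultaneously over one common index $j$. The filteredness of $\{S_i\}$ makes this routine, since each condition is obtained over some index and only finitely many such conditions need to be combined.
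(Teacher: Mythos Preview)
Your proposal is correct and follows essentially the same route as the paper's proof: reduce to spreading a single proper morphism via \Cref{fp_over_filtered_limit}, handle the base case $n=0$ by pulling back to an atlas and invoking the classical scheme result, and for the inductive step spread the diagonal (to get separatedness) together with a proper scheme covering the source after base change to an atlas. Your version is slightly more explicit about applying the induction hypothesis to $\Delta_f$ and about the bookkeeping of combining finitely many conditions over a common index, but the argument is the same.
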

\begin{cor}\label{cor:proper stacks spread out}
Let $\mstack X$ be a smooth proper stack over a field $F$ of characteristic $0$. Then $\mstack X$ is Hodge-proper and Hodge-properly spreadable.
\end{cor}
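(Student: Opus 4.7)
The first assertion is immediate from the machinery already in place: since $\mstack X$ is proper over $F$, \Cref{proper_are_Hodge_proper} shows that the structure morphism $\mstack X \to \Spec F$ is cohomologically proper; since $\mstack X$ is additionally smooth and $F$ is a (regular) field, \Cref{coh_proper_stronger} upgrades cohomological properness to Hodge-properness. So only the spreadability claim requires any real argument.

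For spreadability, the plan is to apply the general spreading out result \Cref{fp_over_filtered_limit} to the intersection $\mathcal P \coloneqq \mathrm{sm}\cap \mathrm{pr}$ of the smooth and proper classes. Smooth is a spreadable class (by \Cref{thm:spreading of schemes} and the subsequent example), and proper is a spreadable class for stacks by \Cref{prop: proper morphisms spread}; intersections of spreadable classes are spreadable. Writing $\Spec F$ as the filtered inverse limit of $\Spec R$ over all smooth $\mathbb Z$-subalgebras $R\subset F$ (a filtered diagram by \Cref{rem: cofilteredness of all such R in F}), \Cref{spreading_with_properties_for_stacks} then produces some $R$ and a smooth proper $n$-Artin stack $\mstack X_R$ over $\Spec R$ with $\mstack X_R \otimes_R F \simeq \mstack X$.

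It remains to verify that such an $\mstack X_R$ satisfies the Hodge-proper condition appearing in \Cref{def: spredable_stacks}. But $R$ is smooth over $\mathbb Z$, hence regular Noetherian, so the combination of \Cref{proper_are_Hodge_proper} (proper $\Rightarrow$ cohomologically proper over $R$) and \Cref{coh_proper_stronger} (smooth plus cohomologically proper over a regular Noetherian ring $\Rightarrow$ Hodge-proper) applies verbatim to $\mstack X_R/R$. This gives the required finite generation of $\RG(\mstack X_R, \wedge^p \mathbb L_{\mstack X_R/R})$ for every $p$, completing the proof.

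There is no real obstacle here; the only mild subtlety is bookkeeping the class $\mathcal P = \mathrm{sm}\cap\mathrm{pr}$ and checking that intersections of spreadable classes remain spreadable, which is immediate from \Cref{def:spreadable_class}. One could equivalently spread smoothness and properness in two steps (first spread $\mstack X$ smoothly via \Cref{fp_over_filtered_limit}, then enlarge $R$ to make the structure morphism proper via \Cref{prop: proper morphisms spread}) at the cost of possibly shrinking $\Spec R$ twice; both routes land at the same conclusion.
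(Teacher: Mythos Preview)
Your proof is correct in substance and follows the same route as the paper. One terminological caveat, though: proper is \emph{not} a spreadable class in the sense of \Cref{def:spreadable_class}, since it fails locality on the source (if $V \surj U \times_{\mstack Y} \mstack X$ is a smooth atlas, the composite $V \to U$ is essentially never proper even when $f$ is). This is precisely why \Cref{prop:  proper morphisms spread} needs a separate inductive argument rather than being an instance of \Cref{fp_over_filtered_limit}, so the sentence ``intersections of spreadable classes are spreadable'' does not literally apply here. Your two-step alternative at the end---spread smoothly via \Cref{fp_over_filtered_limit}, then enlarge $R$ via \Cref{prop:  proper morphisms spread} to make the structure morphism proper (properness being stable under base change, the two spreadings can be reconciled)---is the clean way to phrase it, and is what the paper's terse proof has in mind.
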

\begin{proof} By \Cref{prop: proper morphisms spread} (and \Cref{spreading_with_properties_for_stacks}) applied to $\mstack X \ra \Spec F$ we get a smooth proper spreading $\mstack X_R\ra \Spec R$. Then $\mstack X_R$ is Hodge-proper by Propositions \ref{coh_proper_stronger} and \ref{proper_are_Hodge_proper}.
\end{proof}

We will also use the following corollary:
\begin{cor}\label{chow_lem_for_spreadings}
Let $f\colon \mstack X \to \mstack Y$ be a proper map of finitely presentable Artin stacks over a field $F$ of characteristic $0$ such that $\mstack Y$ is cohomologically properly spreadable (see \Cref{def:coh_prop_spread}). Then $\mstack X$ is also cohomologically properly spreadable.

\begin{proof}
Let $f_R\colon \mstack X_R \to \mstack Y_R$ be some proper spreading of $f$. Since any two spreadings become equivalent after some finite localization of $R$, we can assume that $\mstack Y_R$ is cohomologically proper over $R$. Then we conclude by \Cref{proper_are_Hodge_proper} and the first part of \Cref{properties_of_coh_prop_morps}. 
\end{proof}
\end{cor}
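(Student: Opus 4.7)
The plan is to combine the spreading result for proper morphisms (\Cref{prop: proper morphisms spread}) with the hypothesis on $\mstack Y$ and the fact that proper morphisms are cohomologically proper (\Cref{proper_are_Hodge_proper}). First, apply \Cref{prop: proper morphisms spread} to the proper morphism $f\colon \mstack X\to\mstack Y$ viewed as a morphism of finitely presentable Artin stacks over $F$: since $F$ is a filtered colimit of the $\mathbb Z$-subalgebras $R\subset F$ allowed in \Cref{def: spredable_stacks}, one obtains a proper spreading $f_{R_1}\colon \mstack X_{R_1}\to\mstack Y_{R_1}$ over some such $R_1$. Separately, by hypothesis there is a cohomologically proper spreading $\pi_{R_2}\colon \mstack Y'_{R_2}\to\Spec R_2$.

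Next I would merge these two spreadings. Both $\mstack Y_{R_1}$ and $\mstack Y'_{R_2}$ are spreadings of $\mstack Y$, so after enlarging the ring to some common $R\subset F$ containing $R_1$ and $R_2$ (which exists and can be kept in the allowed class by \Cref{rem: cofilteredness of all such R in F}), their base changes $\mstack Y_{R_1}\otimes_{R_1}R$ and $\mstack Y'_{R_2}\otimes_{R_2}R$ become equivalent spreadings of $\mstack Y$. Indeed, by the fully faithfulness part of \Cref{fp_over_filtered_limit} (applied with $\mathcal P$ the class of equivalences), any isomorphism between the generic fibers over $F$ is induced from one over some sufficiently large $R$. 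Enlarging $R$ if necessary, we may therefore assume that $\mstack X_R$ and $\mstack Y_R\coloneqq \mstack Y'_{R_2}\otimes_{R_2}R$ are both defined, $f_R\colon\mstack X_R\to\mstack Y_R$ is a proper morphism, and $\mstack Y_R$ is cohomologically proper over $\Spec R$.

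To conclude, I apply \Cref{proper_are_Hodge_proper} to deduce that $f_R$ itself is cohomologically proper (as a morphism between locally Noetherian Artin stacks), and then invoke part (1) of \Cref{properties_of_coh_prop_morps}: the composition of cohomologically proper morphisms is cohomologically proper. Hence the structure morphism $\mstack X_R\to\mstack Y_R\to\Spec R$ is cohomologically proper, which is precisely the assertion that $\mstack X_R$ is a cohomologically proper spreading of $\mstack X$.

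The only mild subtlety — and the "main obstacle" if one must identify one — is the identification of the two a priori different spreadings of $\mstack Y$ after enlarging the base; this is handled uniformly by the equivalence of categories in \Cref{fp_over_filtered_limit}, so in the end the argument is essentially formal once all the machinery of Sections \ref{sec:Spreadable stacks} and \ref{sec:Cohomologically proper stacks} is in place.
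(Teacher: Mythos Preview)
Your proof is correct and follows essentially the same route as the paper's own argument: spread the proper morphism $f$, reconcile the resulting spreading of $\mstack Y$ with the given cohomologically proper one via \Cref{fp_over_filtered_limit}, and then compose using \Cref{proper_are_Hodge_proper} and \Cref{properties_of_coh_prop_morps}(1). The paper compresses the merging step into the phrase ``any two spreadings become equivalent after some finite localization of $R$'', which is exactly the content you unpack in detail.
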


\begin{rem}
More generally, a composition of two cohomologically properly spreadable morphisms is again cohomologically properly spreadable.
\end{rem}

\subsubsection{Classifying stacks} \label{sec:BG}
Let $F$ be an algebraically closed field of characteristic zero. We start our investigation of Hodge-proper spredability by first understanding for which algebraic groups $G$ over $F$ the classifying stack $BG$ is Hodge-proper. The answer turns out to be easy: for all finite type $G$. In fact $BG$ is even cohomologically proper:
\begin{prop}\label{BG}
Let $G$ be a finite type group scheme over $F$. Then $BG$ is cohomologically (and, in particular, Hodge-)proper.

\begin{proof}
In fact we will show a more precise statement, namely that for any coherent sheaf $\mc F$ on $BG$, $\RG(BG,\mc F)$ lies in $\Coh(F)$ if $G$ is linear and in $\Coh^+(F)$ if $G$ is general.

Note that, since we are in characteristic $0$, $G$ is smooth and thus so is the natural map $\Spec F\ra BG$\footnote{Note that since the property of a morphism to be smooth can be checked flat locally on the source the structure map $BG \to \Spec F$ is always smooth even when $G$ is not. We refer interested reader to \cite{Toen_AutoFlat} or \cite[Tag 0DLS]{StacksProj} for more details.}. In particular, the natural $t$-structure on $\Coh(BG)$ coincides with the usual $t$-structure on $\Coh(F)$ after taking pullback $\Coh(BG) \ra \Coh (F)$ (aka the forgetful functor in terms of representations). It is enough to show the statement for $\mc F\in \Coh(BG)^\heartsuit$ (\Cref{coh_proper_trivial_rem}). Note that such $\mc F$ is the same thing as a finite-dimensional algebraic representation of $G$ over $F$. 

By Chevalley's structure theorem there is an exact sequence $1\ra L\ra G \ra{} A\ra 1$ where $L$ is a linear algebraic group and $A$ is proper. Then for $L$ we have another short exact sequence 
$$
1\ra U\ra L \ra H \ra 1,
$$
where $U$ is the unipotent radical of $L$ and $H\simeq L/U$ is reductive.

Let $j\colon BU\ra BL$, $f\colon BL\ra BH$, $i\colon BL\ra BG$ and $p\colon BG\ra BA$ be the corresponding maps between classifying stacks. We will prove the statement step by step, starting from the unipotent case.

\clause{Case 1. $G=U$ is unipotent.} We assume $\mc F\in \Coh(BU)^{\heartsuit}$. Since the characteristic of $F$ is 0 and $U$ is unipotent, $\RG(BU,\mc F)$ can be computed as the cohomology of the Lie algebra $\mf u$. Explicitly, this is given by the Chevalley complex:
$$
0\ra \mc F\ra \mc F\otimes \mf u^* \ra \mc F\otimes \wedge^2\mf u^*\ra \ldots \mc F\otimes \wedge^{\dim U}\!  \mf u^*\ra 0.
$$
Since $\mc F$ is finite dimensional this complex is clearly perfect.

\clause{Case 2. $G=H$ is reductive.} This follows from the fact that the abelian category $\Rep(H)$ is semi-simple (since $\mathrm{char}(F) = 0$). Namely for $\mc F\in \Coh(BH)^{\heartsuit}$, the complex $\RG(BH,\mc F)$ is equal to the $H$-invariants $\mc F^H$ (in cohomological degree 0). Since $\mc F$ is finite-dimensional we get $\RG(BH,\mc F)\in \Coh(F)$.

\clause{Case 3. $G=A$ is proper.} Let $\mc F\in \Coh(BA)$. We can compute $\RG(BA,\mc F)$ using the smooth $q\colon \Spec F \ra BA$. Let $p_n\colon A^n\ra BA$ be the map from the $n$-th term of the associated {\v C}ech simplicial object. We get a cosimplicial object 
$$
[n]\mapsto \RG(A^n,p_n^*\mc F),
$$
in $\Mod_F$, and
$$
\RG(BA,\mc F)\simeq \Tot \RG(A^\bullet,p_\bullet^*\mc F).
$$
However, each term $\RG(A^n,p_n^*\mc F)$ lies in $\Coh(F)$ (since $A^n$ is proper) and has cohomology only in non-negative degrees. By \Cref{acoh_basics} it follows that 
$\RG(BA, \mathcal F)$ lies in $\Coh^+(F)$.

\clause{Case 4. $G=L$ is linear.} We assume $\mc F\in \Coh(BU)^{\heartsuit}$ and consider $f_*\mc F\in \QCoh(BH)$ (for $f\colon BL\ra BH$). We claim that $f_*\mc F\in \Coh(BH)$. It is enough to check that after taking pull-back to the smooth cover $q\colon \Spec F\ra BH$. We have a fibered square
$$
\xymatrix{BU \ar[r]^j\ar[d] & BL\ar[d]^f\\
	\Spec F \ar[r]^q & BH
}
$$
and by base change we have $q^*f_*\mc F\simeq \RG(BU,j^*\mc F)$. The map $j$ is flat, so $j^*\mc F$ is coherent and thus $\RG(BU,j^*\mc F)\in \Coh(F)$ by Case~$1$. It follows that $f_*\mc F\in \Coh(BH)$. But then $\RG(BL,\mc F)\simeq \RG(BH,f_*\mc F)$ and we are done by Case 2. At this point we have the statement for $G$ linear.

\clause{Case 5. $G$ is general.} The argument in Case 4 works here as well, replacing $U$ with $L$ and $H$ with $A$. Namely $p_*\mc F\in \Coh(BA)$ and then by Case $3$
$$
\RG(BL,\mc F)\simeq \RG(BH,f_*\mc F).\qedhere
$$
\end{proof}
\end{prop}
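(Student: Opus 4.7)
The plan is to reduce to the structure of finite type group schemes in characteristic zero via the Chevalley decomposition, handling the unipotent, reductive, and proper cases separately, and then recombining using base change along classifying-stack fibrations.

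First I would observe that, since $\mathrm{char}(F) = 0$, the group scheme $G$ is automatically smooth, so the natural map $q \colon \Spec F \to BG$ is a smooth atlas and the pullback functor $q^* \colon \Coh(BG) \to \Coh(F)$ is $t$-exact. In particular, $\Coh(BG)^\heartsuit$ consists of finite-dimensional algebraic $G$-representations, and by \Cref{coh_proper_trivial_rem} it suffices to show that $\RG(BG, \mathcal{F}) \in \Coh^+(F)$ for $\mathcal{F} \in \Coh(BG)^\heartsuit$. Moreover, the Chevalley structure theorem produces a short exact sequence $1 \to L \to G \to A \to 1$ with $L$ linear algebraic and $A$ an abelian variety, and $L$ itself fits in $1 \to U \to L \to H \to 1$ with $U$ unipotent and $H = L/U$ reductive. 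This suggests a stratified argument along the corresponding maps of classifying stacks $j\colon BU \to BL$, $f\colon BL \to BH$, $p\colon BG \to BA$.

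Next I would dispatch the three building blocks. For $G = U$ unipotent, in characteristic zero one can identify $\RG(BU, \mathcal{F})$ with the Lie algebra cohomology $\RG(\mathfrak{u}, \mathcal{F})$, which is computed by the bounded Chevalley--Eilenberg complex $\mathcal{F} \otimes \wedge^\bullet \mathfrak{u}^\vee$ of finite-dimensional vector spaces; hence the answer even lies in $\DMod{F}^{\perf} \subset \Coh(F)$. For $G = H$ reductive, $\Rep(H)$ is semisimple (char $0$), so $\RG(BH, \mathcal{F}) \simeq \mathcal{F}^H$ sits in degree $0$ as a finite-dimensional vector space. For $G = A$ an abelian variety, I would compute $\RG(BA, \mathcal{F})$ as the totalization $\Tot \RG(A^\bullet, p_\bullet^* \mathcal{F})$ of the \v Cech object attached to the smooth atlas $\Spec F \to BA$; each term lies in $\Coh(F)^{\ge 0}$ by classical properness of $A^n$, and so by \Cref{acoh_basics}(2) the totalization lies in $\Coh^+(F)$.

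Finally I would glue using base change along the two fibration diagrams. For $G = L$ linear, applying proper/flat base change to the Cartesian square with $f\colon BL \to BH$ and $q\colon \Spec F \to BH$ yields $q^* f_* \mathcal{F} \simeq \RG(BU, j^* \mathcal{F})$, which is coherent by the unipotent case; hence $f_* \mathcal{F} \in \Coh(BH)$, and the reductive case gives $\RG(BL, \mathcal{F}) \simeq \RG(BH, f_* \mathcal{F}) \in \Coh(F)$. The analogous base change along $p\colon BG \to BA$, now using the linear case to see $p_* \mathcal{F} \in \Coh(BA)$, combined with the abelian variety case, yields $\RG(BG, \mathcal{F}) \simeq \RG(BA, p_* \mathcal{F}) \in \Coh^+(F)$. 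The subtlest step is the abelian variety case, since unlike the linear pieces one does not get a bounded answer, and the argument genuinely relies on the totalization closure property of $\Coh^{\ge 0}(F)$; the Hodge-properness then follows from the definitions, completing the proof.
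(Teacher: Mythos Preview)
Your proposal is correct and follows essentially the same approach as the paper: reduction to $\Coh(BG)^\heartsuit$, Chevalley decomposition into unipotent/reductive/abelian-variety pieces, handling each via the Chevalley--Eilenberg complex, semisimplicity, and \v Cech totalization respectively, and then gluing via base change along the classifying-stack fibrations. Even the identification of the abelian-variety step as the one yielding only $\Coh^+(F)$ rather than $\Coh(F)$ matches the paper's emphasis.
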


Even though $BG$ is Hodge-proper practically for any $G$, there are definitely some algebraic groups $G$ for which $BG$ is not Hodge-properly spreadable. Indeed, consider $G=\mathbb G_a$. If $B\mathbb G_a$ were Hodge-properly spreadable, then by \Cref{cor:Hodge decomposition for singular cohomology} we would get a decomposition 
$$
H^n_\dR(B\mathbb G_a/F)\simeq \bigoplus_{p+q=n} H^q(B\mathbb G_a, \wedge^p \mathbb L_{B\mathbb G_a/F}).
$$
However, this is impossible. Indeed, the left hand side vanishes for $n>0$ by the $\mathbb A^1$-homotopy invariance of the de Rham cohomology in characteristic $0$. On the other hand $\wedge^p \mathbb L_{B\mathbb G_a/F} \simeq \mc O_{B\mathbb G_a}[-p]$ and $H^i(B\mathbb G_a,\mc O_{B\mathbb G_a})$ is non-zero for $i=0,1$. In particular, the right hand side is non-zero for all $n$, contradiction.

Note that by \Cref{Hodge_degeneration} it follows that the Hodge cohomology of any spreading of $B\mathbb G_a$ \emph{has to be} infinitely generated. This is also confirmed by an explicit computation of the cohomology of $\mc O_{B\mbb G_a}$ over $\mbb Z$ which the reader can find in \Cref{appendix:BG_a}. We only slightly comment on this here:
\begin{ex}\label{ex: comment on BG_a}
	Let $\mbb G_a$ be the additive group considered as an algebraic group scheme over $\mbb Z$. By the computation in \Cref{appendix:BG_a} one has an embedding 
	$$
	\xymatrix{\left(\mathbb Z[v_1] \otimes_{\mathbb Z}\Sym_{\mathbb Z}^*\left(\bigoplus_{p}\mathbb F_p v_p\oplus \mathbb F_pv_{p^2}\oplus \ldots\right)\right) \bigg/ v_1^2=v_2 \  \ar@{^{(}->}[r]& \  H^*(B\mathbb G_a,\mc O_{B\mathbb G_a})},
	$$
	where $v_1$ has cohomological degree 1 and all other $v_{p^i}$ are of degree 2. In particular $H^2(B\mbb G_a, \mc O_{B\mathbb G_a})$ has infinitely generated elementary $p$-torsion for any prime $p$. Given any spreading $\mstack X$ of ${(B\mbb G_a)}_F$ over some $R\subset F$, by \Cref{fp_over_filtered_limit} it becomes isomorphic to ${(B\mbb G_a)}_R$ for some larger $R$. Choosing prime $p$ in the image of $
\Spec R$ in $\Spec \mbb Z$, by flat base change we get that $H^2(\mstack X, \mc O_{\mstack X})$ contains an infinite sum $(R/p)^{\oplus \mbb N}$ and thus $\mstack X$ is not Hodge-proper over $R$.
\end{ex}	
\noindent Given the complexity of $B\mathbb G_a$ from cohomological point of view, it is natural to ask for which algebraic groups $G$, the classifying stack $BG$ is Hodge-properly  spreadable. We provide a list of examples:

\begin{ex} \label{ex: examples of Hodge-spreadable stacks} $BG$ is Hodge-properly (and in fact also cohomologically properly) spreadable if
\begin{itemize} 
	\item $G$ is proper (=an extension of a finite group by an abelian variety). Then $BG$ is a proper stack and this is covered by \Cref{cor:proper stacks spread out};
	\item $G$ is reductive. This follows from \Cref{prop: Y/G --> Y//G is cohomologically spreadable} if we take $Y=\Spec F$;
	\item $G\!=\! P\subset H$ is some parabolic subgroup of some reductive group $H$. This is a particular case of \Cref{thm: conical examples}. Alternatively, it follows from the previous point and \Cref{chow_lem_for_spreadings} (using that $BP\ra BH$ is proper).
\end{itemize}

\end{ex}
\begin{rem}\label{rem: extension of abelian by reductive}
	By an argument similar to \Cref{BG} it is also possible to show the spreadability of $BG$ for an extension of an abelian variety by a parabolic subgroup of some reductive group.
\end{rem}

The fact that $BP$ is cohomologically properly spreadable can look a little surprising and we would like to illustrate what happens by the simplest non-trivial example, a Borel subgroup $B\subset \SL_2$:
\begin{ex}\label{ex:BB}
	Let $G=B\subset \SL_2$ be the standard Borel subgroup of $\SL_2$ over $\mathbb Z$, namely
	$$
	B = \left\lbrace
	\begin{pmatrix}t & s \\ 0 & t^{-1} \end{pmatrix}
	\right\rbrace \subset \SL_2.
	$$ 
	In this example we will show that $BB$ is a cohomologically proper spreading of $BB_F$.
	
	Note that $B\simeq \mathbb G_a \rtimes \mathbb G_m$ with $\mathbb G_m=\Spec \mathbb Z[t,t^{-1}]$ acting on $\mathbb G_a= \Spec \mathbb Z[x]$ by multiplication of $x$ by $t^2$. 
	Consider the natural map $p\colon BB\ra B\mathbb G_m$ and take $p_*(\mc O_{BB})$. We have a fiber square
	$$
	\xymatrix{B\mathbb G_a \ar[r]^j \ar[d] & BB \ar[d]\\
		\mr{pt}\ar[r]^q & B\mathbb G_m.
	}
	$$
	We have $j^*\mc O_{BB}\simeq \mc O_{B\mathbb G_a}$ and by base change the underlying complex $q^*p_*\mc O_{BB}$ is equal to $\RG(B\mathbb G_a,\mc O_{B\mathbb G_a})$. It follows that 
	$$
	\RG(BB,\mc O_{BB})\simeq \RG(B\mathbb G_m,p_*\mc O_{BB})\simeq 
	\RG(B\mathbb G_a,\mc O_{B\mathbb G_a})^{\mathbb G_m},$$ since $\mathbb G_m$-invariants is an exact functor. In the terms of the computation in \Cref{appendix:BG_a} this corresponds to the 0-th graded component $\RG(B\mathbb G_a,\mc O_{B\mathbb G_a})_0\subset \RG(B\mathbb G_a,\mc O_{B\mathbb G_a})$, which, as we figure there, is just given by $\mathbb Z$.
	 Consequently $\RG(BB,\mc O_{BB})=\mathbb Z$. 
	
	Summarizing, we can see that even though the cohomology of $\mc O_{B\mathbb G_a}$ is enormous, the $\mathbb G_m$-action contracts it, ultimately making the cohomology of $\mc O_{BB}$ finitely generated. 
	
	In fact more is true: namely for any Borel subgroup $B$ of a split reductive group $G$ over $\mbb Z$ the stack $BB$ is cohomologically proper. Indeed, one first shows that $BG$ is cohomologically proper over $\mbb Z$ by applying \Cref{Franjou} in the case $A=R=\mbb Z$ and, since the morphism $BB\ra BG$ is proper, the rest follows from \Cref{proper_are_Hodge_proper}. 
\end{ex}

\subsubsection{Non-proper schemes}\label{sec:non-proper schemes}
Considering schemes, it is natural to ask whether a Hodge-properly spreadable scheme is necessarily proper. On the other extreme, one can ask whether any schematic example of the Hodge-to-de Rham degeneration is Hodge-properly spreadable. Below we consider an example of a semiabelian surface $X$ given by an extension of an elliptic curve by $\mathbb G_m$; as we will see, appropriate choices of extensions give counterexamples to both statements above.
\begin{ex}\label{ex:key}
Let $E$ be an elliptic curve over a field $k$. Let $K/k$ be a (not necessarily algebraic) field extension and let $\mc L \in \Pic^0(E)(K)\simeq E(K)$ be a degree $0$ line bundle on $E_{K}$. Let $X$ be the total space of the associated $\mathbb G_m$-torsor. The $K$-scheme $X$ is clearly smooth and non-proper; moreover, by \cite[VII.3.16]{Serre_GACC}, $X$ is in fact an algebraic group, more concretely a semiabelian surface. 

\begin{lem}\label{Hodge-proper when non-torsion}
$X$ is Hodge-proper over $K$ if and only if $\mc L^{\otimes n}\neq \mc O_X$ for all $n>0$. If $\mr{char} \ \! K=0$ this is also equivalent to the degeneration of the Hodge-to-de Rham spectral sequence for $X
$.

\begin{proof}
Note that $\Omega_{X}^1 \simeq \mathcal O_{X}^{\oplus 2}$, since $X$ is a group. Denote the natural projection $X \to E_K$ by $\pi$. We find
$$R\pi_*\mathcal O_{X} \simeq \pi_* \mathcal O_{X} \simeq \bigoplus_{n\in \mathbb Z} \mathcal L^n.$$
Next, since the degree of $\mc L$ is zero and $\mathcal L\neq \mc O_{X}$, we have $R\Gamma(E_K, \mathcal L) \simeq 0$. If $\mathcal L$ is non-torsion, the same holds for $\mathcal L^n$, for all $n \ne 0$. So
$$R\Gamma(X, \Omega^2_{X}) \simeq R\Gamma(X, \mathcal O_{X}) \simeq R\Gamma(E_K, \mathcal O_{E_K}) \simeq K\oplus K[-1], \quad R\Gamma(X, \Omega^1_{X}) \simeq R\Gamma(X, \mathcal O_{X}^{\oplus 2}) \simeq K^{\oplus 2}\oplus K[-1]^{\oplus 2}.$$
If, on the other hand, $\mathcal L$ is torsion, $\pi_*\mathcal O_{X}$ has infinitely many copies of $\mc O_{E_K}$ as direct summands and so $H^0(X,\mc O_X)$ is infinite-dimensional.

For the second assertion it is enough to consider the case $K=\mathbb C$, where we can compare the de Rham cohomology with the singular one. Since the degree of $\mc L$ is 0, $\mathcal L$ is topologically trivial, and $X$ is homotopy equivalent to $(\mathbb S^1)^{\times 3}$; comparing the dimensions we see that the Hodge-to-de Rham spectral sequence degenerates at the first page.
\end{proof}
\end{lem}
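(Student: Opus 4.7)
The plan is to exploit the explicit structure of $X$ as an affine $\mathbb{G}_m$-torsor $\pi\colon X\to E_K$, so that the weight decomposition for the $\mathbb{G}_m$-action gives $\pi_*\mathcal O_X\simeq \bigoplus_{n\in\mathbb Z}\mathcal L^n$, and then to reduce every Hodge group on $X$ to coherent cohomology of line bundles on the elliptic curve. Since $X$ is a smooth algebraic group of dimension $2$, its cotangent bundle is trivial, so $\Omega^1_X\simeq\mathcal O_X^{\oplus 2}$ and $\Omega^2_X\simeq\mathcal O_X$, and all of $R\Gamma(X,\Omega^p_X)$ is a direct sum of copies of $R\Gamma(X,\mathcal O_X)$. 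The projection $\pi$ is affine, so $R\Gamma(X,\mathcal O_X)\simeq R\Gamma(E_K,\pi_*\mathcal O_X)\simeq\bigoplus_{n\in\mathbb Z}R\Gamma(E_K,\mathcal L^n)$.

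For the first equivalence, I use the standard elliptic-curve fact that $R\Gamma(E_K,\mathcal L^n)=0$ whenever $\mathcal L^n$ is a nontrivial degree-$0$ line bundle, and $R\Gamma(E_K,\mathcal O_{E_K})\simeq K\oplus K[-1]$. If $\mathcal L^n\not\cong\mathcal O$ for every $n>0$ (equivalently every $n\neq 0$, since $\mathcal L^{-n}\cong(\mathcal L^n)^\vee$), only the $n=0$ summand contributes, giving $R\Gamma(X,\mathcal O_X)\simeq K\oplus K[-1]$; together with the Hodge formulas above this yields Hodge-properness. Conversely, if $\mathcal L^{n_0}\cong\mathcal O$ for some $n_0>0$, then for every multiple $mn_0$ the summand $R\Gamma(E_K,\mathcal L^{mn_0})\simeq K\oplus K[-1]$ survives, producing an infinite-dimensional $H^0(X,\mathcal O_X)$ and thus violating Hodge-properness.

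For the second equivalence I reduce to $K=\mathbb C$ using flat base change for both Hodge and (algebraic) de Rham cohomology. By Grothendieck's theorem, $H^*_{\mathrm{dR}}(X/\mathbb C)\simeq H^*(X(\mathbb C),\mathbb C)$. Since $\mathcal L\in\mathrm{Pic}^0(E)$ is topologically trivial, the $\mathbb C^\times$-bundle $X(\mathbb C)\to E(\mathbb C)$ is topologically trivial, so $X(\mathbb C)$ is homotopy equivalent to $E(\mathbb C)\times S^1\simeq(S^1)^3$ with Betti numbers $(1,3,3,1)$, giving $\sum_n\dim_{\mathbb C}H^n_{\mathrm{dR}}(X/\mathbb C)=8$. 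When $\mathcal L$ is torsion, Hodge cohomology is infinite-dimensional by part one, so the spectral sequence cannot degenerate (the $E_1$ page is strictly larger than $E_\infty$). When $\mathcal L$ is non-torsion, the Hodge Betti numbers are $h^{0,0}=h^{0,1}=1,\ h^{1,0}=h^{1,1}=2,\ h^{2,0}=h^{2,1}=1$ and all other $h^{p,q}=0$, which sum to $8$; by the standard dimension-counting criterion (total $E_1$ equals total $E_\infty$) the Hodge-to-de Rham spectral sequence degenerates at $E_1$.

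There is no real obstacle beyond the setup: once the decomposition $\pi_*\mathcal O_X\simeq\bigoplus_n\mathcal L^n$ and the triviality of $\Omega^\bullet_X$ are in place, everything reduces to a one-line computation on the elliptic curve together with a topological identification of $X(\mathbb C)$. The only mild subtlety is ensuring that the comparison between algebraic de Rham and singular cohomology applies even though $X$ is non-proper, which is covered by Grothendieck's comparison theorem for arbitrary smooth complex varieties.
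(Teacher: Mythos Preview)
Your proof is correct and follows essentially the same route as the paper's: trivialize $\Omega^\bullet_X$ via the group structure, push $\mathcal O_X$ down along the affine map $\pi$ to obtain $\bigoplus_{n\in\mathbb Z}\mathcal L^n$, and reduce everything to cohomology of degree-$0$ line bundles on $E_K$; then for the second part compare with singular cohomology of $(S^1)^3$ over $\mathbb C$. You are slightly more explicit than the paper in checking the converse direction (torsion $\Rightarrow$ non-degeneration via the infinite-dimensional $E_1$ page), which is a useful clarification but not a different method.
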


\begin{rem}\label{Hodge-proper over finite field}
Note that if $K$ is a subfield of $\overline{\mathbb F}_p$, then $\Pic^0(E)(K)$ is a torsion abelian group. Thus, in this case $X$ is never Hodge-proper.
\end{rem}

Now let $E$ be an elliptic curve over $\mathbb Q$. Let's consider $\mc L_{\mathbb C}\in \Pic^0(E)(\mathbb C)$; the corresponding semiabelian variety $X_{\mathbb C}$ is a variety over $\mathbb C$. 

\begin{prop}\label{Hodge-spreadability vs rationality}
$X_{\mathbb C}$ is Hodge-properly spreadable if and only if  $\mc L_{\mathbb C}\in \Pic^0(E)({\mathbb C}) \setminus \Pic^0(E)(\ol{\mathbb Q})$.

\begin{proof}
First, let $\mc L_{\mathbb C} \in E(\ol{\mathbb Q})$. Let $K\subset \ol{\mathbb Q}$ be the field of definition of $\mc L_{\mathbb C}$; then $\mc L_{\mathbb C}$ and $E_{\mathbb C}$ are defined over $K$ and we will denote the corresponding line bundle and elliptic curve over $K$ by $\mc L_K$ and $E_K$. We also denote by $X_K$ the total space of $\mc L_K$. Let $\mc O_K\subset K$ be the ring of integers. Consider the filtered system $\{\mc O_K[1/n]\}_{n\in \mathbb N}$ of subrings of $K$; we have $K= \colim_n \mc O_K[1/n]$. By \Cref{spreading_with_properties_for_schemes} $X_K$ has a smooth spreading $X_A$ over $A=\mc O_K[1/n]$ for $n$ big enough and, taking a larger $n$, we can assume that $X_A$ is the total space of a line bundle $\mc L_A$ over an elliptic curve $E_A$ (with $E_A$ and $\mc L_A$ being spreadings of $E$ and $\mc L$). Note that all closed points of $\Spec A$ are of positive characteristic and have finite residue fields. Localizing $A$ further we can assume that base change for Hodge and de Rham cohomology holds with respect to all closed points of $A$. Let $x \colon \Spec \mathbb F_q \inj \Spec A$ be some closed point. By \Cref{Hodge-proper over finite field} the reduction $\mc L_{\mathbb F_q}$ is torsion and thus $X_{\mathbb F_q}$ is not Hodge-proper. It follows that $X_A$ is not Hodge-proper and thus, since any subring $R\subset K$ satisfying the conditions in \Cref{def: spredable_stacks} is contained in $\mc O_K[1/n]$ for some $n$, $X$ is not Hodge-properly spreadable. We claim that neither is $X_{\mathbb C}$; indeed, let $X_R'$ be a Hodge-proper spreading over some localization $R\subset \mathbb C$ of some finitely generated $\mathbb Z$-algebra as in \Cref{def: spredable_stacks}. Since $\mathbb C$ can be represented as a colimit of flat finitely generated $R$-algebras (and those fit in the framework of \Cref{def: spredable_stacks}), by the "spreading out" for schemes we can assume that ${\{(X_R')\}}_{R\cdot K}\simeq \{(X_K)\}_{R\cdot K}$. Considering two systems: ${\{{(X_A)}_{R\cdot A}\}}_{A}$ and ${\{{(X_R')}_{R\cdot A}\}}_{A}$ with $X_A$ as above and $A$ running over $\mc O_{K}[\frac{1}{n}]$ for various integers $n$ we get that ${(X_A)}_{R\cdot A}\simeq {(X_R')}_{R\cdot A}$ some $A$. Note that since the image of $\Spec R$ in $\Spec \mathbb Z$ is open and $R\cdot A$ is torsion free (over $\mathbb Z$, and thus also over $A$), $R\cdot A\subset \mathbb C$ becomes faithfully flat over $A=\mc O_K[1/n]$ if we take $n$ big enough. Since $X_A$ is not Hodge-proper, neither is ${(X_A)}_{R\cdot A}$. Replacing $A$ we can also assume $R\cdot A$ is flat over $R$; indeed by "spreading out" of flat morphisms of schemes it is enough to show that $R\cdot K$ is flat over $R_{\mathbb Q}$. But $R\cdot K$ is a direct summand of $R_{\mathbb Q}\otimes_{\mathbb Q} K$, so this is clear. Thus by flat base change (applied to $R\cdot A$ which is now flat over $R$) we get that $X_R'$ can't be Hodge-proper, which is a contradiction. Thus $X_{\mathbb C}$ is not Hodge-properly spreadable.  

It remains to deal with the transcendent case $\mc L_{\mathbb C}\notin \Pic^0(E)(\ol{\mathbb Q})$. Let's consider the universal line bundle $\mc P$ on $E\times\Pic^0(E)$.  Since $\mc L$ is not a $\ol{\mathbb Q}$-point, the corresponding map $\Spec \mathbb C \ra \Pic^0(E)$ factors through the generic point $\Spec \mathbb Q(E)\subset E\simeq \Pic^0(E)$ and thus both $\mc L_{\mathbb C}$ and $X_{\mathbb C}$ are defined over $\mathbb Q(E)$. We denote the corresponding bundle and $\mathbb Q(E)$-scheme by $\mc L$ and $X$. Let $y^2=x^3 +ax+b$ be an equation of (the affine part of) $E$. Let $B=\mathbb Z[1/n][x,y]/(y^2-x^3-ax-b)\subset \mathbb Q(E)$ where $n$ is big enough to be divisible by the denominators of both $a$ and $b$, and so that $B$ is smooth over $\Spec \mathbb Z$. Note that $E$ has a smooth proper model $E_{\mathbb Z[1/n]}$  over $\mathbb Z[1/n]$ given by the projective closure of $\Spec B$. Now let $R=B[S^{-1}]\subset \mathbb Q(E)$ be the localization of $B$ with respect to the set $S$ of elements $s\in B$ that are non-zero modulo all primes $p\in \mathbb Z$ provided $(p,n)=1$.\footnote{More explicitly, one can see that it is enough to invert functions $y^n-1$ for $n\ge 1$.} The reduction $R/(p)$ is equal to the field of fractions of $B/(p)$ which is nothing but $\mathbb F_p(E_{\mathbb F_p})$. Identifying $\Pic^0(E)$ with $E$, we obtain spreadings $\mc L_R$ (over $E_R$) and $X_R$ of $\mc L$ and $X$ (considered as a line bundle on $E_{\mathbb Q(E)}$ and a scheme over $\mathbb Q(E)$ correspondingly). Localizing $R$ we can assume that $X_R$ is a group scheme and thus it is enough to show that the cohomology of $\RG(X_R,\mc O_{X_R})$ is finitely generated over $R$. We have 
$$R\pi_{*}\mathcal O_{X_R} \simeq \pi_{*} \mathcal O_{X_R} \simeq \bigoplus_{n\in \mathbb Z} \mathcal L_R^n.$$
Each $\mathcal L_R^n$ is a coherent sheaf on $E_R$ and $\RG(E_R,\mathcal L_R^n)\in \Mod_{R}^{\mr{coh}}$. We claim that it is zero if $n\neq 0$; note that $R$ is regular, thus $\RG(E_R,\mathcal L_R^n)$ is perfect and so it is enough to check this modulo all primes $p\in \mathbb Z$. But by the construction the reduction $\mathcal L_{R/p}$ is the restriction of the universal line bundle on $E_{\mathbb F_p}\times_{\mathbb F_p} \Pic^0(E_{\mathbb F_p})$ to $E_{\mathbb F_p}\times_{\mathbb F_p} \mathbb \Spec \mathbb F_p(E_{\mathbb F_p})$; in particular it is non-torsion. Thus $\RG(E_{R},\mathcal L_{R}^n)\simeq 0$, and so 
$$\RG(X_R,\mc O_{X_R})\simeq\RG(E_{R},\pi_{*} \mathcal O_{X_R})\simeq \RG(E_{R},\mc O_{E_R})$$ is coherent.
\end{proof}
\end{prop}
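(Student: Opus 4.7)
The proposition asserts a dichotomy: Hodge-proper spreadability of $X_{\mbb C}$ depends precisely on whether $\mc L_{\mbb C}$ is defined over $\ol{\mbb Q}$ or not. Both directions will rest on Lemma \ref{Hodge-proper when non-torsion} combined with Remark \ref{Hodge-proper over finite field}, which together imply that on any subfield of $\ol{\mbb F}_p$ the total space $X$ is never Hodge-proper because $\Pic^0(E)$ is torsion there.

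\emph{Algebraic implies not spreadable.} Suppose $\mc L_{\mbb C}$ is defined over a number field $K$, so $X_{\mbb C}$ descends to some $X_K$. The plan is to use \Cref{spreading_with_properties_for_schemes} to spread $X_K$ to a smooth model $X_A$ over $A=\mc O_K[1/n]$ which remains the total space of a line bundle on an elliptic curve $E_A$, enlarging $n$ as necessary to guarantee base change for Hodge cohomology at every closed point. Since residue fields of $A$ are finite, Remark \ref{Hodge-proper over finite field} kills Hodge-properness in every fiber, so $X_A$ is not Hodge-proper over $A$. The delicate step is ruling out a possibly unrelated Hodge-proper spreading $X'_R$ over some $R\subset \mbb C$ from \Cref{def: spredable_stacks}; the plan is to produce a common enlargement $R\cdot A\subset \mbb C$ inside some flat framework, invoke spreading of isomorphisms (\Cref{thm:spreading of schemes}) to identify ${(X_A)}_{R\cdot A}\simeq {(X'_R)}_{R\cdot A}$, and transfer the failure of Hodge-properness back to $X'_R$ by faithfully flat base change. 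The main obstacle is arranging $R\cdot A$ to be simultaneously flat over $R$ and faithfully flat over $A$; the former uses that $R\cdot K$ is a direct summand of $R_{\mbb Q}\otimes_{\mbb Q}K$, while the latter uses the openness-of-image condition on $\Spec R\to \Spec \mbb Z$ built into \Cref{def: spredable_stacks}.

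\emph{Transcendental implies spreadable.} Here the key observation is that if $\mc L_{\mbb C}$ is not a $\ol{\mbb Q}$-point of $\Pic^0(E)\simeq E$, the classifying map $\Spec\mbb C\to \Pic^0(E)$ factors through the generic point, so $\mc L$ and $X$ are naturally defined over $\mbb Q(E)$. The plan is to find a localization $R\subset \mbb Q(E)$ of a smooth $\mbb Z$-algebra whose image in $\Spec \mbb Z$ is open, and such that for every relevant prime $p$ the reduction $R/p$ is a subring of the function field $\mbb F_p(E_{\mbb F_p})$ through which $\mc L_{R/p}$ remains the restriction of the Poincar\'e bundle to the generic point of $\Pic^0(E_{\mbb F_p})$, and hence is non-torsion. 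Concretely, after clearing denominators of the Weierstrass equation one may take $B=\mbb Z[1/n][x,y]/(y^2-x^3-ax-b)\subset \mbb Q(E)$ and further invert a multiplicative subset $S\subset B$ of elements nonzero modulo every prime $p\nmid n$ (for instance $y^n-1$ for $n\ge 1$). Setting $R=B[S^{-1}]$, one spreads to $\mc L_R/E_R$ and $X_R$, and verifies Hodge-properness by computing $\pi_*\mc O_{X_R}\simeq \bigoplus_{n\in\mbb Z}\mc L_R^n$ and reducing the vanishing of $\RG(E_R,\mc L_R^n)$ for $n\neq 0$ to the mod-$p$ case via regularity of $R$ and perfectness of $\mc L_R^n$. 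The main obstacle is choosing $S$ rich enough to enforce non-torsionness in every characteristic while keeping $R$ a localization of a smooth $\mbb Z$-algebra with open image in $\Spec\mbb Z$; this is exactly the feature of \Cref{def: spredable_stacks} that admits infinite localizations, without which the argument could not proceed.
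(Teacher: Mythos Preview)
Your proposal is correct and follows essentially the same route as the paper's proof in both directions: the same descent to $\mc O_K[1/n]$ and transfer via a common base $R\cdot A$ (with the same flatness arguments) in the algebraic case, and the same factorization through the generic point of $\Pic^0(E)$ with the localization $R=B[S^{-1}]$ (even down to the explicit $y^n-1$) in the transcendental case. The only minor slip is that you write ``perfectness of $\mc L_R^n$'' where you mean perfectness of $\RG(E_R,\mc L_R^n)$, but the intent is clear.
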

\end{ex}

\begin{rem}\label{rem:degenerate but not spreadable}
Due to \Cref{Hodge-spreadability vs rationality} and \Cref{Hodge-proper when non-torsion}, a semiabelian surface $X_{\mathbb C}$, for the choice of a non-torsion  line bundle $\mc L_{\mathbb C}\in \Pic^0(E)(\ol{\mathbb Q})$, gives an example of a scheme which is not Hodge-properly spreadable, but the Hodge-to-de Rham spectral sequence degenerates. On the other hand, taking $\mc L_{\mathbb C}\notin \Pic^0(E)(\ol{\mathbb Q})$ gives an example of a Hodge-properly spreadable scheme which is not proper.
\end{rem}

\begin{rem}
Note that even in the case $\mc L_{\mathbb C}\notin \Pic^0(E)(\ol{\mathbb Q})$, the scheme $X_{\mathbb C}$ does not have a Hodge-proper spreading over a finitely generated $\mathbb Z$-algebra $R$ (because then $\Spec R$ necessarily has points with finite residue fields over which $\mc L$ becomes torsion). Thus it really makes a difference to allow arbitrary localizations of the latter in \Cref{def: spredable_stacks}.
\end{rem}

\begin{rem}
The ring $R$ used in the proof of \Cref{Hodge-spreadability vs rationality} is a slight generalization of a ring, that could be called "quantum integers/rationals". Recall that the "quantum integer $n$" polynomial $[n]_q\in \mathbb Z[q]$ is defined as $[n]_q\coloneqq 1+q+\ldots +q^{n-1}$; we then can consider $\mathbb Q_{\mathbf{q}}\coloneqq\mathbb Z[q][[n]_q^{-1}]_{n\in \mathbb N}$. The ring $\mathbb Q_{\mathbf{q}}$ is a principal ideal domain of Krull dimension $1$ whose reduction modulo a prime $p$ is given by 
$$
\mathbb Q_{\mathbf{q}}\otimes_{\mathbb Z}\mathbb F_p\simeq \mathbb F_p[q][[n]_q^{-1}]_{n\in \mathbb N}\simeq \mathbb F_p(q),
$$
the field of rational functions over $\mathbb F_p$.
\end{rem}

\begin{rem}\label{rem:not pure}
Topologically, $X_{\mathbb C}(\mathbb C)\simeq  \mathbb C^\times\times (\mathbb S^1)^2$, since $\mc L_{\mathbb C}$ has degree 0 and is topologically trivial. The space $H^1_{\mr{sing}}(X_{\mathbb C}(\mathbb C),\mathbb C)\simeq \mathbb C^3$ is odd dimensional and thus the corresponding mixed Hodge structure can't be pure of weight 1. In particular this shows that the mixed Hodge structure\footnote{Appropriately defined, say via Section 8.3 of \cite{HodgeIII}.} on the $n$-th singular cohomology of a Hodge-properly spreadable stack is not necessarily pure of weight $n$.
\end{rem}


\subsubsection{Higher examples}\label{sec: higher examples}
Here we also record some examples of Hodge-properly spreadable stacks that are not classical.
	\begin{ex}
		Let $G$ be a classical abelian algebraic group over $F$ such that $BG$ is cohomologically properly spreadable (e.g. by \Cref{rem: extension of abelian by reductive} and the discussion above it, one can take $G$ to be an algebraic torus, an abelian or even a semiabelian variety). Then the higher stack $K(G,n)\coloneqq B^nG$ (given by the sheafification of $S \mapsto K(G(S),n)$) is also cohomologically properly spreadable. Indeed, after enlarging $R$ any cohomologically proper spreading $K(G,1)_R$ becomes equal to $K(G_R,1)$ for some spreading $G_R$ of $G$ which has a group structure. Taking the $K(G_R,n)$ for such $G_R$ gives a spreading of $K(G,n)$ which, we claim, is cohomologically proper. We will show this by induction. Consider the \v Cech simplicial object $\mstack U_{\bullet}$ corresponding to the cover $\Spec R\ra K(G_R,n)$. All of its terms $\mstack U_k$ are given by products $K(G_R,n-1)^k$, the projection morphism $K(G_R,n-1)^k\ra K(G_R,n-1)^{k-1}$ is cohomologically proper\footnote{Indeed, its pull-back to $\Spec R\ra K(G_R,n-1)^{k-1}$ is $K(G_R,n-1)\ra \Spec R$ which is cohomologically proper by the induction assumption, thus by \Cref{properties_of_coh_prop_morps} (3) so is the original map.} and thus, projecting $k$ times, we see that all $\mstack U_k$ are cohomologically proper. It then follows from \Cref{coh_prop_hypercover} that  $K(G_R,n)$ is cohomologically proper as well.
	\end{ex}

\section{Hodge-proper spreadability of quotient stacks}
\label{sec:spreadability of quotient stacks}
In this section we study in more detail the case of quotient stacks, providing several families of non-trivial examples of cohomologically proper spreadable stacks. Here the proofs of spreadability are much more involved; the following two important representation-theoretic results will be used: 
\begin{thm}[Theorem 3 and Proposition 57 of \cite{FranjouKallen}\footnote{Proposition 57 in \textit{loc.cit.} is stated for $R=\mbb Z$. However we can consider $A$ as a $\mbb Z$-algebra with an action of $\mbb G_{\mbb Z}$; indeed, since the action on $R$ is trivial and $R[G_R]\simeq \mbb Z[\mbb G_{\mbb Z}]\otimes_{\mbb Z}R$ we have $A\otimes_R R[G_R]\simeq A\otimes_{\mbb Z} \mbb Z[G_{\mbb Z}]$ and thus get the comultiplication $A\ra A\otimes_R R[G_R]\simeq A\otimes_{\mbb Z} \mbb Z[G_{\mbb Z}]$. Same thing works for any $G_R$-representation and, moreover, $\RG(G_R,M)\simeq \RG(G_{\mbb Z},M)$ (e.g. because they are computed by the same standard complex). Thus Proposition 57 in \textit{loc.cit.} also applies to any $R$ that is finitely generated over $\mbb Z$.}]\label{Franjou} 
	Let $G_{\mathbb Z}$ be a split reductive group over $\mathbb Z$ and $R$ be a finitely generated algebra over $\mathbb Z$. Let $A$ be a finitely generated $R$-algebra endowed with a (rational) action of $G_R$ and let $M$ be a finitely generated $G_R$-equivariant $A$-module. 
	Then the algebra $A^{G_R}$ of $G_R$-invariants is  finitely generated over $R$ and  $H^n(G_R, M)$ is a finitely generated $A^{G_R}$-module for any $n\ge 0$.
\end{thm}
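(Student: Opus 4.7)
The result combines two qualitatively different statements: the classical finite generation of the invariant algebra $A^{G_R}$ over $R$, and the cohomological finite generation (CFG) of $H^n(G_R, M)$ as an $A^{G_R}$-module for $n > 0$. My plan is to deduce the first from the second, and to prove the second by reducing to the CFG conjecture of van der Kallen over a field, now a theorem of Touz\'e. Concretely, if one knows that $H^*(G_R, A)$ is finitely generated over $A^{G_R}$, then $A^{G_R} = H^0(G_R, A)$ is finitely generated over $R$ by a standard argument. For finite generation of $H^*(G_R, M)$ over $A^{G_R}$, first reduce to $R = \mathbb Z$ by presenting $R$ as a quotient of a polynomial ring $\mathbb Z[x_1, \ldots, x_k]$ and using base change for rational cohomology; then reduce from a general split reductive $G_\mathbb Z$ to $\GL_{n,\mathbb Z}$ via a closed embedding $G_\mathbb Z \hookrightarrow \GL_{n,\mathbb Z}$ and the induction/Shapiro equivalence (which works since $\GL_n/G$ is affine for reductive $G$).

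Once reduced to $\GL_n$, the strategy follows the Touz\'e--van der Kallen proof of the CFG conjecture. The first step is to replace $A$ by the associated graded of a \emph{good filtration}, that is, one whose graded pieces are dual Weyl modules $\nabla(\lambda)$; existence of such filtrations on finitely generated $\GL_n$-algebras is a result of Grosshans, and it reduces the problem first to $A = \bigoplus \nabla(\lambda)$ and then, by further d\'evissage, to $A = S^*(V)$ for a polynomial $\GL_n$-representation $V$. The second step invokes the strict polynomial functor formalism of Friedlander--Suslin, which identifies $\Ext^*_{\GL_n}$ between polynomial modules of bounded degree with $\Ext^*$ in the category of strict polynomial functors. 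The third and deepest step is Touz\'e's theorem on the existence of \emph{universal cohomology classes} $c[r] \in \Ext^{2r}_{\GL_n}(\Gamma^d(\mf{gl}_n^{(r)}), \Gamma^{dp^r})$ lifting Frobenius twists; these provide enough multiplicative structure on $H^*(\GL_n, S^*V)$ to force finite generation over the invariant subring.

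The main obstacle is clearly Touz\'e's theorem, whose proof is an intricate computation in the category of strict polynomial bifunctors. Secondary obstacles include running the good-filtration machinery over $\mathbb Z$ rather than over a field (which is exactly why the hypothesis of being \emph{split} reductive over $\mathbb Z$ is needed here) and checking that each reduction step commutes properly with group cohomology. Once these three ingredients are in place, finite generation of $A^{G_R}$ falls out as the $n=0$ case, and the module statement for a general finitely generated $G_R$-equivariant $M$ follows by presenting $M$ as a quotient of $A^{\oplus r}$ and applying the long exact sequence in $G_R$-cohomology together with Noetherianness of $A^{G_R}$.
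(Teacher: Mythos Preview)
The paper does not prove this theorem; it is cited directly from Franjou--van der Kallen (Theorem~3 and Proposition~57). The only argument supplied is the footnote: since $R[G_R]\simeq \mathbb Z[G_{\mathbb Z}]\otimes_{\mathbb Z} R$, any $G_R$-comodule over $R$ is automatically a $G_{\mathbb Z}$-comodule over $\mathbb Z$, and the standard cochain complex computing $R\Gamma(G_R,M)$ coincides with the one computing $R\Gamma(G_{\mathbb Z},M)$; hence Proposition~57 (stated for $R=\mathbb Z$) applies verbatim. That is the whole content of the paper's treatment.

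Your proposal instead attempts a sketch of the Touz\'e--van der Kallen proof of the underlying CFG theorem. The overall architecture (reduce to $\GL_n$, use good filtrations, invoke Friedlander--Suslin and Touz\'e's universal classes) is broadly correct as a summary of that work, but one piece of logic is inverted. You propose to deduce finite generation of $A^{G_R}$ over $R$ from the statement that $H^*(G_R,A)$ is finitely generated over $A^{G_R}$. This does not work: the latter statement only says $A^{G_R}$ is finitely generated over itself, which is vacuous. In the actual development, finite generation of $A^G$ over $R$ is proved \emph{first} and independently (this is the ``power reductivity'' of Franjou--van der Kallen, a form of geometric reductivity over an arbitrary base), and is then used as input to the Noetherian induction arguments that establish CFG. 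So the dependence runs the other way. Your final paragraph, presenting $M$ as a quotient of $A^{\oplus r}$ and using the long exact sequence, likewise presupposes that $A^{G_R}$ is already known to be Noetherian.
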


\begin{thm}[{Kempf's theorem, see e.g. \cite[Proposition II.4.5]{Jantzen}}]\label{Kempf}
Let $G_{\mathbb Z}$ be a split connected reductive group over $\mathbb Z$ and let $B_{\mathbb Z}\subseteq G_{\mathbb Z}$ be a Borel subgroup. Let ${(G/B)}_{\mathbb Z}$ be the corresponding flag variety. Then $\RG({(G/B)}_{\mathbb Z},\mc O_{{(G/B)}_{\mathbb Z}})\simeq \mathbb Z$.
\end{thm}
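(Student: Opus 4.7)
The plan is to reduce the computation to geometric fibers of $\pi\colon (G/B)_{\mathbb Z}\to \Spec\mathbb Z$ and then invoke standard vanishing on a flag variety. First I would note that $(G/B)_{\mathbb Z}$ is smooth and projective over $\mathbb Z$ (by the theory of Chevalley schemes for split reductive groups), so $R\pi_*\mc O_{(G/B)_{\mathbb Z}}$ is a perfect complex of $\mathbb Z$-modules. Combining cohomology-and-base-change for flat proper morphisms with the local constancy of fiber ranks of a perfect complex on the connected base $\Spec\mathbb Z$, it suffices to check that $\RG((G/B)_k,\mc O)$ has total rank one, concentrated in degree zero, for every geometric point $\Spec k\to \Spec\mathbb Z$.

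Over a point of characteristic zero the vanishing $H^{>0}=0$ and the equality $H^0=k$ follow from Borel--Weil--Bott applied to the trivial (dominant) weight, or equivalently from the Hodge decomposition of the complex flag variety (all cohomology classes are algebraic of Hodge type $(p,p)$, so Hodge-to-de Rham degeneration plus the Bruhat cell decomposition pins down the answer). Over a point of positive characteristic $p$, the vanishing $H^{>0}((G/B)_k,\mc O)=0$ is the $\lambda=0$ case of Kempf's vanishing theorem, and $H^0=k$ once again follows from properness and geometric connectedness.

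The main obstacle is therefore Kempf vanishing in characteristic $p$, and my preferred route would be Frobenius splitting in the sense of Mehta--Ramanathan: one exhibits a global splitting of the absolute Frobenius on $(G/B)_{\mathbb F_p}$ compatible with the Schubert stratification, which forces $H^i(G/B,\mc L(\lambda))$ to inject into $H^i(G/B,\mc L(p^n\lambda))$ for every $n\ge 0$. For $\lambda$ dominant the twist $\mc L(p^n\lambda)$ becomes ample for $n$ large, and its higher cohomology then vanishes by Serre; specializing to $\lambda=0$ gives exactly what we need. Alternatively one can follow the Bott--Samelson approach in \cite[Ch.~II.4]{Jantzen}: a reduced word for the longest Weyl element yields a resolution $p\colon Z_{\mathbf i}\to G/B$ by an iterated $\mathbb P^1$-bundle on which $\RG(Z_{\mathbf i},\mc O_{Z_{\mathbf i}})\simeq k$ is immediate from the projective bundle formula applied inductively, after which a standard induction on the length shows $p_*\mc O_{Z_{\mathbf i}}\simeq \mc O_{G/B}$ with no higher direct images, transporting the computation back to $G/B$.
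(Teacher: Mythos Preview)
The paper does not prove this theorem at all; it is quoted from \cite[Proposition~II.4.5]{Jantzen} and used as a black box (in \Cref{lem:use Kempf}, to pass from $[X_R/B_R]$ to $[X_R/G_R]$). Your proposal is a correct sketch of the standard argument, and you have in fact outlined both of the usual routes: Frobenius splitting \`a la Mehta--Ramanathan, and the Bott--Samelson approach that Jantzen himself follows. The reduction to geometric fibers via perfectness of $R\pi_*\mc O$ and base change is exactly right; once each $\RG((G/B)_k,\mc O)$ is $k$ in degree $0$, the perfect complex on $\Spec\mathbb Z$ has Tor-amplitude in $[0,0]$ and rank $1$, hence is $\mathbb Z$. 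There is nothing in the paper to compare against.
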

As we will see, these two theorems, together with the semiorthogonal decompositions of derived categories constructed by Halpern-Leistner (\cite{HL-GIT}, \cite{HL-prep}) allow to prove in a lot of cases that the stack is cohomologically properly spreadable. We stick to the notations of \Cref{sec:Examples of spreadable Hodge-proper stacks}, in particular $F$ will denote an algebraically closed field of characteristic $0$ and we will choose the base $R$ of the spreading to be a finitely generated $\mathbb Z$-subalgebra of $F$.

Here is a plan of the section. In \Cref{sec:reductive quotients} we show that a quotient by reductive group is cohomologically properly spreadable (\Cref{thm:proper coarse moduli}), provided its coarse moduli space is proper and the action is locally linear. The key result is \Cref{prop: Y/G --> Y//G is cohomologically spreadable}, where we use \Cref{Franjou} to show that under the latter assumption the natural morphism $q\colon [Y/G]\ra Y/\!/G$ is cohomologically properly spreadable. Particular examples then include a proper-over-affine scheme with an action of a reductive group (\Cref{ex:projective-over-affine}) and quotients coming from GIT (\Cref{ex:GIT}). In \Cref{sec:more quotients} we look at some other set of examples (\Cref{thm: conical examples}), given by quotients that come from BB-complete $\mathbb G_m$-actions (\Cref{def:BB-complete}). \Cref{thm: conical examples} also allows some quotients by non-reductive groups, and \Cref{Kempf} is used to pass from the quotient by a Borel subgroup to the quotient by the whole group (\Cref{lem:use Kempf}). In \Cref{sec:Teleman} we also give a recipe of reestablishing the degeneration results of \cite{Teleman} in the KN-complete case using \Cref{Hodge_degeneration}; here, however, we need to assume some results which are going to appear in the upcoming work of Halpern-Leistner \cite{HL-prep}. 
\subsection{Global quotients by reductive groups} \label{sec:reductive quotients}
In \cite{Teleman} Teleman proved the Hodge-to-de Rham degeneration for the quotient of a smooth scheme $X$ by a Kempf-Ness (KN) complete action of a reductive group. In this section we establish spreadability for certain class of global quotients, which includes the semistable (single KN-stratum) case $X=X^{\mr{ss}}(\mc L)$ (\ref{ex:GIT}) and another standard KN-complete example given by equivariant "projective-over-affine" variety (\ref{ex:projective-over-affine}). Moreover, the "projectivity" condition  in the latter is replaced by the "proper" one almost for free.

Let $Y$ be a quasi-separated finite type scheme over $F$ and let $G$ be a reductive group acting on $Y$. 
\begin{defn}\label{def:locally linear action}
The action of $G$ on  $Y$ is called \textit{locally linear} if there exists a $G$-invariant affine cover of $Y$.
\end{defn}

In this case there exists the categorical quotient $Y/\!/ G$; in other words, the quotient stack $[Y/G]$ has a coarse moduli $q\colon [Y/G]\ra Y/\!/G$ and $Y/\!/ G$ is representable by a scheme. More explicitly, if $\{U_i\}_{i\in I}$, with $U_i\coloneqq\Spec A_i$, is the $G$-invariant affine cover of $X$, the categorical quotient $X/\!/ G$ is glued out of $\{U_i/\!/G\}_{i\in I}$, with $U_i/\!/G\coloneqq \Spec A_i^G$, with the natural gluing maps induced by the gluing maps for $\{U_i\}_{i\in I}$. Note that if $G$ is a torus and $Y$ is normal, the action is automatically locally linear by the result of Sumihiro (\cite[Corollary 2]{Sumihiro}).

\begin{prop}\label{prop: Y/G --> Y//G is cohomologically spreadable}
Let $Y$ be a quasi-separated finite type scheme over $F$ with a locally linear action of a reductive group $G$. Then the natural morphism $q\colon [Y/G]\ra Y/\!/G$ is cohomologically properly spreadable.

\begin{proof}
The group $G$ is split and has a Chevalley model $G_{\mathbb Z}$ over $\mathbb Z$; this defines a split reductive spreading of $G$ over any $R\subset F$, namely just put $G_R\coloneqq G_{\mathbb Z}\otimes_{\mathbb Z} R$. We can also spread $Y$ to a quasi-separated scheme $Y_R$ over some $R$ and assume that $Y_R$ has a $G_R$-action. It is enough to show that after a suitable enlargement $R$ the quotient stack $[Y_R/G_R]$ becomes cohomologically proper. Note that by picking a $G$-invariant affine cover $\{U_i\}_{i\in I}$ as above and a spreading $A_{i,R}$ of each $A_i$, localizing $R$, we can assume that the affine schemes $U_{i,R}\coloneqq \Spec A_{i,R}$ have a $G_R$-action and give a $G_R$-equivariant affine cover of $Y_R$.

Let $Y_R/\!/G_R$ be the categorical quotient, namely the scheme obtained by gluing the spectra $\Spec A_{i,R}^{G_R}$ of the invariants in the same way that was described above. Note that by \Cref{Franjou} the scheme $Y_R/\!/G_R$ is of finite type and so it is a valid spreading of $Y/\!/G$ (since $A_{i,R}^{G_R}\otimes_R F\simeq A_i$ for all $i\in I$). We have a natural map $q_R\colon [Y_R/G_R] \ra Y_R/\!/ G_R$ given locally (on $Y_R/\!/ G_R$) by $q_{i,R}\colon [\Spec A_{i,R}/G_R]\ra \Spec A_{i,R}^{G_R}$. The map $q_R$ is a spreading of $q$, thus it is enough to show that $q_R$ is cohomologically proper. This can be checked locally, so it is enough to show that for any finitely generated $R$-algebra $A$ the map $q_R\colon [\Spec A/G_R]\ra \Spec A^{G_R}$ is cohomologically proper. 

Let $\mc F\in \Coh([\Spec A/G_R])^\heartsuit$ and let $M$ be the corresponding $G_R$-equivariant finitely generated $A$-module. Since $\Spec A$ is affine, the module $q_{R*}\mc F$ has a simple description: it is just given by $\RG(G_R,M)$ considered as a complex of modules over $A^{G_R}$. The complex $\RG(G_R,M)$ lies in $\Mod_{A^{G_R}}^{\ge 0}$ and its cohomology are finitely generated by \Cref{Franjou}. Thus $\RG(G_R,M)\in \Coh^+(A^{G_R})$ and $q_R$ is cohomologically proper.
\end{proof}
\end{prop}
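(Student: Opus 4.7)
The plan is to spread the entire picture out geometrically and then reduce the question of cohomological properness to a purely representation-theoretic finiteness statement for which we can invoke \Cref{Franjou}.

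First I would choose a Chevalley $\mathbb Z$-model $G_{\mathbb Z}$ of the split reductive group $G$ and set $G_R \coloneqq G_{\mathbb Z}\otimes_{\mathbb Z} R$ for any finitely generated $\mathbb Z$-subalgebra $R\subset F$; this gives a canonical split reductive spreading of $G$. Next, using the spreading results of \Cref{sec:Spreadable stacks} (in particular \Cref{fp_over_filtered_limit}), I would spread $Y$ to a quasi-separated finite type $R$-scheme $Y_R$ equipped with a $G_R$-action. Taking a $G$-invariant affine cover $\{U_i = \Spec A_i\}$ of $Y$, by further enlarging $R$ I may assume that each $A_i$ spreads to a $G_R$-equivariant $R$-algebra $A_{i,R}$ such that $\{U_{i,R}\coloneqq\Spec A_{i,R}\}$ is a $G_R$-equivariant affine cover of $Y_R$.

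The key step is to construct the categorical quotient $Y_R/\!/G_R$ by gluing the affine schemes $\Spec A_{i,R}^{G_R}$ along the natural maps induced by the gluing data for $Y_R$. For this to give a legitimate finite-type spreading of $Y/\!/G$, I need two things: the invariant algebras $A_{i,R}^{G_R}$ must be finitely generated over $R$, and their formation must commute with base change to $F$ (so that the special fiber is really $Y/\!/G$). The finite generation is exactly the content of \Cref{Franjou}, while the base change compatibility follows since both sides identify with $A_i^G$ once we have enough finiteness.

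It then remains to show that the natural morphism $q_R\colon [Y_R/G_R]\to Y_R/\!/G_R$ is cohomologically proper. Since cohomological properness is local on the target (see \Cref{properties_of_coh_prop_morps}), I can reduce to the affine case $q_R\colon [\Spec A/G_R]\to \Spec A^{G_R}$ for a finitely generated $G_R$-equivariant $R$-algebra $A$. Here the pushforward along $q_R$ takes a $G_R$-equivariant finitely generated $A$-module $M$ (representing a coherent sheaf on the source) to the complex $R\Gamma(G_R,M)$ viewed over $A^{G_R}$; this lives in nonnegative cohomological degrees, so by \Cref{coh_proper_trivial_rem} it suffices to show each cohomology module is finitely generated over $A^{G_R}$.

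The main obstacle is precisely this last finiteness assertion: a priori $R\Gamma(G_R,M)$ is a derived functor of invariants for a non-linearly reductive group scheme over a mixed characteristic base, so there is no immediate reason for its cohomology to be finitely generated. This is exactly the hard theorem of Franjou--van der Kallen (\Cref{Franjou}) which gives this cohomological finiteness; once invoked, the argument concludes immediately.
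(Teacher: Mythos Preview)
Your proposal is correct and follows essentially the same route as the paper: spread $G$ via the Chevalley model, spread $Y$ together with its $G$-invariant affine cover, glue the invariant rings to form $Y_R/\!/G_R$, and then reduce the cohomological properness of $q_R$ to the affine case where it becomes precisely the cohomological finiteness statement of \Cref{Franjou}. The only cosmetic difference is that you make explicit the references to \Cref{properties_of_coh_prop_morps} and \Cref{coh_proper_trivial_rem} where the paper just says ``this can be checked locally'' and passes to the heart without comment.
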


\begin{rem}
In the case of $BG$ some stronger results in a similar direction were established in \cite[Proposition 4.3.4]{HLP_equiv_noncom}. Namely under some mild restrictions on the reductive group scheme $\mcr G$ and the base scheme $S$ the structure morphism $B\mcr G\ra S$ is formally proper (in the sense of \cite{HLP_equiv_noncom}). 
\end{rem}

Note that we did not assume that $Y$ was smooth. This was on purpose: the actual cohomologically properly spreadable examples are given by the following theorem:
\begin{thm}\label{thm:proper coarse moduli}
Let $X$ be a smooth scheme and $Y$ be a finite-type scheme over $F$, both endowed with an action of a reductive group $G$. Assume that
\begin{enumerate}
\item There is a proper $G$-equivariant map $\pi\colon X\ra Y$.

\item The $G$-action on $Y$ is locally linear.

\item The categorical quotient $Y/\!/G$ is proper.
\end{enumerate}
Then the quotient stack $[X/G]$ is cohomologically properly spreadable.

\end{thm}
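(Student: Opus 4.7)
The proof assembles several results already established in the paper. My plan is to show cohomological proper spreadability in two moves: first for $[Y/G]$, then transfer it to $[X/G]$ along the proper map induced by $\pi$.

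First I would observe that the $G$-equivariant map $\pi\colon X \to Y$ induces a map $[\pi]\colon [X/G] \to [Y/G]$ of finitely presentable Artin stacks over $F$, and that $[\pi]$ is a proper morphism in the sense of \Cref{defn:proper morphism}. Indeed, properness of a morphism between Artin stacks is flat-local on the target (as properness of schemes is), so after pulling back along the smooth atlas $Y \surj [Y/G]$ one recovers the proper morphism $\pi\colon X \to Y$ up to base change by $G \to \mr{pt}$.

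Next I would show that $[Y/G]$ is cohomologically properly spreadable. By \Cref{prop: Y/G --> Y//G is cohomologically spreadable}, the coarse moduli map $q\colon [Y/G] \to Y/\!/G$ admits a cohomologically proper spreading $q_R\colon [Y_R/G_R] \to Y_R/\!/G_R$ over some finitely generated $\mathbb Z$-subalgebra $R \subset F$ (built from a $G_R$-invariant affine cover $\Spec A_{i,R}$ of $Y_R$ together with $\Spec A_{i,R}^{G_R}$). On the other hand, since $Y/\!/G$ is proper over $F$, the spreading $Y_R/\!/G_R$ of $Y/\!/G$, after further enlargement of $R$, is proper over $\Spec R$ by the standard spreading out results for schemes (\Cref{thm:spreading of schemes}), and hence cohomologically proper by \Cref{proper_are_Hodge_proper}. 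Composing the two cohomologically proper morphisms $[Y_R/G_R] \xra{q_R} Y_R/\!/G_R \to \Spec R$ and using that cohomological properness is closed under composition (\Cref{properties_of_coh_prop_morps}(1)), we conclude that $[Y_R/G_R]$ is a cohomologically proper spreading of $[Y/G]$.

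Finally I would appeal to \Cref{chow_lem_for_spreadings}: given the proper morphism $[\pi]\colon [X/G] \to [Y/G]$ between finitely presentable Artin stacks over $F$, with $[Y/G]$ cohomologically properly spreadable by the previous step, it follows that $[X/G]$ is itself cohomologically properly spreadable. (Concretely, one spreads the proper morphism $[\pi]$ via \Cref{prop: proper morphisms spread} to a proper morphism of spreadings, which by \Cref{proper_are_Hodge_proper} is cohomologically proper, and then composes with the cohomologically proper structure morphism of $[Y_R/G_R]$.)

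The only subtle points are the essentially bookkeeping tasks of finding a common ring $R$ over which all these spreadings are compatible and verifying that $[\pi]$ is indeed proper in the stacky sense; neither presents a real obstacle since we are free to enlarge $R$ finitely many times and since properness of morphisms between Artin stacks is flat-local on the target.
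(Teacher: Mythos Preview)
Your proposal is correct and follows essentially the same approach as the paper's own proof: first use \Cref{prop: Y/G --> Y//G is cohomologically spreadable} together with properness of $Y/\!/G$ to conclude that $[Y/G]$ is cohomologically properly spreadable, and then apply \Cref{chow_lem_for_spreadings} to the proper map $[\pi]\colon [X/G]\to [Y/G]$. The paper's proof is terser (it leaves implicit the step where properness of $Y/\!/G$ is combined with the cohomological proper spreadability of $q$), but the logic is identical.
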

\begin{proof}
Consider the map $q\colon [Y/G]\ra Y/\!/G$; by \Cref{prop: Y/G --> Y//G is cohomologically spreadable} it is cohomologically properly spreadable. The map $\pi\colon [X/G]\ra [Y/G]$ is proper, thus $[X/G]$ is cohomologically properly spreadable by \Cref{chow_lem_for_spreadings}.
 \end{proof}

\begin{rem}
More generally one can replace $[X/G]$ by any $\mstack X$ with a cohomologically properly spreadable morphism $\pi\colon \mstack X \ra [Y/G]$. 
\end{rem}

We discuss some applications of \Cref{thm:proper coarse moduli}:

\begin{ex}\label{ex:projective-over-affine}
Let $X$ be a smooth proper-over-affine scheme $X$ with $\dim H^0(X,\mc O_X)^G<\infty$. By definition, this means that there  is a proper $G$-equivariant map $\pi\colon X\ra \Spec A$. By replacing $\Spec A$ with the image of $\pi$ we can assume that $\pi$ is surjective. Then $A^G$ embeds in $H^0(X,\mc O_X)^G$ and thus is finite-dimensional; equivalently, $(\Spec A)/\!/G$ is finite, and in particular proper. Applying \ref{thm:proper coarse moduli} to $Y=\Spec A$ we get that $[X/G]$ is cohomologically properly spreadable. Also note that we were able to relax the "projective" assumption on the map $\pi$ to the "proper" one. 
\end{ex}

\begin{ex}\label{ex:GIT}
Let $X$ have an ample $G$-equivariant line bundle $\mc L$, and let's assume that $X=X^{\mr{ss}}\coloneqq X^{\mr{ss}}(\mc L)$. Basically by definition, the action on $X^{\mr{ss}}(\mc L)$ is locally linear (see e.g. the proof of \cite[Theorem 1.10]{MumfordFogartyKirvan}). Assume further that $\dim_F H^0(X, \mc O_X)^G<\infty$. Then the scheme
$$X/\!/G\simeq \mr{Proj}\Big(\bigoplus_{n\ge 0}H^0(X,\mc L^{\otimes n})^G\Big)$$
is projective over $\Spec H^0(X, \mc O_X)^G$ and hence also projective over $F$. Thus $[X/G]$ is cohomologically properly spreadable by \Cref{thm:proper coarse moduli} applied to $Y=X$.
\end{ex}

\subsection{Global quotients coming from BB-complete $\mathbb G_m$-actions}
\label{sec:more quotients}
In this section we prove another result (\Cref{thm: conical examples}) about the cohomologically proper spreadability of quotient stacks, which also allows quotients by groups that are not necessarily reductive. Another benefit of \Cref{thm: conical examples} (compared, say, to \Cref{thm:proper coarse moduli}) is that the condition on $X$ (and the $G$-action) is internal: no additional structure, such as a map to another scheme $Y$, is involved.

\subsubsection{Varieties with a $\mathbb G_m$-action and Bialynicki-Birula stratification}\label{sec:G_m-actions}
Let $X$ be a smooth scheme over an algebraically closed field $F$ of characteristic 0 with an action $a\colon\mathbb G_m \curvearrowright X$. By \cite{Sumihiro} such an action is always locally linear:  $X$ has a $\mathbb G_m$-invariant affine cover $\{U_i=\Spec A_i\}_{i\in I}$. A $\mathbb G_m$-action on a given affine scheme $\Spec A$ induces a $\mathbb Z$-grading $A^*$ on $A$; we denote by $I^+\subset A^*$ the ideal generated by $A^{>0}$ and by $I^{\pm}\subset A^*$ the ideal generated by $A^{>0}$ and $A^{<0}$. 

Here are some examples:

\begin{ex}

\begin{itemize}
\item 
$\mr{pt}\coloneqq\Spec F$ with the trivial $\mathbb G_m$-action; here $A=A^0\simeq F$ and $I^+=I^\pm=0$.
\item $\mathbb A^1\coloneqq\Spec F[x]$ endowed with the standard action by dilation, $s\mapsto ts$ for $s\in \mathbb A^1$; in this case $\deg x= -1$, so $I^+=0$ and $I^{\pm}=(x)\subset F[x]$.
\end{itemize}
There are natural $\mathbb G_m$-equivariant maps $\xymatrix{\mr{pt}\ar@/^0.4pc/[r]^{i_0}&\mathbb A^1 \ar@/^0.4pc/[l]^{p}}$ given by the projection and the embedding of $0\in \mathbb A^1(F)$. We also have a (non-equivariant) map $i_1\colon\mr{pt}\ra \mathbb A^1$ given by the embedding of $1\in \mathbb A^1(F)$. 
 
\end{ex} 

To a smooth scheme $X$ endowed with a $\mathbb G_m$-action one can associate the following objects:
\begin{itemize}[wide]
\item  \textit{The fixed points} $X^0\coloneqq\mr{Maps}(\mr{pt},X)^{\mathbb G_m}$; its functor of points is given by $X^0(S)=\mr{Maps}(S,X)^{\mathbb G_m}$, meaning the $\mathbb G_m$-equivariant maps from $S$ to $X$, where the action on $S$ is trivial. There is a natural map $\iota\colon X^0\ra X$ sending a map $f\in X^0$ to its evaluation $f(\mr{pt})$. The map $\iota$ is a closed embedding (\cite[Proposition 1.2.2]{Drinfeld}); the affine cover $\{U_i\}_{i\in I}$ defines an affine cover $\{U_i^0\}_{i\in I}$ of $X^0$ with $U_i^0\coloneqq \Spec (A_i/I_i^\pm)$ (glued along $U_{ij}^0$). There is a natural $\mathbb G_m$-action on $X^0$, which is trivial. 
\item \textit{The attractor} $X^+\coloneqq \Map(\mathbb A^1,X)^{\mathbb G_m}$; here the functor of points is given by $X^+(S)=\mr{Maps}(S\times \mathbb A^1,X)^{\mathbb G_m}$, where the $\mathbb G_m$-action on $S\times \mathbb A^1$ is diagonal. By \cite[Corollary 1.4.3]{Drinfeld}, this functor is indeed represented by a scheme. There are two natural $\mathbb G_m$-actions on $\Map(\mathbb A^1,X)$, one coming from the $\mathbb G_m$-action on $\mathbb A^1$, the other coming from the action on  $X$; their restrictions to $\Map(\mathbb A^1,X)^{\mathbb G_m}$ coincide and thus define the same $\mathbb G_m$-action on $X^+$.

There are natural $\mathbb G_m$-equivariant maps $\xymatrix{X^0\ar@/^0.4pc/[r]^\sigma&X^+ \ar@/^0.4pc/[l]^{\pi}\ar[r]^j& X}$ induced by $i_0$, $i_1$ and $p$. Namely,  
\begin{itemize}
\item $\sigma(f)\in X^+$ is given by pre-composing $f\colon\mr{pt}\ra X$ with $p\colon\mathbb A^1\ra \mr{pt}$; the map $\sigma\colon X^0\ra X^+$ is a closed embedding.
\item $\pi(f)\in X^0$ is given by the evaluation of $f\colon\mathbb A^1\ra X$ at $0\in \mathbb A^1$; by \cite{Bial-Bir}\footnote{More precisely, by \cite[Proposition 1.4.10]{Drinfeld} $X^+$ is smooth, and (say, by \cite[Section 1.4.3]{Drinfeld}) the action of $\mbb G_m$ at any point $x\in X^0\hookrightarrow X^+$ is definite (in the terminology of \cite[Section 1]{Bial-Bir}). Then one can apply \cite[Theorem 2.5]{Bial-Bir} to $X^+$.} $\pi\colon X^+\ra X^0$ is a Zariski locally trivial fibration with the fiber given by an affine space.
\item $j(f)\in X\simeq \Map(\mr{pt},X)$ is given by the evaluation of $f\colon\mathbb A^1\ra X$ at $1\in \mathbb A^1$; the map $j\colon X^+\ra X$ is a locally closed embedding when restricted to each component of $X^+$.
\end{itemize}

Similarly to $X^0$, the affine cover $\{U_i\}_{i\in I}$ defines an affine cover $\{U_i^+\}_{i\in I}$ of $X^+$ with $U_i^+\coloneqq\Spec (A_i/I^+_i)$. 
\end{itemize}

\noindent Also, by \cite[Proposition 1.4.10]{Drinfeld} both $X^0$ and $X^+$ are smooth.

\begin{rem}\label{rem:maps in terms of a covering}
	The construction of the maps $\sigma,\pi,j$ is local and is described as follows in terms of a $\mbb G_m$-invariant cover $\{U_i\}_{i\in I}$: $\sigma_i\colon U_i^0 \hookrightarrow U_i^+$ is induced by the projection $A_i/I^+_i \ra A_i/I^\pm_i$; $A_i/I^\pm_i$ is identified with the 0-th graded component of the negatively graded algebra $A_i/I^+_i$ and this way the contraction $\pi_i\colon U_i^+ \ra U_i^0$ corresponds to the embedding $A_i/I^\pm_i\simeq (A_i/I^+_i)^0\ra A_i/I^+_i$. Finally $j_i\colon U_i^+ \ra U_i$ is given by the projection $A_i\ra A_i/I_i^+$. The maps for $X,X^+$ and $X^0$ are  obtained by gluing along the analogous maps for $U_{i,j}\coloneqq U_i\cap U_j$.
\end{rem}

Let $\pi_0(X^0)=\pi_0(X^+)$ be the set of connected components of $X^0$ (equivalently, $X^+$). For a given $c\in \pi_0(X^0)$ we denote by $Z_c\subset X^0$ the corresponding connected component of $X^0$, and by $S_c\coloneqq \pi^{-1}(S_c)\subset X^+$ the corresponding connected component of $X^+$; we call $\{S_c\}_{c\in \pi_0(X^0)}$ \textit{the Bialynicki-Birula (BB) strata}. 

\begin{defn}\label{def:BB-complete}
The $\mathbb G_m$-action $a\colon \mathbb G_m \curvearrowright X$ is called \textit{BB-complete} if the map $j\colon X^+\ra X$ is a surjection on the underlying topological spaces $|j|\colon|X^+|\twoheadrightarrow |X|$.
\end{defn}

\noindent  Equivalently, $j\colon X^+\ra X$ gives a \textit{full} stratification of $X$ with individual strata being locally closed. In this case, the limit of $a(t)\circ x$ for $t\ra 0$ exists for any point $x\in X$; in particular, the Bialynicki-Birula stratification $\{S_c\}\in \pi_0(X^0)$ gives a full stratification of $X$. We will fix some ordering on $\pi_0(X^0)$ with the only condition that $c'>c$ if $\dim S_{c'} < \dim S_{c}$; in this case a stratum $S_c$ is necessarily closed in $X_{\le c}:= X \setminus \cup_{c'>c} S_{c'}$.

\begin{rem}\label{rem: enough to check on F-points}
Since both $X^+$ and $X$ are of finite type and $F$ is algebraically closed, $|j|\colon |X^+|\ra |X|$ is surjective if and only if the corresponding map $j(F)\colon X^+(F)\ra X(F)$ between the $F$-points is.
\end{rem}


\subsubsection{BB-complete quotients by $\mathbb G_m$}
\label{sec:BB-complete}
\underline{\textit{Spreading BB-stratification.}}  Let $X$ be a smooth scheme over $F$ with a $\mathbb G_m$-action and let $\{U_i\}_{i\in I}$, $U_i\simeq \Spec A_i$ be a $\mathbb G_m$-invariant affine cover as above. The smooth schemes $X^+$ and $X^0$ are glued out of $\{U_i^+\}$ and $\{U_i^0\}$ (along $U_{ij}^+$ and $U_{ij}^0$) correspondingly.

By \Cref{spreading_with_properties_for_schemes} we can spread $X$ to a smooth $R$-scheme $X_R$ endowed with an action of $\mathbb G_{m,R}\coloneqq\Spec R[t,t^{-1}]$. We can also spread the cover $\{U_i\}_{i\in I}$ to a $\mathbb G_m$-invariant affine cover $\{U_{i,R}\}_{i\in I}$, $U_{i,R}\simeq \Spec A_{i,R}$ over some regular finitely generated $\mathbb Z$-algebra $R\subset F$. Each algebra $A_{i,R}$ is $\mathbb Z$-graded and we can consider closed subschemes $U_{i,R}^+\coloneqq \Spec (A_{i,R}/I_{i,R}^+)$ and $U_{i,R}^0\coloneqq \Spec (A_{i,R}/I_{i,R}^\pm)$, as well as schemes $X_R^+$ and $X_R^0$, obtained as their gluings along $U_{ij,R}^+$ and $U_{ij,R}^0$ (defined analogously). We have  $(X_R^+)\times_R F\simeq X^+$ and $(X_R^0)\times_R F\simeq X^0$.

Recall \Cref{rem:maps in terms of a covering} to see how the maps $\sigma, \pi, j$ between $X, X^+, X^0$ are defined in terms of the covering $\{U_i\}_{i\in I}$. Performing the same construction with $U_{i,R}$, $U_{i,R}^+$ and $U_{i,R}^0$ we obtain maps $\xymatrix{X^0_R\ar@/^0.4pc/[r]^{\sigma_R}&X_R^+ \ar@/^0.4pc/[l]^{\pi_R}\ar[r]^{j_R}& X_R}$ which spread $\sigma, \pi, j$. After enlarging $R$ further, by \Cref{spreading_with_properties_for_schemes} we can assume that $\sigma_R$ (resp. $j_R$) is a closed (resp. locally closed when restricted to each connected component) embedding. Picking an open cover $\{V_i\}_{i\in I}$ on which the affine fibration $\pi\colon X^+\ra X^0 $ is trivial, $\pi^{-1}(V_i)\simeq V_i\times \mathbb A^d$, we can spread it out; enlarging $R$ we can assume that $\{V_{i,R}\}_{i\in I}$ cover $X^0_R$, $\pi_R^{-1}(V_{i,R})$ cover $X_R^+$ and $\pi_R^{-1}(V_{i,R})\simeq V_{i,R}\times_R \mathbb A^d_R$. Furthermore, we can assume that $X^0_R$, and consequently $X_R^+$, are smooth over $R$.

After enlarging $R$ further, the set $\pi_0(X_R^0)$ can be identified with $\pi_0(X^0)$; the connected component $Z_{c,R}$ for $c\in \pi_0(X^0_R)\simeq \pi_0(X^0)$ then is a spreading of $Z_c\subset X^0$. Similarly, $S_{c,R}\coloneqq\pi_R^{-1}(Z_{c,R})$ is a connected component of $X_R^+$ and a spreading of $S_c$.  

If the $\mathbb G_m$-action on $X$ is BB-complete, we can assume the same about the $\mathbb G_{m,R}$-action on $X_R$; indeed, enlarging $R$ we can assume that the map $X_R^+\ra X_R$ is a surjection. In particular, we can assume that the spreading $\{S_{c,R}\}$ of the stratification $\{S_c\}$ of $X$ gives a full stratification of $X_R$ by locally closed smooth subschemes.

\smallskip

\noindent \underline{\textit{Semiorthogonal decomposition of $\Coh(\mstack X_R)$.}}
We now discuss certain semiorthogonal decompositions of the category $\Coh(\mstack X_R)$ given by the theory of so-called magic windows developed by Halpern-Leistner in \cite{HL-GIT}. Let $X$ be a smooth scheme over $F$ with a $\mathbb G_{m}$-action and let $X_R$ be a spreading of $X$ as constructed above. Let $S_R:=S_{c,R}\subset X_R$ be a closed stratum among $\{S_{c',R}\}$. Let $Z_R:=Z_{c,R}$ for the same $c$; the scheme $Z_R$ is called the \emph{centrum} of $S_R$. The subschemes $S_R$ and $Z_R$ enjoy the following nice properties:

\begin{prop}[]\label{KN-stratum-defn}
 
\begin{itemize}
\item Both $S_R$ and $Z_R$ are smooth over $R$;
\item There is a $\mathbb G_m$-equivariant map $\pi_R\colon S_{R}\simeq S_{R}^+ \ra S_{R}^0\simeq Z_{R}$, which is a Zariski locally trivial fibration with an affine space $\mathbb A^d_R$ (for some $d$) as a fiber;
\item The conormal bundle $N^\vee_{S_R} X_R$ has strictly positive weights when restricted to the fixed locus $Z_R\subset S_R$. 
\end{itemize}
\end{prop}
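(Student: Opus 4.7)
The plan is to read off parts (1) and (2) directly from the spreading construction recalled just above the proposition, and to reduce part (3) to a combination of the classical Bialynicki--Birula computation over $F$ with a base-change argument for weight decompositions.

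For (1) and (2): in constructing $X_R$ we already arranged, after finitely enlarging $R$, that $X_R^0$ and $X_R^+$ are smooth over $R$ and that $\pi_R\colon X_R^+ \to X_R^0$ is a $\mathbb G_{m,R}$-equivariant Zariski-locally-trivial $\mathbb A^d_R$-fibration. Since $S_R$ and $Z_R$ are unions of connected components of $X_R^+$ and $X_R^0$ respectively, with $S_R = \pi_R^{-1}(Z_R)$ by construction, both smoothness and the fibration structure pass to them essentially for free.

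For (3), I would first establish a weight decomposition of $\mc N \coloneqq N^\vee_{S_R/X_R}|_{Z_R}$. Since $\mathbb G_{m,R}$ acts trivially on $Z_R$, a $\mathbb G_{m,R}$-equivariant quasi-coherent sheaf on $Z_R$ is the same datum as a $\mathbb Z$-graded quasi-coherent sheaf; combined with the fact that each graded piece of a finitely generated graded projective module over a Noetherian graded ring is again projective (by a graded-splitting argument applied to a surjection from a finite free graded module), this yields a canonical decomposition $\mc N \simeq \bigoplus_{n\in \mathbb Z} \mc N_n$ into subbundles, with only finitely many nonzero summands. Note that $\mc N$ is indeed a vector bundle, since $S_R$ is smooth and locally closed inside the smooth $R$-scheme $X_R$.

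The second step is to show $\mc N_n = 0$ for all $n \le 0$. The rank function of a vector bundle is locally constant on $Z_R$; moreover, since $Z_R$ is $R$-smooth with $R$ a domain embedded in $F$, the base change $Z_R \otimes_R F \simeq Z$ hits every connected component of $Z_R$ (the generic fibre of a flat finitely presented morphism is dense). Hence the desired vanishing reduces to $\mc N_n \otimes_R F = 0$ for $n \le 0$, which is the classical Bialynicki--Birula statement: at a fixed point $x \in Z$, the tangent space $T_xX$ decomposes into $\mathbb G_m$-weight pieces, the tangent space $T_xS$ to the attractor picks up the nonnegative weights, and so the normal bundle $T_xX/T_xS$ lives in strictly negative weights, whence its dual lives in strictly positive weights. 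The only real subtlety I anticipate is the sign bookkeeping --- both in checking that the weight decomposition genuinely globalises to a direct sum of subbundles of $\mc N$ on $Z_R$, and in matching the conventions used in the paper ($X^+ = \Map(\mathbb A^1,X)^{\mathbb G_m}$, with $X_R^+$ cut out affine-locally by the ideal generated by positive-weight coordinate functions on the negatively graded algebras $A_{i,R}$) so that the conormal ends up in strictly positive rather than strictly negative weights. This reduces to a direct computation on the affine charts $U_{i,R}^+ = \mr{Spec}(A_{i,R}/I^+_{i,R})$ and should be routine.
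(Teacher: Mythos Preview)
Your approach to parts (1) and (2) matches the paper's exactly: both are read off from the spreading construction since $S_R$ and $Z_R$ are connected components of $X_R^+$ and $X_R^0$.

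For part (3), your argument is correct but takes a more circuitous route than the paper. You propose to (a) decompose $\mc N = N^\vee_{S_R/X_R}|_{Z_R}$ into weight pieces $\mc N_n$, (b) reduce the vanishing of $\mc N_n$ for $n\le 0$ to the generic fibre via rank-constancy and density of $Z_R\otimes_R F$ in $Z_R$, and (c) invoke classical Bialynicki--Birula over $F$. The paper instead works directly over $R$: since the statement is local on $X_R$ one may assume $X_R=\Spec A_R$, $S_R=\Spec A_R/I_R^+$, $Z_R=\Spec A_R/I_R^\pm$; then the conormal sheaf is $I_R^+/(I_R^+)^2$, whose restriction to $Z_R$ is $I_R^+/(I_R^+\cdot I_R^\pm)$, and this visibly has only strictly positive weights because $I_R^+$ is by definition generated by elements of strictly positive degree. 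No passage to $F$, no weight-splitting of vector bundles, no density argument is needed. In effect, the ``routine'' affine computation you mention at the very end \emph{is} the entire proof in the paper; the detour through base change and classical BB over $F$ is unnecessary, though it does work.
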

\begin{proof}
Since $Z_R$ and $S_R$ are connected components of $X_R^0$ and $X_R^+$ correspondingly, it is enough to show that the above properties hold for $X_R^0$ and $X_R^+$ in place of $Z_R$ and $S_R$. The first two properties are included in our construction of the spreading. The last property can be checked locally, thus we can assume $X_R\simeq \Spec A_R$, $S_R\simeq \Spec A_R/I_R^+$ and $Z_R\simeq \Spec A_R/I_R^\pm$. The normal bundle $N^\vee_{X_R^+} X_R$ is then given by $I_R^+/(I_R^+)^2$ as a $A_R/I_R^+$-module and its restriction to $X_R^0$ is given by $I_R^+/(I_R^+\cdot I_R^\pm)$, considered as a $A_R/I_R^\pm$-module). Since by definition $I_R^+$ is generated by elements of strictly positive weight, the weights of $I_R^+/(I_R^+\cdot I_R^\pm)$ are also strictly positive.
\end{proof}

\begin{rem}
In other words, using the terminology of \cite{HL-GIT}, $S_R\subset X_R$ is a smooth KN-stratum satisfying properties $(A)$ and $(L+)$ (in the case $G=\mathbb G_{m,R}$). 
\end{rem}
Let $\mstack X_R \coloneqq [X_R/\mathbb G_{m,R}]$, $\mstack S_R\coloneqq [S_R/\mathbb G_{m,R}]$ and $\mstack Z_R\coloneqq [Z_R/\mathbb G_{m,R}]$.
Also let $U_R\coloneqq X_R\setminus S_R$ be the complement and let $\mstack U_R \coloneqq [U_R/\mathbb G_{m,R}]$. Let $\Coh(\mstack X_R)$ be the (bounded derived) category of coherent sheaves on $\mstack X_R$  and let $\Coh_{\mstack S_R}(\mstack X_R)\subset \Coh(\mstack X_R)$ be the full subcategory of sheaves whose pull-back to the flat cover $X_R\twoheadrightarrow \mstack X_R$ is set-theoretically supported on $S_R$. The action of $\mathbb G_{m,R}$ on $Z_R$ is trivial and so $\mstack Z_R\simeq Z_R\times_R B\mathbb G_{m,R}$. Thus the heart $\Coh(\mstack Z_R)^\heartsuit$ of the category of coherent sheaves on $\mstack Z_R$ is identified with the category of $\mathbb Z$-graded objects in $\Coh(Z_R)^\heartsuit$. For a given $w$ we denote by $\Coh(\mstack Z_R)_{<w}\subset \Coh(\mstack Z_R)$ the full subcategory spanned by objects $\mathcal F\in \Coh(\mstack Z_R)$ whose cohomology sheaves $\mc H^i(\mathcal F)\in \Coh(\mstack Z_R)^\heartsuit$ have grading less than $w$. Similarly, by $\Coh(\mstack Z_R)_{\ge w}\subset \Coh(\mstack Z_R)$ we denote the full subcategory spanned by objects  whose cohomology sheaves have grading greater or equal than $w$.

Let $i_R\colon \mstack U_R \ra \mstack X_R$ and $j_R\colon \mstack S_R\ra \mstack X_R$ be the natural embeddings. Then, given $w\in \mathbb Z$, by \cite[Theorem 2.10]{HL-GIT}, we have a semiorthogonal decomposition 
\begin{equation}\label{decomposition}
\Coh(\mstack X_R)=\langle \ \! \Coh_{\mstack S_R}(\mstack X_R)_{< w}, \mr{G}_w , \Coh_{\mstack S_R}(\mstack X_R)_{\ge w} \ \!\rangle,
\end{equation}
where 
\begin{itemize}
\item $\Coh_{\mstack S_R}(\mstack X_R)_{\ge w}\coloneqq\{\mathcal F\in \Coh_{\mstack S_R}(\mstack X_R)\ | \ \sigma_R^* \mathcal F\in \Coh(\mstack Z_R)_{\ge w} \} \subset \Coh_{\mstack S_R}(\mstack X_R)$,
\item $\Coh_{\mstack S_R}(\mstack X_R)_{< w}\coloneqq\{\mathcal F\in \Coh_{\mstack S_R}(\mstack X_R)\ | \ \sigma_R^* j_R^! \mathcal F\in \Coh(\mstack Z_R)_{<w} \} \subset \Coh_{\mstack S_R}(\mstack X_R)$ 
\end{itemize} 
\noindent and $\mr{G}_w \subset \Coh(\mstack X_R)$ is a certain (full) subcategory such that the functor $i_R^*:\Coh(\mstack X_R)\ra \Coh(\mstack U_R)$ restricted to $\mr G_w$ 
$$
{i_R^*|}_{\mr G_w}\colon \mr G_w \xymatrix{\ar[r]^-\sim &} \Coh(\mstack U_R)
$$
is an equivalence. 

\begin{rem}
	Note that the \cite{HL-GIT} assumes the base to be a field of characteristic 0. Even though in the case of $G=\mathbb G_m$ this assumption is not really necessary, the semiorthogonal decomposition above is also covered by the proof of \Cref{prop:Theta-stratified stacks are cohomologically proper}. Indeed it is enough to show that $\mstack S_R$ gives a $\Theta$-stratum in $\mstack X_R$. This follows from the intrinsic description of $\Theta$-strata (see \cite[Proposition 1.4.1]{HL-prep} or also see the proof of \Cref{lem: Theta-strata spread out}): the third point of \Cref{KN-stratum-defn} exactly says that $\mbb L_{\mstack S_R/\mstack X_R}\in \QCoh(\mstack S_R)^{\ge 1}$ (meaning the term of the corresponding baric decomposition).
\end{rem}	

\begin{prop}\label{prop:KN-stratum induction step}
$\mstack X_R$ is cohomologically proper if and only if both $\mstack U_R$ and $\mstack S_R$ are.
\end{prop}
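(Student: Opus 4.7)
The plan is to propagate cohomological properness through the open--closed decomposition using the local-cohomology fiber sequence. For any $\mc F \in \Coh(\mstack X_R)$ we have
$$j_{R*}j_R^!\mc F \to \mc F \to i_{R*}i_R^*\mc F$$
in $\QCoh(\mstack X_R)$, which upon pushing forward to $\Spec R$ becomes
$$R\Gamma(\mstack S_R, j_R^!\mc F) \to R\Gamma(\mstack X_R,\mc F) \to R\Gamma(\mstack U_R, i_R^*\mc F).$$
The essential input beyond formal manipulations is that $j_R^!$ preserves coherence: since both $\mstack X_R$ and $\mstack S_R$ are smooth over $R$ (\Cref{KN-stratum-defn}), the closed immersion $j_R$ is regular of some codimension $d$, and smooth-locally on $\mstack X_R$ the Koszul resolution of $\mc O_{\mstack S_R}$ identifies $j_R^!\mc F$ with a bounded complex of coherent sheaves on $\mstack S_R$ (for $\mc F$ a vector bundle it is simply $j_R^*\mc F$ twisted by the top exterior power of the conormal bundle and placed in cohomological degree $d$). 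This is the one non-formal ingredient.

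For the $(\Leftarrow)$ direction, if both $\mstack S_R$ and $\mstack U_R$ are cohomologically proper, then the outer terms of the second displayed sequence lie in $\Coh^+(R)$ (since $j_R^!\mc F \in \Coh(\mstack S_R)$ and $i_R^*\mc F\in \Coh(\mstack U_R)$), hence so does the middle term by closure of $\Coh^+(R)$ under cofibers (\Cref{acoh_basics}).

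For the $(\Rightarrow)$ direction, assume $\mstack X_R$ is cohomologically proper. Cohomological properness of $\mstack S_R$ is immediate: $j_{R*}\colon \Coh(\mstack S_R) \to \Coh(\mstack X_R)$ is well-defined (as $j_R$ is a closed immersion of Noetherian stacks) and $R\Gamma(\mstack X_R, j_{R*}\mc G) \simeq R\Gamma(\mstack S_R, \mc G)$ is then nearly coherent by assumption. For $\mstack U_R$, any $\mc F \in \Coh(\mstack U_R)$ is a retract of $i_R^*\mc G$ for some $\mc G \in \Coh(\mstack X_R)$ by \Cref{cor:coherent_exted_from_open} (alternatively, the equivalence $i_R^*\colon \mr{G}_w \xrightarrow{\sim} \Coh(\mstack U_R)$ from the decomposition \eqref{decomposition} directly produces a genuine extension $\widetilde{\mc F} \in \mr{G}_w$), and running the fiber sequence for this $\mc G$ yields a triangle whose middle term is nearly coherent by assumption and whose left term is nearly coherent by the just-established properness of $\mstack S_R$; hence the right term $R\Gamma(\mstack U_R, i_R^*\mc G)$ is nearly coherent, and so is its retract $R\Gamma(\mstack U_R, \mc F)$. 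There is no serious obstacle beyond the coherence-preservation of $j_R^!$, which is where the smoothness of both stacks over $R$ enters in an essential way.
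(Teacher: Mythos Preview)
Your argument rests on the claim that
\[
j_{R*}j_R^!\mc F \longrightarrow \mc F \longrightarrow i_{R*}i_R^*\mc F
\]
is a fiber sequence in $\QCoh(\mstack X_R)$, with $j_R^!$ the right adjoint to $j_{R*}$. This is false. The cofiber of the counit $j_{R*}j_R^!\mc F \to \mc F$ is the localization of $\mc F$ away from the essential image of $j_{R*}$, i.e.\ away from sheaves \emph{scheme-theoretically} supported on $\mstack S_R$; this is not $i_{R*}i_R^*\mc F$, which is the localization away from sheaves \emph{set-theoretically} supported on $\mstack S_R$. Concretely, take $X_R = \mathbb A^1_R$, $S_R = \{0\}$: then $j_R^!\mc O_{X_R} \simeq R[-1]$ is coherent, but the genuine fiber of $\mc O_{X_R} \to i_{R*}i_R^*\mc O_{X_R}$ is the local cohomology $R\underline\Gamma_{\{0\}}(\mc O_{X_R}) \simeq (R[x,x^{-1}]/R[x])[-1]$, which is not coherent. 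Your description of $j_R^!$ via the Koszul resolution is correct, but that functor simply does not sit in the open--closed triangle.

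This breaks both directions. For $(\Leftarrow)$, the actual left term $R\underline\Gamma_{\mstack S_R}(\mc F)$ of the localization triangle is not coherent, so cohomological properness of $\mstack S_R$ gives no control over its global sections; knowing each thickening $n\mstack S_R$ is cohomologically proper only bounds the finite stages of a colimit that need not stay in $\Coh^+(R)$. For $(\Rightarrow)$ on $\mstack U_R$, the same issue prevents you from deducing near-coherence of $R\Gamma(\mstack U_R, i_R^*\mc G)$ from that of the other two terms. The paper's proof circumvents this precisely by invoking the semiorthogonal decomposition \eqref{decomposition}: rather than a triangle, one gets a \emph{finite} filtration of any $\mc F \in \Coh(\mstack X_R)$ whose graded pieces lie either in $\Coh_{\mstack S_R}(\mstack X_R)$ (controlled via $\mstack S_R$) or in $\mr G_w \simeq \Coh(\mstack U_R)$; and for $(\Rightarrow)$ it uses dualizability in $\Coh(\mstack X_R)\simeq \mr{Perf}(\mstack X_R)$ to realize $R\Gamma(\mstack U_R, -)$ as a $\Hom$ inside $\Coh(\mstack X_R)$ via the inverse of $i_R^*|_{\mr G_w}$. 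The semiorthogonal decomposition is not optional here --- it is exactly what replaces the non-existent coherent localization triangle.
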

\begin{proof} 
Fix some $w\in \mathbb Z$.

"$\Rightarrow$" $\mstack S_R$ is cohomologically proper, since  for $\mathcal F\in \Coh(\mstack S_R)$ we have $j_{R*}\mathcal F \in\Coh(\mstack X_R)$ and $\RG(\mstack S_R, F)\simeq \RG(\mstack X_R, j_{R*}\mathcal F)$. Since $\mstack X_R$ is smooth, we have $\Coh(\mstack X_R)\simeq \mr{Perf}(\mstack X_R)$ and so any object of $\Coh(\mstack X_R)$ is dualizable; in particular, for any $V_1,V_2\in \Coh(\mstack X_R)$ we have 
$$
\Hom_{\Coh(\mstack X_R)}(V_1,V_2)\simeq \RG(\mstack X_R, \HHom(V_1,V_2)),
$$
where $\HHom(V_1, V_2) \in \Coh(\mstack X_R)$. Thus $\Hom_{\Coh(\mstack X_R)}(V_1,V_2)\in \Coh^+(R)$, or, in other words, $\Coh{(\mstack X_R)}$
is a \textit{nearly proper} $R$-linear stable $\infty$-category. Now, taking $E\in \Coh(\mstack U_R)$, we get 
$$\RG(\mstack U_R, E)\simeq \Hom_{\Coh(\mstack U_R)}(\mc O_{\mstack U_R}, E)\simeq \Hom_{\Coh(\mstack X_R)}(({i_R^*|}_{\mr G_w})^{-1}\mc O_{\mstack U_R}, ({i_R^*|}_{\mr G_w})^{-1} E)\in \Coh^+(R),$$
via the equivalence $({i_R^*|}_{\mr G_w})^{-1}\colon \Coh(\mstack U_R) \xra{\sim} \mr G_w\subset \Coh(\mstack X_R)$.

"$\Leftarrow$" It is enough to show that the category $\Coh(\mstack X_R)$ is nearly proper. Let's first show that the subcategory $\Coh_{\mstack S_R}(\mstack X_R)\subset \Coh(\mstack X_R)$ is nearly proper. Every object of $\Coh_{\mstack S_R}(\mstack X_R)$ has a finite filtration with graded pieces of the form $j_{R*} \mathcal F$, where $\mathcal F\in \Coh(\mstack S_R)$. Thus it is enough to show that $\mr{Hom}(j_{R*} \mathcal F_1,j_{R*} \mathcal F_2)\in \Coh^+(R)$ for any $\mathcal F_1, \mathcal F_2\in \Coh(\mstack S_R)$.  Since $\mstack S_R$ and $\mstack X_R$ are smooth we have 
$$
\Hom_{\Coh(\mstack S_R)}(j_{R*} \mathcal F_1, j_{R*} \mathcal F_2)\simeq \RG(\mstack S_R, \HHom(j^*_{R}j_{R*}\mathcal F_1, \mathcal F_2))
$$
with $\HHom(j^*_{R}j_{R*}\mathcal F_1, \mathcal F_2)\in \Coh(\mstack S_R)$; then $\RG(\mstack S_R, \HHom(j^*_{R}j_{R*}\mathcal F_1, \mathcal F_2))\in \Coh^+(R)$ since $\mstack S_R$ is cohomologically proper. 

More generally, given $\mathcal F\in \Coh (\mstack S_R)$ and $V\in \Coh(\mstack X_R)$ we have
\begin{align*}
 &\Hom_{\Coh (\mstack X_R)}(V,j_{R*}\mathcal F)\simeq \Hom_{\Coh (\mstack S_R)}(j_{R}^*V, \mathcal F)\in \Coh^+(R),\\
 &\Hom_{\Coh (\mstack X_R)}(j_{R*}\mathcal F, V)\simeq \Hom_{\Coh (\mstack S_R)}(\mathcal F, j_{R}^!V)\in \Coh^+(R),
\end{align*}
where $j_{R}^!V \in \Coh(\mstack S_R)$, since both $\mstack X_R$ and $\mstack S_R$ are smooth. It follows that $\Hom_{\Coh (\mstack X_R)}(V,E)$ and $\Hom_{\Coh (\mstack X_R)}(E,V)$ both lie in $\Coh^+(R)$ if $E\in \Coh_{\mstack S_R}(\mstack X_R)$ and $V$ is any coherent sheaf on $\mstack X_R$. 
 
Now, due to the semiorthogonal decomposition in \eqref{decomposition} any $V\in \Coh(\mstack X_R)$ has a finite (in fact three-step) filtration  with the associated graded pieces lying either in $ \Coh_{\mstack S_R}(\mstack X_R)$ or  $\mr G_w$. By the above discussion, hom-complex $\Hom_{\Coh(\mstack X_R)}(E, -)$ for any object $E$ in $\Coh_{\mstack S_R}(\mstack X_R)$ is always bounded below coherent. The category $\mr G_w \simeq \Coh(\mstack U_R)$ is nearly proper since $\mstack U_R$ is cohomologically proper. Taking such filtrations for a given pair $\mathcal F_1, \mathcal F_2 \in \Coh(\mstack X_R)$ and using the exactness of $\Hom$ in each variable we get that $\Hom_{\Coh(\mstack X_R)}(\mathcal F_1, \mathcal F_2)\in \Coh^+(R)$.
\end{proof} 

The $\mathbb G_{m,R}$-action on $X_{R}^0$ is trivial; thus $ \mstack X_R^0$ is isomorphic to $X_R^0\times_R B\mathbb G_{m,R}$. Let $p\colon\mstack X_R^0 \ra X_R^0$ be the projection. 

\begin{lem}\label{lem: from + to 0}
The morphism $p\circ\pi_R\colon \mstack X_R^+ \ra X_R^0$ is cohomologically proper.
\end{lem}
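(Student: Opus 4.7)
Cohomological properness is local on the target (this is a consequence of base change along an open immersion together with \Cref{cor:coherent_exted_from_open}), so the plan is to check the property on an affine open cover of $X_R^0$. Concretely, I would pick a cover $\{V_\alpha\}$ of $X_R^0$ over which the affine fibration $\pi_R$ is trivialised, i.e.\ $\pi_R^{-1}(V_\alpha)\simeq V_\alpha\times_R\mathbb A^d_R$ with $\mathbb G_{m,R}$ acting trivially on $V_\alpha$ and via some (nonzero, and all of the same sign by \Cref{KN-stratum-defn}) weights $w_1,\dots,w_d$ on the coordinates of $\mathbb A^d_R$. Such a cover exists by construction of the spreading.

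Over such a $V_\alpha=\Spec B$, the preimage under $p\circ\pi_R$ is the quotient stack $[\Spec B[x_1,\dots,x_d]/\mathbb G_{m,R}]$, and a coherent sheaf on this stack is nothing but a finitely generated $\mathbb Z$-graded $B[x_1,\dots,x_d]$-module $M$ (where $B$ is in degree $0$ and $x_i$ is in degree $-w_i$). The key simplification is that $\mathbb G_{m,R}$ is a diagonalisable, hence linearly reductive, group scheme over $R$: taking $\mathbb G_{m,R}$-invariants is a $t$-exact functor on representations, so the (derived) pushforward along $p\circ\pi_R$ agrees with the underived one, and for $\mathcal F$ corresponding to $M$ we simply get $(p\circ\pi_R)_*\mathcal F\simeq M_0$, the degree $0$ submodule of $M$, placed in cohomological degree $0$. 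In particular, it suffices to show that $M_0$ is finitely generated over $B$ for any finitely generated graded $M$.

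This last step is the only actual computation and the main --- though very mild --- obstacle. I would argue as follows: pick homogeneous generators $m_1,\dots,m_k\in M$ of degrees $d_1,\dots,d_k$; then every element of $M_0$ is a $B$-linear combination $\sum_i f_i m_i$ with $f_i\in B[x_1,\dots,x_d]_{-d_i}$, where $B[x_1,\dots,x_d]_{-d_i}$ denotes the $B$-submodule of homogeneous polynomials of weight $-d_i$. Because the weights $w_1,\dots,w_d$ are all nonzero and of the same sign, the set of monomials $x_1^{a_1}\cdots x_d^{a_d}$ with $\sum_j a_j w_j=d_i$ is finite for every $i$. Hence each $B[x_1,\dots,x_d]_{-d_i}$ is a finitely generated free $B$-module, and $M_0$ is a quotient of $\bigoplus_i B[x_1,\dots,x_d]_{-d_i}$, so it is finitely generated over $B$. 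By \Cref{coh_proper_trivial_rem} this proves that $(p\circ\pi_R)|_{(p\circ\pi_R)^{-1}(V_\alpha)}$ is cohomologically proper, and gluing over $\alpha$ finishes the proof.
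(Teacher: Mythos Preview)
Your proof is correct and follows essentially the same route as the paper's: localise on $X_R^0$ to trivialise the affine fibration, use linear reductivity of $\mathbb G_{m,R}$ to identify the pushforward with the degree-$0$ part of a finitely generated graded $B[x_1,\dots,x_d]$-module, and then use that the $x_i$ all have weights of one sign to conclude that this degree-$0$ part is finitely generated over $B$. The only minor remark is that your appeal to \Cref{KN-stratum-defn} for the sign condition is slightly misplaced --- that proposition concerns the conormal weights of $S_R$ in $X_R$, whereas the fact you need (that the fibre coordinates of $\pi_R$ have weights of a fixed sign) is built into the attractor construction and the spreading of the Bialynicki--Birula trivialisation.
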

\begin{proof}
The statement is local on $X_R^0$. Let $\{U^0_{i,R}\}_{i\in I}$, $U^0_{i,R}=\Spec A^0_{i,R}$ be an affine cover of $X_R^0$ such that the affine bundle given by $\pi_R\colon X_R^+\ra X_R^0$ is trivial. Let $U^+_{i,R}\coloneqq \pi_R^{-1}(U^0_{i,R})$. It is enough to show that the morphism $\mstack U^+_{i,R}\coloneqq [U^+_{i,R}/\mathbb G_{m,R}]\ra U^0_{i,R}$ induced by the composition of $\pi_R$ and $p$ is cohomologically proper. We have $U^+_{i,R}\simeq U^0_{i,R}\times_R \mathbb A^d_R$ where $\mathbb G_{m,R}$ acts with negative weights on $\mathbb A^d_R$. Let $A^+_{i,R}$ be the ring of functions on $U^+_{i,R}$; it is naturally $\mathbb Z$-graded and $A^+_{i,R}\simeq A^0_{i,R}[x_1,\ldots,x_d]$ where $x_i$'s can be chosen to be homogeneous of strictly negative degree. 

The functor $(p\circ \pi_R)_*$ is $t$-exact, since $\pi_R$ is affine and $\mathbb G_{m,R}$ is linearly reductive. We have an equivalence between the abelian category $\Coh(\mstack U^+_{i,R})^\heartsuit$ and the category of finitely generated graded $A^+_{i,R}$-modules. Via this equivalence, $(p\circ \pi_R)_*$ sends a graded module $M^*$ to the $A^0_{i,R}$-module $M^0$. Since the degrees of $x_i$'s are strictly negative, it is straightforward to see that if a $A^0_{i,R}[x_1,\ldots,x_d]$-module $M$ is finitely generated, the $A^0_{i,R}$-module $M^0$ is finitely generated as well, and thus corresponds to a coherent sheaf on $U_{i,R}^0$. 
\end{proof}

\begin{prop}\label{prop: X_R vs X_R^+ vs X_R^0}
Let $X$ be smooth scheme over $F$ and with a BB-complete $\mathbb G_m$-action. Let $X_R$ be a spreading as above. Then the following are equivalent: 
\begin{enumerate}
\item $\mstack X_R$  is cohomologically proper. 
\item $\mstack X_R^+$ is cohomologically proper.
\item $X_R^0$ is proper.
\end{enumerate}

\begin{proof}
$1\Leftrightarrow 2$. Note that $\mstack X_R^+\simeq \cup_{c\in \pi_0(X^0)} \mstack S_{c,R}$. Let's fix an ordering on $\pi_0(X^0)$ such that each $S_{c,R}$ is closed in $X_{\le c,R}:=X_R\setminus \cup_{c'>c} S_{c',R}$. Since $\{S_{c,R}\}$ form a full finite stratification of $X_R$, applying \Cref{prop:KN-stratum induction step} we are done by induction on $c$.

$2\Rightarrow 3$.  The morphism $\sigma_R\colon \mstack X_R^0 \ra \mstack X_R^+$ is a closed embedding (in particular proper) and thus is cohomologically proper. It follows that $\mstack X_R^0$ is cohomologically proper. On the other hand $\mstack X_R^0\simeq X_R^0\times_R B\mathbb G_{m,R}$ and so $X_R^0$ is cohomologically proper as well. Indeed, $p_*\mc O_{\mstack X_R^0}\simeq \mc O_{X_R^0}$ and given $\mathcal F\in \Coh(X_R^0)$ we have $\RG(\mstack X_R^0, p^*\mathcal F)\simeq \RG(X_R^0, \mathcal F)$ by the projection formula; since $p^*\mathcal F\in \Coh(\mstack X_R^0)$ we get $\RG(X_R^0, \mathcal F)\in \Coh^+(R)$. Being a cohomologically proper $R$-scheme, $X_R^0$ is forced to be proper (Corollary 3.8 of \cite{GAGA}).  

$2\Leftarrow 3$: The structure morphism $\mstack X_R^+\ra \Spec R$ factors as the composition of $\mstack X_R^+\xra{p\circ \pi_R} X_R^0$ and $X_R^0\ra \Spec R$. The first map is cohomologically proper by \Cref{lem: from + to 0}, the second --- since $X_R^0$ is proper. 
\end{proof}
\end{prop}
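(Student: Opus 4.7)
The plan is to handle the three implications in the order $(1)\Leftrightarrow(2)$, then $(2)\Rightarrow(3)$, then $(3)\Rightarrow(2)$, building directly on Propositions \ref{prop:KN-stratum induction step} and \ref{lem: from + to 0} already established.

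For $(1)\Leftrightarrow(2)$, I would exploit the full BB-stratification $X_R = \bigsqcup_{c\in\pi_0(X^0)} S_{c,R}$ obtained from spreading out the BB-complete $\mathbb G_m$-action; thus $\mstack X_R^+\simeq\bigsqcup_c \mstack S_{c,R}$. Fix an ordering on $\pi_0(X^0)$ so that each $S_{c,R}$ is closed in $X_{\le c,R}\coloneqq X_R\setminus\bigcup_{c'>c}S_{c',R}$ (which is possible by the dimension condition already imposed on the ordering). Run induction along this ordering, repeatedly invoking \Cref{prop:KN-stratum induction step} to the closed-open decomposition $(\mstack S_{c,R},\mstack U_{c,R})$ inside $[\,X_{\le c,R}/\mathbb G_{m,R}]$: the cohomological properness of $\mstack X_R=[X_{R}/\mathbb G_{m,R}]$ is equivalent to that of all strata $\mstack S_{c,R}$ simultaneously, which is exactly the cohomological properness of $\mstack X_R^+$.

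For $(2)\Rightarrow(3)$, note that $\sigma_R\colon\mstack X_R^0\hookrightarrow\mstack X_R^+$ is a closed embedding, hence in particular cohomologically proper (its pushforward is $t$-exact and preserves coherence); composing with the cohomologically proper structure map of $\mstack X_R^+$ one obtains that $\mstack X_R^0$ is cohomologically proper. Since $\mstack X_R^0\simeq X_R^0\times_R B\mathbb G_{m,R}$, the projection $p\colon\mstack X_R^0\to X_R^0$ satisfies $p_*\mc O_{\mstack X_R^0}\simeq\mc O_{X_R^0}$, and by the projection formula $\RG(X_R^0,\mathcal F)\simeq\RG(\mstack X_R^0,p^*\mathcal F)$ for any $\mathcal F\in\Coh(X_R^0)$. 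Since $p^*\mathcal F$ remains coherent on $\mstack X_R^0$, we conclude $X_R^0$ is a cohomologically proper $R$-scheme. Now cohomological properness of a finite-type $R$-scheme implies properness: indeed, this is exactly the content of Corollary 3.8 of \cite{GAGA} (or can be seen by the standard argument that cohomological properness forces universal closedness together with the existing finite-type and separatedness).

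For $(3)\Rightarrow(2)$, I factor the structure morphism $\mstack X_R^+\to\Spec R$ as the composition
$$\mstack X_R^+\xrightarrow{p\circ\pi_R} X_R^0\longrightarrow\Spec R.$$
The first arrow is cohomologically proper by \Cref{lem: from + to 0}, and the second is proper by assumption; since cohomologically proper morphisms compose (first point of \Cref{properties_of_coh_prop_morps}), the composite is cohomologically proper, giving $(2)$. The main subtle point in this argument is the induction step in $(1)\Leftrightarrow(2)$: one must verify that at each stage the open complement $\mstack U_{c,R}$ is again of the same form, so that the inductive hypothesis applies to a strictly smaller BB-stratified smooth quotient stack, and that the $\Theta$/KN-stratum hypothesis from Proposition \ref{prop:KN-stratum induction step} indeed holds for $\mstack S_{c,R}\subset[X_{\le c,R}/\mathbb G_{m,R}]$, which is precisely what \Cref{KN-stratum-defn} ensures after the spreading enlargements made just before.
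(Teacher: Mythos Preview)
Your proposal is correct and follows essentially the same approach as the paper: the same induction via \Cref{prop:KN-stratum induction step} for $(1)\Leftrightarrow(2)$, the same closed-embedding-plus-projection-formula argument with the citation of \cite{GAGA} for $(2)\Rightarrow(3)$, and the same factorization through $X_R^0$ using \Cref{lem: from + to 0} for $(3)\Rightarrow(2)$. Your added remark about why \Cref{KN-stratum-defn} ensures the hypotheses of \Cref{prop:KN-stratum induction step} hold at each stage is a welcome clarification but does not change the argument.
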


\begin{cor}\label{cor:  X_R is cohomologically proper}
Let $X$ be a smooth scheme over $F$ with an action of $\mathbb G_m$. Assume that the action is BB-complete and that the scheme of $\mathbb G_m$-fixed points $X^0$ is proper. Then $\mstack X$ is cohomologically properly spreadable.
\end{cor}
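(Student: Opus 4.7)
The plan is to combine the spreading construction described in Section~\ref{sec:BB-complete} (which produces $X_R, X_R^+, X_R^0$ together with spread versions of the maps $\sigma,\pi,j$ and of the BB-stratification) with the equivalence $(1)\Leftrightarrow(3)$ from Proposition~\ref{prop: X_R vs X_R^+ vs X_R^0}. The idea is that all the hypotheses of that proposition are already built into the spreading construction, so the only thing left to arrange is that $X_R^0$ can be taken to be proper over $R$; given that, the quotient stack $\mstack X_R \coloneqq [X_R/\mathbb G_{m,R}]$ will be cohomologically proper, and hence serve as the desired cohomologically proper spreading of $\mstack X$.

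First I would invoke the construction at the start of Subsection~\ref{sec:BB-complete} to obtain, for some sufficiently large finitely generated $\mathbb Z$-subalgebra $R\subset F$, a smooth $R$-scheme $X_R$ with a $\mathbb G_{m,R}$-action spreading $X$, equipped with a spreading $\{U_{i,R}\}$ of a chosen $\mathbb G_m$-invariant affine cover. From these data one constructs the closed subscheme $X_R^0$ and the attractor $X_R^+$, together with maps $\sigma_R, \pi_R, j_R$ spreading $\sigma,\pi,j$, and, after enlarging $R$ if necessary, one may assume all of the structural properties listed there (smoothness of $X_R^0, X_R^+$ over $R$; $\sigma_R$ a closed embedding; $\pi_R$ a Zariski-locally trivial $\mathbb A^d$-bundle on each component; $j_R$ a locally closed embedding on each component; and BB-completeness of the $\mathbb G_{m,R}$-action, so that the $\{S_{c,R}\}$ give a full stratification of $X_R$ by smooth locally closed subschemes).

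Next I would spread out the properness of $X^0$. Since $X^0$ is proper over $F$ and $X^0 \simeq X_R^0 \otimes_R F$, by \Cref{prop: proper morphisms spread} (spreadability of proper morphisms) I can enlarge $R$ so that the structure map $X_R^0 \to \Spec R$ is proper. Now all three of the conditions used as input in the implication $(3)\Rightarrow(1)$ of Proposition~\ref{prop: X_R vs X_R^+ vs X_R^0} are in place: $X_R$ has a spread BB-stratification, and its centrum $X_R^0$ is proper over $R$.

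Finally, I would apply Proposition~\ref{prop: X_R vs X_R^+ vs X_R^0} directly: its proof of $(3)\Rightarrow(2)$ factors the structure map $\mstack X_R^+ \to \Spec R$ as the cohomologically proper map $p\circ\pi_R\colon \mstack X_R^+ \to X_R^0$ (from \Cref{lem: from + to 0}) followed by the proper structure map of $X_R^0$; and $(2)\Rightarrow(1)$ follows from \Cref{prop:KN-stratum induction step} applied inductively along the finite BB-stratification, starting from the closed strata. Thus $\mstack X_R$ is cohomologically proper over $R$, and since $\mstack X_R \otimes_R F \simeq \mstack X$, this exhibits $\mstack X$ as cohomologically properly spreadable. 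No single step is a serious obstacle here: the entire difficulty has been absorbed into the preceding \Cref{prop: X_R vs X_R^+ vs X_R^0} and \Cref{prop:KN-stratum induction step}, and the corollary is essentially their direct consequence once properness of $X^0$ is spread out.
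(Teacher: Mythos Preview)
Your proposal is correct and follows essentially the same approach as the paper: spread out $X$ using the construction at the start of Subsection~\ref{sec:BB-complete}, enlarge $R$ so that the spreading $X_R^0$ of $X^0$ becomes proper, and then apply \Cref{prop: X_R vs X_R^+ vs X_R^0}. The paper's proof is a two-line version of exactly this argument.
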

\begin{proof} Let $X_R$ be a spreading as above, then $X_R^0$ is a spreading of $X^0$ and thus, after enlarging $R$, we can assume that $X_R^0$ is proper. Then $\mstack X_R$ is cohomologically proper by 
\Cref{prop: X_R vs X_R^+ vs X_R^0}. 
\end{proof}

\subsubsection{Quotients by $G$ that are BB-complete with respect to a subgroup} 
In \Cref{cor:  X_R is cohomologically proper} we gave some sufficient conditions for the quotient stack $[X/\mathbb G_m]$ to be cohomologically spreadable. As we will see soon, Kempf's theorem allows to generalize this to a quotient by an arbitrary linear group $G$; however, a certain extra weight-positivity assumption with respect to a 1-parameter subgroup $h\colon\mathbb G_m\ra G$ is necessary.

Let $G$ be a linear algebraic group and let $B\subset G$ be a Borel subgroup\footnote{Recall that a subgroup $B\subset G$ is called Borel if it is a maximal Zariski-closed solvable subgroup of $G$.}. Let $U\subset B$ be the unipotent radical of $B$ and let $T\subset B$ be a maximal torus. We have a short exact sequence $1\ra U\ra B\ra T\ra 1$. Let $X^*(T)\coloneqq \Hom(T,\mathbb G_m)$ and $X_*(T)\coloneqq\Hom(\mathbb G_m,T)$ be the character and cocharacter lattices of $T$. One has $X_*(T)\simeq X^*(T)^\vee$. Given a $T$-representation $V$ and a character $\lambda\in X^*(T)$ we denote by $V_\lambda \subset V$ the subspace of $V$ of weight $\lambda$. 
The adjoint action of $T$ on $U$ induces an action on the Lie algebra $\mf u$ of $U$ and we denote by $\Phi^+\subset X^*(T)$ the set of weights of $\mf u$ with respect to this action. 

\begin{thm}\label{thm: conical examples}
Let $X$ be a smooth scheme over $F$ endowed with an action of a linear algebraic group $G$. Let $B\subset G$ be a Borel subgroup and let $T\subset B$ be a maximal torus. Let $\Phi^+\subset X^*(T)$ be the set of $T$-weights of the Lie algebra $\mf u$ of the unipotent radical $U\subset B$ with respect to the adjoint action of $T$ on $U$. 

Assume that there is a subgroup $h\colon \mathbb G_m \ra T$, $h\in X_*(T)$, such that 
\begin{enumerate}
\item $h(\Phi^+)>0$.
\item The $\mathbb G_m$-action induced by $h$ is BB-complete.
\item The $h(\mathbb G_m)$-fixed points $X^0$ are proper.
\end{enumerate}

\noindent Then the quotient stack $[X/G]$ is cohomologically properly spreadable.

\begin{proof}

Let's first assume that $G$ is connected. Note that $B$ is isomorphic to a semidirect product $T\ltimes U$ and can be spread out to a semidirect product $T_R\ltimes U_R$ of a split torus $T_R$ and a unipotent group $U_R$ over a finitely generated $\mathbb Z$-algebra $R\subset F$. Since $T_{R}$ is split $X^*(T_{R})\simeq X^*(T)$. In particular we have a cocharacter $h\colon\mathbb G_{m,R}\ra T_R$. The subgroup $B\subset G$ can be spread out to a closed subgroup $B_R \subset G_R$ and we can assume that $G_R$ is split. Let $U_G$ be the unipotent radical of $G$. Then $G/U_G$ is reductive and can be spread out to a split reductive group $(G/U_G)_R$. We then also have a spreading $p_R\colon G_R \ra (G/U_G)_R$ of the projection $p\colon G\ra G/U_G$ and the kernel $(U_{G})_R\coloneqq\Ker(p_G)$ is a spreading of $U_G$ and thus can be assumed to be unipotent. Since $U_G$ is a closed subgroup of $B$, we can assume that $(U_G)_R$ is a closed subgroup of $B_R$. The image of $B_R$ under $p_R$ is a spreading of $B/U_G\subseteq G/U_G$ and thus can be assumed to be a Borel subgroup of the split reductive group $(G/U)_R$. Note that with all these assumptions $G_R/B_R\simeq (G/U_G)_R/p_R(B_R)$.

We can also spread $X$ with the action $a\colon G\curvearrowright X$ to a smooth scheme $X_R$ over $R$ with an action $a_R\colon G_R\curvearrowright X_R$. Note that by \cite{Sumihiro} the restriction of the action of $G$ on $X$ to $\mathbb G_m$ (via $h$) is locally linear; consider a $\mathbb G_m$-invariant open cover $\{U_i\}_{i\in I}$ of $X$, $U_i=\Spec A_i$. We have spreadings $A_{i,R}$ of $A_i$ with an action of $\mathbb G_{m,R}$; localizing $R$ if necessary, we can assume that $U_{i,R}:=\Spec A_{i,R}$ cover $X_R$ and that the restriction of $a_R$ to $\mathbb G_{m,R}$ via $h$ is locally linear. Localizing $R$, we can assume that the $\mathbb G_{m,R}$-fixed points  $X_R^0$ are proper, the action is BB-complete and $X_R$ is such that the conditions of \ref{prop: X_R vs X_R^+ vs X_R^0} are satisfied. Thus we have that $[X_R/h(\mathbb G_{m,R})]$ is cohomologically proper. It is enough to show that $[X_R/G_R]$ is cohomologically proper. 

We split the argument into a sequence of lemmas:
\begin{lem}\label{from /G_m to /T_R} Let $X_R$ be as above. Then 
$$
\text{$[X_R/h(\mathbb G_{m,R}))]$ is cohomologically proper over $R$} \ \  \Rightarrow \ \ \text{$[X_R/T_R]$ is cohomologically proper over $R$}.
$$
\end{lem}

\begin{proof}
Let $p\colon X_R\ra [X_R/T_R]$ and $q\colon [X/h(\mathbb G_{m,R})]\ra [X_R/T_R]$ be the natural smooth covers. Then, given $\mc F\in \Coh([X_R/T_R])$, we have $\RG([X_R/T_R], \mc F)\simeq \RG(X_R,p^*\mc F)^{T_R}$. But 
$$
\RG(X_R,p^*\mc F)^{T_R}\simeq \left(\RG(X_R,p^*\mc F)^{h(\mathbb G_{m,R})}\right)^{T/h(\mathbb G_{m,R})}\simeq \RG([X_R/{h(\mathbb G_{m,R})}],s^*\mc F)^{T/h(\mathbb G_{m,R})};
$$
since $s^*\mc F\in \Coh([X_R/{h(\mathbb G_{m,R})}])$ we have $\RG([X_R/{h(\mathbb G_{m,R})}],s^*\mc F)\in \Coh^+(R)$. Recall that the coherence of a complex of $R$-modules is equivalent to being $t$-bounded and having finitely generated cohomology. The group scheme $T/h(\mathbb G_{m,R})$ is a torus and the functor of $T_R/h(\mathbb G_{m,R})$-invariants is $t$-exact. Thus $\RG([X_R/{h(\mathbb G_{m,R})}],s^*\mc F)^{T/h(\mathbb G_{m,R})}$ is also bounded and has finitely generated cohomology, so is coherent.
\end{proof}

\begin{lem} \label{from /G_m to /B_R} Let $X_R$ be as above. Then 
$$
\text{$[X_R/h(\mathbb G_{m,R}))]$ is cohomologically proper over $R$} \ \  \Rightarrow \ \ \text{$[X_R/B_R]$ is cohomologically proper over $R$}.
$$

\end{lem}

\begin{proof}
Consider the natural smooth cover $q\colon  [X_R/T_R]\ra [X_R/B_R]$ induced by the embedding $T_R\ra B_R$. Since $B_R\simeq  T_R\ltimes U_R $, the $n$-th term of the {\v C}ech complex associated to $q$ is given by
$$
[X_R\overset{T_R}{\times}\underbrace{B_R\overset{T_R}{\times} B_R\overset{T_R}{\times} \cdots \overset{T_R}{\times} B_R}_n/T_R]\simeq [(X_R\times \underbrace{U_R\times U_R\times \cdots \times U_R}_n)/T_R]\footnote{The isomorphism is given by the formula $[(x, b_1,\ldots, b_n)]\mapsto [(x,u_1, \ldots, u_n)]$, where $b_i=t_i\cdot u_i\in T_R \ltimes U_R$. },
$$
where the action of $T_R$ on $X_R\times U_R\times U_R\times \cdots \times U_R$ is given by the product of the action on $X_R$ and the adjoint action on each copy of $U_R$. 

Note that the underlying scheme of $U_R$ can be $T_R$-equivariantly identified with its Lie algebra $\mf u_R$ (see II.1.7 in \cite{Jantzen}); this way functions on $U_R$ (resp $U_R^n$) are identified with $\Sym_{R}(\mf u_R^*)$ (resp. $\Sym_{R}(\mf u_R^*)^{\otimes n}$). Since $h(\Phi^+)>0$ we get that the $\mathbb G_{m,R}$-weights on non-constant homogeneous functions on $U_R^n$ are strictly negative. It follows that $U_R^n\simeq (U_R^n)^+$ and  $(U_R^n)^0\simeq \Spec R$. We have $(X_R\times U_R^n)^+\simeq X_R^+\times U_R^n$, so the map $(X_R\times U_R^n)^+\ra X_R\times U_R^n$ is surjective and  thus the $\mathbb G_{m,R}$-action on $X_R\times U_R^n$ is BB-complete for every $n$. Also, $(X_R\times U_R^n)^0\simeq X_R^0$ is proper. Finally, since $U_R^n$ is isomorphic to the affine space, the Bialynicki-Birula strata still satisfy the conditions in \ref{KN-stratum-defn}. Consequently, \Cref{prop: X_R vs X_R^+ vs X_R^0} applies to $X_R\times U_R^n$ for all $n$; we get that $[(X_R\times U_R^n)/h(\mathbb G_{m,R})]$ is cohomologically proper. By \Cref{from /G_m to /T_R} it follows that $[(X_R\times U_R^n)/T_R]$ is cohomologically proper for all $n$. By \Cref{coh_prop_hypercover} we get that $[X_R/B_R]$ is cohomologically proper.
\end{proof}

To pass from $B_R$ to $G_R$ we use the Kempf's theorem (\ref{Kempf}):

\begin{lem}\label{lem:use Kempf}
 Let $X_R$ be as above. Then 
$$
\text{$[X_R/B_R]$ is cohomologically proper over $R$} \ \  \Rightarrow \ \ \text{$[X_R/G_R]$ is cohomologically proper over $R$}.
$$

\end{lem}
\begin{proof}
Let $j\colon BB_R\ra BG_R$ be the natural morphism. Then by the projection formula
$$
\RG(BB_R,j^*\mc F)\simeq \RG(BG_R,j_*j^*\mc F)\simeq \RG(BG_R, \mc F\otimes j_*\mathcal O_{BB_R}).
$$
By base change, the underlying complex of $R$-modules of $j_*\mc O_{BB_R}$ is equivalent to $\RG(G_R/B_R, \mathcal O_{G_R/B_R})$. But $G_R/B_R\simeq (G/U_G)_R/(p_R(B_R))$, where $p_R(B_R)\subset (G/U_G)_R$ is a Borel subgroup and thus $\RG(G_R/B_R, \mathcal O_{G_R/B_R})\simeq R$ by \Cref{Kempf}. Consequently, $\RG(BG_R, \mc F) \simeq \RG(BB_R, j^*\mc F)$ for any sheaf $\mc F\in \QCoh(BG_R)$. We now apply this as follows: there is a fibered square
$$\xymatrix{
[X_R/B_R]\ar[r]^{f}\ar[d]&[X_R/G_R]\ar[d]^{}\\
BB_R\ar[r]^{j}&BG_R 
}$$
and, given a coherent sheaf $\mc F\in \Coh([Y_R/G_R])$, its pull-back $f^*\mc F\in \Coh([Y_R/B_R])$ is also coherent. Applying base change and the above isomorphism we get that $\RG([Y_R/G_R],\mc F)\simeq \RG([Y_R/B_R],f^*\mc F)$; in particular, one complex is bounded below coherent if and only if the other one is. The statement of the lemma follows.
\end{proof}

It remains to cover the case of a disconnected $G$.
We can write $[X/G]\simeq [[X/G^0]/\pi_0(G)]$, where $G^0$ is the connected component of $e\in G$ and $\pi_0(G)$ is the finite group of components. 
The homomorphism $p\colon G\ra \pi_0(G)\simeq G/G^0$ can be spread out to $p_R\colon G_R\ra \pi_0(G)_R$ where $G_R$ is some spreading out of $G$ and $\pi_0(G)_R$ is the constant group $R$-scheme associated to $\pi_0(G)$. Moreover the kernel $G^0_R$ of $p_R$ is a spreading of $G^0$.

We have just shown that the quotient stack $[X_R/G^0_R]$ is cohomologically proper over $R$. We also have $[X_R/G_R]\simeq [[X_R/G^0_R]/\pi_0(G)_R]$. It follows that for any $\mc F\in \Coh([X/G_R])$
$$
\RG([X_R/G_R], \mc F) \simeq \RG\left(B\pi_0(G)_R, \RG([X_R/G^0_R], \mc F)\right).
$$
Replacing $R$ with $R[1/|\pi_0(G)|]$ we can assume that $|\pi_0(G)|$ is invertible in $R$, and so the functor of $\pi_0(G)$-invariants is $t$-exact. Then we get $H^q([X_R/G_R], F) \simeq H^q([X_R/G^0_R], \mc F)^{\pi_0(G)}$. In particular, $\RG([X_R/G_R], \mc F)$ is $t$-bounded and its cohomology are finitely generated over $R$, so $\RG([X_R/G_R], \mc F)\in \Coh^+(R)$.
\end{proof}
\end{thm}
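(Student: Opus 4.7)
The plan is to reduce the cohomological properness of $[X/G]$ to that of $[X/h(\mbb G_m)]$, which we already control via the Bialynicki-Birula technology of \Cref{sec:BB-complete}, by climbing the chain $h(\mbb G_m) \subset T \subset B \subset G$ of subgroups and then handling the component group $\pi_0(G)$ separately. First, I would spread everything out: pick a subring $R\subset F$ as in \Cref{def: spredable_stacks} so that $X,G,B,T,U$ together with the action $G\curvearrowright X$, the cocharacter $h\colon \mbb G_{m,R}\ra T_R$, the Borel/unipotent/torus structure, and a $h(\mbb G_m)$-invariant affine cover (which exists by Sumihiro) all spread. Since $G_R$ is split, the adjoint weights $\Phi^+$ are defined over $R$ and $h(\Phi^+)>0$ on the nose. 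Enlarging $R$ I can arrange $X_R^{h(\mbb G_{m,R})}$ to be proper and the $h(\mbb G_m)$-action on $X_R$ to be BB-complete, so \Cref{cor:  X_R is cohomologically proper} provides cohomological properness of $[X_R/h(\mbb G_{m,R})]$.

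The passage from $h(\mbb G_m)$ to $T$ is straightforward: $T_R/h(\mbb G_{m,R})$ is a split torus, hence the invariants functor is $t$-exact and preserves finite generation, so $[X_R/T_R]$ inherits cohomological properness. The step from $T$ to $B$ is the key geometric input. Writing $B_R \simeq T_R \ltimes U_R$ and descending along the smooth cover $[X_R/T_R]\surj [X_R/B_R]$, the $n$-th \v Cech term is $[(X_R\times U_R^n)/T_R]$. Because $h(\Phi^+)>0$, the adjoint $h(\mbb G_m)$-action on $\mf u_R\simeq U_R$ has strictly negative weights on nonconstant functions, so $(U_R^n)^+ = U_R^n$ and $(U_R^n)^{h(\mbb G_m)} = \Spec R$; hence the product $h(\mbb G_m)$-action on $X_R\times U_R^n$ is BB-complete with proper fixed locus $X_R^0$. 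Applying \Cref{cor:  X_R is cohomologically proper} termwise and then the $h(\mbb G_m)\to T$ step, each \v Cech term $[(X_R\times U_R^n)/T_R]$ is cohomologically proper, so \Cref{coh_prop_hypercover} gives cohomological properness of $[X_R/B_R]$.

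To pass from $B$ to $G$, consider the fiber square with horizontal maps $[X_R/B_R]\ra [X_R/G_R]$ and $BB_R\ra BG_R$. By base change the pushforward of $\mc O_{BB_R}$ to $BG_R$ has underlying complex $\RG(G_R/B_R,\mc O)$, which equals $R$ by Kempf's theorem (\Cref{Kempf}) applied to the split reductive quotient $(G/U_G)_R$ and its Borel $p_R(B_R)$, noting $G_R/B_R\simeq (G/U_G)_R/p_R(B_R)$. Projection formula then gives $\RG([X_R/G_R],\mc F)\simeq \RG([X_R/B_R], f^*\mc F)$ for any $\mc F\in \Coh([X_R/G_R])$, and $f^*\mc F$ is coherent, so the previous step concludes. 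Finally, if $G$ is disconnected, write $[X/G]\simeq [[X/G^0]/\pi_0(G)]$, spread the exact sequence $1\to G^0\to G\to \pi_0(G)\to 1$, invert $|\pi_0(G)|$ in $R$ so that $\pi_0(G)$-invariants is $t$-exact, and deduce cohomological properness of $[X_R/G_R]$ from that of $[X_R/G^0_R]$. The main obstacle is the $T$-to-$B$ step: one must exploit the weight-positivity hypothesis $h(\Phi^+)>0$ in order to put the auxiliary schemes $X_R\times U_R^n$ inside the BB-complete regime and then invoke the full strength of \Cref{prop: X_R vs X_R^+ vs X_R^0} to reduce to a proper fixed locus; the other steps are essentially formal manipulations with descent and Kempf.
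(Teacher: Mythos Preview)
Your proposal is correct and follows essentially the same route as the paper: spread out the data, establish cohomological properness of $[X_R/h(\mbb G_{m,R})]$ via the BB-complete machinery, then climb the chain $h(\mbb G_m)\subset T\subset B\subset G$ using torus-invariants exactness, the \v Cech nerve of $[X_R/T_R]\to[X_R/B_R]$ combined with the weight-positivity hypothesis, and Kempf's theorem, finishing with the finite component group by inverting $|\pi_0(G)|$. The only cosmetic point is that in the $T$-to-$B$ step you should invoke \Cref{prop: X_R vs X_R^+ vs X_R^0} directly (as you note at the end) rather than \Cref{cor:  X_R is cohomologically proper}, since you are already working over $R$.
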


We end this subsection by giving some examples to which \Cref{thm: conical examples} does apply:
\begin{ex}\label{ex:mpre examples}
\begin{enumerate}[leftmargin=0pt,itemindent=*]
\item $X$ is proper. In this case by the valuative criterion of properness every $\mathbb G_m$-orbit of an $F$-point has a limit as $t\ra 0$, so the map $X^+\ra X$ is surjective on $F$-points. It follows that any $\mathbb G_m$-action on $X$ is BB-complete. Moreover $X^0\subset X$ is a closed subscheme and so is proper. Thus, the only condition to check is on $G$: namely there should exist $h\in X_*(T)$ such that $h(\Phi^+)>0$ (since all Borel subgroups of $G$ are conjugate to each other this does not depend on the choice of $B$). Here is a list of linear algebraic groups $G$ which satisfy this:

\begin{itemize}
\item $G$ reductive. Then one can take $h\in X_*(T)$ given by any dominant coweight. This case is also covered by \Cref{thm:proper coarse moduli};
\item $G=P\subset H$ is a parabolic subgroup of a reductive group $H$. Any $h$ that is dominant with respect to some Borel subgroup $B\subset P$ applies;
\item More or less tautologically, any $G$ with a 1-dimensional subtorus $\mathbb G_m\subset G$ such that the adjoint action of $\mathbb G_m$ on the  the Lie algebra $\mf u_G$ of the unipotent radical $U_G\subset G$ has strictly positive weights and such that the projection of $\mathbb G_m$ to $G/U_G$ gives a regular coweight (meaning its centralizer is given by a maximal torus). Then one picks $B$ as the preimage of a Borel subgroup of $G/U_G$, with respect to which the $\mathbb G_m$ above gives a dominant coweight, under the projection $G\twoheadrightarrow G/U_G$ and take $h$ given by any lifting $\mathbb G_m\ra B$. As a non-parabolic example of such $G$ one can take any semidirect product $\mathbb G_m\ltimes U$ where $U$ is unipotent and $\mathbb G_m$ acts on $\mf u$ with strictly positive weights.
\end{itemize}

\item There are also natural examples that are more in the spirit of \Cref{thm:proper coarse moduli}. Namely, let $\pi\colon X\ra Y$ be a proper $G$-equivariant morphism where $Y$ is not necessarily smooth. Then, given a cocharacter $h\colon\mathbb G_m\ra G$ that satisfies $h(\Phi^+)>0$ for some $B\subset G$, we have that if $Y^+\twoheadrightarrow Y$ is a surjection and $Y^0$ is proper, $X$ satisfies the conditions of \Cref{thm: conical examples}. Indeed the induced map $X^0\ra Y^0$ is proper and so $X^0$ is proper. Also, given any point $x\in X(F)$, the image  $\pi(\mathbb G_m\cdot x)$ of its orbit is the orbit $\mathbb G_m\cdot \pi(x)$. Since $Y^+\twoheadrightarrow Y$ is a surjection, the limit $\lim_{t\ra 0} t\circ \pi(x)$ exists. This gives a diagram
$$
\xymatrix{\mathbb G_m \ar[r]^{\cdot\circ x}\ar[d]& X\ar[d]\\
\mathbb A^1\ar@{-->}[ru]\ar[r]^{\cdot \circ \pi(x)} & Y}
$$
and, due to the valuative criterion of properness, the lifting $\xymatrix{\mathbb A^1\ar@{-->}[r]& X}$, this way producing the limit of $t\circ x$ as $t\ra 0$. We get that $X^+\ra X$ is a surjection on $F$-points and that the $\mathbb G_m$-action given by $h$ is BB-complete.

This applies, in particular, to the case when $Y^0\simeq \Spec F$ and $G=\mathbb G_m$ (where we can assume $h=\mr{id}\colon \mathbb G_m\ra \mathbb G_m$). In this case we basically arrive at the definition of a conical resolution (see e.g. \cite{KubrakTravkin}). Namely, we have $Y\simeq Y^+ \ra \Spec F$ is  affine, so $Y\simeq \Spec A$; the induced $\mathbb Z$-grading on $A$ is such that $A\simeq A^{\le 0}$ and $A^0\simeq F$\footnote{Note the change of sign in the grading compared to \cite{KubrakTravkin}. In the case of a commutative group action there are two natural left actions on the space of functions on $Y$, induced either by the action of $g$ or $g^{-1}$ on $Y$. This is exactly the difference we are facing here.}. The map $\pi\colon X\ra \Spec A$ is proper, $X$ is smooth and the $\mathbb G_m$-action on $X$ agrees with the grading on $A$.  The geometry of such $X$ is the following: it is not proper itself, but it has a proper $\mathbb G_m$-equivariant map to $\Spec A$ so that the $\mathbb G_m$-action contracts it to the central fiber $\pi^{-1}(Y^0)$ which is proper over $F$. Note that even if $X$ is smooth, $\pi^{-1}(Y^0)$ can be singular (for example in the case of the minimal resolution of the $A_n$-singularity for $n>2$).
\end{enumerate}
\end{ex}

\subsection{$\Theta$-stratified stacks and the relation to the work of Teleman}
\label{sec:Teleman}

The example of $BB$-complete quotients by $\mbb G_m$ can be vastly generalized by the notion of a $\Theta$-stratified stack introduced recently by Halpern-Leistner (and studied in great detail in \cite{HL-prep}). All the stacks in this section are assumed to be derived and we also assume the base ring $R$ to be Noetherian and regular. Let $\mstack X$ be a derived stack over $R$ and assume that it is locally almost of finite presentation with affine diagonal. We also let $\Theta\coloneqq {[\mbb A^1/\mbb G_m]}$. One can define two mapping stacks associated to $\mstack X$: the stack of \textit{filtered objects} $\Filt(\mstack X)\coloneqq \ul\Map_{R}(\Theta, \mstack X)$ and the stack of \textit{graded objects} $\Grad(\mstack X)\coloneqq \ul\Map_{R}(B\mbb G_m, \mstack X)$. We have a stacky version of maps defined in \Cref{sec:G_m-actions}
$$
\xymatrix{\Grad(\mstack X)\ar@/^0.4pc/[r]^{\sigma}&\Filt(\mstack X) \ar@/^0.4pc/[l]^{\ev_0}\ar[r]^(.57){\ev_1}& \mstack X}
$$
induced by evaluations at $0\colon B\mbb G_m \hookrightarrow \Theta$ and $1\colon \Spec R\simeq [\left(\mbb A^1\setminus 0\right)/\mbb G_m]\ra \Theta$, and the natural projection $\Theta \ra B\mbb G_m$. Note that if $\mstack X$ is smooth (and thus classical) by \cite[Corollary 1.3.2.1]{HL-prep} the stack $\Filt(\mstack X)$ is also smooth and classical. A \textit{derived $\Theta$-stratum} $\mstack S$ is by definition a union of connected components of $\Filt(\mstack X)$ with the condition that $\ev_1 |_{\mstack S}\coloneqq \mstack S \ra \mstack X$ is a closed embedding. Let $\mstack Z\coloneqq \sigma^{-1}(\mstack S)\subset \Grad(\mstack X)$ be the centrum of $\mstack S$; $\ev_0$ restricts to a map $\mstack S \ra \mstack Z$.

\begin{defn}[A particular case of {\cite[Definition 1.10.1]{HL-prep}}]
	A finite \textit{$\Theta$-stratification} of $\mstack X$ indexed by a totally ordered finite set $I$ with a minimal element $0\in I$ is given by: 
	\begin{enumerate} \item  A collection of open substacks $\mstack X_{\le \alpha}\subset \mstack X$ with $\alpha\in I$ such that $\mstack X_{\alpha}\subset \mstack X_{\alpha'}$ if $\alpha< \alpha'$.
		
		\item  For each $\alpha>0$ a $\Theta$-stratum $\mstack S_\alpha\subset \Filt(\mstack X_\alpha)$ such that $\mstack X_{\le \alpha}\setminus\left( \cup_{\alpha'<\alpha} \mstack X_{\le \alpha'}\right)= \ev_1(\mstack S_\alpha)$. \item One should have $\mstack X=\cup_{\alpha\in I} \mstack X_\alpha$.
\end{enumerate}	
The minimal open stratum $\mstack X^{ss}\subset \mstack X$ is called the \textit{semistable locus}.
\end{defn}

Let $i_\alpha\colon \mstack S_\alpha \ra \mstack X_{\alpha}$ be the embedding induced by $\ev_1$. The pushforward $i_{\alpha*}$ has left $i^*_\alpha\colon \QCoh(\mstack X_{\le \alpha})\ra \QCoh(\mstack S_\alpha)$ and right $i_\alpha^!\colon \QCoh(\mstack X_{\le \alpha})\ra \QCoh(\mstack S_\alpha)$ adjoints. Also let $\Coh^-(-)$ denote the bounded above category of coherent sheaves. 

\begin{prop}\label{prop:Theta-stratified stacks are cohomologically proper}
	Let $\mstack X$ be a smooth Artin stack of finite type over $R$ with affine diagonal endowed with a finite $\Theta$-stratification. Assume that $\mstack X^{ss}$ and the centra $\mstack Z_\alpha$ are cohomologically proper over $R$. Then $\mstack X$ is cohomologically proper over $R$.
\end{prop}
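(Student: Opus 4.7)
The strategy is to induct on the length of the $\Theta$-stratification, using at each step a semiorthogonal decomposition of $\Coh(\mstack X_{\le\alpha})$ which isolates a single $\Theta$-stratum, in complete parallel with the proof of Proposition~\ref{prop:KN-stratum induction step}. More precisely, order $I=\{0<\alpha_1<\cdots<\alpha_N\}$ and note that $\mstack X^{ss}=\mstack X_{\le 0}$ is cohomologically proper by assumption. Assuming inductively that $\mstack X_{<\alpha}\coloneqq\cup_{\alpha'<\alpha}\mstack X_{\le\alpha'}$ is cohomologically proper, we aim to show that $\mstack X_{\le\alpha}$ is, so that the conclusion follows after $N$ steps with $\mstack X=\mstack X_{\le\alpha_N}$.

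For the inductive step, the first ingredient is the analogue of the semiorthogonal decomposition \eqref{decomposition}, which in the $\Theta$-stratified setting is provided by the general version of \cite[Theorem~2.10]{HL-GIT} carried out in \cite{HL-prep}: for each integer $w$ one has
\[
\Coh(\mstack X_{\le\alpha})=\langle\,\Coh_{\mstack S_\alpha}(\mstack X_{\le\alpha})_{<w},\ \mr{G}_w,\ \Coh_{\mstack S_\alpha}(\mstack X_{\le\alpha})_{\ge w}\,\rangle,
\]
where the weight conditions are measured by restriction along $\sigma\colon\mstack Z_\alpha\to\mstack S_\alpha$, and $\mr{G}_w$ is a full subcategory that is sent isomorphically to $\Coh(\mstack X_{<\alpha})$ by restriction $j^*\colon\Coh(\mstack X_{\le\alpha})\to\Coh(\mstack X_{<\alpha})$. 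The second ingredient is smoothness of $\mstack X_{\le\alpha}$, which allows us to identify $\Coh(\mstack X_{\le\alpha})$ with $\Perf(\mstack X_{\le\alpha})$, so that every coherent sheaf is dualizable and hence $\Hom$-complexes are computed by global sections of internal Homs. With these two ingredients in hand, the argument of Proposition~\ref{prop:KN-stratum induction step} transports verbatim: it suffices to check that each piece of the semiorthogonal decomposition is nearly proper, and for this I will use that (i)~$\mr{G}_w\simeq\Coh(\mstack X_{<\alpha})$ is nearly proper by the inductive hypothesis, and (ii)~every object of $\Coh_{\mstack S_\alpha}(\mstack X_{\le\alpha})$ has a finite filtration by pushforwards from $\mstack S_\alpha$, whence it is enough to know that $\mstack S_\alpha$ itself is cohomologically proper over $R$ (the relative dualizing complex $i_\alpha^!\mc O_{\mstack X_{\le\alpha}}$ is a shifted line bundle by smoothness, so $i_\alpha^!$ preserves coherence).

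This reduces the proposition to showing that each $\mstack S_\alpha$ is cohomologically proper over $R$. Here is where the main obstacle lies: we need a $\Theta$-stratum analogue of Lemma~\ref{lem: from + to 0}, stating that the evaluation map $\ev_0\colon\mstack S_\alpha\to\mstack Z_\alpha$ has $\ev_{0*}$ sending $\Coh(\mstack S_\alpha)$ into $\Coh^+(\mstack Z_\alpha)$. The map $\ev_0$ comes from the projection $\Theta\to B\mathbb G_m$; locally on $\mstack Z_\alpha$ the stratum $\mstack S_\alpha$ is a quotient of a vector bundle by a unipotent group scheme with the grading having only positive weights (this is the content of the intrinsic description of $\Theta$-strata in \cite[Proposition~1.4.1]{HL-prep}: $\mathbb L_{\mstack S_\alpha/\mstack X_{\le\alpha}}$ and hence the relative tangent complex of $\ev_0$ have only positive weights on $\mstack Z_\alpha$). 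By the same elementary argument used for the affine fibration in Lemma~\ref{lem: from + to 0} -- $\mathbb G_m$-invariants of a finitely generated positively graded module over a Noetherian ring is a finitely generated module in degree zero -- one obtains that $\ev_{0*}$ is $t$-exact and preserves coherent sheaves in each cohomological degree, hence sends coherent sheaves into $\Coh^+(\mstack Z_\alpha)$. Combined with the assumed cohomological properness of $\mstack Z_\alpha$ and composition of cohomologically proper morphisms, this gives cohomological properness of $\mstack S_\alpha$ and completes the induction.

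The main technical obstacle, then, is setting up and citing the two inputs from \cite{HL-prep}: the semiorthogonal decomposition for an arbitrary smooth $\Theta$-stratum, and the local structural description of $\ev_0\colon\mstack S_\alpha\to\mstack Z_\alpha$ as a quotient of a positively graded affine fibration. Everything else is a faithful reorganization of the $\mathbb G_m$-quotient argument of \S\ref{sec:BB-complete}.
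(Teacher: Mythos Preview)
Your overall architecture matches the paper's proof: induction on the number of strata, semiorthogonal decomposition from \cite{HL-prep} (the paper cites Theorem~1.9.2 and Proposition~2.1.2 there, noting that smoothness of $\mstack X$ and $\mstack S$ lets one pass from $\Coh^-$ to $\Coh$), identification $\Coh\simeq\QCoh^\perf$ by smoothness, and reduction to showing that $\ev_0\colon\mstack S_\alpha\to\mstack Z_\alpha$ is cohomologically proper. The paper also justifies that $i^!$ preserves $\Coh$ via the Koszul resolution of $i_*\mc O_{\mstack S}$ after smooth descent, which is essentially equivalent to your remark about the dualizing complex.

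The one place where your argument is imprecise is the verification that $\ev_0$ is cohomologically proper. You assert a local structure theorem---``locally on $\mstack Z_\alpha$ the stratum $\mstack S_\alpha$ is a quotient of a vector bundle by a unipotent group scheme with only positive weights''---and cite \cite[Proposition~1.4.1]{HL-prep} for it. That proposition only gives the intrinsic characterization $\mathbb L_{\mstack S/\mstack X}\in\QCoh(\mstack S)^{\ge 1}$; it does not by itself yield the local presentation you describe, and for a general smooth stack with affine diagonal there is no obvious \'etale-local chart of this shape on $\mstack Z_\alpha$. The paper instead invokes \cite[Lemma~6.11]{Alper-HL-Heinloth} to produce a smooth cover $[X/\mathbb G_m]\to\mstack X$ with $X$ an affine scheme such that the preimage of $\mstack S$ is a union of connected components of $[X^+/\mathbb G_m]$; then by base change the cohomological properness of $\ev_0$ reduces literally to Lemma~\ref{lem: from + to 0}. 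You should replace your local-structure sketch with this reference.
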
 
\begin{proof}
	By induction on $|I|$ we can reduce to the case of a single $\Theta$-stratum $\mstack S$ with the complement given by $\mstack X^{ss}$. The stratum $\mstack S$ is a connected component of $\Filt(\mstack X)$ and thus is also smooth over $R$. Since both $\mstack X$ and $\mstack S$ are smooth, we get that $i^*$ restricts to a functor between $\Coh(\mstack X)\simeq \QCoh(\mstack X)^\perf$ and $\Coh(\mstack S)\simeq \QCoh(\mstack S)^\perf$. Also by smoothness the direct image $i_*\mc O_{\mstack S}\in \QCoh(\mstack X)$ is perfect. Indeed, by descent this is enough to check after taking a pull-back on a smooth cover by a smooth $R$-scheme of finite type where we get a regular embedding $i'\colon S'\ra X'$ which is automatically a locally complete intersection in $X'$. After refining the cover in Zariski topology, we can assume the intersection is actually complete and resolve $i'_*\mc O_{S'}$ by the Koszul complex. This shows that the functor $i^!$ also restricts to a functor from $\QCoh(\mstack X)^\perf$ to $\QCoh(\mstack S)^\perf$. Indeed, one has a formula $i^!\mc F\simeq \ul\Hom_{\QCoh(\mstack X)}(i_*\mc O_{\mstack S},\mc F)$ where the latter has support on $\mstack S$ and is considered as an object of $\QCoh(\mstack S)$. By smooth descent for $\ul \Hom$ (\cite[Lemma A.1.1]{Preygel}) we can reduce to the case  $\Spec A \simeq S\ra X\simeq \Spec B$ of smooth affine schemes over $R$, where the sheaf $\ul \Hom_{\QCoh(X)}(i_*\mc O_S,\mc O_X)$ is computed by the dual to the Koszul complex and thus is bounded and has finitely generated cohomology modules (and this way belongs to $\Coh(A)$). 
	
	Given all this, by a similar argument to \Cref{prop:KN-stratum induction step} it is enough to get a suitable semiorthogonal decomposition of $\Coh(\mstack X)\simeq \QCoh(\mstack X)^\perf$ in terms of $\Coh_{\mstack S}(\mstack X)$ and $\Coh(\mstack X^{ss})$ and show that all $\Hom$'s in $\Coh(\mstack S)$ lie in $\Coh^+(R)$. Since $\Hom_{\Coh(\mstack S)}(\mc F,\mc G)\simeq \RG(\mstack S,\ul \Hom_{\QCoh(\mstack S)}(\mc F,\mc G))$ with $\ul \Hom_{\QCoh(\mstack S)}(\mc F,\mc G)\in \Coh(\mstack S)$ and $\mstack Z$ is cohomologically proper for the latter point it is enough to show that $\ev_0\colon \mstack S \ra \mstack Z$ is cohomologically proper. By descent and base change this can be checked on a (suitable) smooth cover of $\mstack X$; namely we can use \cite[Lemma 6.11]{Alper-HL-Heinloth} to produce a smooth cover $[X/\mbb G_m]\ra \mstack X$ where $X$ is an affine scheme and such that the preimage of $\mstack S$ is given by a union of connected components of $[X^+/\mbb G_m]$ (in the terminology of Sections \ref{sec:G_m-actions} and \ref{sec:BB-complete}). The centrum $\mstack Z$ is then given by a union of the corresponding components of $[X^0/\mbb G_m]$ and the needed statement follows from \Cref{lem: from + to 0}.
	
	It remains to deal with the semiorthogonal decomposition. By \cite[Theorem 1.9.2]{HL-prep} for each integer $w\in \mbb Z$ we have a decomposition for the bounded above category $\Coh^-(\mstack X)$
	$$\Coh^-(\mstack X)=\langle \ \! \Coh^-_{\mstack S}(\mstack X)_{< w}, \mr{G}_w^- , \Coh_{\mstack S}^-(\mstack X)_{\ge w} \ \!\rangle$$
	in terms of certain subcategories $\Coh_{\mstack S}(\mstack X)_{< w},\Coh_{\mstack S}(\mstack X)_{\ge w} \subset  \Coh_{\mstack S}(\mstack X)$ forming a semiorthogonal decomposition of $ \Coh_{\mstack S}(\mstack X)$ on its own and with $\mr{G}_w^-$ being isomorphic to $\Coh^-(\mstack X^{ss})$ via the restriction $\mc F\mapsto \mc F|_{\mstack X^{ss}}$. This decomposition holds without extra assumptions on $\mstack X$, however if we assume $\mstack X,\mstack S$ are smooth (and thus in particular the embedding $i\colon \mstack S \ra \mstack X$ is regular) the proof of \cite[Proposition 2.1.2]{HL-prep} goes through without any changes, giving the analogous decomposition for $\QCoh^\perf$ (and thus also for $\Coh$).
\end{proof}

Let's now assume that we have a smooth finite type stack $\mstack X$ over a characteristic 0 field $F$ endowed with a $\Theta$-stratification. Filtering $F$ by regular Noetherian rings $R\subset F$ as in \Cref{sec:Examples of spreadable Hodge-proper stacks} we get a smooth spreading $\mstack X_R$; we can also spread the open substacks $\mstack X_{\le \alpha}$ to get open substacks $\mstack X_{\le \alpha, R}\subset \mstack X_R$. Applying the following lemma inductively we can in fact assume that this gives a $\Theta$-stratification. Before stating the lemma note that one has a natural monoid structure on $\Theta$ induced by the multiplication map $\mbb A^1\times \mbb A^1 \ra \mbb A^1$. Having a stack $\mstack Y$ with an action of $\Theta$ in the homotopy category one gets a baric structure on $\QCoh(\mstack Y)$ (see \cite[Section 1.1]{HL-prep}); in particular for each weight $w\in \mbb Z$ one has a semiorthogonal decomposition $\langle\ \! \! \QCoh(\mstack Y)^{\ge w}, \QCoh(\mstack Y)^{<w}\ \! \! \rangle$. Moreover if $\mstack Y$ is smooth over a regular Noetherian base ring $R$ this also defines a decomposition for coherent sheaves: $\Coh(\mstack Y) =\langle\ \! \! \Coh(\mstack Y)^{\ge w}, \Coh(\mstack Y)^{<w}\ \! \! \rangle$ (\cite[Proposition 1.2.1(3)]{HL-prep}); we will denote by $\beta^{\ge w},\beta^{<w}$ the corresponding truncation functors. We note that any $\Theta$-stratum $\mstack S\subset \Filt(\mstack X)$ comes with a natural $\Theta$-action.

\begin{lem}\label{lem: Theta-strata spread out}
	Let $\mstack X_F$ be a smooth Artin stack of finite type over $F$ with affine diagonal and let $i_F\colon \mstack S_F\hookrightarrow \mstack X_F$ be a $\Theta$-stratum. Then one has a spreading $i_R\colon \mstack S_R\hookrightarrow \mstack X_R$ which is a $\Theta$-stratum as well.
\end{lem}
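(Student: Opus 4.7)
The plan is to spread the defining data of $\mstack S_F$ as a clopen substack of $\Filt(\mstack X_F)$ using \Cref{fp_over_filtered_limit}, and then to verify the required properties survive over $R$ (after possibly enlarging it) via the intrinsic characterization of $\Theta$-strata from \cite[Proposition 1.4.1]{HL-prep} in terms of a baric condition on the cotangent complex. Throughout I will rely on two facts: that formation of the mapping stack $\ul\Map_R(\Theta,-)$ commutes with base change, and that the cotangent complex of a finite-presentation morphism between smooth stacks is perfect, so that conditions such as \'etaleness or baric $t$-amplitude can be propagated from a geometric fiber by semicontinuity.

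First, applying \Cref{fp_over_filtered_limit} with $\mc P$ the class of smooth morphisms, I spread $\mstack X_F$ to a smooth finite-presentation Artin stack $\mstack X_R$ with affine diagonal over some $R\subset F$. I then form $\Filt(\mstack X_R)\coloneqq \ul\Map_R(\Theta,\mstack X_R)$, which by the relative version of \cite[Corollary 1.3.2.1]{HL-prep} is smooth and locally of finite type over $R$ and which base-changes to $\Filt(\mstack X_F)$. As a closed substack of the finite-type $\mstack X_F$, the stratum $\mstack S_F$ is itself of finite type, so the closed immersion $i_F$ spreads to a closed immersion $i_R\colon\mstack S_R\hookrightarrow\mstack X_R$. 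The $\Theta$-stratum structure provides a lift $f_F\colon\mstack S_F\to\Filt(\mstack X_F)$ of $i_F$, which by adjunction corresponds to a morphism $\mstack S_F\times_F\Theta\to\mstack X_F$ between finite-presentation stacks; this spreads to $\mstack S_R\times_R\Theta\to\mstack X_R$, and by adjunction yields a lift $f_R\colon\mstack S_R\to\Filt(\mstack X_R)$ of $i_R$.

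It remains to verify that $f_R$ realizes $\mstack S_R$ as a union of connected components of $\Filt(\mstack X_R)$. Since $\ev_1\circ f_R = i_R$ is a monomorphism, so is $f_R$, and hence it suffices to check that $f_R$ is \'etale: an \'etale monomorphism is automatically an open immersion, and the closedness of its image will follow from the intrinsic baric characterization combined with the closedness of $i_R$. For \'etaleness, the complex $\mbb L_{f_R}$ is perfect (source and target being smooth, $f_R$ of finite presentation) and vanishes on the $F$-fiber, so shrinking $\Spec R$ using semicontinuity of the vanishing locus of a perfect complex achieves $\mbb L_{f_R}\simeq 0$ everywhere. Finally, the baric condition $\mbb L_{\mstack S_R/\mstack X_R}\in\QCoh(\mstack S_R)^{\ge 1}$ of \cite[Proposition 1.4.1]{HL-prep} is inherited from the $F$-fiber and spreads by an analogous semicontinuity argument checked on a smooth cover. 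The main obstacle is precisely this control over the baric (weight) decomposition in families: one must ensure that the spread $\Theta$-action is compatible with the weight decomposition of the cotangent complex and that the weight bound $\ge 1$ is preserved after pulling back from $F$; perfection of $\mbb L$ together with smoothness of $\Filt(\mstack X_R)$ is exactly what makes this possible.
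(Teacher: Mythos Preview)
Your approach is essentially the same as the paper's: both spread the closed embedding and the $\Theta$-structure, and both finish by verifying the baric condition $\mathbb L_{\mstack S_R/\mstack X_R}\in\QCoh(\mstack S_R)^{\ge 1}$ via perfectness of the cotangent complex and vanishing on the $F$-fibre. The paper is a bit more streamlined in two respects.

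First, the paper spreads the $\Theta$-action $a_F\colon \Theta_F\times\mstack S_F\to\mstack S_F$ directly, rather than your adjoint map $\mstack S_F\times_F\Theta\to\mstack X_F$. Your map is only $i_F\circ a_F$; it does not by itself give a $\Theta$-action on $\mstack S_R$, which is what \cite[Proposition 1.4.1]{HL-prep} takes as input. This is easily repaired (spread $a_F$ itself, or spread the factorisation through $\mstack S_R$ using that $i_R$ is a closed immersion), but as written you are invoking the intrinsic criterion without having constructed the action it requires.

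Second, your \'etaleness step is superfluous. The intrinsic characterisation already says that, given a closed embedding $i_R$ with $\Theta$-action, the induced $\phi\colon\mstack S_R\to\Filt(\mstack X_R)$ is automatically a closed embedding, and it is an open immersion (hence picks out a union of components) exactly when the baric condition $\beta^{<1}\mathbb L_{\mstack S_R/\mstack X_R}\simeq 0$ holds. So once you have spread the action, the single check on $\mathbb L_{\mstack S_R/\mstack X_R}$ suffices; there is no need to analyse $\mathbb L_{f_R}$ separately. The paper simply observes that $\beta^{<1}\mathbb L_{\mstack S_R/\mstack X_R}$ is coherent (both stacks smooth), vanishes over $F$, and hence vanishes after a finite localisation of $R$ by constructibility on a smooth cover---which is the same endgame you reach.
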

\begin{proof}
	The key step here is to use the intrinsic description of $\Theta$-strata (\cite[Section 1.4]{HL-prep}). Namely, over any Noetherian base a closed substack $i\colon \mstack S\hookrightarrow \mstack X$ with an action $a\colon \Theta \times \mstack S\ra \mstack S$ gives a map $\phi\colon \mstack S \ra \Filt(\mstack X)$ defined as a composition $$\mstack S\xra{a^*} \ul{\Map}(\Theta, \mstack S)\xra{\circ i} \ul{\Map}(\Theta, \mstack X).$$ By \cite[Proposition 1.4.1]{HL-prep} if $\mstack X$ is locally of finite presentation with affine diagonal the map $\phi$ is also  a closed embedding; moreover, $\phi\colon \mstack S \hookrightarrow \Filt(\mstack X)$ defines a $\Theta$-stratum if and only if $\mbb L_{\mstack S/\mstack X}\in \QCoh(\mstack S)^{\ge 1}$. 
	
	The stack $\mstack X_F$ is smooth, thus by \cite[Corollary 1.3.2.1]{HL-prep} $\Filt(\mstack X_F)$ is smooth and so $\mstack S_F$ is smooth. The stack $\mstack S_F$ is also a closed substack of a stack of finite type and so is of finite type over $F$ as well. Using \Cref{fp_over_filtered_limit} we can spread the natural action $a_F\colon \Theta_F\times \mstack S_F\ra \mstack S_F$ and the closed embedding $i_F\colon \mstack S_F\hookrightarrow \mstack X_F$ to get an action $a_R\colon \Theta_R\times \mstack S_R\ra \mstack S_R$ and a closed embedding $i_R\colon \mstack S_R\hookrightarrow \mstack X_R$. Moreover we can assume $\mstack S_R,\mstack X_R$ are smooth and of finite type over $R$. By the above description $\mstack S_R$ is a $\Theta$-stratum if and only if $\mbb L_{\mstack S_R/\mstack X_R}\in \QCoh(\mstack S_R)^{\ge 1}$; this is equivalent to $\beta^{<1}(\mbb L_{\mstack S_R/\mstack X_R})\simeq 0$. Note that by smoothness $\mbb L_{\mstack S_R/\mstack X_R}$ and thus also $\beta^{<1}(\mbb L_{\mstack S_R/\mstack X_R})$ are coherent. Since the restriction to $\mstack X_F$ of $\beta^{<1}(\mbb L_{\mstack S_R/\mstack X_R})$ is given by $\beta^{<1}(\mbb L_{\mstack S_F/\mstack X_F})$ which is zero, we get that $\beta^{<1}(\mbb L_{\mstack S_R/\mstack X_R})\simeq 0$ after a finite localization of $R$ (indeed this is enough to check for a pull-back to a smooth cover by a scheme, where this follows from the Chevalley's consructibility theorem).
\end{proof}
\begin{rem}
	We needed to use the intrinsic description of the $\Theta$-strata in \Cref{lem: Theta-strata spread out} because the stack $\Filt(\mstack X)$ is only locally finitely presentable; thus we can't directly apply \Cref{fp_over_filtered_limit} to spread $\mstack S_F\ra \Filt(\mstack X_F)$ or compare $\Filt(\mstack X_R)$ with some other spreading $\Filt(\mstack X)_R$.
\end{rem}

From the discussion above we deduce the following result: 

\begin{cor} \label{cor:degeneration for Theta-stratified stacks}
	Let $\mstack X$ be a smooth Artin stack of finite type with affine diagonal over $F$ and let $\{\mstack X_{\le \alpha},\mstack S_{\alpha}\}$ be a finite $\Theta$-stratification of $\mstack X$. If the centra $\mstack Z_\alpha$ of $\Theta$-strata and the semistable locus $\mstack X^{ss}$ are cohomologically properly spreadable then $\mstack X$ is also cohomologically properly spreadable. In particular the Hodge-de Rham spectral sequence degenerates for $\mstack X$.
\end{cor}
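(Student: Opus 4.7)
The plan is to spread the entire $\Theta$-stratification to a suitable $R\subset F$ so that Proposition~\ref{prop:Theta-stratified stacks are cohomologically proper} can be applied directly to obtain a cohomologically proper spreading of $\mstack X$; the ``in particular'' statement then follows from Proposition~\ref{coh_proper_stronger} and Theorem~\ref{Hodge_degeneration}.

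First, using Corollary~\ref{spreading_with_properties_for_stacks}, I would spread $\mstack X$ to a smooth finite-type Artin stack $\mstack X_R$ over some regular Noetherian $R\subset F$ and, by induction on $|I|$ and successive enlargement of $R$, spread each open substack $\mstack X_{\leq \alpha}\subset \mstack X$ to an open substack $\mstack X_{\leq \alpha, R}\subset \mstack X_R$ respecting the inclusions $\mstack X_{\leq \alpha}\subset \mstack X_{\leq \alpha'}$ for $\alpha<\alpha'$. The key geometric input is Lemma~\ref{lem: Theta-strata spread out}, which I would apply inductively to each closed stratum $\mstack S_\alpha\subset \mstack X_{\leq \alpha, F}$: enlarging $R$ finitely many times, I obtain a $\Theta$-stratum $\mstack S_{\alpha, R}\subset \Filt(\mstack X_{\leq \alpha, R})$ whose image under $\ev_1$ is the closed complement of $\cup_{\alpha'<\alpha} \mstack X_{\leq \alpha', R}$ in $\mstack X_{\leq \alpha, R}$. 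Again enlarging $R$ if needed (only finitely many $\alpha$ are involved), this assembles into a genuine finite $\Theta$-stratification of $\mstack X_R$.

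Next, I need to bring in the assumed cohomologically proper spreadings of $\mstack X^{ss}$ and of each centrum $\mstack Z_\alpha = \sigma^{-1}(\mstack S_\alpha)$. By the uniqueness of spreadings (a direct consequence of Theorem~\ref{fp_over_filtered_limit}), after a further finite localization of $R$ I may identify the hypothesized cohomologically proper spreading of $\mstack X^{ss}$ with the semistable locus $\mstack X^{ss}_R$ of the spread stratification, and similarly identify each assumed spreading of $\mstack Z_\alpha$ with $\sigma_R^{-1}(\mstack S_{\alpha, R})\subset \Grad(\mstack X_{\leq \alpha, R})$. Thus over some $R$ all the hypotheses of Proposition~\ref{prop:Theta-stratified stacks are cohomologically proper} are satisfied, yielding that $\mstack X_R$ is cohomologically proper over $R$. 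This exhibits $\mstack X_R$ as a cohomologically proper spreading of $\mstack X$, and Theorem~\ref{Hodge_degeneration} (via Proposition~\ref{coh_proper_stronger}) then gives the Hodge-to-de Rham degeneration.

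The main obstacle is the bookkeeping in the second paragraph: ensuring that the two \emph{a priori} independent spreadings of $\mstack Z_\alpha$ --- the one produced as the centrum of the spread $\Theta$-stratum $\mstack S_{\alpha, R}$, and the one given by hypothesis --- actually coincide after a common enlargement of $R$, and likewise for $\mstack X^{ss}$. This is where Theorem~\ref{fp_over_filtered_limit} is essential; it guarantees that any two finitely presented spreadings of the same object become equivalent over a sufficiently large $R$, so the finitely many compatibilities required can all be arranged simultaneously.
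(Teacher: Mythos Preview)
Your proposal is correct and follows essentially the same route as the paper: spread the $\Theta$-stratification inductively via Lemma~\ref{lem: Theta-strata spread out}, then use uniqueness of spreadings (Theorem~\ref{fp_over_filtered_limit}) to arrange that the centra $\mstack Z_{\alpha,R}$ and $\mstack X^{ss}_R$ are cohomologically proper, and conclude with Proposition~\ref{prop:Theta-stratified stacks are cohomologically proper}. The paper compresses your bookkeeping paragraph into a single sentence and additionally notes that $\mstack Z_{\alpha,R}$ is a closed substack of $\mstack S_{\alpha,R}$ (hence of finite type), which is precisely what makes the appeal to Theorem~\ref{fp_over_filtered_limit} legitimate; you might make this explicit rather than phrasing the centrum as $\sigma_R^{-1}(\mstack S_{\alpha,R})\subset \Grad(\mstack X_{\leq\alpha,R})$, since $\Grad$ is only locally of finite presentation.
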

\begin{proof}
 By induction on $|I|$ using \Cref{lem: Theta-strata spread out} we can spread out the $\Theta$-stratification (with the properties as in the proof of the latter). The centrum $\mstack Z_{\alpha,R}$ is a closed substack of $\mstack S_{\alpha, R}$, thus it is also of finite type and is a spreading of $\mstack Z_\alpha$. Enlarging $R\subset F$ so that all $\mstack Z_{\alpha,R}$ and $\mstack X^{ss}_R$ become cohomologically proper we then use \Cref{prop:Theta-stratified stacks are cohomologically proper} to get that so is $\mstack X_R$.
\end{proof}

In \cite{HL-instability} various ways of constructing a $\Theta$-stratifications on a stack are discussed in great detail. We will stop on a single example given by a KN-stratification of a global quotient stack. 

\begin{ex}\label{ex:Kempf-Ness}
	In  \cite{Teleman} Teleman showed the degeneration of the Hodge-to-de Rham spectral sequence for the quotient stacks $[X/G]$ with $G$ reductive under the condition that the action on $X$ is KN-complete (see Section 1 of \textit{loc.cit.} for the definition of a KN-complete action and Section 7 for the proof of degeneration). We comment on how to deduce his results (in a slightly more general form) from \Cref{Hodge_degeneration} and \Cref{cor:degeneration for Theta-stratified stacks}. 
	
	A KN-stratification of a variety $X$ with a $G$-action is a stratification
$$
X=X^{\mr{ss}}\cup \bigcup_{\alpha\in I} S_\alpha
$$
by locally closed $G$-invariant subschemes satisfying the following properties:

\begin{itemize}
	\item 
For each $\alpha$ there should exist a one-parameter subgroup $\lambda_\alpha\colon \mathbb G_m\hookrightarrow G$; let $L_\alpha\subset G$ be the centralizer of $\lambda_\alpha(\mathbb G_m)$. The KN-strata $S_\alpha$ should come as follows: for each $\alpha\in I$ there should exist an open subvariety $Z_\alpha\subset X^{\lambda_\alpha(\mathbb G_m)}$ of the fixed locus of $\lambda_\alpha(\mathbb G_m)$ such that $S_\alpha$ is given by the $G$-span $G\cdot Y_\alpha$ of the corresponding attracting locus
$$
Y_\alpha\coloneqq \{x\in X  | \lim_{t\ra 0}\lambda_\alpha(t)\cdot x\in Z_\alpha\}.
$$
The subvariety $Z_\alpha$ is called the centrum of $S_\alpha$ and it is endowed with the natural action of the centralizer $L_\alpha$. The attracting locus $Y_\alpha$ is endowed with the natural action of the (automatically parabolic) subgroup $P_\alpha\subset G$ of elements $p\in G$ for which the limit of $\lambda_\alpha(t)p\lambda_\alpha(t)^{-1}$ in $G$ as $t\ra 0$ exists.

\item Each KN-stratum $S_\alpha$ should satisfy one further property: namely, the natural action map $G\times Y_\alpha\ra X$ should induce an isomorphism $S_\alpha\simeq G\overset{P_\alpha}{\times} Y_\alpha$. In the context of GIT the KN-stratification usually comes as follows: the centra $Z_\alpha\subset X^{\lambda_\alpha(\mathbb G_m)}$ are the semistable loci of the action of $L_\alpha'\coloneqq L_\alpha/\lambda_\alpha(\mathbb G_m)$ on $X^{\lambda_\alpha(\mathbb G_m)}$; we note that in this case the $L_\alpha$-action on $Z_\alpha$ is automatically locally linear and thus \Cref{thm:proper coarse moduli} applies to $\mstack Z_\alpha\coloneqq [Z_\alpha/L_\alpha]$. We will assume further on that we are in this setting.

\end{itemize}

A KN-stratification is called \textit{complete} if the categorical quotients  $Z_\alpha/\!/ L_\alpha$ and $X^{\mr{ss}}/\!/G$ are projective. We will call it \textit{locally linear} if the actions of $L_\alpha$ on $Z_\alpha$ and the action of $G$ on $X^{\mr{ss}}$) are locally.

Let $\mstack X\coloneqq [X/G]$, $\mstack X^{\mr{ss}}\coloneqq [X^{\mr{ss}}/G]$, $\mstack S_i\coloneqq[S_\alpha/G]\simeq [Y_i/P_i]$ and $\mstack Z_\alpha\coloneqq [Z_\alpha/L_i]$. Find a total ordering\footnote{It is not hard to see that such a total ordering exists for any stratification. In the GIT "projective-over-affine" case it usually comes via the values of the Hilbert-Mumford potential. Examples of more general potentials which apply to other situations can be found in \cite[Section 4]{HL-instability}.} on $I$ such that for $X_{\le \alpha}\coloneqq X\setminus \cup_{\alpha'>\alpha} S_{\alpha'}$ the embedding $i_\alpha\colon S_\alpha \rightarrow X_{\le \alpha}$ is closed. By the description of $\Theta$-strata in a quotient stack (\cite[Theorem 1.37]{HL-instability}) applied to $\mstack X_{\le \alpha}$ one can see that $\mstack S_\alpha$ is naturally a $\Theta$-stratum and that $\mstack Z_\alpha$ is its centrum. 
 Then, given the KN-stratification is complete and locally linear, by \Cref{thm:proper coarse moduli} we get that the stacks $\mstack Z_{\alpha}$ and $\mstack X^{ss}$ are cohomologically properly spreadable. Thus by \Cref{cor:degeneration for Theta-stratified stacks} $\mstack X$ is also cohomologically properly spreadable and the Hodge-de Rham spectral sequence degenerates for it. 
 
 Note that the same proof works if the categorical quotients $Z_\alpha/\!/ L_\alpha$ and $X^{\mr{ss}}/\!/G$ are proper but not necessarily projective.
\end{ex}

\begin{rem} In \cite{HLP_equiv_noncom} the non-commutative Hodge-to-de Rham degeneration was proved for a slightly more general definition of a KN-complete stratification: namely one does not need to assume that the $L_i$-action on $Z_i$ is locally linear, only that there exists a good quotient $q\colon [Z_i/L_i]\ra Z_i/\!/L_i$. In this case the strata are not necessarily covered by \Cref{thm:proper coarse moduli} (and the above strategy) but we still hope that they could be cohomologically properly spreadable.  
More generally, it would be interesting to answer the following question:
\begin{question}
Let $q\colon \mstack Y \ra Y$ be a good moduli space (e.g. see the Definition in Section 1.2 of \cite{Alper}) and assume that $Y$ is a proper algebraic space. Is it true that $\mstack Y$ is cohomologically properly spreadable?
\end{question}

The notion of a good moduli space does not spread out well unless the stabilizers are \textit{nice}, i.e. extension of a finite group by a torus. Thus we think it would be very interesting to understand if the property of having a good proper moduli space in characteristic 0 implies any cohomological properness for its mixed characteristic spreadings (as it happens in the case of $BG$ for a reductive $G$).
\end{rem}

Motivated by Questions 1.3.2 and 1.3.3 of \cite{RelaxedProperness}, one can also ask the following: 

\begin{question}\label{que: is formally proper spreadable?}
Let $\mstack X$ be a formally proper stack (in the sense of Definition 1.1.3 of \cite{RelaxedProperness}) over $F$. Is $\mstack X$ cohomologically properly spreadable?
\end{question}

\appendix 
\numberwithin{thm}{section}
\section{Computation of $H^*(B\mathbb G_a,\mc O_{B\mathbb G_a})$ over $\mathbb Z$}\label{appendix:BG_a}
In this section we compute cohomology (with coefficients in the structure sheaf) of the classifying stack $B\mathbb G_a$ of the additive group $\mathbb G_a$ over the ring of integers $\mathbb Z$. Unfortunately, were not able to locate this result in the literature, so we do the computation here based on the Jantzen's computation of cohomology of $B\mathbb G_{a,\mathbb F_p}$. This result is included for completeness only and will not be used anywhere else in the paper.

We start by constructing sufficiently many elements in the first few cohomology groups of $\mc O_{B\mathbb G_a}$, the rest of the computation will then unravel from there. We consider the action of $\mathbb G_m$ on $\mathbb G_a$ with $t\in \mathbb G_m$ acting on the variable $x$ in $\mc O(\mathbb G_a)\simeq \mathbb Z[x]$ by $t\colon x \mapsto t^2x$ (note the square in the formula). This makes  $H^*(B\mathbb G_a,\mc O_{B\mathbb G_a})$ into a $\mathbb G_m$-representation\footnote{More precisely we should take the corresponding semidirect product $\mathbb G_a \rtimes \mathbb G_m$ with the projection $p\colon \mathbb G_a \rtimes \mathbb G_m\ra \mathbb G_m$ and then consider the direct image $p_*\mc O_{B(\mathbb G_a \rtimes \mathbb G_m)}\in \QCoh(B\mathbb G_m)$; by base change its fiber over the point $\Spec \mathbb Z\ra B\mathbb G_m$ is given by $R\Gamma(B\mathbb G_a, \mc O_{B\mathbb G_a})$.}, thus providing an extra $\mathbb Z$-grading which we denote by ${H^*(B\mathbb G_a,\mc O_{B\mathbb G_a})}_*$ using the lower indexing.
	
	The cohomology ${H^*(B\mathbb G_a,\mc O_{B\mathbb G_a})}$ is the same as the cohomology of $\mathbb G_a$ with coefficients in the trivial module $\mathbb Z$. We can compute it via the standard complex $C^\bullet(\mathbb G_a,\mathbb Z)\coloneqq \mathbb Z[\mathbb G_a^\bullet]$ (see \cite[Section 4.14]{Jantzen}):
	$$
	0\ra \mathbb Z \xra{d_0} \mathbb Z[x] \xra{d_1} \mathbb Z[y,z] \xra{d_2} \ldots.
	$$
	The action of $\mathbb G_m$ extends to $C^\bullet(\mathbb G_a,\mathbb Z)$ giving a $\mathbb Z$-grading which on each term $\mathbb Z[\mathbb G_a^n]\simeq \mathbb Z[x_1,\ldots, x_n]$ is given by the doubled degree of a polynomial. This splits $C^\bullet(\mathbb G_a,\mathbb Z)$ as a direct sum of graded components $C^\bullet(\mathbb G_a,\mathbb Z)_{n}$ with $i\ge 0$. Note that all non-zero components have even weight.
	
	The 0-th component $C^\bullet(\mathbb G_a,\mathbb Z)_{0}$ has $\mathbb Z$ in every component and is just the complex associated to the constant simplicial set $\mathbb Z$; thus $C^\bullet(\mathbb G_a,\mathbb Z)_{0}\simeq \mathbb Z[0]$. The second graded component $C^\bullet(\mathbb G_a,\mathbb Z)_{2}$ looks as
	$$
	0\ra 0\ra \mathbb Z\cdot x \ra \mathbb Z\cdot x\oplus \mathbb Z\cdot y \ra \ldots 
	$$
	and is the complex associated to the simplicial interval $\Delta_1$ (or rather the corresponding free abelian group) shifted by 1; thus  $C^\bullet(\mathbb G_a,\mathbb Z)_{2}\simeq \mathbb Z[-1]$. Let $v_1$ be the corresponding generator of $H^1(\mathbb G_a,\mathbb Z)_{2}$. 
	
	For any $n\in \mathbb N$ consider $\Phi_n(y,z)\coloneqq d_1(x^n)=(y-z)^n-y^n+z^n$ as an element of $C^2(\mathbb G_a,\mathbb Z)_{2n}$. Note that $x^n$ is the generator of $C^1(\mathbb G_a,\mathbb Z)_{2n}$ and thus, first, $H^1(\mathbb G_a,\mathbb Z)_{2n}=0$ unless $n=1$ and, second, $\Phi_n(y,z)$ generates the group of coboundaries $B^2(\mathbb G_a,\mathbb Z)_{2n}\subset C^2(\mathbb G_a,\mathbb Z)_{2n}$ over $\mathbb Z$. In particular $H^1(\mathbb G_a,\mathbb Z)\simeq \mathbb Z v_1$. Note also that since ${p}\vert{p^i\choose{k}}$ if $0<k<p^i$, $\Phi_{p^i}(y,z)$ is divisible by $p$ for any $i\ge 1$; moreover $d_2\left(\frac{\Phi_{p^i}(x,y)}{p}\right)=0$ since $d_2(\Phi_{p^i}(x,y))=0$ and all terms in the complex are free $\mathbb Z$-modules. Thus for any prime $p$ and $i>0$ we get a class $v_{p_i}\coloneqq \left[\frac{\Phi_{p^i}(y,z)}{p}\right]\in H^2(\mathbb G_a,\mathbb Z)_{p^i}$ such that $p\cdot v_{p_i}=0$. This way, we get a map 
	\begin{equation}\label{eq:chi}
	\chi\colon (\mathbb Z\oplus \mathbb Zv_1) \otimes_{\mathbb Z}\Sym_{\mathbb Z}^*\left(\bigoplus_{p}\mathbb F_p v_p\oplus \mathbb F_pv_{p^2}\oplus \ldots\right)\ra H^*(\mathbb G_a,\mathbb Z)
	\end{equation}
	which is an isomorphism on $H^1$ and an injection on $H^2$. In the context of Hodge-properness we see that this is already very bad: for any $p$ the $p$-torsion in $H^2(B\mathbb G_a,\mc O_{B\mathbb G_a})$ is infinitely generated.
	
	To compute $H^*(\mathbb G_a,\mathbb Z)$ fully we will need a description of the cohomology of $\mathbb G_a$ over $\mathbb F_p$ (see e.g. \cite[Lemma 4.22 and Proposition 4.27]{Jantzen}). The first cohomology 
	$
	H^1({(\mathbb G_a)}_{\mathbb F_p},{\mathbb F_p})
	$ is a span $\mathbb F_pw_1 \oplus \mathbb F_p w_p\oplus \mathbb F_pw_{p^2}\oplus \ldots$ of classes $w_1,w_p,w_p^2 \ldots$ with the $\mathbb G_m$-weight of each $w_{p^i}$ given by $2p^i$. Moreover since by the universal coefficient formula the reduction map $H^1(\mathbb G_a,\mathbb Z)/p \ra H^1({(\mathbb G_a)}_{\mathbb F_p},{\mathbb F_p})$ is an injection and preserves the $\mathbb G_m$-weights we get that the class $w_1$ equals to the reduction $\ol v_1$ of $v_1$ (up to a scalar) for any $p$. From the computation in \cite[second paragraph on p.60]{Jantzen} it also follows the reductions $\ol v_{p^i}\in H^2({(\mathbb G_a)}_{\mathbb F_p},{\mathbb F_p})_{2p^i}$ are non-zero (namely in the notations of \cite{Jantzen} $\ol v_{p^i}$ is equal to $\beta(x^{p^i})$ up to a sign change in the second variable) and linearly independent (since they have different $\mathbb G_m$-weights). 
	
	\begin{lem}\label{lem: torsion in cohomology is elementary}
		For any prime $p$ the $p$-primary part of $H^*(\mathbb G_a,\mathbb Z)$ is elementary, i.e. it is killed by $p$. 
	\end{lem}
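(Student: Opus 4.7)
The plan is to deduce the lemma by showing that the map $\chi$ in \eqref{eq:chi} is in fact an isomorphism. The target of $\chi$ decomposes as a free $\mathbb{Z}$-module on $1, v_1$ (with the relation $v_1^2 = v_2$) tensored with a symmetric algebra over $\mathbb{F}_p$'s, so all torsion in the source is $p$-elementary for various primes $p$. Thus an isomorphism statement for $\chi$ immediately implies the lemma.

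The key tool is to work weight-by-weight with respect to the $\mathbb{G}_m$-action. First I would observe that the cochain complex splits as $C^\bullet(\mathbb{G}_a,\mathbb{Z}) = \bigoplus_{n\ge 0} C^\bullet(\mathbb{G}_a,\mathbb{Z})_{2n}$, and each summand is a \emph{finite} complex of finitely-generated free $\mathbb{Z}$-modules (the weight-$2n$ piece of $\mathbb{Z}[x_1,\ldots,x_m]$ consists of homogeneous polynomials of total degree $n$, of which there are finitely many; the complex also vanishes in degrees $> n$ since each variable has weight 2). Consequently $H^i(\mathbb{G}_a,\mathbb{Z})_{2n}$ is a finitely-generated abelian group, and the universal coefficient theorem applied to this complex of free $\mathbb{Z}$-modules yields, for each prime $p$, a short exact sequence
$$0 \to H^i(\mathbb{G}_a,\mathbb{Z})_{2n}\otimes \mathbb{F}_p \to H^i(\mathbb{G}_a,\mathbb{F}_p)_{2n} \to H^{i+1}(\mathbb{G}_a,\mathbb{Z})_{2n}[p] \to 0.$$

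Next I would bring in the known computation of $H^*(\mathbb{G}_a,\mathbb{F}_p)$ due to Cline--Parshall--Scott--van der Kallen and described in \cite[Sections I.4.21--I.4.27]{Jantzen}: as an algebra it is generated by classes $\lambda_i \in H^1_{2p^i}$ and their Bocksteins $\mu_i = \beta(\lambda_i) \in H^2_{2p^i}$ (with the appropriate exterior/polynomial structure depending on whether $p$ is odd or $p=2$). Under the mod-$p$ reduction the integral classes match: $\bar v_1 = \lambda_0$ (up to a unit), $\bar v_{p^i} = \mu_i$ for $i \ge 1$, while $\mu_0$ and $\lambda_i$ for $i\ge 1$ are absent from the image of $\chi \bmod p$. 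Crucially, for $i\ge 1$ the class $\lambda_i$ lives in weight $2p^i$, a weight in which $H^1(\mathbb{G}_a,\mathbb{Z})$ already vanishes (as observed before the lemma), so $\lambda_i$ can only come from the Bockstein on the right of the UCT sequence; and $\lambda_i$ maps via that Bockstein precisely to (a unit multiple of) $v_{p^i}$. A similar analysis identifies $\mu_0$ as the Bockstein of some class that pairs with $v_1$ in the way forced by $v_1^2 = v_2$.

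I would then run an induction on weight (and within each weight, on cohomological degree) to show that the UCT exact sequence above has its two outer terms generated exactly by the appropriate bigraded pieces of the image of $\chi \bmod p$ on the left, and of $\chi$-torsion on the right. Summing the $\mathbb{F}_p$-dimensions predicted by $\chi$ against Jantzen's dimensions on the middle term gives equality in every weight and degree, so there is no room for torsion beyond what $\chi$ provides. This forces $\chi$ to be an isomorphism and completes the proof. The main technical obstacle will be the bookkeeping of the dimension count, together with the separate analysis at $p=2$ where the relation $v_1^2 = v_2$ merges two generators that remain independent for odd $p$; however in both cases the count matches Jantzen's algebra structure exactly.
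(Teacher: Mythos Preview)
Your approach has a genuine gap: the map $\chi$ is \emph{not} an isomorphism, so the strategy of proving the lemma by establishing that $\chi$ is an isomorphism cannot succeed. The dimension count you propose will not match.

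Here is a concrete obstruction. Fix an odd prime $p$ and consider the $\mbb G_m$-weight $2(p+p^2)$. In Jantzen's description $H^*((\mbb G_a)_{\mbb F_p},\mbb F_p)\simeq \Lambda^*(w_1,w_p,\ldots)\otimes\Sym^*(\ol v_p,\ol v_{p^2},\ldots)$ one has the class $w_p w_{p^2}\in H^2$ of this weight, and its Bockstein $\beta_p(w_pw_{p^2})=\ol v_p w_{p^2}-w_p\ol v_{p^2}\in H^3$ is nonzero. This forces a nonzero $p$-torsion class in $H^3(\mbb G_a,\mbb Z)$ of weight $2(p+p^2)$. But the source of $\chi$ has nothing in cohomological degree $3$ at this weight: the degree-$3$ part of the source is spanned by elements $v_1 v_{p^i}$ of weight $2(1+p^i)$, and $1+p^i=p+p^2$ has no solution with $i\ge 1$. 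Hence $\chi$ misses this class. The paper in fact records (after proving the lemma) that $H^*(\mbb G_a,\mbb Z)$ is strictly larger than the image $A$ of $\chi$, with additional $A/p$-module summands generated by the $\beta_p(w_I)$ for $|I|\ge 2$.

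The paper's proof avoids this trap by not attempting to identify $H^*(\mbb G_a,\mbb Z)$ directly. Instead it computes the Bockstein differential $\beta_p$ on $H^*((\mbb G_a)_{\mbb F_p},\mbb F_p)$: using $\beta_p(w_{p^i})=\ol v_{p^i}$ for $i\ge 1$ and $\beta_p(w_1)=0$, one recognises $(H^*((\mbb G_a)_{\mbb F_p},\mbb F_p),\beta_p)$ as (essentially) a Koszul complex for the regular sequence $(\ol v_p,\ol v_{p^2},\ldots)$, whose cohomology is just $\mbb F_p\oplus\mbb F_p w_1$. Since both surviving classes are reductions of non-torsion integral classes ($1$ and $v_1$), the standard Bockstein spectral sequence argument shows that the $p$-primary torsion is killed by $p$. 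Your UCT framework and the weight decomposition are correct ingredients, but the conclusion you aim for ($\chi$ an isomorphism) is too strong; the right endpoint is the Bockstein-cohomology computation.
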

	\begin{proof}
		The statement will follow from the computation of the Bockstein differential $$\beta_p\colon H^*({(\mathbb G_a)}_{\mathbb F_p},{\mathbb F_p})\ra H^{*+1}({(\mathbb G_a)}_{\mathbb F_p},{\mathbb F_p}).$$ Namely we will use the fact that if we have a class $[c]\in H^*({\mathbb G_a},{\mathbb Z})$ then its reduction $[\ol c]\in H^*({(\mathbb G_a)}_{\mathbb F_p},{\mathbb F_p})$ is killed by $\beta_p$ and that if $[\ol c]\in \Im \ \!\beta_p$ then $p\cdot [c]=0$ (in other words if the class of $[\ol c]$ in the cohomology with respect to Bockstein is 0 then $[c]$ is killed by $p$).
		
		There are two cases. If $p=2$ there is an isomorphism
		$$
		H^*({(\mathbb G_a)}_{\mathbb F_2},{\mathbb F_2})\simeq \Sym_{\mathbb F_2}^*\left( H^1({(\mathbb G_a)}_{\mathbb F_2},{\mathbb F_2})\right)\simeq  \mathbb F_2[w_1,w_2,w_{4},\ldots].
		$$
		Consequently, all $\mathbb G_m$-weights in $H^2({(\mathbb G_a)}_{\mathbb F_2},{\mathbb F_2})$ are given as sums $2^i+2^j$ for $i,j\ge 0$. Since the reduction $\ol v_{2^i}\in H^2({(\mathbb G_a)}_{\mathbb F_2},{\mathbb F_2})$ is non-zero and its weight is $2^{i+1}$ the only option for it is $ w_{2^{i-1}}^2$.
		Also, by the universal coefficient formula the class $w_{2^i}$, $i>0$ should come as the only non-zero element of $\Tor_1(H^2(\mathbb G_a,\mathbb Z),\mathbb F_2)$ of weight $2^{i+1}$. The latter group is equal to the 2-torsion in $H^2(\mathbb G_a,\mathbb Z)$ and contains $v_{2^i}$ which has the correct weight. From the properties of the Bockstein operator it follows that $\beta_2(w_{2^i})$ is equal to $\ol v_{2^i}=w_{2^{i-1}}^2$ if $i>0$. Also $\beta_2(w_1)=0$ (since $w_1=\ol v_1$) and since  $\beta_2$ is a differentiation this, together with the above, defines it uniquely\footnote{In terms of the identification $H^*({(\mathbb G_a)}_{\mathbb F_2},{\mathbb F_2})\simeq \mathbb F_2[w_1,w_2,w_{4},\ldots]$, $\beta_2$ acts as the well-defined vector field $\sum_{i=1}^\infty w_{2^{i-1}}^2\frac{\partial}{\partial w_{2^i}}$.}. Note that $\beta_2$ is $\mathbb F_2[w_1, w_2^2, w_4^2, \ldots]$-linear and that $\mathbb F_2[w_1, w_2, w_4, \ldots]$ is free over $\mathbb F_2[w_1, w_2^2, w_4^2, \ldots]$ with basis given by $w_{2^I}\coloneqq w_{2^{i_1}}w_{2^{i_2}}\ldots w_{2^{i_k}}$ where $I=\{i_1,\ldots, i_k\}$ runs over finite subsets of $\mathbb Z_{>0}$. We turn $(\mathbb F_2[w_1, w_2, w_4, \ldots],\beta_2)$ into a complex of $\mathbb F_2[w_1, w_2^2, w_4^2, \ldots]$-modules be defining another (homological) grading, namely putting $\deg_* w_{2^I}$ to be equal to $|I|$. In fact this way we can identify $(\mathbb F_2[w_1, w_2, w_4, \ldots]_*,\beta_2)$ with the Koszul complex $\mr{Kos}_{\mathbb F_2[w_1, w_2^2, w_4^2, \ldots]}(w_1^2,w_2^2,w_4^2,\ldots)_*$ for the infinite sequence $(w_1^2,w_2^2,w_4^2,\ldots)$;\footnote{We warn the reader that this is only an isomorphism $\mathbb F_2[w_1, w_2^2, w_4^2, \ldots]$-dg-modules and not dg-algebras.} indeed, one can map the $k$-th term $\mr{Kos}_{\mathbb F_2[w_1, w_2^2, w_4^2, \ldots]}(w_1^2,w_2^2,w_4^2,\ldots)_k\simeq \Lambda^k_{\mathbb F_2[w_1, w_2^2, w_4^2, \ldots]}(\mathbb F_2[w_1, w_2^2, w_4^2, \ldots]^{\oplus \infty})$ with the basis $(e_1,e_2,e_3,\ldots)$ of $\mathbb F_2[w_1, w_2^2, w_4^2, \ldots]^{\oplus \infty}$, associated to the elements $(w_1^2,w_2^2,w_4^2,\ldots)$,  to the $k$-th graded component of $\mathbb F_2[w_1, w_2, w_4, \ldots]$  by sending $e_I\coloneqq e_{i_1}\wedge \ldots \wedge e_{i_k}$ to $w_{2^I}$. It is easy to see that the Koszul differential goes exactly to $\beta_2$. Since the sequence $(w_1^2,w_2^2,w_4^2,\ldots)$ is regular we get that the cohomology of $(H^2({(\mathbb G_a)}_{\mathbb F_2},{\mathbb F_2}),\beta_2)$ is given by 
		$$\mathbb F_2[w_1, w_2^2, w_4^2, \ldots]/(w_1^2, w_2^2, w_4^2,\ldots)\simeq \mathbb F_2[w_1]/w_1^2$$ and is spanned by 1 and $w_1$ over $\mathbb F_2$. But we know that $w_1=\ol v_1$ (and of course 1) is a reduction of a non-torsion class $v_1$ (resp. 1). Thus we get the statement.
		
		If $p$ is odd then
		$$
		H^*({(\mathbb G_a)}_{\mathbb F_p},{\mathbb F_p})\simeq \Lambda_{\mathbb F_p}^*(w_1, w_p,w_{p^2},\ldots)\otimes_{\mathbb F_p}\Sym_{\mathbb F_p}^*( \ol v_p,\ol v_{p^2},\ol v_{p^3},\ldots),
		$$
		By a similar reasoning to the $p=2$ case we get that $\beta_p(w_{p^i})=\ol v_{p^i}$ (at least up to a scalar) for $i>0$ and that $\beta_p(w_1)=0$. Similarly, $\beta_p$ is ${\mathbb F_p}[ v_p,v_{p^2},v_{p^3},\ldots]$-linear and $H^*({(\mathbb G_a)}_{\mathbb F_p},{\mathbb F_p})$ is a free module over ${\mathbb F_p}[ v_p,v_{p^2},v_{p^3},\ldots]$ with the basis given by $w_{p^I}\coloneqq w_{p^{i_1}}\wedge\ldots \wedge w_{p^{i_k}}$ where $I=\{i_1,\ldots, i_k\}$ with $i_1<\ldots i_k$ runs over finite subsets of $\mathbb Z_{\ge 0}$. Defining a new (homological) grading on  $(H^*({(\mathbb G_a)}_{\mathbb F_p},{\mathbb F_p})$ by putting $\deg_* w_{p^I}=|I|$ and $\deg_* v_{i}=0$ we view $(H({(\mathbb G_a)}_{\mathbb F_p},{\mathbb F_p})_*,\beta_p)$ as a complex of $\Sym_{\mathbb F_p}^*[ \ol v_p,\ol v_{p^2},\ol v_{p^3},\ldots]$-algebras, which in fact is identified with the product (now as dg-algebras) 
		$$ \Lambda_{\mathbb F_p}^*[w_1]\otimes_{\mathbb F_p} \mr{Kos}_{{\mathbb F_p}[ \ol v_p,\ol v_{p^2}, \ol v_{p^3},\ldots]}(\ol v_p, \ol v_{p^2},\ol v_{p^3},\ldots)\simeq H^*({(\mathbb G_a)}_{\mathbb F_p},{\mathbb F_p})
		$$
		where the differential on $w_1$ is 0. Indeed one can define a map by sending each generator $e_I\in \mr{Kos}_{|I|}$ to $w_{p^I}$ and we leave it to the reader to check that it is an isomorphism. Since $\ol v_p,\ol v_{p^2},\ol v_{p^3},\ldots\in {\mathbb F_p}[ \ol v_p,\ol v_{p^2},\ol v_{p^3},\ldots]$ is a regular sequence in the case of an odd $p$ we also get that the cohomology of $\beta_p$ is spanned by 1 and $w_1$ over $\mathbb F_p$, and they come as reductions of non-torsion classes. This finishes the proof.
	\end{proof} 
	
	We finish the description of $H^*(\mathbb G_a,\mathbb Z)$. By \Cref{lem: torsion in cohomology is elementary} the $p$-primary part of $H^*(\mathbb G_a,\mathbb Z)$ (as a non-unital algebra) is killed by $p$ and thus can be described as $\Im \ \! \beta_p\subset H^*((\mathbb G_a)_{\mathbb F_p},\mathbb F_p)$ via the reduction map. More explicitly $\Im \ \!\beta_p$ is freely  generated by elements $\beta_p(w_I)$ as a module over ${\mathbb F_p}[ \ol v_p,\ol v_{p^2},\ol v_{p^3},\ldots]$ in the notations of \Cref{lem: torsion in cohomology is elementary}. The elements $\beta_p(w_I)$ are not algebraically independent over ${\mathbb F_p}[ \ol v_p,\ol v_{p^2},\ol v_{p^3},\ldots]$ and it seems hard to describe all the relations between them; but still this description is somewhat nice, since there is only finite number of $\beta_p(w_I)$ of a given cohomological degree. To finish the computation over $\mathbb Z$ it only remains to see what happens with the powers of $v_1$. Since $v_1$ is of cohomological degree $1$, $v_1^2$ is $2$-torsion and we saw in the course of proof of \Cref{lem: torsion in cohomology is elementary} that in fact $v_1^2=v_2$. All in all this gives the following description of $H^*(B\mathbb G_a,\mc O_{B\mathbb G_a})$:
	
	\begin{prop}\label{prop: description of the cohomology of BG_a}
		We have
		$$
		H^*(B\mathbb G_a,\mc O_{B\mathbb G_a})\simeq \left(\mathbb Z[v_1]\oplus \left(\bigoplus_p \Im\ \! \beta_p\right)\right)\bigg/ v_1^2 =v_2.
		$$	
	\end{prop}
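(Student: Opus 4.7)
The plan is to assemble the description from what has already been proved. The first move is to split the $\mathbb{Z}$-module $H^*(B\mathbb{G}_a, \mc O_{B\mathbb{G}_a}) = H^*(\mathbb{G}_a, \mathbb{Z})$ as a torsion-free part $F$ plus a torsion part $T$. By \Cref{lem: torsion in cohomology is elementary}, every element of $T$ is killed by a prime, so $T$ splits canonically as $\bigoplus_p T_p$ with each $T_p$ the $p$-primary (hence $p$-torsion) summand, an $\mathbb{F}_p$-vector space.

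Next I would identify $T_p$ with $\Im \beta_p \subset H^*(\mathbb{G}_a, \mathbb{F}_p)$. The Bockstein long exact sequence
$$\cdots \to H^{n-1}(\mathbb{G}_a, \mathbb{F}_p) \xrightarrow{\beta_p} H^n(\mathbb{G}_a, \mathbb{Z}) \xrightarrow{p} H^n(\mathbb{G}_a, \mathbb{Z}) \to H^n(\mathbb{G}_a, \mathbb{F}_p) \to \cdots$$
gives $T_p = \ker(p\cdot) = \Im\beta_p$, and the reduction map identifies $T_p$ with the subspace $\Im \beta_p$ inside $H^n(\mathbb{G}_a, \mathbb{F}_p)$. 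To pin down $F$, I would use that the Bockstein cohomology $\ker \beta_p/\Im \beta_p$ equals $(F/pF)\otimes_{\mathbb Z}\mathbb{F}_p$ (via the Bockstein LES or universal coefficients); by the computation inside the proof of \Cref{lem: torsion in cohomology is elementary}, this quotient is $\mathbb{F}_p[w_1]/(w_1^2)$, concentrated in degrees $0$ and $1$. Since this holds for every prime $p$ and $F$ is torsion-free, $F$ is of rank one in degrees $0$ and $1$ and vanishes in higher degrees; combined with the non-torsion classes $1$ and $v_1$ already constructed, $F = \mathbb{Z}\cdot 1 \oplus \mathbb{Z}\cdot v_1$.

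Finally, I would account for the ring structure. The only nontrivial extension datum between the two summands is the square $v_1^2 \in H^2$, which is necessarily $2$-torsion (hence lies in $T_2 = \Im \beta_2$); the identity $w_1^2 = \ol v_2$ recorded in the proof of \Cref{lem: torsion in cohomology is elementary} lifts to $v_1^2 = v_2 \in \Im \beta_2$ after absorbing a unit into the choice of $v_2$. Higher products involving $v_1$ cause no trouble: for $\beta_p(y) \in \Im \beta_p$ one has $v_1\cdot \beta_p(y) = \beta_p(\ol v_1\cdot y)$ since $\beta_p$ is a derivation and $\beta_p(\ol v_1) = \beta_p(w_1) = 0$, so multiplication by $v_1$ preserves each $\Im \beta_p$ and is determined by the mod-$p$ formulas. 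I do not expect a serious obstacle: the delicate point is simply the bookkeeping of ring structure, which follows from the derivation property of Bockstein plus the single identification $v_1^2 = v_2$.
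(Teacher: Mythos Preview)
Your argument is correct and follows the same route as the paper: use \Cref{lem: torsion in cohomology is elementary} to identify each $p$-primary summand with $\Im\beta_p$ via reduction, read off the free part from the Bockstein cohomology computed there, and then pin down the single relation $v_1^2=v_2$. If anything, you are more explicit than the paper about why the torsion-free part is exactly $\mathbb Z\cdot 1\oplus\mathbb Z\cdot v_1$ (the paper leaves this implicit in the proof of \Cref{lem: torsion in cohomology is elementary}), and your remark that $v_1\cdot\beta_p(y)=\beta_p(\ol v_1\cdot y)$ is a clean way to see the $\mathbb Z[v_1]$-module structure on each $\Im\beta_p$. One small point worth making explicit: the inference ``$F/pF=0$ for all $p$ and $F$ torsion-free $\Rightarrow F=0$'' uses that each $\mathbb G_m$-graded piece $H^n(\mathbb G_a,\mathbb Z)_m$ is a finitely generated $\mathbb Z$-module (clear from the standard complex), so that no divisible summands can appear.
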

	
	Also, returning to the map $\chi$ (see \Cref{eq:chi}), we get a subalgebra 
	$$
	A=\left(\mathbb Z[v_1] \otimes_{\mathbb Z}\Sym_{\mathbb Z}^*\left(\bigoplus_{p}\mathbb F_p v_p\oplus \mathbb F_pv_{p^2}\oplus \ldots\right)\right) \bigg/ v_1^2=v_2 \subset H^*(B\mathbb G_a,\mc O_{B\mathbb G_a}),
	$$ and the algebra $H^*(B\mathbb G_a,\mc O_{B\mathbb G_a})$ is generated by 1 and $\beta_p(w_I)$ (for various $p$ and $I$) as an $A$-module. More precisely one can check that we have a direct sum decomposition
	$$
	H^*(B\mathbb G_a,\mc O_{B\mathbb G_a})\simeq A \oplus  \bigoplus_{p,I} A\cdot \beta_p(w_I),
	$$
	where for each $\beta_p(w_I)$ the submodule $A\cdot \beta_p(w_I)$ is just isomorphic to (non-derived) quotient $A/p$.

\begin{rem} 
	Quite remarkably the cohomology $H^*(B\mathbb G_a,\mc O_{B\mathbb G_a})$ turns out to be directly related to the cohomology of the Eilenberg-Maclane space $K(\mathbb Z,3)$\footnote{Recall that $K(\mathbb Z,n)$ for $n\ge 1$ is the unique (up to homotopy) space such that $\pi_n(K(\mathbb Z,n))=\mathbb Z$ and $\pi_i(K(\mathbb Z,n))=0$ for $i\neq n$.}:
	namely there is an isomorphism 
	$$
	H^{n}_{\mr{sing}}(K(\mathbb Z,3),\mathbb Z)\simeq \bigoplus_{i=0}^n H^i(B\mathbb G_a,\mc O_{B\mathbb G_a})_{n-2i},
	$$
	which in fact extends to the isomorphism of the graded algebras 
	$$
	\bigoplus_{n\ge 0} H^{n}_{\mr{sing}}(K(\mathbb Z,3),\mathbb Z)\simeq \bigoplus_{n\ge 0} \left(\bigoplus_{i=0}^n H^i(B\mathbb G_a,\mc O_{B\mathbb G_a})_{n-2i}\right).
	$$
	We comment more on this. Indeed, $K(\mathbb Z,3)\simeq B(K(\mathbb Z,2))\simeq B\mathbb C\mathbb P^\infty$. Realizing $K(\mathbb Z,3)$ as the colimit of the simplicial diagram
	$$
	\colim \left(\xymatrix{\ldots \ar@<0.8ex>[r]\ar[r]\ar@<-0.8ex>[r]& \mathbb C\mathbb P^\infty\times \mathbb C\mathbb P^\infty \ar@<0.4ex>[r]\ar@<-0.4ex>[r]& \mathbb C\mathbb P^\infty \ar[r]& {*} }\right)\xymatrix{\ar[r]^\sim&} K(\mathbb Z,3)
	$$
	we get a spectral sequence 
	$$
	E_1^{n,q}=H^q_{\mr{sing}}((\mathbb C\mathbb P^\infty)^n,\mathbb Z) \Rightarrow H^{n+q}_{\mr{sing}}(K(\mathbb Z,3),\mathbb Z).
	$$

	The cohomology $H^{*}_{\mr{sing}}(\mathbb C\mathbb P^\infty,\mathbb Z)\simeq \mathbb Z[x]$, $\deg x=2$ has a natural Hopf algebra structure with comultiplication induced by the addition $m\colon \mathbb C\mathbb P^\infty\times \mathbb C\mathbb P^\infty \ra \mathbb C\mathbb P^\infty$. It is easy to see that $m^*(x)=x\otimes 1 + 1\otimes x$ and so the corresponding affine group scheme is $\mathbb G_a$; moreover, the cohomological grading corresponds exactly to the $\mathbb G_m$-action on $\mathbb G_a$ which we considered before.
	Via this identification and the K\"unneth formula, the first page $E_1^{n,q}$ of the spectral sequence above is identified the standard complex $C^\bullet(\mathbb G_a,\mathbb Z)$:
	$$
	0\ra \mathbb Z \xra{d_0} \mathbb Z[x] \xra{d_1} \mathbb Z[x]\otimes_{\mathbb Z} \mathbb Z[x] \xra{d_2} \ldots. 
	$$
	Thus we also know the second page, namely we have $E_2^{n,q}=H^n(\mathbb G_a,\mathbb Z)_{q}$. Note that all odd rows are automatically zero.
	
	We claim that the spectral sequence degenerates at the second page. Since all terms $E_2^{n,q}$ are finitely generated over $\mathbb Z$ and by \Cref{lem: torsion in cohomology is elementary} all torsion they have is elementary, it is enough to check that all the differentials on the second page are zero modulo all primes $p$. Consider the analogous spectral sequence $E_{1,p}^{n,q}=H^q_{\mr{sing}}((\mathbb C\mathbb P^\infty)^n,\mathbb F_p) \Rightarrow  H^{n+q}_{\mr{sing}}(K(\mathbb Z,3),\mathbb F_p)$ for $\mathbb F_p$-cohomology; similarly to the above its second page looks as $E_{2,p}^{n,q}=H^n((\mathbb G_a)_{\mathbb F_p},\mathbb F_p)_{2q}$ and as we have seen the reduction map $E_2^{n,q}/p\ra E_{2,p}^{n,q}$ is an embedding. Thus it will be enough to show that $E_{*,p}^{*,*}$ degenerates at the second page for any $p$.  All differentials in $E_{*,p}^{*,*}$ are generated by maps between some $\mathbb F_p$-cohomology of some spaces and thus commute with the action of the Steenrod algebra $\mc A_p$. By \cite[Proposition in 4.27]{Jantzen} the algebra generators $w_{p^i}$ in fact are related by the Frobenius $F_{\mathbb G_a}\colon \mathbb G_a\ra \mathbb G_a$, namely $w_{p^i}=F_{\mathbb G_a}^*w_{p^{i-1}}$. From the topological point of view, if $x\in E_{1,p}^{1,2}\simeq H^2(\mathbb C\mathbb P^\infty,\mathbb F_p)$ is a generator whose class in $E_{2,p}^{1,2}=H^2((\mathbb G_a)_{\mathbb F_p},\mathbb F_p)_2$ is equal to $w_1\in E_{2,p}^{1,2}$, then such $w_{p^i}$ comes as a generator $x^{p^i}\in H^{2p^i}(\mathbb C\mathbb P^\infty,\mathbb F_p)$ and is expressed more functorially as $P^iP^{i-1}\ldots P^1 w_1$, where $P^i$ denotes $i$-th Steenrod power operation. Also recall that $\ol v_{p^i}=\beta_p(w_{p^i})$. Since $w_{p^i}$ and $v_{p^i}$ together generate $E_{*,p}^{*,*}$ and are obtained from $w_1$ by applying cohomological operations it is enough to show that $d_{n,p}(w_1)=0$ for any $n$. This is obvious for $n>2$ and for $n= $ we have $d_{2,p}(w_1)=0$ since $d_{2,p}(w_1)\in E_{2,p}^{3,1}=H^3((\mathbb G_a)_{\mathbb F_p},\mathbb F_p)_{1}=0$.

	Even though the degeneration of the spectral sequence a priori only gives the description of a certain associated graded of $H^{*}_{\mr{sing}}(K(\mathbb Z,3),\mathbb Z)$, we claim that there also exists a natural isomorphism of the latter with $E_2^{*+*}$. Indeed, let $c$ be the generator of $H^3(K(\mathbb Z,3),\mathbb Z)\simeq \mathbb Z$ which goes to $v_1\in E_{2}^{1,2}\simeq H^1(\mathbb G_a,\mathbb Z)_{2}$ under the natural map and consider its reduction $\ol c$ which generates $H^3(K(\mathbb Z,3),\mathbb F_p)\simeq \mathbb F_p$.  Then putting $c_{p^i}\coloneqq P^iP^{i-1}\ldots P^1\ol c$, and $d_{p^i}=\beta_p(c_{p^i})$ in the case of odd $p$, we get an isomorphism 
	$$
	\mathbb F_2[\ol c,c_2,c_{4},\ldots]\simeq H^*(K(\mathbb Z,3),\mathbb F_2)
	$$
	and 
	$$
	\Lambda_{\mathbb F_p}^*(\ol c, c_p,c_{p^2},\ldots)\otimes_{\mathbb F_p}\Sym_{\mathbb F_p}^*( d_p,d_{p^2}, d_{p^3},\ldots)\simeq H^*(K(\mathbb Z,3),\mathbb F_p)
	$$
	for $p$ odd. Moreover, the Bockstein $\beta_p$ is acting on $H^*(K(\mathbb Z,3),\mathbb F_p)$ by analogous formulas analogous to the ones we had in the course of the proof of \Cref{lem: torsion in cohomology is elementary}
	and by the same argument it follows that the  $p$-primary part in $H^{*}_{\mr{sing}}(K(\mathbb Z,3),\mathbb F_p)$ is killed by $p$. Sending $c$ to $v_1$, $c_{p^i}$ to $w_{p^i}$ and $d_{p^i}$ to $v_{p^i}$ defines an isomorphism between $H^*(K(\mathbb Z,3),\mathbb F_p)$ and $H^*((\mathbb G_a)_{\mathbb F_p},\mathbb F_p)$, which, moreover, respects Bocksteins on both sides. Finally, describing $H^*(K(\mathbb Z,3),\mathbb Z)$ in terms of $\Im \ \! \beta_p$ for various $p$ and the class $c$ as in \Cref{prop: description of the cohomology of BG_a} this extends to the isomorphism of graded algebras
	$$
	\bigoplus_{n\ge 0} H^{n}_{\mr{sing}}(K(\mathbb Z,3),\mathbb Z)\simeq \bigoplus_{n\ge 0} \left(\bigoplus_{i=0}^n H^i(B\mathbb G_a,\mc O_{B\mathbb G_a})_{n-2i}\right)
	$$
	as we claimed.
\end{rem}

\numberwithin{thm}{section}

\begin{bibdiv}
\addcontentsline{toc}{section}{\protect\numberline{}References}
\begin{biblist}

\bib{Alper}{article}{
author={Alper, Jarod},
title={Good moduli spaces for Artin stacks},
journal={Annales de l'Institut Fourier},
volume={63},
number={6},
date={2013},
pages={2349-2402},
}

\bib{Alper-HL-Heinloth}{article}{
	author={Alper, Jarod},
	author={Halpern-Leistner, Daniel},
	author={Heinloth, Jochen},
	title={Existence of moduli spaces for algebraic stacks.},
	eprint={https://arxiv.org/abs/1812.01128},
	year={2018}
}

\bib{Antieau-Bhatt-Mathew}{article}{
author={Antieau, Benjamin},
author={Bhatt, Bhargav},
author={Mathew, Akhil},
title={Counterexamples to Hochschild--Kostant--Rosenberg in characteristic $p$},
year={2019},
eprint={https://arxiv.org/pdf/1909.11437.pdf}
}

\bib{BarthelotOgus}{book}{
   title =     {Notes on crystalline cohomology},
   author =    {Berthelot, Pierre},
   author =    {Ogus, Arthur},
   publisher = {Princeton Univ. Pr.},
   isbn =      {9780691082189,0691082189},
   date =      {1978},
   series =    {Mathematical Notes},
}

\bib{Bhatt_derivedDeRham}{article}{
author={Bhatt, Bhargav},
title={Completions and derived de Rham cohomology},
date={2012},
eprint={https://arxiv.org/abs/1207.6193},
}
\bib{Bhatt_pAdicDerivedDeRham}{article}{
title={$p$-adic derived de Rham cohomology},
author={Bhatt, Bhargav},
year={2012},
eprint={https://arxiv.org/abs/1204.6560},
}

\bib{BhattdeJong}{article}{
	title={Crystalline cohomology and de Rham cohomology},
	author={Bhatt, Bhargav},
	author={de Jong, Aise},
	year={2011},
	eprint={https://arxiv.org/abs/1110.5001},
}

\bib{BMS2}{article}{
author={Bhatt, Bhargav},
author={Morrow, Mathew},
author={Scholze, Peter},
title={Topological Hochschild homology and integral $p$-adic Hodge theory},
journal={Publ. Math. de l'IHES Sci.},
volume={129},
date={2019}
}

\bib{Bial-Bir}{article}{
author={Bialynicki-Birula, Andrzej},
title={Some theorems on actions of algebraic groups},
journal={Annals of mathematics},
date={1973},
pages={480--497}
}

\bib{MathewBrantner_LLambda}{article}{
author={Brantner, Lukas},
author={Mathew, Akhil},
title={Deformation Theory and Partition Lie Algebras},
date={2019},
eprint={https://arxiv.org/abs/1904.07352}
}

\bib{HodgeIII}{article}{
author = {Deligne, Pierre},
title = {Th\'eorie de Hodge: III},
journal = {Publications Math\'ematiques de l'IH\'ES},
volume = {44},
pages = {5-77},
year = {1974}
}

\bib{DeligneIllusie}{article}{
author={Deligne, Pierre},
author={Illusie, Luc},
title={Rel\`evements modulo $p^2$ et d\'ecomposition du complexe de de Rham},
journal={Invent. Math.},
volume={89},
date={1987},
pages={247-270}
}

\bib{Drinfeld}{article}{
author={Drinfeld, Vladimir},
title={On algebraic spaces with an action of $\mathbb G_m$},
year={2013},
eprint={https://arxiv.org/abs/1308.2604}
}


\bib{FontaineMessing_padciPeriods}{article}{
author={Fontaine, Jean Marc},
author={Messing, William},
title={$p$-adic periods and $p$-adic \'etale cohomology},
journal={Current trends in arithmetical algebraic geometry (Arcata, Calif., 1985), Contemporary math. (67), Amer. Math. Soc., Providence, RI,},
date={1987}
}
\bib{Franjou}{article}{
author={Franjou, Vincent},
title={Cohomologie de de Rham entiere (Integral de Rham cohomology)},
eprint={https://arxiv.org/abs/math/0404123}
}

\bib{FranjouKallen}{article}{
author={Franjou, Vincent},
author={van der Kallen, Wilberd},
title={Power reductivity over an arbitrary base},
journal={Doc.Math. Extra vol: Suslin},
volume={},
date={2010},
pages={171-195}
}

\bib{GaitsRozI}{book}{
title =     {A Study in Derived Algebraic Geometry, Volume I: Correspondences and Duality},
author =    {Gaitsgory, Dennis},
author =    {Rozenblyum, Nick},
publisher = {American Mathematical Society},
isbn =      {1470435691,9781470435691},
date =      {2017},
series =    {Mathematical Surveys and Monographs},
}

\bib{EGA_III1}{article}{
author={Grothendieck, Alexander},
title={\'El\'ements de g\'eom\'etrie alg\'ebrique III. \'Etude cohomologique des faisceaux coh\'erents, Premi\`ere partie},
journal={Publ. Math. de l'Inst. des Hautes \'Etudes Sci.},
year={1961},
volume={11},
}

\bib{EGA_IV3}{article}{
author={Grothendieck, Alexander},
title={\'El\'ements de g\'eom\'etrie alg\'ebrique IV. \'Etude locale des sch\'emas
et des morphismes de sch\'emas. Troisi\`eme partie},
journal={Publ. Math. de l'Inst. des Hautes \'Etudes Sci.},
year={1966},
volume={28},
}
\bib{EGA_IV4}{article}{
author={Grothendieck, Alexander},
title={\'El\'ements de g\'eom\'etrie alg\'ebrique IV. \'Etude locale des sch\'emas
et des morphismes de sch\'emas. Quatri\`eme partie},
journal={Publ. Math. de l'Inst. des Hautes \'Etudes Sci.},
year={1967},
volume={32},
}

\bib{GAGA}{article}{
author={Hall, Jack},
title={GAGA Theorems},
eprint={https://arxiv.org/pdf/1804.01976},
year={2018}
}

\bib{HL-GIT}{article}{
author={Halpern-Leistner, Daniel},
title={The derived category of a GIT quotient},
journal={Journal of the American Mathematical Society},
year={2015},
volume={28 (3)},
issue={3},
pages={871-912},
}

\bib{HL-instability}{article}{
	author={Halpern-Leistner, Daniel},
	title={On the structure of instability in moduli theory},
	date={2018},
	eprint={https://arxiv.org/abs/1411.0627}
}

\bib{HL-prep}{article}{
	author={Halpern-Leistner, Daniel},
	title={Derived $\Theta$-stratifications and the $D$-equivalence conjecture},
	date={2020},
	eprint={https://arxiv.org/abs/2010.01127},
}
\bib{HLP_equiv_noncom}{article}{
author={Halpern-Leistner, Daniel},
author={Pomerleano, Daniel},
title={Equivariant Hodge theory and noncommutative geometry},
date={2015},
eprint={https://arxiv.org/abs/1507.01924}
}

\bib{RelaxedProperness}{article}{
author={Halpern-Leistner, Daniel},
author={Preygel, Anatoly},
title={Mapping stacks and categorical notions of properness},
date={2019},
eprint={https://arxiv.org/abs/1402.3204}
}

\bib{Illusie_Cotangent}{book}{
author={Illusie, Luc},
title={Complexe cotangent et d\'eformations. I},
series={Lecture notes in mathematics},
volume={223},
date={1971},
publisher={Springer-Verlag}
}
\bib{Illusie_CotangentII}{book}{
author={Illusie, Luc},
title={Complexe cotangent et d\'eformations. II},
series={Lecture notes in mathematics},
volume={283},
date={1972},
publisher={Springer-Verlag}
}

\bib{Jantzen}{book}{
  author={Jantzen, Jens Carsten},
  title={Representations of Algebraic Groups},
  isbn={9780821843772},
  lccn={2003058381},
  year={2007},
  series={Mathematical surveys and monographs},
  publisher={American Mathematical Society}
}

\bib{Kaledin}{article}{
author={Kaledin, Dmitry},
title={Non-commutative Hodge-to-de Rham degeneration via the method of Deligne-Illusie},
journal={Pure Appl. Math. Quot.},
volume={4},
date={2008},
}
\bib{Kaledin2}{article}{
author={Kaledin, Dmitry},
title={Spectral sequences for cyclic homology},
journal={Algebra, Geometry, and Physics in the 21st Century},
volume={324},
date={2017},
}

\bib{KubrakTravkin}{article}{
author={Kubrak, Dmitry},
author={Travkin, Roman},
title={Resolutions with conical slices and descent for the Brauer group classes of certain central reductions of differential operators in characteristic $p$},
date={2016},
}

\bib{MoretLaumon}{book}{
title={Champs alg\'ebriques},
author={Laumon, G\'erard},
author={Moret-Bailly, Laurent},
publisher={Springer},
year={2000},
volume={39},
}

\bib{LurHTT}{article}{
      author={Lurie, Jacob},
       title={Higher Topos Theory},
       date={2009},
       journal={Princeton Univ. Pr.},
}

\bib{LurHA}{article}{
      author={Lurie, Jacob},
       title={Higher Algebra},
       date={2017},
      eprint={http://www.math.harvard.edu/~lurie/papers/HA.pdf},
}

\bib{Lur_SAG}{article}{
author={Lurie, Jacob},
title={Spectral Algebraic Geometry },
date={2018},
eprint={https://www.math.ias.edu/~lurie/papers/SAG-rootfile.pdf},
}

\bib{MumfordFogartyKirvan}{book}{
title={Geometric invariant theory},
author={Mumford, David},
author={Fogarty, John},
author={Kirwan, Frances Clare},
publisher={Springer-Verlag},
year={1994},
series={Ergebnisse der Mathematik und ihrer Grenzgebiete. 2. Folge 34},
edition={3rd enl. ed},
}

\bib{Olsson}{article}{
author = {Olsson, Martin C.},
title = {On proper coverings of Artin stacks},
journal = {Advances in Mathematics},
volume = {198},
pages = {93-106},
year = {2005}
}

\bib{Poonen}{book}{
author={Poonen, Bjorn},
title={Rational Points on Varieties},
isbn={9781470443153},
series={Graduate studies in mathematics},
url={https://books.google.ru/books?id=TjdFtAEACAAJ},
year={2017},
publisher={Amer. Mathematical Society}
}

\bib{PortaYu_StacksGAGA}{article}{
title={Higher analytic stacks and GAGA theorems},
author={Porta, Mauro},
author={Yu, Tony Yue},
year={2014},
journal={Advances in Mathematics},
volume={302},
}

\bib{Preygel}{article}{	
	title={Thom-Sebastiani and Duality for Matrix Factorizations},
	author={Preygel, Anatoly},
	date={2011},
	eprint={https://arxiv.org/abs/1101.5834}
}

\bib{Pridham}{article}{
author = {Pridham, Jon P.},
title = {Presenting higher stacks as simplicial schemes},
journal = {Advances in Mathematics},
volume = {238},
pages = {184-245},
year = {2015},
issn = {0001-8708},
}

\bib{Rydh_NoetherianApprox}{article}{
author={Rydh, David},
title={Noetherian approximation of algebraic spaces and stacks},
volume={422},
journal={Journal of Algebra},
publisher={Elsevier BV},
year={2015},
pages={105--147}
}

\bib{Nick_FilteredCats}{article}{
      author={Rozenblyum, Nick},
       title={Filtered colimits of $\infty$-categories},
       date={2012},
      eprint={http://www.math.harvard.edu/~gaitsgde/GL/colimits.pdf},
}

\bib{Satriano}{article}{
author={Satriano, Matthew},
title={de Rham Theory for Tame Stacks and Schemes with Linearly Reductive Singularities},
journal={Ann. de l'Inst. Fourier},
date={2012},
volume={62 (6)},
issue={6},
}

\bib{Serre_GACC}{book}{
author={Serre, Jean Pierre},
title={Groupes alg\'ebriques et corps de classes},
series={Actualit\'es scientifiques et industrielles},
publisher={Hermann},
date={1975}
}

\bib{StacksProj}{article}{
author={The Stacks Project Authors},
title={Stacks Project},
eprint={https://stacks.math.columbia.edu/},
date={2020},
label={SP20}
}

\bib{Sumihiro}{article}{
author={Sumihiro, Hideyasu},
title={Equivariant completion},
journal={J. Math. Kyoto Univ.},
year={1974},
volume={14(1)},
issue={1},
pages={1-28},
}

\bib{Teleman}{article}{
      author={Teleman, Constantin},
       title={The quantization conjecture revisited},
   journal={Ann. of Math.},
        isbn={0003-486X},
        date={2000},
      volume={152 (2)},
      issue={2},
}

\bib{Toen_AutoFlat}{article}{
title={Descente fid\'element plate pour les $n$-champs d’Artin},
author={To\"en, Bertrand},
journal={Compositio Mathematica},
volume={147},
number={5},
publisher={London Mathematical Society},
pages={1382--1412},
year={2011},
}

\bib{TV_HAGII}{article}{
      author={Toen, Bertrand},
      author={Vezzosi, Gabriele},
       title={Homotopical Algebraic Geometry II: Geometric Stacks and Applications},
   publisher={Amer. Mathematical Society},
        isbn={0821840991,9780821840993},
        date={2008},
      series={Memoirs of the American Mathematical Society (v. II)},
}

\bib{Totaro}{article}{
     author={Totaro, Burt},
     title={Hodge theory of classifying stacks},
     journal={Duke Math. J.},
     volume={167},
     date={2018}
}
\end{biblist}
\end{bibdiv}

\bigskip

\noindent Dmitry~Kubrak, {\sc Department of Mathematics, Massachusetts Institute of Technology,  Cambridge, United States}
\href{mailto:dmkubrak@mit.edu}{dmkubrak@mit.edu}

\smallskip

\noindent 
Artem~Prikhodko, {\sc Department of Mathematics, National Research University Higher School of Economics, Moscow; Center for Advanced Studies, Skoltech, Moscow,}
\href{mailto:artem.n.prihodko@gmail.com}{artem.n.prihodko@gmail.com}
\end{document}